\definecolor{webgreen}{rgb}{0,.5,0}
\definecolor{webbrown}{rgb}{.6,0,0}
\definecolor{RoyalBlue}{cmyk}{1, 0.50, 0, 0}
\def\cal{\mathcal}
\let\Re=\undefined
\DeclareMathOperator{\Re}{Re}
\let\Im=\undefined
\DeclareMathOperator{\Im}{Im}
\newcommand{\dd}{\stackrel{\rm def}{=}}
\newcommand{\R}     {\mathbb{R}}
\renewcommand{\arg}{\mathrm{arg}}
\def\cal{\mathcal}
\let\Re=\undefined
\DeclareMathOperator{\Re}{Re}
\let\Im=\undefined
\DeclareMathOperator{\Im}{Im}
\def\ge{\geqslant}
\def\le{\leqslant}
\newtheorem{theorem}{Theorem}[section]
\newtheorem{corollary}[theorem]{Corollary}
\newtheorem{lemma}[theorem]{Lemma}
\theoremstyle{remark}
\numberwithin{equation}{section}
\begin{document}

\title[ Spatial asymptotics of Green's function for elliptic operators and applications \ldots]{Spatial asymptotics of Green's function for elliptic operators and applications: a.c. spectral type, wave operators for wave equation}
\author{Sergey A. Denisov}

\thanks{
The work done in the last section
 of the paper was supported by a grant of the Russian Science Foundation (project RScF-14-21-00025)
 and  research
conducted in the rest of the paper was supported by the grants NSF-DMS-1464479,  NSF DMS-1764245, and Van Vleck Professorship Research Award. Author gratefully acknowledges hospitality of IHES where part of this work was done.
}

\address{
\begin{flushleft}
Sergey Denisov: denissov@wisc.edu\\\vspace{0.1cm}
University of Wisconsin--Madison\\  Department of Mathematics\\
480 Lincoln Dr., Madison, WI, 53706,
USA\vspace{0.1cm}\\and\\\vspace{0.1cm}
Keldysh Institute of Applied Mathematics, Russian Academy of Sciences\\
Miusskaya pl. 4, 125047 Moscow, RUSSIA\\
\end{flushleft}
}\maketitle

\begin{abstract}
In the three-dimensional case, we consider  Schr\"odinger operator and an elliptic operator in divergence form. For slowly-decaying oscillating potentials, we establish spatial asymptotics of the Green's function. The main term in this asymptotics involves  $L^2(\mathbb{S}^2)$-valued analytic function whose behavior is studied away from the spectrum. This analysis is used to prove that absolutely continuous spectrum of both operators fills $\mathbb{R}^+$. We also apply our technique to 
establish   existence of wave operators for wave equation under optimal conditions for decay of potential.
\end{abstract} \vspace{1cm}

\subjclass{}

\keywords{}

\maketitle

\setcounter{tocdepth}{3}

\tableofcontents

\section{Introduction}

In this paper, we study two operators that are central for Spectral and Scattering Theory of wave propagation.  The first one is Schr\"odinger operator
\begin{equation}\label{n11}
H=-\Delta+V, \quad x\in \mathbb{R}^3 
\end{equation}
and the second one is  elliptic operator written in ``divergence form''
\begin{equation}\label{n12}
D=-{\rm div} (1+V)\nabla, \quad x\in \mathbb{R}^3.
\end{equation}
We will study the Schr\"odinger operator in the first part of the paper, operator \eqref{n12} will be considered in the second part.  The potential $V$ is always assumed to be real-valued and decaying at infinity at a certain rate to be specified below. One motivation for this work comes from the following problem suggested by B. Simon \cite{rarry}:\smallskip

{\it Let $V$ be a function on $\mathbb{R}^\nu$ which obeys 
\begin{equation}\label{l22}
\int_{\mathbb{R}^\nu} |x|^{-\nu+1}V^2(x)dx<\infty.
\end{equation}
Prove that $-\Delta+V$ has an a.c. spectrum of infinite multiplicity on $[0,\infty)$ if $\nu\ge 2$.
}
\smallskip

For $\nu=1$, a lot is known, e.g., the characterization of $V\in L^2(\mathbb{R}^+)$ in terms of spectral data was obtained in  \cite{ks-a2}.
The Simon's multidimensional $L^2$ conjecture generated a lot of activity and many results were obtained.
We recommend two recent surveys \cite{surv1}, \cite{surv2} and \cite{o1,o2,o3} for more information and the list of references.  

The goal of this paper is to go far beyond understanding the a.c. spectral type. When the spectral parameter is taken off the spectrum, we will study the asymptotics of the Green's function and  establish  existence of the a.c. spectrum and wave operators as a consequence.  In a sense, this paper builds on ideas introduced in \cite{den1} where less precise estimates were proved using  perturbation theory and more restrictive class of potentials was treated. \smallskip

To illustrate the kind of results obtained in this paper, we list a few of them below. First, we need the following notation: given function $f$ defined on $\{x\in \mathbb{R}^3:|x|>N\}$ and $p\geq 1$, we introduce
\[
\|f\|_{\ell^p[N,\infty),L^\infty}\dd \left(\sum_{n=N}^\infty \Bigl(\sup_{n<|x|<n+1} |f(x)|\Bigr)^p\right)^{1/p}.
\]

\begin{theorem}\label{il_1}
Consider $V$ that  satisfies the following conditions:
\begin{equation}\label{main-assump_il}
V={\rm div}\, Q, \quad Q\in C^1(\mathbb{R}^3),\quad   \|V\|_{\ell^2(\mathbb{Z}^+),L^\infty}+\|Q\|_{\ell^2(\mathbb{Z}^+),L^\infty}<\infty\,.
\end{equation}
Then, $\sigma_{ac}(-\Delta+V)=\R^+.$
\end{theorem}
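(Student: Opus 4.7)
The plan is to deduce Theorem~\ref{il_1} from sharp spatial asymptotics of the Green's function $G(x,y;k^2)=\langle\delta_x,(H-k^2)^{-1}\delta_y\rangle$ on $\{\Im k>0\}$ and extract absolute continuity of the spectral measure from the boundary behaviour of an associated angular amplitude, following the program announced in the abstract. Concretely, I would prove that for each fixed $y$ and each $k$ with $\Im k>0$,
\[
G(x,y;k^2)=\frac{e^{\ic k|x|}}{4\pi|x|}\,\Psi(\hat x,y;k)+o(|x|^{-1}),\qquad |x|\to\infty,
\]
with an amplitude $\Psi(\,\cdot\,,y;k)\in L^2(\mathbb{S}^2)$ depending analytically on $k$ in the upper half-plane.

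The main technical input is a Neumann expansion of $G$ around the free resolvent $G_0(x,y;k)=e^{\ic k|x-y|}/(4\pi|x-y|)$. Because $V={\rm div}\,Q$, every factor of $V$ appearing in the iterate $G_0(VG_0)^n$ can be integrated by parts, transferring a derivative onto the neighbouring copies of $G_0$. Differentiating $G_0$ in one variable yields a main term proportional to $\ic k$ times $G_0$ itself, multiplied by a bounded unit vector, together with a remainder decaying one order faster in distance; thus each occurrence of $V$ is traded for a $Q$-factor against bounded oscillatory multipliers. The hypothesis $\|V\|_{\ell^2(\Z^+),L^\infty}+\|Q\|_{\ell^2(\Z^+),L^\infty}<\infty$ then makes the rearranged multilinear integrals square-summable once paired with the leading oscillatory part of $G_0$; a Plancherel-type estimate on the sphere $k\mathbb{S}^2$ produces the desired $L^2(\mathbb{S}^2)$ control of $\Psi(\,\cdot\,,y;k)$ uniformly on compact subsets of the upper half-plane.

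With $\Psi$ constructed, the passage to $\sigma_{ac}(H)\supseteq\R^+$ proceeds in two further steps. First, $L^2(\mathbb{S}^2)$-valued analyticity of $k\mapsto\Psi(\,\cdot\,,y;k)$ on $\{\Im k>0\}$, combined with a Jensen/log-integral inequality of the form $\int_I\log\|\Psi(\,\cdot\,;k+\ic\varepsilon)\|_{L^2(\mathbb{S}^2)}\,dk\ge -C_I$ on any bounded interval $I\subset\R^+$ uniformly in $\varepsilon>0$, forces the existence of nonzero nontangential boundary values for a.e.\ $k\in\R^+$. Second, these boundary values are identified with the density of the spectral measure via a representation of the form $d\langle E_\lambda f,f\rangle/d\lambda=c\,k^{-1}\|\Psi_f(\,\cdot\,;k+\ic 0)\|_{L^2(\mathbb{S}^2)}^2$ with $\lambda=k^2$, so nonvanishing a.e.\ on $\R^+$ yields the claimed inclusion. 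The reverse inclusion $\sigma(H)\subseteq[0,\infty)\cup\{\text{eigenvalues}\}$ is standard under the decay hypothesis on $V$.

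The principal difficulty is controlling the Neumann expansion uniformly as $\Im k\downarrow 0$ and verifying that the error term is genuinely $o(|x|^{-1})$ in a sense strong enough to survive the boundary-value limit. The $\ell^2$ condition supplies summability of each integrated-by-parts factor, but one must also prevent loss of powers of $k^{-1}$ in the tail of the series that would be fatal at low energies; this calls for a multiscale decomposition of $G_0$ near and away from the diagonal together with a careful bookkeeping of oscillatory cancellations. The resulting quadratic bound on $\|\Psi\|_{L^2(\mathbb{S}^2)}$ and the logarithmic lower bound it supports are the two quantitative cores of the argument; once they are in place, Theorem~\ref{il_1} follows by assembly.
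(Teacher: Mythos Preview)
Your high-level architecture---spatial asymptotics of the Green's function to define an $L^2(\mathbb{S}^2)$-valued amplitude analytic in $k$, Hardy-space/harmonic-majorant control of this amplitude, identification of its boundary values with the density of the spectral measure, and a log-integral bound to conclude---is precisely the paper's strategy, and the second half of your outline is correct.

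The gap is the first step. The Neumann expansion cannot be made to work here. The hypotheses are \emph{decay} conditions, not smallness conditions: $V$ (and $Q$) may be arbitrarily large on, say, $B_1(0)$, and for such $V$ the series $\sum_n G_0(VG_0)^n$ diverges at every fixed $k$. Integrating by parts to trade $V$ for $Q$ does not help, since $Q$ can be equally large on a bounded set. The $\ell^2(\mathbb{Z}^+,L^\infty)$ condition controls the contribution of a \emph{single} factor of $V$ or $Q$ against an oscillatory kernel---this is exactly what the auxiliary operators $B_r^{(j)}$ in the paper quantify---but says nothing about summability over the iteration index $n$. No multiscale decomposition or bookkeeping of oscillations can repair this; the obstruction is nonperturbative.

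The paper bypasses it by deriving the a priori bounds directly from the PDE. Writing $\mu(x)=4\pi|x|e^{-ik|x|}G(x,0,k^2)$, one has the equation \eqref{main-eq1} for $\mu$ away from the origin. Multiplying by $\bar\mu/|x|^2$, integrating over annuli, dividing by $ik$ and taking real parts, and then integrating the term $\int V|\mu|^2/|x|^2$ by parts using $V={\rm div}\,Q$, produces a closed system of inequalities for the quantities
\[
m(r)=r^{-2}\!\int_{|x|=r}|\mu|^2,\qquad M(r)=\sup_{\rho>r}\int_\rho^{\rho+1}m,\qquad A(r)=\int_{|x|>r}\frac{|\nabla\mu|^2}{|x|^2}
\]
(Lemma~\ref{mre}). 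These estimates are nonperturbative: they hold for all $V$ in the class and give $\sup_r\|A(r\sigma,0,k)\|_{L^2(\mathbb{S}^2)}\le C(\Im k)^{-2}$ on each $\Pi(a,b,h)$ (Theorem~\ref{t1}). Only \emph{after} these bounds are in hand does the paper invoke a single step of the resolvent identity,
\[
\mu(x,k)=1-\int_{\mathbb{R}^3}\frac{|x|e^{ik(|x-y|+|y|-|x|)}}{|x-y||y|}V(y)\mu(y,k)\,dy,
\]
to prove existence of the limit $A_\infty$ and its analyticity; the energy-method bounds on $\mu$ and on $\nabla\mu/|x|$ are what make this integral converge in $L^2(\mathbb{S}^2)$. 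Your Hardy-space endgame then runs as written.
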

Comparing it to other recent  results in the field (see, e.g., \cite{surv1} and \cite{surv2}), this theorem is, perhaps, the strongest in terms of unconditional point-wise decay imposed on $V$. This rate of decay also turns out to be optimal on $\ell^p$ scale. The statement of the theorem is contained in a stronger result, theorem~\ref{t5}, which is proved in the first part of the paper along with auxiliary lemmas. The method is based on analysis of the spatial asymptotics of the Green's function $G(x,y,z)$ when $z$ is a regular point of $H$, $y\in \mathbb{R}^3$ is fixed and $x$ tends to infinity in arbitrary direction.  Under rather mild (and, again, essentially optimal) assumptions on $V$, we prove the formula 
\begin{equation}\label{noas}
G(x,y,z)=G^0(x,y,z)(A_\infty(\sigma,y,k)+o(1)), \,|x|\to\infty, x/|x|\to \sigma\in \mathbb{S}^2,
\end{equation}
where $G^0$ is Green's function of $H_0=-\Delta$ and  $A_\infty(\sigma,y,k)$ is $L^2(\mathbb{S}^2)$-valued  function analytic in $k=\sqrt z$.
 We  obtain the uniform estimates for $A_\infty$ and study its boundary behavior in $k\in \mathbb{C}^+$ near the real line by identifying the proper harmonic majorant. The standard  properties of the vector-valued functions in the Hardy class $H^2(\mathbb{D})$ imply the entropy bound for the spectral measures and theorem~\ref{il_1} follows immediately as a corollary.
 
  In the second part of the paper, we study operator \eqref{n12} and  the wave equation
\begin{equation}\label{acu-acu}
u_{tt}={\rm div}(1+V)\nabla u, \quad u(x,0)=f_0(x),\quad u_t(x,0)=f_1(x),
\end{equation}
which corresponds to, e.g., the propagation of acoustic waves in the medium described by potential $V$.  Formally, the group $e^{it\sqrt{D}}$ defines the solutions to \eqref{acu-acu} if $D$ is given by \eqref{n12}. The operator \eqref{n12} is non-negative under very mild assumptions on $V$ so $\sqrt D$ is well-defined by the Spectral Theorem and the evolution $e^{it\sqrt{D}}$  preserves the $L^2(\mathbb{R}^3)$ norm.  Our central contribution is the following theorem.

\begin{theorem}\label{main-th_il}
Suppose $V$ satisfies conditions:
\begin{equation}\label{2-assu}
\|V\|_\infty<1,\, V={\rm div} \, Q, \, Q\in C^2(\mathbb{R}), \, \max_{j=0,1,2}\|D^jQ\|_{\ell^2(\mathbb{Z}^+),L^\infty}<\infty\,.
\end{equation}
Then, the following wave operators exist
\begin{equation}\label{wowo}
W^{\pm}(\sqrt{D},\sqrt{H_0})\dd {s-\lim}_{t\to\pm \infty}e^{it\sqrt{D}}e^{-it\sqrt{H_0}}
\end{equation}
and the limit is understood in the strong sense.
\end{theorem}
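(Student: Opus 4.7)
My plan is to establish existence of the wave operators via stationary scattering theory based on the Green's function asymptotics developed in the paper. The analogue for $D$ of formula \eqref{noas}, presumably proved in the second part of the paper preceding this theorem, yields for $z=k^2$ off the spectrum
\[
G_D(x,y,z)=G^0(x,y,z)\bigl(B_\infty(\sigma,y,k)+o(1)\bigr),\qquad |x|\to\infty,\ x/|x|\to\sigma,
\]
where $B_\infty$ is $L^2(\mathbb{S}^2)$-valued analytic in $k\in\mathbb{C}^+$ and controlled up to the real axis by the harmonic majorant arising from the $\ell^2$ decay of $Q$ and its first two derivatives. Taking boundary values $B_\infty(\sigma,y,k+i0)$ and Fourier-transforming in $y$ produces distorted plane waves $\psi_+(x,\xi)$ satisfying $D\psi_+=|\xi|^2\psi_+$ with scattering asymptotics
\[
\psi_+(x,\xi)=e^{i\langle\xi,x\rangle}+\frac{e^{i|\xi||x|}}{|x|}\,a_+(x/|x|,\xi)+o(|x|^{-1}).
\]

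Next I would verify that $\mathcal{F}_+ f(\xi):=\int_{\mathbb{R}^3}\overline{\psi_+(x,\xi)}f(x)\,dx$ extends to a partial isometry onto the absolutely continuous subspace of $D$ intertwining $D$ with multiplication by $|\xi|^2$; by the functional calculus the same map intertwines $\sqrt{D}$ with $|\xi|$. With $\mathcal{F}_0$ denoting the ordinary Fourier transform (the analogue for $H_0$), the candidate wave operator is $W^+:=\mathcal{F}_+^*\mathcal{F}_0$. To identify $W^+$ with the strong limit in \eqref{wowo}, test on $f$ with $\mathcal{F}_0 f\in C_c^\infty(\mathbb{R}^3\setminus\{0\})$. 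After applying $\mathcal{F}_+^*$ and the intertwining, the difference $(e^{it\sqrt{D}}e^{-it\sqrt{H_0}}-W^+)f$ reduces to a quantity of the form
\[
\Bigl\|\int_{\mathbb{R}^3}\bigl[e^{i\langle\xi,x\rangle}-\psi_+(x,\xi)\bigr]\,e^{-it|\xi|}\,(\mathcal{F}_0 f)(\xi)\,d\xi\Bigr\|_{L^2(dx)},
\]
which I would estimate by stationary/non-stationary phase in $\xi$, splitting $\mathbb{R}^3$ into $\{|x|\le(1-\varepsilon)t\}$, the transition region $\{|x|\sim t\}$, and $\{|x|\ge(1+\varepsilon)t\}$. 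The phase $\langle\xi,x\rangle-t|\xi|$ has its critical set at $x=t\xi/|\xi|$, hence is non-stationary outside the light cone on the support of $\mathcal{F}_0 f$, giving rapid decay there; the far field is controlled by the $|x|^{-1}$ decay of the scattering remainder combined with the volume growth; the transition zone is handled by continuity of $a_+$ and the $L^2(\mathbb{S}^2)$ bound on the amplitude. The case $t\to-\infty$ is handled identically using incoming distorted waves $\psi_-$ built from $k\in\mathbb{C}^-$ boundary values.

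The main obstacle is twofold: controlling $B_\infty$ uniformly up to the real axis, including the low-energy regime $k\to 0^+$, and adapting the Lippmann--Schwinger construction from the Schr\"odinger case to the divergence-form operator $D=-{\rm div}(1+V)\nabla$. The stronger hypothesis \eqref{2-assu}, demanding $V={\rm div}\,Q$ with two bounded $\ell^2$-type derivatives of $Q$ rather than the single one sufficient in \eqref{main-assump_il}, provides the extra room to integrate by parts and absorb the derivatives lost in passing from $-\Delta+V$ to $D$; this ensures the scattering amplitude $a_+$ has enough regularity and decay in $\xi$ to make the oscillatory integral estimate above go through.
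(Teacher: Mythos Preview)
Your proposal has a genuine gap, and it lies at the heart of why the paper does \emph{not} take the stationary route you describe.

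The distorted-plane-wave construction you outline requires control of the amplitude $B_\infty(\sigma,y,k)$ on the real axis that is simply unavailable here. Under \eqref{2-assu} the paper establishes only that $a_\infty(\sigma,y,\cdot)$ belongs to an $L^2(\mathbb{S}^2)$-valued Hardy class, so that non-tangential boundary values $a_\infty(\sigma,y,\kappa)$ exist for \emph{almost every} $\kappa\in\mathbb{R}$, with no continuity in $\kappa$ and with the amplitude measured only in $L^2(\mathbb{S}^2)$, not $L^\infty(\mathbb{S}^2)$. Consequently your $\psi_+(x,\xi)$ is defined only for a.e.\ $|\xi|$, the asymptotic expansion you write holds in $L^2(\mathbb{S}^2)$ rather than pointwise, and there is no regularity in $\xi$ on which to run stationary or non-stationary phase. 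The limiting absorption principle, which would give you these objects for all $\kappa$, is known to fail in this class: the paper emphasizes that the a.c.\ spectrum may coexist with embedded point spectrum and singular continuous spectrum, so the standard Lippmann--Schwinger machinery is unavailable. The extra derivative of $Q$ in \eqref{2-assu} is used to handle the second-order perturbation $V\Delta+\nabla V\cdot\nabla$ when comparing $D$ with $H_0$, not to restore a LAP.

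The paper instead proves the strong limit directly by a time-dependent argument that never touches the real axis until the very end. It writes $e^{-it\sqrt{D}}q_n(\sqrt{D})$ as a contour integral $\frac{1}{2\pi i}\int_{\Gamma_{a,b}}e^{-itk}(D-k^2)^{-1}p_n(k)\,dk$ over a rectangle strictly inside $\mathbb{C}^+\cup\mathbb{C}^-$, and then proves separately (A) convergence in $L^2(B_R(0))$ for every $R$, using only the off-axis estimate $\|a(r\sigma,y,k)\|_{L^2(\mathbb{S}^2)}\lesssim(\Im k)^{-3/2}$, and (B) tightness, via an intricate bootstrap using Duhamel's formula, a stability lemma comparing $D$ with $D_1=-\operatorname{div}(1+V^{(R_1)})\nabla$, and the localization of the free wave $e^{it\sqrt{H_0}}f$ near $|x|=t$. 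The stationary representation \eqref{voln_o} is then \emph{derived} from the wave operator already constructed, not the other way round.
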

Existence of wave operators, in a standard way, implies that $D$, restricted to the ranges of $W^\pm$ is unitarily equivalent to $H_0=-\Delta$ and that guarantees the infinite multiplicity of the a.c. spectrum of~$D$. 

Another application of our technique has to do with, perhaps, the most natural and basic question about the long time behavior of solution to equation \eqref{acu-acu}: given some $f_0$ and $f_1$, does the solution propagate ballistically like in the unperturbed case? In view of possible eigenvalues embedded into the continuous spectrum, the answer to the general question should be negative (indeed, if $\Psi$ is an eigenfunction for eigenvalue $E=1$, we observe that function $\cos t \cdot \Psi(x)$ solves the problem with $f_0=\Psi, f_1=0$ but does not propagate at all). However, we have the following theorem.

\begin{theorem} \label{il5} Suppose $V$ satisfies conditions of theorem \ref{main-th_il}, $f$ is compactly supported, nonnegative, and is not  zero identically, then a nontrivial part of $f$ propagates ballistically. More precisely, we can write $f=h_1+h_2$, where $h_1\perp h_2, h_1\neq 0$ and
\[
\lim_{t\to +\infty} \|e^{-it\sqrt D}h_1-e^{-it\sqrt H_0}(W^{+})^{-1}h_1\|_2=0\,.
\]
\end{theorem}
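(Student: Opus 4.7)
My plan is as follows. Define $h_1 := Pf$ and $h_2 := f - h_1$, where $P$ is the orthogonal projection onto $\mathrm{Range}(W^+)$. This is well defined because Theorem~\ref{main-th_il} makes $W^+$ an isometry, so $\mathrm{Range}(W^+)$ is closed; $h_1\perp h_2$ by construction. It remains to verify the limit and to prove $h_1\neq 0$.

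The convergence is immediate from the intertwining in \eqref{wowo}. Setting $g:=(W^+)^{-1}h_1\in L^2$, the defining formula reads $\|e^{it\sqrt D}e^{-it\sqrt{H_0}}g - h_1\|_2 \to 0$ as $t\to+\infty$, and applying the $L^2$-isometry $e^{-it\sqrt D}$ converts this into $\|e^{-it\sqrt D}h_1 - e^{-it\sqrt{H_0}}g\|_2 \to 0$, which is exactly the asserted limit with $g=(W^+)^{-1}h_1$.

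The substance is to show $h_1\neq 0$. Suppose, for a contradiction, $f\perp \mathrm{Range}(W^+)$. The analog of \eqref{noas} for $D$ developed in the second part of the paper gives
\[
G(x,y,z) = G^0(x,y,z)\bigl(A_\infty(\sigma,y,k) + o(1)\bigr),\quad |x|\to\infty,\ x/|x|\to\sigma,\ k=\sqrt z,
\]
with $A_\infty$ an $L^2(\mathbb{S}^2)$-valued analytic function of $k\in\mathbb{C}^+$ subject to Hardy-type estimates at the real boundary. By standard scattering theory — relating $\mathrm{Range}(W^+)$ to the closed span of the generalized eigenfunctions whose asymptotics are governed by $A_\infty$, in parallel with the free identity $d\mu^{H_0}_f/dk \propto k^2 \int_{\mathbb{S}^2}|\hat f(k\sigma)|^2 d\sigma$ obtained by integrating $\Im\langle (D-\lambda-i0)^{-1}f,f\rangle$ over a large sphere and invoking the outgoing radiation condition — the component of $f$ in $\mathrm{Range}(W^+)$ is controlled by the scattering transform
\[
\mathcal A_f(\sigma,k) := \int A_\infty(\sigma,y,k)\, f(y)\, dy.
\]
Thus $f\perp \mathrm{Range}(W^+)$ forces $\mathcal A_f(\cdot,k)=0$ for a.e.\ $k>0$, and analyticity of $A_\infty$ in $\mathbb{C}^+$ together with its Hardy-type bounds promote this to $\mathcal A_f\equiv 0$ throughout $\mathbb{C}^+$.

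The contradiction comes by evaluating at $k=i\tau$, $\tau>0$ large, where $z=-\tau^2$ lies well inside the resolvent set. The free-case asymptotic $G^0(x,y,-\tau^2)\sim (4\pi|x|)^{-1} e^{-\tau|x|} e^{\tau\sigma\cdot y}$ combined with the decay of $V$ gives the leading behavior $A_\infty(\sigma,y,i\tau)\sim e^{\tau\sigma\cdot y}$, so
\[
\mathcal A_f(\sigma,i\tau) \sim \int e^{\tau\sigma\cdot y} f(y)\, dy > 0
\]
by $f\ge 0$ and $f\not\equiv 0$, contradicting $\mathcal A_f\equiv 0$. I expect the main obstacle to be the clean derivation of the representation of $\mathrm{Range}(W^+)$ via $A_\infty$ for the divergence-form operator $D$ (the analog for $H=-\Delta+V$ is executed in the first part of the paper), together with the off-spectrum asymptotic of $A_\infty$; both mirror techniques already present in the paper but must be transported to the $D$-setting with uniform control as $k$ approaches the spectrum.
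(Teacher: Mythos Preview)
Your overall strategy is correct and matches the paper's: project $f$ onto $\mathrm{ran}\,W^+$, derive the limit from the definition of $W^+$, and show $h_1\neq 0$ by contradiction via the stationary representation of $W^+$ and a positivity argument on the imaginary axis.

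There is, however, a notational slip that makes the argument as written incorrect. The object that appears in the stationary representation of $W^+$ (formula \eqref{voln_o} in the paper) is $a_\infty(\sigma,y,\kappa)=e^{-i\kappa\langle\sigma,y\rangle}A_\infty(\sigma,y,\kappa)$, not $A_\infty$ itself. Consequently, $f\perp\mathrm{ran}\,W^+$ forces the vanishing (for a.e.\ $\kappa>0$) of
\[
F(\sigma,\kappa)=\int f(y)\,a_\infty(\sigma,y,\kappa)\,dy,
\]
not of your $\mathcal A_f=\int f A_\infty$. Your asymptotic claim $A_\infty(\sigma,y,i\tau)\sim e^{\tau\sigma\cdot y}$ is also off: by (the $D$-analog of) Theorem~\ref{t3}, $A_\infty\to 1$ in sectors, and it is $a_\infty(\sigma,y,i\tau)=e^{\tau\langle\sigma,y\rangle}A_\infty(\sigma,y,i\tau)$ that picks up the exponential factor. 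Once you replace $A_\infty$ by $a_\infty$ throughout, your positivity argument goes through. The paper in fact uses a slightly cleaner variant: for $k\in i\mathbb{R}^+$ one has $k^2<0\notin\sigma(D)$, and positivity of the elliptic Green's kernel below spectrum gives $a_\infty(\sigma,y,i\tau)>0$ for \emph{all} $\tau>0$, not just asymptotically; hence $F(\sigma,i\tau)>0$ directly.

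Finally, the step you label ``by standard scattering theory'' is the real content: deducing the stationary formula \eqref{voln_o} for $W^+$ in the $D$-setting. This is carried out in the paper (Theorem~\ref{pr_sv} and the discussion around it) via the contour integral representation \eqref{formula1} and the spatial asymptotics of $G$; it is not a one-liner, and your sketch of it via $\Im\langle (D-\lambda-i0)^{-1}f,f\rangle$ would need to be connected to $W^+$ rather than merely to the spectral density.
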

To clarify the statement, $h_1$ is chosen as the orthogonal projection of $f$ to the range of $W^+$ and $h_2$ is $h_1$'s orthogonal complement in $L^2(\mathbb{R}^3)$. Since $h_1\perp h_2$, we also have $e^{-it\sqrt D}h_1\perp e^{-it\sqrt D}h_2$ and, therefore, part of the wave propagates ballistically. We notice carefully that not all of $e^{it\sqrt D}f$ is necessarily propagating: if $h_2$ is not equal to zero, then part of the wave can be localized around the origin (e.g., oscillate like in the example with eigenstate discussed above or undergo even more complicated dynamics if singular continuous spectrum is present). \bigskip

  The classes of potentials considered in this paper are strikingly sharp for the kind of results we obtain. In both \eqref{n11} and \eqref{n12}, we let $V$ decay at infinity slowly and oscillate. More precisely, this can be expressed in the following way:
 $
 V={\rm div}\, Q
 $
 where $Q$ is $C^1(\mathbb{R}^3)$ vector-field that decays at infinity. For example, one can think of
 \[
 V={\rm div}\left(\frac{(\sin x_1,0,0)}{(x^2+1)^{0.25+\delta}}\right)=\frac{\cos x_1}{(x^2+1)^{0.25+\delta}}+O(|x|^{-1.5-2\delta})\,, \delta>0\,.
 \]
In the last section of the paper, we show that these potentials are in fact ubiquitous, e.g., they appear naturally in investigation of the random models  (e.g., random decaying potentials studied by Bourgain \cite{bour1} and Rodnianski-Schlag \cite{rods})).

    To avoid some unessential technical issues (e.g., the correct definition of the operator $H$) we assume that both $Q$ and $V$ are bounded and that they decay at infinity as follows:
 \begin{equation}\label{lotto}
 \sup_{n-1<|x|<n}|V(x)|\in \ell^2(\mathbb{N}), \,  \sup_{n-1<|x|<n}|Q(x)|\in \ell^2(\mathbb{N}).
 \end{equation}
 This decay, similar to \eqref{l22}, is also $L^2$-like and that makes our results optimal (i.e., changing $\ell^2$ to $\ell^p, p>2$ in \eqref{lotto} leads to absence of a.c. spectrum in general). The oscillation of the potential is also crucial for our analysis of Green's function asymptotics. Indeed, even in the one-dimensional case this asymptotics contains the nontrivial WKB correction if the potential $V$ does not decay fast or does not have some oscillation. In \cite{denik, perelman}, the WKB correction was studied in the three-dimensional case.

  \bigskip
 
The problems considered in this paper are classical to scattering theory of PDE and the references to older results are numerous. If the potential $V$ is short-range, i.e., 
 \[
 |V(x)|\le \frac{C}{(1+|x|)^{1+\delta}},\, \delta>0,
\]
the limiting absorption principle  (see \cite{ag},\cite{yafaev1}) implies that the positive spectrum of $H=-\Delta+V$ is purely absolutely continuous.  For elliptic problems written in the form \eqref{n12}, the limiting absorption principle was studied in \cite{eidus,ilin1,ilin2,ilin}.  As far as existence of wave operators is concerned, another very effective tool, the Enss method, has been widely used to analyze the scattering problem in the case when potential is short-range.   We recommend the monograph \cite{yafaev1} as a reference that contains most of the classical results in scattering theory that are relevant to our paper. 
 In comparison to short-range case, potentials that satisfy \eqref{lotto} are too rough for the spectrum to be purely absolutely continuous. In fact, the a.c spectrum can coexist with rich singular spectrum and thus the standard methods (limiting absorption principle or Enss method mentioned above) become ineffective. The technique we use allows to overcome this obstacle.
 
 The scattering theory for the wave equation \eqref{acu-acu} was developed in \cite{eidus, ilin,yama} under the assumptions that $V$ decays at infinity fast. In this context, see also \cite{lp} for the classical treatment of the scattering problem for wave equations.
Our method to control evolution $e^{it\sqrt{D}}$ is based on the well-known  formula that expresses it as a contour integral of the resolvent (see, e.g., \cite{vainberg1} where this approach is discussed). This allows us to prove existence of wave operators and obtain the stationary representation for them.  In \cite{deni1,deni2,deni3},  the analysis of the stationary scattering problem has been used to study the existence of wave and modified wave operators in the one-dimensional case. The current paper develops this technique and puts it into the multidimensional setting.
 
The basis for our analysis is the method of a priori estimates for some Helmholtz-like equations. For Helmholtz equations, it was used in \cite{perthame1, perthame2} in a different context. In \cite{den5}, analogous a priori estimates were used to study hyperbolic pencils related to Schr\"odinger operator. The idea to control the asymptotics of $G(x,y,z)$ in the $L^2(\mathbb{S}^2)$ topology is not new,  it was used by Agmon \cite{Ag1}  for the short-range case. We, however, consider the functions $A_\infty(\sigma,y,k)$ in \eqref{noas} as elements of the $L^2(\mathbb{S}^2)$-valued Hardy space and that allows us to obtain necessary estimates on the boundary behavior. These bounds become crucial in the proof of existence of wave operators \eqref{wowo}.

We finish this introduction by making a remark  that  we  considered the three-dimensional case only to avoid unessential technicalities. We believe our approach works in any dimension after minor modifications.   It is also conceivable that all results obtained in this paper can be generalized to  $V$ that can be written in the following form:
 \[
 V=V_{\rm osc}+V_{\rm sr},
 \]
where $V_{\rm osc}$, the slowly decaying and oscillating part, is like in theorems \ref{il_1} and \ref{main-th_il} and $V_{\rm sr}$, the short-range part, satisfies
\[
 \|V\|_{\ell^1(\mathbb{Z}^+),L^\infty}<\infty\,.
\]
We do not pursue this direction here.

\bigskip

{\bf Notation}

\begin{itemize}
\item  $B_r(x)$ denotes closed ball centered at $x$ and radius $r$ and $S_r(x)$ is the corresponding  sphere, $\mathbb{S}^2\dd S_1(0)$.

\item If $A$ is a self-adjoint operator defined on the dense subset of the Hilbert space  $\cal{H}$ and $z$ does not belong to its spectrum (e.g., $z\notin\sigma(A)$), then $R_z=(A-z)^{-1}$ denotes the resolvent of $A$ at point $z$. If $R_z$ is given by the integral operator, i.e., if
\[
(R_zf)(x)=\int_{\mathbb{R}^3} G(x,y,z)f(y)dy,
\]
then we will call the integral kernel $G$ the Green's function of $A$. For example, if $A=H_0=-\Delta$ and $\cal{H}=L^2(\mathbb{R}^3)$, then   (\cite{rs2}, formula (9.30), p. 73)
\[
G^0(x,y,k^2)=\frac{e^{ik|x-y|}}{4\pi|x-y|}, \quad k\in \mathbb{C}^+.
\]

\item
Sobolev spaces  over the domain $U$ with  square integrable derivatives up to order $l$ are denoted by $\cal{H}^l(U)$. The space of compactly supported infinitely smooth functions is denoted by $C_c^\infty(U)$.

\item The symbol $\delta_y$ denotes the Dirac delta-function at point $y\in \mathbb{R}^3$.

\item The symbol $\sigma_x$ stands for surface measure.

\item If $x\in \mathbb{R}^3$ and $x\neq 0$, then
$
\widehat x\dd x/|x|
$.

\item  The symbol $P_{\mathbb{C}^+}(k,\xi)$ stands for the Poisson kernel in the upper half-plane, i.e.,
\[
P_{\mathbb{C}^+}(k,\xi)=\frac{\Im k}{\pi((\Re k-\xi)^2+\Im^2k)}.
\]
In general, if $\Omega$ is the domain in $\mathbb{C}$ with piece-wise smooth boundary $\partial\Omega$, then the Poisson kernel will be denoted by
$
P_\Omega(k,\xi), \, k\in \Omega,\,\xi\in \partial\Omega
$. Thus, for every $f\in C(\overline\Omega)$, harmonic in $\Omega$, we have
\[
f(k)=\int_{\partial\Omega}P_\Omega(k,\xi)f(\xi)d|\xi|
\]
with $d|\xi|$ being the arc-length measure.

\item Given $[a,b]$ such that $0\notin [a,b]$, we define $\Pi(a,b,h)\dd \{k\in \mathbb{C}^+, \Re k\in (a,b), \, \Im k\in (0,h)\}$. 

\item For two non-negative functions
$f_{1(2)}$, we write $f_1\lesssim f_2$ if  there is an absolute
constant $C$ such that
\[
f_1\le Cf_2
\]
for all values of the arguments of $f_{1(2)}$. We define $\gtrsim$
similarly and say that $f_1\sim f_2$ if $f_1\lesssim f_2$ and
$f_2\lesssim f_1$ simultaneously.

\item

If $(\Omega_{1(2)},\mu_{1(2)})$ are two measure spaces and  $A$ is a linear operator, bounded from $L^{p_1}(\Omega_1,\mu_1)$ to $L^{p_2}(\Omega_2,\mu_2)$, then its operator norm is denoted by $\|A\|_{p_1,p_2}$. In general, if $X_{1(2)}$ are two Banach spaces and $A$ is a linear bounded operator from $X_1$ to $X_2$, then $\|A\|_{X_1,X_2}$ will denote its operator norm.

\item For shorthand, we will use $\|f\|_p$ to indicate the $L^p(\mathbb{R}^3)$ norm of the function $f$. Similarly, $L^p$ will refer to $L^p(\mathbb{R}^3)$. 

%\item For the self-adjoint operator $A$ and a Borel set $\Omega$, the symbol $P_\Omega$ stands for the orthogonal projection onto $\Omega$, as given by the Spectral Theorem.

\item The Fourier transform of function $f$ will be denoted by 
\[
\cal{F}f=\widehat f(\xi)\dd \int_{\mathbb{R}^3} f(x)e^{-2\pi i\langle x,\xi\rangle}dx
\]
and the inverse Fourier by $\widecheck f$ or $\cal{F}^{-1}f$.

\item Given self-adjoint operator $H$ with spectrum $\sigma(H)$, we
define the following set
\[
\Sigma(H)\dd \{k\in \mathbb{C}^+, k^2\notin \sigma(H)\}.
\]
We will often write $\Sigma$ dropping $H$.

\item The averaging of function $f$ over the sphere centered at $x$ with the radius $r$ is denoted by
\[
M_r(f)(x)\dd \langle f\rangle_{S_r(x)}=\frac{1}{|S_r(x)|}\int_{S_r(x)}f(\xi)d\sigma_\xi.
\]

\item Potential $V$ is called short-range if there is $\delta>0$ such that $|V|\lesssim (1+|x|)^{-1-\delta}$.

\item The symbol $C$ denotes the absolute constant which can change the value from formula to formula. If we write, e.g., $C(\alpha)$, this defines a positive function of parameter $\alpha$.
\end{itemize}

\bigskip
\section{Part 1. Schr\"odinger operator with decaying and oscillating potential}

\subsection{Formulation of main results}
Consider stationary Schr\"odinger operator $H$ given by \eqref{n11}
\[
H=-\Delta+ V, \quad x\in \mathbb{R}^3
\]
with real-valued potential $V$ that  satisfies the following properties
\begin{equation}\label{main-assump}
V={\rm div}\, Q, \quad Q\in C^1(\mathbb{R}^3),\quad  \|V\|\dd \|V\|_{\ell^2(\mathbb{Z}^+),L^\infty}+\|Q\|_{\ell^2(\mathbb{Z}^+),L^\infty}<\infty\,.
\end{equation}
We notice that both $V$ and $Q$ converge to $0$ as $|x|\to\infty$. Since $\lim_{|x|\to\infty} V(x)=0$, it is known from Weyl's Theorem (\cite{reed4}, p.117) that $\sigma_{\rm ess}(H)=[0,\infty)$.
The question what decay assumptions at infinity imply that $\sigma_{ac}(H)=[0,\infty)$ is more delicate and has been extensively studied lately, especially in one-dimensional case (e.g., \cite{den-kis}).

In the first part of the paper, we study the spatial asymptotics of the Green's function $G(x,y,z)$ when $z\notin \sigma(H)$ and introduce ``an amplitude'', which is $L^2(\mathbb{S}^2)$-valued analytic function in $z$. We study its properties and establish  the absolute continuity of the spectrum of $H$ as a corollary. \bigskip

The following quantity will play the key role. Let
\[
A(x,y,k)\dd 4\pi |x-y|e^{-ik|x-y|}G(x,y,k^2)
\]
for $k\in \Sigma$. This formula is easy to understand, in fact
\[
 A(x,y,k)=\frac{G(x,y,k^2)}{G^0(x,y,k^2)}
\]
thus the comparison is made to free Green's function.
We will take $|x|\to\infty$ while keeping $y$ fixed and study the asymptotical behavior. 
This is related to the concept of  Martin boundary in the theory of harmonic functions, potential theory, and elliptic PDE (see, e.g., \cite{murata}) in the case when $k\in i\mathbb{R}^+$ and has large absolute value.

The main results of the first part of this paper are  listed below. 

\begin{theorem} \label{t1} Let $V$ satisfy \eqref{main-assump}. For every $\Pi(a,b,h)$, we have
\[
\sup_{r>1}\frac{1}{r^2} \int_{|x-y|=r} |A(x,y,k)|^2d\sigma_x<\frac{C(a,b,h,|y|,V)}{\Im^4 k}
\]
as long as $k\in \Pi(a,b,h)$.
\end{theorem}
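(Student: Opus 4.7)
\emph{Proof proposal.} The plan is to translate the claim into an a priori estimate for a Helmholtz-like PDE satisfied by $A$ in spherical coordinates about $y$, and then attack it with a Wronskian/Morawetz-type multiplier argument that uses the divergence structure $V=\mathrm{div}\,Q$. Writing $u(x)\dd G(x,y,k^2)$ in coordinates $x=y+r\omega$, $r>0$, $\omega\in\mathbb{S}^2$, and combining $\Delta u=r^{-1}\partial_r^2(ru)+r^{-2}\Delta_\omega u$ with $(-\Delta+V-k^2)u=\delta_y$, a direct calculation shows that for $r>0$ the amplitude $A$ satisfies
\[
A''+2ikA'+r^{-2}\Delta_\omega A=\tilde V A,\qquad \tilde V(r,\omega)\dd V(y+r\omega),
\]
with $'$ denoting $\partial_r$. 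The theorem is then equivalent to the bound $\sup_{r>1}f(r)\lesssim \Im^{-4}k$ for the spherical energy $f(r)\dd\int_{\mathbb{S}^2}|A(r,\omega)|^2\,d\omega$.

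Multiplying the above PDE by $\bar A$ and integrating over $\mathbb{S}^2$ gives a single complex identity. Its imaginary part, exploiting that both $V$ and $|\nabla_\omega A|^2$ are real, reduces to a first-order ODE for the flux $q(r)\dd\Im\int_{\mathbb{S}^2}A'\bar A\,d\omega$, namely $q'-2\beta q+\alpha f'=0$ with $\alpha=\Re k$, $\beta=\Im k$. Using the integrating factor $e^{-2\beta s}$ on $[r,\infty)$, together with $u\in L^2(\mathbb{R}^3)$ (which forces $e^{-2\beta r}q(r)\to 0$ as $r\to\infty$), expresses $q$ as an explicit linear functional of $f$:
\[
q(r)=-\alpha f(r)+2\alpha\beta\,e^{2\beta r}\!\int_r^\infty e^{-2\beta s}f(s)\,ds.
\]
Taking the real part of the same integrated identity gives a second-order differential identity for $f$ whose right-hand side carries $\int_{\mathbb{S}^2}|A'|^2\,d\omega$ and $r^{-2}\int_{\mathbb{S}^2}|\nabla_\omega A|^2\,d\omega$ as positive error terms together with the potential contribution $\int_{\mathbb{S}^2}\tilde V|A|^2\,d\omega$. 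Substituting the formula for $q$ produces a closed integro-differential inequality for $f$ modulo the potential.

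The main obstacle is controlling $\int_{\mathbb{S}^2}\tilde V|A|^2\,d\omega$: since $V$ only satisfies $\ell^2(L^\infty)$ decay, no useful pointwise bound on $V$ is available. I would exploit $V=\mathrm{div}\,Q$ by pairing the inequality against a weight $\phi(r)$, returning to Cartesian coordinates on a spherical annulus, and integrating by parts in $x$; this replaces the potential term by boundary pieces on $S_r(y)$ plus bulk terms of the form $Q\cdot\nabla(|A|^2)$ whose radial and tangential components pair with $A'\bar A$ and with $r^{-1}\nabla_\omega A\cdot\overline{\nabla_\omega A}$ respectively. Cauchy--Schwarz together with the summability $\|Q\|_{\ell^2(\mathbb{Z}^+),L^\infty}<\infty$ (summed over unit dyadic shells) absorbs both pieces into the positive error terms at the cost of a constant depending only on $\|V\|$. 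This is where the divergence structure in \eqref{main-assump} is essential and is what distinguishes the estimate from the classical short-range theory, where pointwise bounds on $V$ are used directly.

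To close the argument I would bound the initial data $f(1)$ and $|q(1)|$ by a cutoff argument and the global $L^2$-resolvent bound $\|R_{k^2}\|_{L^2\to L^2}\lesssim 1/\mathrm{dist}(k^2,\sigma(H))\lesssim\beta^{-1}$ valid on $\Pi(a,b,h)$, obtaining $f(1),|q(1)|\lesssim C(|y|)\beta^{-2}$. Inserting these boundary data into the integro-differential inequality and running a Gronwall-type argument propagates the bound to all $r>1$; the final exponent $\Im^{-4}k$ appears as the product of a $\beta^{-2}$ from the initial datum and a further $\beta^{-2}$ picked up in the weighted integration that recovers $q$ from $f$.
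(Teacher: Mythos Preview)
Your overall strategy---pass to the equation $A''+2ikA'+r^{-2}\Delta_\omega A=\tilde V A$, pair with $\bar A$, and use $V=\mathrm{div}\,Q$ to integrate the potential term by parts---is exactly the one the paper employs. But two steps in your outline are genuine gaps, and the paper handles them differently.

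\medskip
\textbf{You cannot close a single inequality for $f$.} After integrating by parts the term $\int \tilde V|A|^2$ via $V=\mathrm{div}\,Q$, you are left with bulk terms of the form $\int Q\cdot(\bar A\,\partial A)$. Cauchy--Schwarz turns these into $\|Q\|_{\ell^2 L^\infty}\sqrt{M}\sqrt{A}$, where (in the paper's notation) $A(r)=\int_{|x|>r}|\nabla\mu|^2/|x|^2$ and $M(r)=\sup_{\rho>r}\int_\rho^{\rho+1}m$. So the gradient integral $A(r)$ is a \emph{second unknown}, not an error term that can be discarded or absorbed. The paper does not attempt a Gronwall for $f$ alone; instead it divides the complex identity by $-ik$ \emph{before} taking the real part, which makes both the weighted gradient and $m(r_2)$ appear with a positive sign in a single line (equation \eqref{key1}). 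From this it extracts a coupled system of two inequalities, \eqref{key6} for $A(r)$ and \eqref{key88} for $M(r)$, and then solves the system (Lemma~\ref{mre}, yielding \eqref{poros}--\eqref{porosyo}). Your ``closed integro-differential inequality for $f$ modulo the potential'' followed by ``Gronwall'' is precisely the place where this two-variable structure is being swept under the rug.

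\medskip
\textbf{The boundary condition at infinity is not free.} Your formula for $q(r)$ uses $e^{-2\beta r}q(r)\to 0$, which does \emph{not} follow from $u\in L^2$: that only gives integrability of $e^{-2\beta s}f(s)$, not decay of the flux. The paper avoids this by first truncating to a compactly supported $V_{(\widehat R)}$, for which the explicit far-field expansion of $G$ gives $I_2\to 0$ directly, proves \eqref{key6}--\eqref{key88} in that case, and only then passes to the limit $\widehat R\to\infty$ using $\cal H^2$-convergence on annuli (Lemma~\ref{alemma}, \eqref{dvaff}).

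\medskip
\textbf{A minor correction on the initial data.} The bound $f(1)\lesssim\beta^{-2}$ cannot come from $\|R_{k^2}\|_{2,2}\lesssim\beta^{-1}$ alone since $\delta_y\notin L^2$. The paper writes $G=G^0-R_{k^2}VG^0$, uses $\|G^0\|_2\lesssim\beta^{-1/2}$, and gets $\|G-G^0\|_2\lesssim\beta^{-3/2}$, hence $m(r_0),m_1(r_0)\lesssim\beta^{-3}$ (Lemma~\ref{lm2}, \eqref{kuzma1}--\eqref{kuzma2}). Substituting into \eqref{poros} contributes one more $\beta^{-1}$, giving $\widehat M\lesssim\beta^{-4}$. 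Your accounting $\beta^{-2}\times\beta^{-2}$ lands on the right exponent by accident.
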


\begin{theorem} \label{t2}  Let $V$ satisfy \eqref{main-assump}. There is the function $A_\infty(\sigma,y,k)$, defined for every $y\in \mathbb{R}^3, k\in \Sigma$. It is $L^2(\mathbb{S}^2)$ vector-valued function in $\sigma$ and it is analytic in $k\in \Sigma$ (as an $L^2(\mathbb{S}^2)$-valued function). Moreover,
\[
\lim_{r\to\infty}\|A(y+r\sigma,y,k)- A_\infty(\sigma, y,k)\|_{L^2(\mathbb{S}^2)}=0\,.
\]
\end{theorem}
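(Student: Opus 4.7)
The plan is to derive the radial equation satisfied by $A(y+r\sigma, y, k)$, viewed as an $L^2(\mathbb{S}^2)$-valued function of $r$, and then combine the uniform $L^2(\mathbb{S}^2)$ bound from Theorem~\ref{t1} with the divergence structure $V = {\rm div}\, Q$ to show that $\partial_r A$ is summable in the $L^2(\mathbb{S}^2)$ norm near infinity. This will yield Cauchy convergence to a limit $A_\infty$.

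First I would set $\psi_r(\sigma) \dd A(y+r\sigma, y, k)$. Substituting the factorization $G(\cdot, y, k^2) = \frac{e^{ikr}}{4\pi r}\psi_r$ into $(-\Delta + V - k^2) G = 0$ (away from $y$) and expanding the Laplacian in spherical coordinates centered at $y$ yields
\[
\partial_r^2 \psi_r + 2ik\,\partial_r \psi_r + \frac{1}{r^2}\Delta_\sigma \psi_r = V(y + r\sigma)\,\psi_r,
\]
whose asymptotic model $\psi'' + 2ik\psi' = 0$ admits only constants as bounded solutions when $\Im k > 0$. Multiplying by the integrating factor $e^{2ikr}$ and integrating from $r$ to $\infty$ (the boundary term at infinity vanishes since $|e^{2ikr}|\to 0$ while $\partial_r\psi_r$ grows at most polynomially), I obtain
\[
\partial_r \psi_r = -\int_r^\infty e^{2ik(s-r)}\Bigl[V(y+s\sigma)\,\psi_s - \frac{\Delta_\sigma \psi_s}{s^2}\Bigr]\, ds.
\]

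Next I would show $\int_R^\infty \|\partial_r \psi_r\|_{L^2(\mathbb{S}^2)}\, dr \to 0$ as $R \to \infty$. The bare $\ell^2$-decay of $V$ is insufficient for this, so I would decompose $V = \partial_s Q_s + \tfrac{2}{s} Q_s + \tfrac{1}{s}{\rm div}_\sigma Q_\tau$, where $Q_s$ and $Q_\tau$ are the radial and tangential components of $Q$ in coordinates centered at $y$, and integrate by parts in $s$ against the factor $e^{2ik(s-r)}$. This converts multiplication by $V$ into terms involving only $Q$ (which lies in $\ell^2(\mathbb{Z}^+, L^\infty)$), at the cost of a boundary term $Q_s(y+r\sigma)\psi_r$ at $s=r$ plus inner terms of the form $Q\cdot \partial_s\psi_s$ and $Q\cdot\psi_s$. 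The boundary term is controlled by $\|Q\|_{\ell^2}$ together with the uniform bound of Theorem~\ref{t1}, while the inner terms close via a Gr\"onwall-type argument using the $\ell^2$-decay of $Q$ in shells.

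Cauchy convergence of $\psi_r$ in $L^2(\mathbb{S}^2)$ then defines $A_\infty$. Analyticity in $k \in \Sigma$ is inherited from the operator-analyticity of $G(\cdot, y, k^2)$ on $\Sigma$, together with locally uniform convergence on any $\Pi(a, b, h)$ guaranteed by the uniform character of all the estimates in Theorem~\ref{t1}. The main obstacle is the term $\Delta_\sigma \psi_s / s^2$: Theorem~\ref{t1} provides no direct control on angular derivatives of $\psi$, so I would integrate by parts on $\mathbb{S}^2$ to transfer the two angular derivatives onto the tangential factor $Q_\tau$ produced by the integration by parts in $s$ — effectively replacing $\Delta_\sigma \psi_s$ by a pairing against ${\rm div}_\sigma Q_\tau$, which again has $\ell^2$ summability in shells — after which the $1/s^2$ prefactor ensures absolute integrability in $s$.
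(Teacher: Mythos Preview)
Your approach is genuinely different from the paper's, and the gap lies exactly where you flag it yourself: the angular Laplacian term $\Delta_\sigma \psi_s / s^2$. Your proposed fix does not work. That term appears in the radial ODE independently of $V$; it carries no factor of $Q_\tau$ whatsoever, so there is nothing for the two angular derivatives to land on when you integrate by parts on $\mathbb{S}^2$. The tangential divergence ${\rm div}_\sigma Q_\tau$ shows up only inside the $V\psi_s$ term, and there the angular derivative is already on $Q$, not on $\psi$. To bound $\|\partial_r\psi_r\|_{L^2(\mathbb{S}^2)}$ via your integral formula you would need control on $\|\Delta_\sigma\psi_s\|_{L^2(\mathbb{S}^2)}$ or at least on one angular derivative of $\psi_s$ in $L^2(\mathbb{S}^2)$ pointwise in $s$, and Theorem~\ref{t1} gives neither. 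A Gr\"onwall argument cannot close this because the unknown quantity $\Delta_\sigma\psi_s$ is of a different (higher) order than $\partial_r\psi_r$.

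The paper avoids this problem entirely by never writing the radial ODE. Instead it uses the integral equation
\[
\mu(x,k)=1-\int_{\mathbb{R}^3}\frac{|x|e^{ik(|x-\xi|+|\xi|-|x|)}}{|x-\xi||\xi|}\,V(\xi)\mu(\xi,k)\,d\xi,
\]
splits off $|\xi|<1$, writes $V\mu=|\xi|{\rm div}(Q\mu/|\xi|)-|\xi|Q\nabla\mu/|\xi|+Q\mu\,\xi/|\xi|^2$, and integrates by parts in $\mathbb{R}^3$ (not in $r$). The resulting terms are expressed through the model operators $B_r^{(j)}$ introduced earlier, whose strong convergence $B_r^{(j)}\to B_\infty^{(j)}$ in $L^2(\mathbb{S}^2)$ is established separately. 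The required inputs are $Q\mu/|\xi|\in L^2(|\xi|>1)$ and $\nabla\mu/|\xi|\in L^2(|\xi|>1)$; the second is the nontrivial one and comes from the a~priori estimate $A(r)=\int_{|x|>r}|\nabla\mu|^2/|x|^2\,dx<\infty$ of Lemma~\ref{mre}. This is a weighted $L^2$ bound on the \emph{full} gradient in space, which is exactly what your ODE framework lacks and cannot recover from Theorem~\ref{t1} alone.
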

For the short-range potentials, Agmon proved analogous result in \cite{Ag1}.

\begin{theorem} \label{t3}  Let $V$ satisfy \eqref{main-assump}. $A_\infty(\sigma,y,k)$ has the following asymptotics in  sectors of $\mathbb{C}^+$:
\[
\lim_{|k|\to\infty, \arg k\in (\delta,\pi-\delta)}\|A_\infty(\sigma,y,k)-1\|_{L^2(\mathbb{S}^2)}=0
\]
for every $\delta>0$. In particular, this implies that $A_\infty$ is not identically equal to zero in $\Sigma$.

\end{theorem}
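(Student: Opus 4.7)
The plan is to exploit that in any sector $\arg k\in(\delta,\pi-\delta)$ we have $\Im k\geq|k|\sin\delta$, so that $|k|\to\infty$ simultaneously forces $\Im k\to\infty$. In this regime the free resolvent kernel $G^0(x,y,k^2)=e^{ik|x-y|}/(4\pi|x-y|)$ decays exponentially at rate $\Im k$, the distance $\mathrm{dist}(k^2,\sigma(H))$ is $\gtrsim|k|^2\sin(2\delta)$ for $|k|$ large (using that $V\in L^\infty$ is implicit in \eqref{main-assump} together with $\sigma_{\mathrm{ess}}(H)=[0,\infty)$), and a Combes--Thomas argument gives $|G(x,y,k^2)|\lesssim e^{-c\Im k|x-y|}/|x-y|$. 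So $G\approx G^0$ and hence $A=G/G^0\to 1$ in a suitable sense.

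Concretely, I would start from the resolvent identity $G=G^0-GVG^0$, integrate by parts using $V=\mathrm{div}\,Q$ to write
\[
G(x,y,z)-G^0(x,y,z)=\int\bigl[\nabla_wG(x,w,z)\,G^0(w,y,z)+G(x,w,z)\,\nabla_wG^0(w,y,z)\bigr]\cdot Q(w)\,dw,
\]
multiply by $4\pi|x-y|e^{-ik|x-y|}$, and pass to the limit $x=y+r\sigma$, $r\to\infty$ (using Theorem~\ref{t2} together with the limits $G^0(x,w,z)/G^0(x,y,z)\to e^{-ik\sigma\cdot(w-y)}$ and $\nabla_wG^0(x,w,z)/G^0(x,y,z)\to-ik\sigma\,e^{-ik\sigma\cdot(w-y)}$). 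This yields an integral representation of $A_\infty(\sigma,y,k)-1$ containing $Q$, $G^0(\cdot,y,z)$ and $A_\infty(\sigma,\cdot,k)$, with a factor of order $|k|$ from the gradient of $G^0$. The central estimate for bounding it in $L^2(\mathbb{S}^2)$ is the angular integral
\[
\int_{\mathbb{S}^2}e^{-2\Im k\,|w-y|(1-\sigma\cdot\widehat{w-y})}\,d\sigma\lesssim\frac{1}{\Im k\,|w-y|},
\]
which, combined with $|k|/\Im k\le 1/\sin\delta$, cancels the gradient factor.

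The main obstacle is that the $\ell^2(L^\infty)$ hypothesis on $Q$ is borderline non-integrable, so a naive Minkowski estimate on the $w$-integral does not close. I would split the domain according to whether $|w-y|<1/\Im k$ or $|w-y|\geq1/\Im k$: the inner region yields $O(|k|/(\Im k)^2)=o(1)$ from boundedness of $Q$ and volume considerations, while the outer region requires a Cauchy--Schwarz--Plancherel style argument that exploits the angular concentration near the ray from $y$ in direction $\sigma$ to gain an extra $(\Im k\,|w-y|)^{-1/2}$ factor, making the $w$-sum convergent under the $\ell^2(L^\infty)$ assumption. A further technical ingredient is to control $\nabla_wG$ through a Caccioppoli inequality applied to $(-\Delta_w+V-k^2)G(x,w,z)=\delta_x(w)$ on annuli bounded away from $x$, combined with the Combes--Thomas exponential bounds. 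A cleaner route is to iterate the resolvent identity once more, $G=G^0-G^0VG^0+G^0VG^0VG$, so that every gradient falls on a fully explicit $G^0$ and the residual factor $G$ is tamed by the operator norm bound $\|(H-k^2)^{-1}\|=O(|k|^{-2})$; this converts the problem into estimating two fixed quadratic (resp.\ cubic) expressions in $G^0$ plus a small remainder.

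Once $\|A_\infty(\cdot,y,k)-1\|_{L^2(\mathbb{S}^2)}\to 0$ in sectors is established, the final assertion is immediate: for $|k|$ large enough in a fixed sector we have $\|A_\infty(\cdot,y,k)\|_{L^2(\mathbb{S}^2)}>\sqrt{4\pi}/2\neq 0$, so $A_\infty$ cannot vanish identically on the open set $\Sigma\subset\mathbb{C}^+$.
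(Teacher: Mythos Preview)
Your diagnosis of the difficulty is correct: under only the $\ell^2(\mathbb{Z}^+,L^\infty)$ assumption on $Q$, a Minkowski estimate on the $w$-integral fails by exactly one-half power of $|w-y|$. The gap is that neither of your two proposed remedies actually closes it.

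For the first remedy, you write that a ``Cauchy--Schwarz--Plancherel style argument'' should gain an extra $(\Im k\,|w-y|)^{-1/2}$. But no mechanism for this gain is given, and I do not see one along these lines. The angular integral you wrote already extracts the full decay available from $|e^{-ik\sigma\cdot(w-y)}|$; after Minkowski the exponentials cancel and you are left with $\int |Q(w)|\,|w-y|^{-3/2}\,dw$, divergent for generic $Q$ in the class. A $TT^*$ or Schur-type argument on the $\sigma$-integral does not help, since the kernel one obtains has no extra off-diagonal decay. For the second remedy, iterating the resolvent identity does make all derivatives fall on explicit $G^0$'s, but the \emph{first}-order term $G^0VG^0$ already produces exactly the integral above after the limit $r\to\infty$ and the same divergence reappears; the operator-norm smallness of the remainder is irrelevant to this term.

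What the paper does instead is avoid pointwise estimation altogether. Two ingredients are essential. First, the a~priori energy/multiplier estimates of Lemma~\ref{mre} (multiply the equation for $\mu=4\pi|x|e^{-ik|x|}G$ by $\bar\mu/|x|^2$, integrate, take real part) yield, after sending $r\to 0$ and checking $m(0^+)=1$, $m_1(0^+)\lesssim M(0)/\Im k$, the closed system $A(0)\lesssim 1+M(0)+|k|$, $M(0)\lesssim 1+\sqrt{A(0)M(0)}/|k|$, hence $M(0)\le C(\|V\|)$ and $A(0)\le C(\|V\|)\,|k|$ uniformly in sectors. Second, the same multiplier method, applied now to the model equations \eqref{me1}--\eqref{me7}, shows that the operators $B_\infty^{(j)}$ in \eqref{eret} are bounded from $L^2$ to $L^2(\mathbb{S}^2)$ with norms $C_j'(k)\lesssim |k|^{-1/2}$ in sectors (Theorem~\ref{ttt1} and \eqref{qeee11}--\eqref{qeee31}). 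Writing $A_\infty-1$ via the decomposition \eqref{rew} as $I_0+I_{1,1}+\sum_{j=2}^{4}B_\infty^{(j)}(\ldots)$ and feeding in the bounds for $M(0),A(0)$ then gives $\|A_\infty-1\|_{L^2(\mathbb{S}^2)}\lesssim |k|^{-1/2}$. The point is that the $L^2(\mathbb{S}^2)$ boundedness of $B_\infty^{(j)}$ is \emph{not} obtainable by Minkowski; it comes from the PDE-level cancellation encoded in the multiplier identity, which is the idea your proposal is missing.
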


 Take any $f\in L^2(\mathbb{R}^3)$ and assume that it has compact support. Let $\sigma_f$ be its spectral measure relative to $H$.
  The proofs of theorems \ref{t2} and \ref{t3} give continuity of $A_\infty(\sigma,y,k)$ in  $y$ in $L^2(\mathbb{S}^2)$ topology. So, we can define $h_f(\sigma,k)$:
\begin{equation}\label{defh}
h_f(\sigma,k)\dd \int_{\mathbb{R}^3} A_\infty(\sigma,y,k)e^{-ik\langle \sigma,y\rangle}f(y)dy\,.
\end{equation}

\begin{theorem} \label{t4}   Let $V$ satisfy \eqref{main-assump} and $[a,b]\subset (0,\infty)$. Then
\begin{equation}\label{t4t4}
\|h_f(\sigma,k)\|^2_{L^2(\mathbb{S}^2)}\le C(a',b',a,b,V,f)\left(1+\int_{a^2}^{b^2} P_{\mathbb{C}^+}(k,\sqrt\eta)  d\sigma_f(\eta) \right)
\end{equation}
for all intervals $(a',b')\subsetneq (a,b)$ and all $k\in \Pi(a',b',1)$.
\end{theorem}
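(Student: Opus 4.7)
The plan is to identify $h_f(\cdot,k)$ with the Sommerfeld radiation profile of $u:=R_{k^2}f$, relate its $L^2(\mathbb{S}^2)$-norm to $(\Im k)\,\|u\|_{L^2}^2$ via a flux identity on large spheres, and then compare the kernel $\Im k/|\eta-k^2|^2$ coming from the spectral theorem with $P_{\mathbb{C}^+}(k,\sqrt\eta)$.

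Fix $R_0$ with $\supp f\subset B_{R_0}$. Combining Theorem~\ref{t2} with the elementary expansion $|R\sigma-y|=R-\langle\sigma,y\rangle+O(R^{-1})$ for $y\in\supp f$, I first show
\[
4\pi R\,e^{-ikR}u(R\sigma)\xrightarrow[R\to\infty]{L^2(\mathbb{S}^2)}h_f(\sigma,k),\quad\text{hence}\quad\|h_f(\cdot,k)\|_{L^2(\mathbb{S}^2)}^2=16\pi^2\lim_{R\to\infty}e^{2\Im k\cdot R}\!\int_{|x|=R}\!|u|^2\,d\sigma_x.
\]
On $\{|x|>R_0\}$, $u$ satisfies $(-\Delta+V-k^2)u=0$ and (since $\Im k>0$) decays exponentially. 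Green's identity on $B_{R'}\setminus B_R$ ($R_0<R<R'$), followed by $R'\to\infty$, yields
\[
\Im\int_{\partial B_R}\bar u\,\partial_r u\,d\sigma_x=2(\Re k)(\Im k)\int_{|x|>R}|u|^2\,dx.
\]
Differentiating the far-field formula gives $\partial_r u=iku+O(u/R)$ in the $L^2(\mathbb{S}^2)$-averaged sense on spheres, so $\Im(\bar u\,\partial_r u)=(\Re k)|u|^2+o(|u|^2)$. Writing $\int_{|x|>R}|u|^2\,dx=\int_R^\infty K(s)e^{-2\Im k s}\,ds$ with $K(s):=e^{2\Im k s}\int_{|x|=s}|u|^2\,d\sigma_x\to\|h_f(\cdot,k)\|_{L^2(\mathbb{S}^2)}^2/(16\pi^2)$, and passing $R\to\infty$, I deduce
\[
\|h_f(\cdot,k)\|_{L^2(\mathbb{S}^2)}^2\le C(R_0)\cdot(\Im k)\cdot\|u\|_{L^2(\mathbb{R}^3)}^2,
\]
uniformly in $k\in\Pi(a',b',1)$; note that $\Im k\le 1$ absorbs the factor $e^{2\Im k R_0}\le e^{2R_0}$.

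Next, the spectral theorem gives $\|u\|_2^2=\int|\eta-k^2|^{-2}\,d\sigma_f(\eta)$. Split this integral at $[a^2,b^2]$. For $\eta\notin[a^2,b^2]$, $|k^2-\eta|$ is bounded below by $\delta=\delta(a,a',b,b')>0$ on $\Pi(a',b',1)$, so that part contributes at most $\delta^{-2}\|f\|_2^2$, which accounts for the ``$1$'' in \eqref{t4t4}. For $\eta=\mu^2$ with $\mu\in[a,b]$, the factorization $|k^2-\mu^2|=|k-\mu||k+\mu|$, the identity $\Im k/|k-\mu|^2=\pi\,P_{\mathbb{C}^+}(k,\mu)$, and the bound $|k+\mu|\ge a+a'$ yield
\[
\frac{\Im k}{|k^2-\mu^2|^2}\le\frac{\pi}{(a+a')^2}\,P_{\mathbb{C}^+}(k,\sqrt\eta),
\]
and integrating against $d\sigma_f$ delivers the Poisson-kernel term in \eqref{t4t4}.

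The main obstacle is the flux analysis: one must quantify the radiation error $\int_{\partial B_R}\Im(\bar u(\partial_r u-iku))\,d\sigma_x$ uniformly for $k\in\Pi(a',b',1)$ as $R\to\infty$, and justify that $\lim_{R\to\infty}e^{2\Im k R}\int_{|x|>R}|u|^2\,dx\le C(R_0)\|u\|_2^2$ with a constant independent of $k$. The uniform bound on $A$ from Theorem~\ref{t1} combined with the quantitative form of the $L^2(\mathbb{S}^2)$-convergence in Theorem~\ref{t2} are precisely what is needed to make this uniformity hold.
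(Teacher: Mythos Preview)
Your reduction in the final paragraph is clean and correct: if one had $\|h_f(\cdot,k)\|_{L^2(\mathbb{S}^2)}^2\le C\,(\Im k)\,\|u\|_2^2$ uniformly on $\Pi(a',b',1)$, the spectral-theorem step and the comparison $\Im k/|k^2-\mu^2|^2\le \pi(a+a')^{-2}P_{\mathbb{C}^+}(k,\mu)$ would finish the proof. The gap is that the flux identity cannot produce that inequality. Passing $R\to\infty$ on both sides gives a tautology: with $K(s)=e^{2\Im k s}\int_{|x|=s}|u|^2 d\sigma_x=(16\pi^2)^{-1}m(s)$, one has $\lim_{R\to\infty}e^{2\Im k R}\int_{|x|>R}|u|^2=\|h_f\|^2/(32\pi^2\Im k)$, while the left side of your flux identity, after inserting $\partial_r u=(ik-R^{-1})u+\tfrac{e^{ikR}}{4\pi R}\partial_r\mu$, tends to $\tfrac{\Re k}{16\pi^2}\|h_f\|^2$ along a subsequence. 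The two limits agree identically and yield no bound of $\|h_f\|^2$ by $(\Im k)\|u\|_2^2$. At finite $R=R_0$ the left side involves $m(R_0)$, not $\|h_f\|^2$, and nothing in Theorems~\ref{t1}--\ref{t2} forces $m(R_0)\gtrsim \|h_f\|^2$ with a $k$-independent constant; indeed, if $m(s)$ stays small on $[R_0,S]$ and reaches $\|h_f\|^2$ only for $s>S$, then $(\Im k)\|u\|_2^2\lesssim e^{-2\Im k S}\|h_f\|^2$, which can be arbitrarily small relative to $\|h_f\|^2$. Theorem~\ref{t1} only gives $m(s)\le C/\Im^4 k$, which is far weaker than what you need here.

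The paper's proof goes a genuinely different route, and the missing idea is \emph{subharmonicity}. One approximates $V$ by compactly supported $V_{(\widehat R)}$, for which the limiting absorption principle gives continuity of $p_{(\widehat R)}(k):=\|{h_f}_{(\widehat R)}(\cdot,k)\|_{L^2(\mathbb{S}^2)}$ up to $\mathbb{R}^+$ and the exact identity $\sigma_f'(\kappa^2,H_{(\widehat R)})=C\kappa\,p_{(\widehat R)}(\kappa)^2$ on the real line. Since $h_f$ is $L^2(\mathbb{S}^2)$-valued analytic, $p_{(\widehat R)}$ is subharmonic, and one writes the Poisson representation on a thin isosceles triangle $T_\alpha(a,b)$: the contribution from the base uses the real-line identity, the contribution from the two oblique sides uses the a~priori bound $p_{(\widehat R)}\le C/\Im^2 k$ (Theorem~\ref{t1}) together with the corner decay $P_{T_\alpha}(k,\xi)\lesssim|\xi-a|^{(\pi-\alpha)/\alpha}$, which compensates the blow-up once $\alpha$ is small. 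Passing $\widehat R\to\infty$ via weak-$\ast$ convergence of the spectral measures then gives \eqref{t4t4}. In short, the harmonic majorant is obtained from boundary data via potential theory, not from an interior flux estimate.
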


 {\bf Remark.} The last theorem implies that
 \[
 \|h_f(\sigma,k)\|^2_{L^2(\mathbb{S}^2)}\le \frac{C(a,b,f,V)}{\Im k}, \quad k\in \Pi(a,b,1)
 \]
 for every $[a,b]\subset (0,\infty)$.

\begin{theorem}\label{t5}
Under the conditions of the previous theorem, if we assume that $f$ is non-negative (and not identically equal to zero), then $h_f$ is not identically equal to zero and
\[
\int_a^b \log \sigma_f'(E)dE>C(a,b,V,f)
\]
for every $[a,b]$.  As a corollary, we have $\sigma_{ac}(H)=[0,\infty)$.
\end{theorem}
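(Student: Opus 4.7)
I would evaluate \eqref{defh} along the imaginary axis $k=it$, $t\to\infty$. Then
\[
h_f(\sigma,it)=\int_{\R^3}A_\infty(\sigma,y,it)\,e^{t\langle\sigma,y\rangle}f(y)\,dy,
\]
and Theorem~\ref{t3} gives $\|A_\infty(\cdot,y,it)-1\|_{L^2(\mathbb{S}^2)}\to 0$, uniformly in $y$ on the compact set $\supp f$ once one invokes the $y$-continuity of $A_\infty$ into $L^2(\mathbb{S}^2)$ (already used to make sense of \eqref{defh}). The leading term $L(\sigma,t)=\int e^{t\langle\sigma,y\rangle}f(y)\,dy$ is the Laplace transform of a nonzero, nonnegative, compactly supported density, and a Laplace-asymptotics/Minkowski calculation shows that $\|L(\cdot,t)\|_{L^2(\mathbb{S}^2)}$ dominates the error $\|\int (A_\infty-1)e^{t\langle\cdot,y\rangle}f\,dy\|_{L^2(\mathbb{S}^2)}$ for $t$ sufficiently large. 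Hence $\|h_f(\cdot,it)\|_{L^2(\mathbb{S}^2)}>0$ for some $t$, and analyticity of $h_f$ in $k\in\Sigma$ forces $h_f\not\equiv 0$.

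\textbf{Step 2 (Subharmonicity and Hardy-class lower bound).} Set $\varphi(k)\dd\log\|h_f(\cdot,k)\|_{L^2(\mathbb{S}^2)}^2$. Writing $\varphi(k)=\sup_{\|g\|_{L^2(\mathbb{S}^2)}=1}\log|\langle h_f(\cdot,k),g\rangle|^2$ and noting that each scalar $k\mapsto\langle h_f(\cdot,k),g\rangle$ is analytic on $\Sigma$, one sees $\varphi$ is subharmonic on $\Sigma$. Theorem~\ref{t4} supplies the positive harmonic majorant $\|h_f\|^2\le C(1+u(k))$ on $\Pi(a',b',1)$, with $u(k)=\int_{a^2}^{b^2}P_{\mathbb{C}^+}(k,\sqrt\eta)\,d\sigma_f(\eta)$, placing $h_f$ in the vector-valued Hardy class on this rectangle. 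By Step~1 and the discreteness of the zero set of a nontrivial analytic $L^2(\mathbb{S}^2)$-valued function, there is $k_0\in\Pi(a',b',1)$ with $\varphi(k_0)>-\infty$. The sub-mean-value inequality
\[
\varphi(k_0)\le\int_{\partial\Pi(a',b',1)}\varphi(\xi)\,d\omega_{k_0}(\xi),
\]
combined with the Remark after Theorem~\ref{t4} (which bounds $\varphi$ from above on the top and lateral sides of the rectangle, where $\Im\xi$ is bounded away from $0$), yields after rearrangement
\[
\int_{a''}^{b''}\log\|h_f(\cdot,\mu+i0)\|_{L^2(\mathbb{S}^2)}^2\,d\mu>-\infty
\]
on any $[a'',b'']\subset(a',b')$, since the density of $d\omega_{k_0}$ on $(a',b')$ is bounded above and below on such subintervals.

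\textbf{Step 3 (Identification with $\sigma_f'$ and the corollary).} To convert Step~2 into a bound on $\int\log\sigma_f'$, I would establish the scattering-theoretic identity
\[
\sigma_f'(\mu^2)=c(\mu)\,\|h_f(\cdot,\mu)\|_{L^2(\mathbb{S}^2)}^2,\qquad\mu\in(a,b),
\]
with $c(\mu)$ continuous and strictly positive on $(0,\infty)$. The derivation starts from Stone's formula $\sigma_f'(E)=\pi^{-1}\lim_{\varepsilon\downarrow 0}\Im\langle R_{E+i\varepsilon}f,f\rangle$, substitutes the Green's-function expansion $G(x,y,k^2)=G^0(x,y,k^2)(A_\infty(\widehat{x-y},y,k)+o(1))$ of Theorem~\ref{t2}, and uses the compact support of $f$ to recast $\Im\langle R_{E+i0}f,f\rangle$ as a Parseval integral over $\mathbb{S}^2$ of $|h_f(\cdot,\sqrt E)|^2$---the familiar statement that the spectral density equals the angular $L^2$-norm of the far-field pattern. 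Combined with Step~2 one gets $\int_a^b\log\sigma_f'(E)\,dE>C(a,b,V,f)$. For the corollary, the Szeg\H{o}-type lower bound forces $\sigma_f'>0$ on a set of positive Lebesgue measure in every $[a,b]\subset(0,\infty)$ and every compactly supported $f\ge 0,\ f\not\equiv 0$; since such $f$ form a cyclic family in $L^2(\R^3)$ and $\sigma_{\mathrm{ess}}(H)=[0,\infty)$ by Weyl's theorem, $\sigma_{ac}(H)=[0,\infty)$.

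\textbf{Main obstacle.} The serious work is in Step~3. The bound of Theorem~\ref{t4} is intrinsically one-sided: it controls $\|h_f\|^2$ from above by the Poisson extension of $d\sigma_f$, but not the reverse, so a lower bound on $\int\log\|h_f^*\|^2$ does not translate mechanically into a lower bound on $\int\log\sigma_f'$. Rigorously establishing the scattering identity $\sigma_f'\asymp\|h_f\|^2$ requires pushing $G(x,y,k^2)$ to the real spectrum and carefully taking imaginary parts in the Green's-function asymptotic of Theorem~\ref{t2}, for which the $L^2(\mathbb{S}^2)$-analytic structure of $A_\infty$ developed in Theorems~\ref{t1}--\ref{t3} (and not merely a pointwise amplitude bound) is essential.
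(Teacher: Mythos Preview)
Your outline follows the paper's strategy, with two differences worth flagging.

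First, the paper works with the scalar analytic function $g(k)=\langle h_f(\cdot,k),1\rangle_{L^2(\mathbb{S}^2)}$ rather than with $\|h_f\|$. This streamlines both Step~1 and Step~2. For Step~1, on the imaginary axis $k=id$ (with $d$ large enough that $H+d^2>0$) the Green's function is positive, so $a_\infty(\cdot,y,id)\ge 0$ a.e.; by Theorem~\ref{t3} and analyticity it is not identically zero, hence $\langle a_\infty(\cdot,y,id),1\rangle>0$ for each $y$, and then $g(id)=\int f(y)\,\langle a_\infty(\cdot,y,id),1\rangle\,dy>0$ since $f\ge 0$, $f\not\equiv 0$. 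Your Laplace-asymptotics route is more delicate than needed: Theorem~\ref{t3} carries no rate, and the weight $e^{t\langle\sigma,y\rangle}$ grows with $t$, so the Minkowski bound on the error need not be dominated by $\|L\|_{L^2(\mathbb{S}^2)}$. Step~2 then reduces to classical scalar Nevanlinna theory, since $|g|^2\le 4\pi\|h_f\|^2$ inherits the harmonic majorant from Theorem~\ref{t4}.

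Second---and this is the substantive point---your Step~3 is where the paper proceeds differently. You propose to derive $\sigma_f'(\mu^2)=c(\mu)\|h_f(\cdot,\mu)\|^2$ directly for the given $V$ via Stone's formula and the far-field asymptotics. That route requires a limiting absorption principle to control $R_{E+i\varepsilon}f$ locally as $\varepsilon\to 0$, and LAP is precisely what fails for slowly decaying $V$. The paper never attempts this. The exact identity \eqref{getq} holds for the truncations $V_{(\widehat R)}$ (compactly supported, hence short-range), is taken from the literature, and already underlies the proof of Theorem~\ref{t4}. On the real line it gives $|g_{(\widehat R)}^*(\kappa)|^2\le 4\pi\|h_{f,(\widehat R)}(\kappa)\|^2=C\kappa^{-1}\sigma'_{f,(\widehat R)}(\kappa^2)$ with no additive constant, and the entropy bound for $V$ then follows via the stability Lemma~\ref{lolo} and the weak-$*$ approximation of Lemma~\ref{wkwk} (equivalently, via \eqref{hase}). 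Note that the majorant \eqref{t4t4} alone cannot close the argument: its boundary value on $(a',b')$ is $C\bigl(1+2\kappa\sigma_f'(\kappa^2)\bigr)$, and $\int\log|g^*|>-\infty$ yields only the vacuous $\int\log(1+\sigma_f')>-\infty$. It is the \emph{exact} identity at the truncated level, without the ``$+1$'', that makes the passage to $\int\log\sigma_f'>-\infty$ work.
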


The result about absolute continuity is sharp in the following sense. 

\begin{lemma}\label{ssha}
For every $p>2$, there are potentials $V$ that can be written in the form
\[
V={\rm div }\,Q, \, \|V\|_{\ell^p(\mathbb{Z}^+),L^\infty}<\infty, \,  \|Q\|_{\ell^p(\mathbb{Z}^+),L^\infty}<\infty, \, p>2
\]
and $\sigma_{ac}(H)=\emptyset$.
\end{lemma}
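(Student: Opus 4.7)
\medskip

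\textbf{Proof proposal for Lemma~\ref{ssha}.}

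My plan is to construct $V$ as a spherically symmetric potential and reduce the a.c.\ spectrum question to a well-studied one-dimensional half-line problem. Take $V(x) = v(|x|)$ for a real scalar function $v$ on $\mathbb{R}^+$. Under the standard decomposition $L^2(\mathbb{R}^3) = \bigoplus_{\ell, m} L^2(\mathbb{R}^+) \otimes \mathrm{span}(Y_\ell^m)$ (with a $1/r$ factor included to make the reduced radial operator self-adjoint), the operator $-\Delta + V$ is unitarily equivalent to $\bigoplus_{\ell \geq 0} (2\ell+1) L_\ell$, where
\[
L_\ell \dd -\frac{d^2}{dr^2} + v(r) + \frac{\ell(\ell+1)}{r^2}
\]
acts on $L^2(\mathbb{R}^+)$ with Dirichlet boundary at $r=0$. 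Since $\ell(\ell+1)/r^2 \in L^1([1,\infty))$, a standard Kato--Birman argument on the half-line shows that $\sigma_{ac}(L_\ell) = \sigma_{ac}(L_0)$ for every $\ell \geq 0$. Hence $\sigma_{ac}(H) = \sigma_{ac}(L_0)$, so it suffices to construct a scalar $v$ for which the ordinary one-dimensional operator $L_0 = -d^2/dr^2 + v$ has empty a.c.\ spectrum while $V(x) = v(|x|)$ and a compatible $Q$ satisfy the required $\ell^p(\mathbb{Z}^+), L^\infty$ bounds.

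For the construction, fix $\alpha \in (1/p, 1/2)$ and a countable set $\{k_j\} \subset (0,\infty)$ dense in every bounded interval. Choose well-separated radii $r_j \to \infty$ and a smooth bump $\phi$ of unit width, and set
\[
Q(r) \dd \sum_{j} r_j^{-\alpha}\, \phi(r-r_j)\, \sin(2 k_j r), \qquad V(x) \dd \operatorname{div}\bigl(Q(|x|) \hat x\bigr) = Q'(r) + \frac{2Q(r)}{r}\Big|_{r=|x|}.
\]
By construction $\sup_{n \leq r < n+1} |Q(r)| \lesssim \sum_j r_j^{-\alpha} \mathbf{1}_{|n-r_j| \leq 1}$, and since $\alpha p > 1$ we have $\|Q\|_{\ell^p(\mathbb{Z}^+), L^\infty} \lesssim (\sum_j r_j^{-\alpha p})^{1/p} < \infty$. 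Differentiating each bump produces a leading term $2 k_j r_j^{-\alpha} \phi(r-r_j)\cos(2 k_j r)$ plus lower-order pieces, so $V$ has the same $\ell^p$ control; the $2Q/r$ term is even smaller since $r \sim r_j$ on the support. Thus $\|V\|_{\ell^p}, \|Q\|_{\ell^p} < \infty$ for every $p > 2$, as required by the lemma.

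It remains to show $\sigma_{ac}(L_0) = \emptyset$, and this is the main obstacle. On the support of the $j$th bump, $v$ is a Wigner--von Neumann-type perturbation resonating at energy $E_j = k_j^2$; because $\alpha < 1/2$ the resonance is slow enough that the transfer matrix produces, after integration by parts in Pr\"ufer variables, amplitude growth of order $r_j^{\gamma_j}$ with $\gamma_j > 0$ at a full interval of energies around $E_j$ (not merely at $E_j$ itself). Tuning the amplitudes of the bumps and the sparseness of $\{r_j\}$ and invoking the Gilbert--Pearson subordinacy theory, one concludes that subordinate solutions exist for a.e.\ $E$ in every bounded subinterval of $(0,\infty)$, giving $\sigma_{ac}(L_0) = \emptyset$. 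Equivalently, one may cite the one-dimensional results of Kiselev, Last, and Simon (or Naboko--Yakovlev) that for any $p > 2$ produce $L^p(\mathbb{R}^+)$ potentials with empty a.c.\ spectrum, and observe that such constructions can be arranged (and slightly sparsified) to have both $v$ and its antiderivative $Q$ in the required class. Combining with the spherical reduction of the first paragraph yields $\sigma_{ac}(H) = \emptyset$, proving the lemma.
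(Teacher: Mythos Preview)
Your overall strategy---reduce to a radial half-line problem via spherical symmetry and kill the a.c.\ spectrum there---is exactly what the paper does. The reduction paragraph is fine (the paper inserts an extra decoupling at $r=1$ via Deift--Simon before decomposing, but your direct decomposition with the centrifugal term in $L^1[1,\infty)$ works just as well).

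The gap is in your specific one-dimensional construction. A width-$1$ bump of amplitude $r_j^{-\alpha}$ carrying the oscillation $\sin(2k_j r)$ does \emph{not} trigger the Wigner--von~Neumann mechanism: that mechanism needs the resonant oscillation to persist over an interval of length $\gg r_j^{\alpha}$ to accumulate Pr\"ufer growth, whereas your bump has fixed width~$1$. Across each of your bumps the transfer matrix is $I+O(r_j^{-\alpha})$ at every energy, so the claimed ``amplitude growth of order $r_j^{\gamma_j}$ over a full interval of energies'' is simply false. If instead you try to invoke the sparse-bump mechanism of Kiselev--Last--Simon, you need sparseness ($r_j/r_{j+1}\to 0$) together with $\sum a_j^2=\infty$; but with $a_j=r_j^{-\alpha}$ these two requirements fight each other, since sparseness forces superpolynomial growth of $r_j$ and then $\sum r_j^{-2\alpha}<\infty$.

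The paper's fix is to decouple the amplitude from the location. Take $q(r)=\sum_{n\ge 2} n^{-\gamma}\phi(r-n!)$ with $\gamma\in(1/p,1/2)$ and set $Q=q(|x|)\,\widehat x$, so $V=\operatorname{div} Q=q'(r)+2q(r)/r$. Now the bump amplitudes are $a_n=n^{-\gamma}$, giving $\sum a_n^2=\infty$ while $\|Q\|_{\ell^p}^p\sim\sum n^{-p\gamma}<\infty$, and the locations $n!$ are sparse. Theorem~1.6 of \cite{kls} (whose proof extends verbatim to sign-indefinite bumps) then yields $\sigma_{ac}(-d^2/dr^2+q')=\emptyset$; the leftover $2q/r+\ell(\ell+1)/r^2$ is in $L^1[1,\infty)$ and hence trace-class-harmless. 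No internal oscillation in the bumps is needed at all.
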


The plan of the first part is as follows. We start with proving sharpness, lemma \ref{ssha}.
The next section will contain some auxiliary results. In section 4, we study  properties of linear and bilinear operators used later in the text. Section 5 contains the proofs of theorems \ref{t1}--\ref{t5}. The harmonic majorant for $A_\infty(\sigma,y,k)$ is found in the last section. \vspace{0.5cm}

\subsection{Sharpness of $\ell^2$ condition}

\begin{proof}{\it(of lemma \ref{ssha}).}
Consider 
\[
V(x)\dd  {\rm div}\, Q,\, Q\dd  \frac{q(|x|)}{|x|}(x_1,x_2,x_3)\,,
\]
where 
\begin{equation}\label{potepote}
q(r)\dd \sum_{n=2}^\infty a_n \phi(r-n!)\,,
\end{equation}
$a_n\dd n^{-\gamma}, \gamma\in (0,\frac 12)$ and $\phi$ is smooth  function (a ``bump'') supported on $[-1,1]$ which is not identically zero. Differentiation gives
\[
V(x)=q'(r)+\frac{2q(r)}{r}, \quad r\dd |x|\,.
\]
Clearly, $V$ satisfies conditions of lemma \ref{ssha}.

By the theorem 7 from  \cite{ds} and Relative Trace-class Perturbation Theorem (theorem 8.8, \cite{yafaev}), we know that  $\sigma_{ac}(-\Delta+V)=\empty\sigma_{ac}(H_1)$ where $H_1=-\Delta+V, x\in \mathbb{R}^3\backslash B_1(0)$ with Dirichlet boundary condition on $\mathbb{S}^2$. Since $V$ is radially symmetric, $H_1$ is unitarily equivalent to 
\[
-\frac{d^2}{dr^2}-\frac{B}{r^2}+V(r)
\]
defined on $L^2([1,\infty),L^2(\mathbb{S}^2))$ with Dirichlet boundary condition at $r=1$. The symbol $B$ denotes Laplace-Beltrami operator on $L^2(\mathbb{S}^2)$. Thus, in the orthogonal basis of spherical harmonics, $H$ is a direct orthogonal sum of one-dimensional operators $\{L_n\}$ (counting multiplicity)
\[
L_n\dd -\frac{d^2}{dr^2}+\frac{\lambda_n}{r^2}+q'(r)+\frac{2q(r)}{r}, \,n\in \mathbb{N}
\]
with Dirichlet boundary conditions at $r=1$, where $\{\lambda_n\}$ are eigenvalues of $B$.
In \cite{kls},  theorem~1.6, the following potential $q_1$ was considered
\[
q_1(x)=\sum_{j=1} a_j W(x-x_j)\,,
\] 
where $W$ is non-negative, supported on $[-1,1]$ and $\lim_{j\to\infty}a_j=0, \lim_{j\to\infty}x_j/x_{j+1}=0$. Then, it was proved that $\sum_{j=1}^\infty a_j^2=\infty$ implies $\sigma_{ac}(-d^2/dx^2+q_1)=\emptyset$ for every boundary condition at zero. The proof of this result, however,  extends to sign-indefinite potentials without efforts and this gives
\[
\sigma_{ac}(-d^2/dr^2+q')=\emptyset\,,
\]
where $q$ is defined in \eqref{potepote} and the Dirichlet condition at $r=1$ is assumed. For the perturbation in $L_n$, we have 
\[
\frac{\lambda_n}{r^2}+\frac{2q(r)}{r}\in L^1[1,\infty)\,,
\]
which makes it a relative trace-class perturbation that leaves the absolutely continuous spectrum intact. To summarize, we have  $\sigma_{ac}(L_n)=\emptyset$ for all $n$ and so the absolutely continuous spectra of $H_1$ and $H$ are empty.

\end{proof}

\vspace{0.5cm}

\subsection{Basic estimates for Green's function}

In this section, we will be mostly interested in the general properties of the Green's kernel for bounded potential. First, we need to make sure that this kernel exists. To do that, we start with lemma.
\begin{lemma}\label{dobro1}
If $V\in L^\infty(\mathbb{R}^3)$ and $z\notin \sigma(H)\cup [0,\infty)$, then $R_zf\in \cal{H}^2(\mathbb{R}^3)$ for every $f\in L^2(\mathbb{R}^3)$.
\end{lemma}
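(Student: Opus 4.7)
The plan is to exploit the boundedness of $V$ to identify the domain of $H$ with $\cal{H}^2(\mathbb{R}^3)$ and then show that the range of $R_z$ lies in this domain. Concretely, since $V \in L^\infty(\mathbb{R}^3)$ is real, multiplication by $V$ is a bounded self-adjoint operator on $L^2(\mathbb{R}^3)$. Combined with the fact that $-\Delta$ is self-adjoint on $\cal{H}^2(\mathbb{R}^3)$, the Kato--Rellich theorem (or simply the observation that bounded symmetric perturbations preserve self-adjointness and the domain) gives that $H=-\Delta+V$ is self-adjoint on $\cal{H}^2(\mathbb{R}^3)$. Therefore, for any $z\notin\sigma(H)$, the resolvent $R_z$ is a bounded bijection from $L^2(\mathbb{R}^3)$ onto $\cal{H}^2(\mathbb{R}^3)$, which is what we want.

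If one prefers a more hands-on approach that avoids quoting abstract self-adjointness results, I would argue directly. Set $u=R_z f\in L^2(\mathbb{R}^3)$. By definition of the resolvent, $(H-z)u=f$ as an $L^2$-identity, which, using that $H=-\Delta+V$ acts distributionally on any $L^2$ function when $V$ is bounded, becomes
\[
-\Delta u = f-(V-z)u \quad \text{in } \cal{D}'(\mathbb{R}^3)\,.
\]
The right-hand side belongs to $L^2(\mathbb{R}^3)$ since $f\in L^2$, $V\in L^\infty$, and $u\in L^2$. Hence $\Delta u\in L^2(\mathbb{R}^3)$.

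It remains to upgrade $u\in L^2$ together with $\Delta u\in L^2$ to $u\in \cal{H}^2(\mathbb{R}^3)$. Passing to the Fourier transform, $\widehat u\in L^2$ and $|\xi|^2\widehat u\in L^2$, so $(1+|\xi|^2)\widehat u\in L^2$, which is precisely the definition of $u\in \cal{H}^2(\mathbb{R}^3)$.

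There is no genuine obstacle here, only the minor bookkeeping of the first paragraph (to justify that the domain of $H$ is $\cal{H}^2$ under the standing boundedness assumption on $V$). Note that the hypothesis $z\notin [0,\infty)$ is not actually used for the Sobolev regularity itself; only $z\notin\sigma(H)$ is needed so that $R_z$ is defined as a bounded operator on $L^2(\mathbb{R}^3)$.
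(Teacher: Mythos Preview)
Your argument is correct and essentially coincides with the paper's. The paper writes the second resolvent identity $R_z=R_z^{0}-R_z^{0}VR_z$ and observes that $R_z^{0}:L^2\to\cal{H}^2$ while $VR_z f\in L^2$; unwinding this is exactly your direct computation $-\Delta u=f-(V-z)u\in L^2$. Your remark that only $z\notin\sigma(H)$ is actually needed is accurate; the paper's extra hypothesis $z\notin[0,\infty)$ is there solely because the free resolvent $R_z^{0}$ is invoked explicitly.
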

\begin{proof}
Before proceeding with the proof, we recall two main identities from the Perturbation Theory:
\[
R_z=R^0_z-R_zVR_z^0=R_z^0-R_z^0VR_z, \quad z\notin\sigma(H)\cup\sigma(H_0),
\]
where $V=H-H_0$, and
\[
R_z=R_{z_0}+(z-z_0)R_zR_{z_0}, \quad z,z_0\notin\sigma(H)\,.
\]
We will be using them multiple times in this paper. To prove lemma,
we write $R_zf=R_z^{0}f-R_z^0VR_zf$ and notice that $R_z$ maps $L^2(\mathbb{R}^3)$ to itself, $R_z^0$ maps $L^2(\mathbb{R}^3)$ to $\cal{H}^2(\mathbb{R}^3)$. Since $V$ is a multiplier in $L^2(\mathbb{R}^3)$, we have the required property.
\end{proof}

Since $\cal{H}^2(\mathbb{R}^3)$ is continuously embedded into $L^\infty(\mathbb{R}^3)$, Corollary 2.14 from \cite{cycon} can be applied to get representation
\[
R_zf(x)=\int_{\mathbb{R}^3} G(x,y,z)f(y)dy, \quad \sup_{x\in \mathbb{R}^3}\int_{\mathbb{R}^3}|G(x,y,z)|^2dy<\infty
\]
for all $z\notin \sigma(H)\cup [0,\infty)$. In the case when $\|V\|<\infty$, we get $[0,\infty)\subseteq \sigma(H)$, so it is sufficient to require only $z\notin \sigma(H)$.

We continue with simple and well-known symmetry result.
\begin{lemma}
 If $V\in L^\infty(\mathbb{R}^3)$, then
$
G(x,y,z)=G(y,x,z)
$
for each $z\notin \sigma(-\Delta+V)$.
\end{lemma}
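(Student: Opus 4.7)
The operator $H=-\Delta+V$ is self-adjoint on its natural domain because $V$ is bounded and real-valued; this underlies the whole argument. My plan is to combine two independent symmetries of the resolvent $R_z$: the one coming from self-adjointness and the one coming from the fact that $H$ has real coefficients.

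First I would exploit self-adjointness. For $z\notin\sigma(H)$, one has $R_z^*=R_{\bar z}$, which at the level of integral kernels reads
\[
G(x,y,z)=\overline{G(y,x,\bar z)}\,,
\]
valid for a.e.\ $(x,y)$. This is just the usual adjoint identity $\langle R_z f,g\rangle=\langle f, R_{\bar z}g\rangle$, specialized via the kernel representation established in the previous lemma (which gives $R_zf(x)=\int G(x,y,z)f(y)\,dy$ for $f\in L^2$).

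Second, I would use that $H$ commutes with complex conjugation. Since $V$ is real-valued, the identity $\overline{Hf}=H\bar f$ holds for $f$ in the domain; consequently, for $z\notin\sigma(H)$, $\overline{R_z f}=R_{\bar z}\bar f$. Reading this off at the kernel level yields
\[
\overline{G(x,y,z)}=G(x,y,\bar z)\,.
\]
Combining the two a.e.\ identities gives
\[
G(x,y,z)=\overline{G(y,x,\bar z)}=G(y,x,z)\,,
\]
which is the claim, valid for a.e.\ $(x,y)$.

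The only mild subtlety is whether the identity is \emph{pointwise} rather than a.e. This is not really an obstacle: both sides, viewed as functions of $x$ with $y$ fixed, coincide as tempered distributions solving $(H-z)G(\cdot,y,z)=\delta_y$ and $(H-z)G(y,\cdot,z)=\delta_y$ respectively, and standard elliptic regularity makes them continuous off the diagonal $x=y$, so the a.e.\ equality promotes to genuine pointwise equality for $x\ne y$. If one prefers a route that avoids even this minor issue, an alternative is to expand $R_z$ in a Neumann series in $R_z^0$ for $|\Im z|$ large, observe that each iterated kernel is manifestly symmetric in $(x,y)$ because $G^0(x,y,z)=G^0(y,x,z)$ and $V$ acts diagonally, and then analytically continue in $z$ throughout $\mathbb{C}\setminus\sigma(H)$; this is a bit heavier but purely elementary.
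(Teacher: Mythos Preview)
Your argument is correct but follows a different route from the paper. The paper writes out the perturbation (Neumann) series
\[
G(x,y,k^2)=G^0(x,y,k^2)-\int G^0(x,\xi,k^2)V(\xi)G^0(\xi,y,k^2)\,d\xi+\cdots,
\]
notes that it converges absolutely for $\Im k$ large, observes that each term is symmetric in $(x,y)$ because $G^0$ is, and then extends the identity to all of $\mathbb{C}\setminus\sigma(H)$ by analyticity. This is precisely the ``heavier but elementary'' alternative you mention at the end.

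Your primary argument instead combines the adjoint identity $G(x,y,z)=\overline{G(y,x,\bar z)}$ with the conjugation symmetry $\overline{G(x,y,z)}=G(x,y,\bar z)$ coming from the reality of $V$. This is clean and avoids the series altogether. It is worth noting that the paper proves the conjugation identity as its very next lemma, but there derives it \emph{from} the symmetry just established together with the adjoint relation; you are running the same two ingredients in the opposite logical order (conjugation $\Rightarrow$ symmetry rather than symmetry $\Rightarrow$ conjugation). Both orderings are equally valid; yours is marginally more self-contained since it does not invoke convergence of the Born series.
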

\begin{proof}
The perturbation series for the resolvent
\[
G(x,y,k^2)=G^0(x,y,k^2)-\int_{\mathbb{R}^3} G^0(x,\xi_1,k^2)V(\xi_1)G^0(\xi_1,y,k^2)d\xi_1+\ldots
\]
converges absolutely if  $\Im k >L$ where $L$ is large enough.
This implies $G(x,y,z)=G(y,x,z)$ for these $k$. However, both of these functions are analytic in $\mathbb{C}\backslash \sigma(H)$ so the identity can be extended to the domain of analyticity.
\end{proof}

\begin{lemma}If $V\in L^\infty(\mathbb{R}^3)$, then $G(x,y,\overline{z})=\overline{G(x,y,z)}$ if $z\notin \sigma(H)$. Then, $A(x,y,-\overline{k})=\overline{A(x,y,k)}$ if $k\in \Sigma$.\label{opew}
\end{lemma}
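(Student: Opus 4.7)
The plan is to reduce both assertions to two standard facts: the self-adjointness identity $R_{\overline{z}}=(R_z)^\ast$ available for any self-adjoint operator, plus the symmetry $G(x,y,z)=G(y,x,z)$ established in the preceding lemma. Since $V$ is real-valued, $H$ is self-adjoint on its natural domain, and for $z\notin\sigma(H)$ both $z$ and $\overline{z}$ lie in the resolvent set, so the resolvent identity $R_{\overline{z}}=(R_z)^\ast$ applies.

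Concretely, I would first write out what $R_{\overline{z}}=(R_z)^\ast$ means at the level of integral kernels: the kernel of $(R_z)^\ast$ at $(x,y)$ is $\overline{G(y,x,z)}$. Hence $G(x,y,\overline{z})=\overline{G(y,x,z)}$. Invoking the previously proven symmetry $G(y,x,z)=G(x,y,z)$ then yields
\[
G(x,y,\overline{z})=\overline{G(x,y,z)},\qquad z\notin\sigma(H).
\]
For the kernels to make pointwise sense I would refer to the representation of $R_z$ by a locally square-integrable kernel established just above (via lemma~\ref{dobro1} and the embedding $\mathcal{H}^2\hookrightarrow L^\infty$), and appeal to analyticity of $G(x,y,\cdot)$ on $\mathbb{C}\setminus\sigma(H)$ for any ambiguity in a null set.

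For the statement about $A$, the point is just to unfold the definition $A(x,y,k)=4\pi|x-y|e^{-ik|x-y|}G(x,y,k^2)$ at the argument $-\overline{k}$. First I would verify $-\overline{k}\in\Sigma$: we have $\Im(-\overline{k})=\Im k>0$, and $(-\overline{k})^2=\overline{k^2}$, which lies off $\sigma(H)\subset\mathbb{R}$ precisely when $k^2$ does. Then
\[
A(x,y,-\overline{k})=4\pi|x-y|\,e^{i\overline{k}|x-y|}\,G(x,y,\overline{k^2})
=4\pi|x-y|\,\overline{e^{-ik|x-y|}}\;\overline{G(x,y,k^2)}=\overline{A(x,y,k)},
\]
where the middle equality uses the first part of the lemma applied at $z=k^2$.

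I do not expect any real obstacle. The only subtlety is the passage from the operator-level identity $R_{\overline{z}}=(R_z)^\ast$ to a pointwise identity for the kernels, but that is taken care of by the existence and local $L^2$ properties of $G$ noted right before the lemma; once $G$ is a bona fide measurable kernel, the adjoint of an integral operator has the conjugate-transposed kernel, so the identification is immediate off a set of measure zero and then extends everywhere by analyticity in $z$.
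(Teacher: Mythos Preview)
Your proof is correct and follows exactly the paper's approach: the identity $R_{\overline{z}}=(R_z)^\ast$ combined with the kernel symmetry $G(x,y,z)=G(y,x,z)$ from the previous lemma gives the first claim, and the second is read off from the definition of $A$. You supply more detail than the paper (which dispatches both parts in two lines), but the argument is the same.
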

\begin{proof}
Since $\left((H-z)^{-1}\right)^*=(H-\overline{z})^{-1}$, the previous lemma gives
\[
G(x,y,z)=\overline{G(x,y,\overline{z})}\,.
\]
The identity for $A$ now follows from its definition.
\end{proof}

\begin{lemma}\label{shift}
Suppose $V$ satisfies  \eqref{main-assump}. Take $y\in \mathbb{R}^3$ and consider
\[
Q_{[y]}(x)\dd Q(x-y),\quad
V_{[y]}(x)\dd {\rm div}_x Q_{[y]}(x)\,.
\]
Then,
\[
\|V_{[y]}\|_{\ell^2(\mathbb{Z}^+),L^\infty}\lesssim 1+|y|, \quad  \|Q_{[y]}\|_{\ell^2(\mathbb{Z}^+),L^\infty}\lesssim 1+|y|\,.
\]

\end{lemma}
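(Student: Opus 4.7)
The plan is to reduce the statement to a single translation estimate for the $\ell^2(\mathbb{Z}^+),L^\infty$ norm. First I would observe that since differentiation commutes with translation, $V_{[y]}(x) = \mathrm{div}_x\, Q(x-y) = (\mathrm{div}\, Q)(x-y) = V(x-y)$, so $V_{[y]}$ and $Q_{[y]}$ are literally translates of $V$ and $Q$. Thus it suffices to prove the following general fact: for any scalar (or vector-valued) $f$ with $\|f\|_{\ell^2(\mathbb{Z}^+),L^\infty}<\infty$,
\[
\|f(\cdot - y)\|_{\ell^2(\mathbb{Z}^+),L^\infty} \lesssim (1+|y|)\, \|f\|_{\ell^2(\mathbb{Z}^+),L^\infty}.
\]

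The second step is the geometric covering argument. Let $S_n \dd \{x\in\mathbb{R}^3 : n<|x|<n+1\}$ and $a_m \dd \sup_{x\in S_m}|f(x)|$, so $\sum_m a_m^2 = \|f\|_{\ell^2(\mathbb{Z}^+),L^\infty}^2$. Changing variable $x=x'+y$, one has
\[
\sup_{x\in S_n}|f(x-y)| = \sup_{n<|x'+y|<n+1}|f(x')|,
\]
and the triangle inequality forces $n-|y|<|x'|<n+1+|y|$. Setting $N\dd \lceil |y|\rceil+1$, any such $x'$ lies in $S_m$ for some $m$ with $|m-n|\le N$, so
\[
b_n\dd \sup_{x\in S_n}|f(x-y)| \le \sum_{m:\,|m-n|\le N} a_m.
\]

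The third step is a one-line harmonic-analysis estimate: the map $(a_m)\mapsto (b_n)$ above is convolution with the indicator of a window of length $2N+1$, so either Cauchy--Schwarz applied to the inner sum or a Schur test gives $\|b\|_{\ell^2(\mathbb{Z}^+)}\le (2N+1)\|a\|_{\ell^2(\mathbb{Z}^+)}\lesssim (1+|y|)\|f\|_{\ell^2(\mathbb{Z}^+),L^\infty}$. Applying this to $f=Q$ (componentwise) and to $f=V$, and recalling that $\|V\|$ and $\|Q\|$ are finite by \eqref{main-assump} so are absorbed into the implicit constant, yields both bounds in the lemma. There is no real obstacle here: the statement is genuinely combinatorial, quantifying how many original shells can intersect one translated shell, and the proof is essentially the observation that translations act boundedly on this mixed norm with operator norm at most linear in $|y|$.
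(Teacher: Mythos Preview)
Your proof is correct and follows essentially the same route as the paper: bound the shell supremum of the translate by a sum of at most $O(1+|y|)$ consecutive shell suprema of the original function, then pass to $\ell^2$. The paper phrases the last step as the triangle inequality in $\ell^2$ applied to the shifted sequences, while you phrase it as convolution with a window and invoke Young/Schur or Cauchy--Schwarz; these are the same estimate with the same constant.
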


\begin{proof}
We trivially have
\[
\sup_{n_1<|x|<n_2}|Q(x)|\le \sum_{j=n_1}^{n_2-1} \sup_{j<|x|<j+1} |Q(x)|\,.
\]
Since
\[
\sup_{n<|x|<n+1}|Q_{[y]}(x)|\leq \sup_{\max (0,n-[y]-1)<|x|<n+[y]+1} |Q(x)|\,,
\]
we get the statement of the lemma from triangle inequality in $\ell^2$. The estimates for $V$ can be obtained similarly.
\end{proof}

We will need to truncate potentials in the following ways. Given $\rho>1$, consider smooth  $\alpha_\rho(x), x\in [0,\infty)$  such that $0\leq \alpha_\rho\leq 1$ for all $x$,   $\alpha_\rho=1$ on $[0,\rho]$, $\alpha_\rho=0$ for $x>\rho+1$. Take $V$ that satisfies  \eqref{main-assump} and define
\begin{equation}\label{tru}
Q_{(\rho)}(x)\dd \alpha_{\rho}(|x|)Q(x),\quad V_{(\rho)}\dd {\rm div}\,Q_{(\rho)}\,.
\end{equation}
Similarly,
\[
Q^{(\rho)}\dd Q-Q_{(\rho)},\quad V^{(\rho)}\dd V-V_{(\rho)}\,.
\]
Notice that $\|V_{(\rho)}\|\lesssim \|V\|$ and its support is restricted to $B_{\rho+1}(0)$. For $V^{(\rho)}$ we have
\[
\|V^{(\rho)}\|\lesssim \|V\|, \quad \lim_{\rho\to\infty}\|V^{(\rho)}\|=0\,.
\]
Let us consider the corresponding operator by
$
H_{(\rho)}=-\Delta+V_{(\rho)}
$,
its resolvent  $R_{(\rho)}$, and the Green's function  $G_{(\rho)}(x,y,z)$. 
\begin{lemma}  \label{ls}  Assume that $V$ satisfies \eqref{main-assump}.  If  $f\in L^2(\mathbb{R}^3)$ and  $z\notin\sigma(H)$, then
\[
\lim_{\rho\to\infty} \|{R_{(\rho)}}_zf- R_zf\|_2=0\,.
\]
\end{lemma}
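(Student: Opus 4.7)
The plan is to reduce the claim to the second resolvent identity and to show that the correction term vanishes in $L^2$. First, observe that the condition $\|V\|<\infty$ and $\|Q\|<\infty$ from \eqref{main-assump} means both $\sup_{n<|x|<n+1}|V|$ and $\sup_{n<|x|<n+1}|Q|$ are $\ell^2(\mathbb{Z}^+)$ sequences. Since $\ell^2$ sequences converge to zero,
\[
\lim_{n\to\infty}\sup_{|x|\geq n}|V(x)|=0,\qquad \lim_{n\to\infty}\sup_{|x|\geq n}|Q(x)|=0.
\]
Writing out the derivative,
\[
V^{(\rho)}=V-V_{(\rho)}=V-\mathrm{div}(\alpha_\rho Q)=(1-\alpha_\rho)V-(\nabla\alpha_\rho)\cdot Q,
\]
and recalling that $1-\alpha_\rho$ is supported in $\{|x|\geq \rho\}$ while $\nabla\alpha_\rho$ is supported in $\{\rho\leq|x|\leq \rho+1\}$ with $|\nabla\alpha_\rho|\lesssim 1$, we conclude that $V^{(\rho)}$ is bounded and
\[
\|V^{(\rho)}\|_\infty\leq \sup_{|x|\geq \rho}|V(x)|+C\sup_{|x|\geq \rho}|Q(x)|\xrightarrow[\rho\to\infty]{}0.
\]

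Next, I would use the second resolvent identity in the form
\[
R_{(\rho),z}-R_z=R_{(\rho),z}(H-H_{(\rho)})R_z=R_{(\rho),z}V^{(\rho)}R_z.
\]
To make this useful, I need a uniform bound on $\|R_{(\rho),z}\|$. Fix $z\notin\sigma(H)$ and set $d=\mathrm{dist}(z,\sigma(H))>0$, so $\|R_z\|\leq 1/d$. Since $V^{(\rho)}$ is a bounded multiplication operator with $\|V^{(\rho)}\|_\infty\to 0$, we have $\|R_z V^{(\rho)}\|\leq \|R_z\|\|V^{(\rho)}\|_\infty\to 0$. Using the factorization
\[
H_{(\rho)}-z=(H-z)\bigl(I-R_z V^{(\rho)}\bigr),
\]
for all $\rho$ with $\|R_z V^{(\rho)}\|\leq 1/2$ the operator $H_{(\rho)}-z$ is boundedly invertible, $z\notin\sigma(H_{(\rho)})$, and
\[
R_{(\rho),z}=\bigl(I-R_zV^{(\rho)}\bigr)^{-1}R_z,\qquad \|R_{(\rho),z}\|\leq 2\|R_z\|.
\]

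Combining, for all sufficiently large $\rho$,
\[
\|R_{(\rho),z}f-R_zf\|_2\leq \|R_{(\rho),z}\|\,\|V^{(\rho)}\|_\infty\,\|R_zf\|_2\leq \frac{2}{d^2}\,\|V^{(\rho)}\|_\infty\,\|f\|_2,
\]
which tends to $0$ as $\rho\to\infty$. The only real point that needs care is the uniform boundedness of $\|R_{(\rho),z}\|$, and the key input making that work is the observation at the start that the $\ell^2$ sup-condition on $V$ and $Q$ automatically forces the pointwise decay of $V^{(\rho)}$; everything else is a routine two-line application of the resolvent identity.
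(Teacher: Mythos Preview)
Your proof is correct and follows essentially the same route as the paper: the second resolvent identity $R_{(\rho),z}-R_z=R_{(\rho),z}V^{(\rho)}R_z$ combined with $\|V^{(\rho)}\|_\infty\to 0$ and a uniform bound on $\|R_{(\rho),z}\|$. The only cosmetic difference is that where the paper invokes ``general perturbation theory'' (Hausdorff convergence of spectra) for the uniform resolvent bound, you supply the explicit Neumann-series argument via $H_{(\rho)}-z=(H-z)(I-R_zV^{(\rho)})$, which is in fact cleaner and more self-contained.
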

\begin{proof}Since $\lim_{\rho\to\infty}\|V^{(\rho)}\|_{2,2}=0$, we have $\sigma(H_{(\rho)})\to\sigma(H)$ in the Hausdorff sense if $\rho\to\infty$. This follows from the general perturbation theory.   For every $z\notin \sigma(H)$, we can take $\rho$ large enough and write
\[
{R_{(\rho)}}_zf=R_zf-{R_{(\rho)}}_z V^{(\rho)}R_zf\,.
\]
Since $\limsup_{\rho\to\infty}\|{R_{(\rho)}}_z\|_{2,2}<\infty$ and $\lim_{\rho\to\infty}\|V^{(\rho)}R_zf\|_2=0$, we get the statement of the lemma.
\end{proof}

Given $f\in L^2(\mathbb{R}^3)$, we can define the spectral measure $\sigma_f$ of $f$ relative to $H$. Similarly, we introduce ${\sigma_f}_{(\rho)}$.
The immediate corollary of the previous lemma is
\begin{lemma}\label{wkwk}
Assume that $V$ satisfies \eqref{main-assump}.  If $f\in L^2(\mathbb{R}^3)$, then
\[
{\sigma_f}_{(\rho)}\to \sigma_f, \,{\it as}\quad \rho\to\infty
\]
in the weak-($\ast$) sense.
\end{lemma}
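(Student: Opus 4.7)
The plan is to transfer the strong $L^2$-convergence of resolvents supplied by Lemma~\ref{ls} into weak-$*$ convergence of spectral measures via the Borel (Cauchy) transform. Recall that for any self-adjoint operator $A$ with spectral measure $\mu_f$ of a vector $f$,
\[
F_f(z) \dd \langle (A-z)^{-1} f, f\rangle = \int_{\mathbb{R}} \frac{d\mu_f(\eta)}{\eta - z}, \quad z \in \mathbb{C} \setminus \mathbb{R}.
\]

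First I would apply Lemma~\ref{ls}: every $z \in \mathbb{C}_+$ lies outside $\sigma(H)$, so the strong convergence ${R_{(\rho)}}_z f \to R_z f$ in $L^2(\mathbb{R}^3)$ gives pointwise convergence of the Borel transforms,
\[
{F_f}_{(\rho)}(z) = \langle {R_{(\rho)}}_z f, f\rangle \;\longrightarrow\; \langle R_z f, f\rangle = F_f(z), \quad z \in \mathbb{C}_+.
\]

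Second, I would verify tightness of the family $\{{\sigma_f}_{(\rho)}\}$. All of these measures share the same total mass $\|f\|_2^2 = \sigma_f(\mathbb{R})$. Moreover, since $\|V_{(\rho)}\|_\infty \lesssim \|V\|_\infty$ uniformly in $\rho$, the operators $H_{(\rho)}$ are uniformly bounded from below, and hence the supports of ${\sigma_f}_{(\rho)}$ lie in a fixed half-line $[-M, \infty)$. By Helly's selection theorem, any sequence $\rho_k \to \infty$ has a subsequence $\rho_{k_j}$ along which ${\sigma_f}_{(\rho_{k_j})} \to \nu$ vaguely for some finite Borel measure $\nu$ on $\mathbb{R}$.

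Third, since $\eta \mapsto 1/(\eta-z)$ belongs to $C_0(\mathbb{R})$ for each fixed $z \in \mathbb{C}_+$, vague convergence together with the first step gives
\[
\int_{\mathbb{R}} \frac{d\nu(\eta)}{\eta - z} = F_f(z) = \int_{\mathbb{R}} \frac{d\sigma_f(\eta)}{\eta - z}, \quad z \in \mathbb{C}_+,
\]
and uniqueness of the Cauchy transform (equivalently, the Stieltjes inversion formula) forces $\nu = \sigma_f$. Because every subsequence admits a sub-subsequence with the same vague limit $\sigma_f$, the whole family ${\sigma_f}_{(\rho)}$ converges to $\sigma_f$ in the weak-$*$ sense. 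The only delicate point is ruling out escape of mass to $+\infty$, but this is automatic: taking the imaginary part of the identity above at $z = i$ shows $\int (1+\eta^2)^{-1} d{\sigma_f}_{(\rho)} \to \int (1+\eta^2)^{-1} d\sigma_f$, and combined with the preservation of total mass, no mass can be lost at infinity. In short, the main (and essentially only) obstacle is packaging these standard spectral-theory facts; the lemma is a routine consequence of Lemma~\ref{ls}.
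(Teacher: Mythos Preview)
Your proof is correct and shares the paper's starting point: invoke Lemma~\ref{ls} to obtain pointwise convergence of the Borel transforms $\langle {R_{(\rho)}}_z f,f\rangle\to\langle R_z f,f\rangle$ for $z\in\mathbb{C}^+$. The only difference is in how you finish: the paper concludes directly by remarking that any $C_c(\mathbb{R})$ test function can be approximated by its Poisson integral (the imaginary part of the Cauchy integral), whereas you go through Helly selection plus uniqueness of the Cauchy transform, with an explicit tightness check. Both completions are standard and equivalent in strength; yours is a bit more explicit about mass preservation, while the paper's is a one-line appeal to Poisson approximation.
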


\begin{proof}
Indeed, since 
\[
\langle {R_{(\rho)}}_zf,f\rangle=\int \frac{d{\sigma_f}_{(\rho)}(\lambda)}{\lambda-z}\to \langle {R}_zf,f\rangle=\int \frac{d{\sigma_f}(\lambda)}{\lambda-z}
\]
as $\rho\to\infty$ for every $z\in \mathbb{C}^+$, we get the statement of the lemma because continuous function with compact support can be approximated by its Poisson integral (imaginary part of the Cauchy integral).
\end{proof}

\begin{lemma}\label{pop}
If $V\in L^\infty(\mathbb{R}^3)$, then
\[
\sup_{x,y\in \mathbb{R}^3}\Bigl|G(x,y,z)-G^0(x,y,z)\Bigr|<C_1(z,\|V\|_\infty)
\]
for all $z\notin \sigma(H)\cup [0,\infty)$. Moreover, for every $a,b,h$, we have
\begin{equation}\label{ce}
C_1(z,\|V\|_\infty)<C(a,b,h)\left(\frac{\|V\|_\infty}{\Im k}+\frac{\|V\|^2_\infty}{\Im^2 k} \right)
\end{equation}
if $z=k^2$ and $k\in \Pi(a,b,h)$.
\end{lemma}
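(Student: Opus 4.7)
The plan is to use the second resolvent identity iterated twice. Writing $R_z = R_z^0 - R_z^0 V R_z$ and then substituting back, we get the kernel identity
\[
G(x,y,z) - G^0(x,y,z) = -\int_{\mathbb{R}^3} G^0(x,\xi,z)V(\xi)G^0(\xi,y,z)\,d\xi + \iint_{\mathbb{R}^6} G^0(x,\xi,z)V(\xi)G(\xi,\eta,z)V(\eta)G^0(\eta,y,z)\,d\xi\,d\eta.
\]
The existence of this Green's function as a genuine kernel (and integrability allowing these manipulations) is already guaranteed by lemma \ref{dobro1} and the discussion that follows. The point of stopping at order two is that the remainder is written in terms of $R_z$ itself, whose operator norm on $L^2$ we control cleanly.

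For the first term I would compute $\|G^0(x,\cdot,k^2)\|_2$ explicitly: a change to polar coordinates based at $x$ gives
\[
\int_{\mathbb{R}^3} \frac{e^{-2\Im k\,|x-\xi|}}{16\pi^2|x-\xi|^2}\,d\xi = \frac{1}{8\pi\,\Im k},
\]
so $\|G^0(x,\cdot,k^2)\|_2 \lesssim (\Im k)^{-1/2}$ uniformly in $x$. Cauchy--Schwarz then yields
\[
\left| \int G^0(x,\xi,z)V(\xi)G^0(\xi,y,z)\,d\xi\right| \le \|V\|_\infty\,\|G^0(x,\cdot,z)\|_2\,\|G^0(\cdot,y,z)\|_2 \lesssim \frac{\|V\|_\infty}{\Im k},
\]
uniformly in $x,y$.

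For the second term I interpret the double integral as a triple $L^2$ pairing: $\langle G^0(x,\cdot,z),\, V\,R_z\,V\,G^0(\cdot,y,z)\rangle$, which is legitimate because $V\in L^\infty$ and $G^0(\cdot,y,z)\in L^2$. Thus
\[
\bigl|(G^0 V G V G^0)(x,y)\bigr| \le \|V\|_\infty^2\,\|G^0(x,\cdot,z)\|_2\,\|R_z\|_{2,2}\,\|G^0(\cdot,y,z)\|_2.
\]
The spectral theorem gives $\|R_z\|_{2,2}\le \dist(z,\sigma(H))^{-1}$, and for $k=\Re k+i\,\Im k\in\Pi(a,b,h)$ with $0\notin[a,b]$ one has $|\Im k^2| = 2|\Re k|\,\Im k \ge 2\min(|a|,|b|)\,\Im k$, so $\|R_z\|_{2,2}\lesssim C(a,b)/\Im k$. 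Combined with the $L^2$ estimate for $G^0$ this produces a bound of order $C(a,b)\|V\|_\infty^2/\Im^2 k$.

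Summing the two contributions gives the required inequality \eqref{ce}, and since the bound is independent of $x,y$ we obtain the stated uniform estimate $C_1(z,\|V\|_\infty)$. There is no serious obstacle; the only subtlety is making sure the iterated identity is justified and that the resolvent norm is controlled by $\Im k$ rather than the more naive $\Im(k^2)$ — handled by the hypothesis $0\notin[a,b]$, which forces $\Im(k^2)\gtrsim \Im k$.
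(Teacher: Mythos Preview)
Your proof is correct and uses the same ingredients as the paper: the second resolvent identity, the explicit bound $\|G^0(x,\cdot,k^2)\|_2\lesssim(\Im k)^{-1/2}$, the operator bound $\|R_z\|_{2,2}\le C(a,b)/\Im k$ on $\Pi(a,b,h)$, and Cauchy--Schwarz. The only organizational difference is that you iterate the resolvent identity twice to isolate the two terms $G^0VG^0$ and $G^0VR_zVG^0$ directly, whereas the paper iterates once ($G-G^0=-R_zVR_z^0$), first extracts the $L^2$ estimate $\|G(\cdot,y,z)-G^0(\cdot,y,z)\|_2\le \|V\|_\infty\|R_z\|_{2,2}(\Im k)^{-1/2}$ (labeled \eqref{leret}), and then feeds it back into Cauchy--Schwarz via the symmetry $G(x,\xi,z)=G(\xi,x,z)$; this intermediate $L^2$ bound is reused later in the paper, which is the small advantage of that route.
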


\begin{proof}
Write
\[
G(x,y,z)=G^0(x,y,z)-\int_{\mathbb{R}^3} G(x,\xi,z)V(\xi)G^0(\xi,y,z)d\xi\,.
\]
Since
$
\|G^0(\cdot,y,k^2)\|_2\lesssim (\Im k)^{-1/2}
$
and $\|R_z\|_{2,2}<C_2(z)$, we get
\begin{equation}\label{leret}
\|G(\cdot,y,z)-G^0(\cdot,y,z)\|_2\leq \frac{\|V\|_\infty C_2(z)}{\sqrt{\Im k}}\,.
\end{equation}
Now Cauchy-Schwarz inequality and symmetry of the kernel yield
\[
\left|\int_{\mathbb{R}^3} G(x,\xi,z)V(\xi)G^0(\xi,y,z)d\xi\right|\le \|V\|_\infty \|G^0(\cdot,y,z)\|_2 \|G(x,\cdot,z)\|_2\lesssim  \frac{\|V\|_\infty}{{\Im k}}\Bigl(1+C_2(z)\|V\|_\infty\Bigr)\,.
\]
More careful analysis of the constant  gives \eqref{ce} because
\begin{equation}\label{cac}
C_{2}(z)\le \frac{C(a,b,h)}{\Im k}
\end{equation}
if $k\in \Pi(a,b,h)$.
\end{proof}
The previous proof immediately yields the following lemma.
\begin{lemma}\label{alemma}   Assume that $V$ satisfies \eqref{main-assump}.           If $z\notin \sigma(H)$, then
\[
\lim_{\rho\to\infty}\sup_{x,y\in \mathbb{R}^3}|G_{(\rho)}(x,y,z)-G(x,y,z)|=0,
\]
\begin{equation}\label{dvaff}
\lim_{\rho\to\infty}\|G_{(\rho)}(x,y,z)-G(x,y,z)\|_{\cal{H}^2(r_1<|x-y|<r_2)}=0
\end{equation}
for all $r_{1(2)}: 0<r_1<r_2$.
\end{lemma}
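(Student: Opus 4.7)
The plan is to mimic the argument of Lemma \ref{pop}, starting from the resolvent identity
\[
R_z - {R_{(\rho)}}_z = -R_z V^{(\rho)} {R_{(\rho)}}_z,
\]
which at the level of kernels gives
\[
G(x,y,z) - G_{(\rho)}(x,y,z) = -\int_{\mathbb{R}^3} G(x,\xi,z)\, V^{(\rho)}(\xi)\, G_{(\rho)}(\xi,y,z)\, d\xi.
\]
First I would observe that $V^{(\rho)} = (1-\alpha_\rho)\,{\rm div}\, Q - (\nabla\alpha_\rho)\cdot Q$ is supported in $\{|x|>\rho\}$; since both $V$ and $Q$ have vanishing tails by \eqref{main-assump} (the $\ell^2$ condition forces the sup on unit annuli to tend to zero) and $|\nabla \alpha_\rho|$ is bounded independently of $\rho$, one gets $\|V^{(\rho)}\|_\infty \to 0$ as $\rho\to\infty$. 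Next, Lemma \ref{pop} applied to both $H$ and $H_{(\rho)}$, combined with Lemma \ref{ls} (which gives $\limsup_{\rho\to\infty}\|{R_{(\rho)}}_z\|_{2,2}<\infty$ for $z\notin\sigma(H)$), furnishes uniform-in-$(x,y,\rho)$ bounds on $\|G(x,\cdot,z)\|_2$ and $\|G_{(\rho)}(\cdot,y,z)\|_2$. Applying Cauchy--Schwarz to the integral above and combining these three facts establishes the first claim.

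For the $\cal{H}^2$ statement I would use the fact that for $x \neq y$ both Green's functions satisfy the corresponding stationary equation in $x$, so
\[
(-\Delta_x + V(x) - z)\bigl(G(x,y,z)-G_{(\rho)}(x,y,z)\bigr) = -V^{(\rho)}(x)\, G_{(\rho)}(x,y,z)
\]
on the annulus $\{x : r_1 < |x-y| < r_2\}$, which stays away from the singularity at $x=y$. Interior elliptic $\cal H^2$ estimates for this second-order operator with bounded coefficients yield
\[
\|G-G_{(\rho)}\|_{\cal{H}^2(r_1<|x-y|<r_2)} \lesssim \|G-G_{(\rho)}\|_{L^2(r_1'<|x-y|<r_2')} + \|V^{(\rho)} G_{(\rho)}\|_{L^2(r_1'<|x-y|<r_2')}
\]
for any $0<r_1'<r_1<r_2<r_2'$. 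The first term on the right tends to zero by the uniform pointwise convergence already proved, applied on a bounded region. The second term tends to zero since $\|V^{(\rho)}\|_\infty \to 0$ and $G_{(\rho)}$ is uniformly bounded on $\{|x-y|>r_1'\}$ (by $|G^0|\lesssim 1/|x-y|$ together with Lemma \ref{pop} applied to $H_{(\rho)}$, whose constants are controlled by $\|V_{(\rho)}\|\le \|V\|$).

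The main technical point to watch is securing all the uniformity in $\rho$: one needs that the operator norm $\|{R_{(\rho)}}_z\|_{2,2}$ does not blow up as $\rho\to\infty$, which follows from the Hausdorff convergence of the spectra (the argument used in Lemma \ref{ls}), and that the constants in the interior elliptic estimate depend only on the $L^\infty$ bound of the coefficient $V$ (hence are uniform in $\rho$ because $\|V_{(\rho)}\|_\infty\le\|V\|_\infty$). The rest is routine.
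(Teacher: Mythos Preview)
Your proposal is correct and follows essentially the same approach as the paper's own proof: resolvent identity plus Cauchy--Schwarz and $\|V^{(\rho)}\|_\infty\to 0$ for the uniform convergence, then the equation for $\delta u = G-G_{(\rho)}$ combined with interior elliptic regularity for the $\cal{H}^2$ statement. The only cosmetic difference is that the paper first observes $\|\Delta(\delta u)\|_{L^\infty}\to 0$ on the annulus (directly from the equation, since $\delta u\to 0$ uniformly and $V^{(\rho)}G_{(\rho)}\to 0$) and then invokes interior regularity, whereas you phrase the same step via the standard $\cal H^2$ interior estimate with right-hand side $-V^{(\rho)}G_{(\rho)}$; this is the same mechanism.
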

\begin{proof}
Fix $z\notin \sigma(H)$. We can take $\rho$ large enough to have $z\notin \sigma(H_{(\rho)})$. The second resolvent identity gives
\[
G_{(\rho)}(x,y,z)-G(x,y,z)=\int_{\mathbb{R}^3} G_{(\rho)}(x,\xi,z) V^{(\rho)}(\xi) G(\xi,y,z)d\xi\,.
\]
Now, Cauchy-Schwarz inequality along with \eqref{leret} provide
\[
|G_{(\rho)}(x,y,z)-G(x,y,z)|\le \|V^{(\rho)}\|_\infty \|G_{(\rho)}(x,\cdot,z)\|_{2}\|G(\cdot,y,z)\|_2<C(z,\|V\|_\infty) \|V^{(\rho)}\|_\infty\,.
\]
Since $\|V^{(\rho)}\|_\infty\to 0$, we have the first statement of the lemma.

To prove \eqref{dvaff}, denote $u(x)\dd G(x,y,z)$, $u_{(\rho)}(x)\dd G_{(\rho)}(x,y,z)$ and write
\[
-\Delta u+Vu=zu,  \quad -\Delta u_{(\rho)}+V_{(\rho)}u_{(\rho)}=zu_{(\rho)}
\]
for $x: \, |x-y|>0$. Taking $\delta u=u-u_{(\rho)}$, we have
\[
-\Delta (\delta u)+V\delta u=z\delta u+(V_{(\rho)}-V)u_{(\rho)}\,.
\]
Since $\lim_{\rho\to\infty}\|\delta u\|_{L^\infty(r_1<|x-y|<r_2)}=0$ uniformly, we have $\lim_{\rho\to\infty}\|\Delta (\delta u)\|_{L^\infty(r_1<|x-y|<r_2)}=0$ uniformly. The Interior Regularity Theorem for elliptic equations  (\cite{evans}, p. 309) then implies \eqref{dvaff}.
\end{proof}

\begin{lemma}\label{poliu}
If $V\in L^\infty(\mathbb{R}^3)$ then
\begin{equation}\label{nn1}
\frac{1}{r^2}\int_{|x|=r}|G(x,0,z)|^2d\sigma_x\lesssim \frac{(1+|z|+\|V\|_\infty)^2}{r^2}\int_{r-1<|x|<r+1}|G(x,0,z)|^2dx
\end{equation}
and
\begin{equation}\label{nn2p2}
\frac{1}{r^2}\int_{|x|=r}|\partial_rG(x,0,z)|^2d\sigma_x\lesssim \frac{(1+|z|+\|V\|_\infty)^2}{r^2}\int_{r-1<|x|<r+1}|G(x,0,z)|^2dx
\end{equation}
uniformly in $r>2$ and $z\notin \sigma(H)$.
\end{lemma}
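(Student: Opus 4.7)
The plan is to reduce both inequalities to a combination of a Caccioppoli-type estimate, interior $\cal{H}^2$ regularity, and a trace bound on spheres, all applied on a thin annulus of width $\sim 1$.

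Set $u(x) \dd G(x,0,z)$. For $r>2$ the annulus $A_r \dd \{r-1<|x|<r+1\}$ is bounded away from the origin, so by lemma \ref{dobro1} together with standard interior elliptic regularity (using that $u$ solves $-\Delta u+(V-z)u=0$ pointwise with $V-z\in L^\infty$), $u$ lies in $\cal{H}^2_{\rm loc}(\R^3\setminus\{0\})$. First I would establish a Caccioppoli estimate: multiplying the equation by $\chi^2 \overline u$ for a smooth radial cutoff $\chi$ equal to $1$ on a shrunk annulus $A_r'\subset A_r$ containing $S_r(0)$ with $|\nabla\chi|\lesssim 1$, integrating by parts and using Young's inequality yields
\[
\int_{A_r'} |\nabla u|^2\,dx \;\lesssim\; (1+|z|+\|V\|_\infty)\int_{A_r}|u|^2\,dx\,.
\]
Combined with $\|\Delta u\|_{L^2(A_r')}\lesssim (1+|z|+\|V\|_\infty)\|u\|_{L^2(A_r)}$, which is immediate from the equation, standard interior $H^2$ regularity on a still smaller annulus $A_r''$ containing $S_r(0)$ gives
\[
\|u\|_{\cal{H}^2(A_r'')}^2 \;\lesssim\; (1+|z|+\|V\|_\infty)^2 \int_{A_r}|u|^2\,dx\,.
\]

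The second ingredient is an elementary trace estimate. Writing $|u(r\omega)|^2-|u(s\omega)|^2 = 2\Re\int_s^r \overline u\,\partial_t u(t\omega)\,dt$ for $\omega\in\mathbb{S}^2$, averaging over $s$ in an interval of length $\sim 1$ inside $A_r''$, converting the averaged integrals back to volume integrals, and applying Cauchy--Schwarz gives
\[
\int_{S_r(0)}|u|^2\,d\sigma_x \;\lesssim\; \|u\|_{\cal{H}^1(A_r'')}^2\,,
\]
with the $r^2$ surface-area factor absorbed into the change of variables. Applying the same argument componentwise to $\nabla u$ produces
\[
\int_{S_r(0)}|\nabla u|^2\,d\sigma_x \;\lesssim\; \|u\|_{\cal{H}^2(A_r'')}^2\,.
\]
Since $|\partial_r u|\le |\nabla u|$ on $S_r(0)$, chaining these bounds with the $\cal{H}^2$ estimate and dividing by $r^2$ yields \eqref{nn1} and \eqref{nn2p2}.

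The main point to monitor is that the implicit constants in the cutoff, Caccioppoli, elliptic, and trace steps are uniform in $r>2$. This is automatic because $A_r$ has fixed width and lies away from the singularity at the origin, so each of these estimates is a translated instance of the same fixed-size problem, and the $r^2$ on both sides of the lemma is exactly the surface-area normalization matching the volume element on $A_r$. The power $(1+|z|+\|V\|_\infty)^2$ is the natural one: one factor comes from Caccioppoli (to recover $\nabla u$) and a second from using $\Delta u=(V-z)u$ inside the interior $H^2$ estimate, which is needed both to handle $\partial_r u$ in \eqref{nn2p2} and to upgrade $\cal{H}^1$ to $\cal{H}^2$ when deriving the trace bound for $|\nabla u|$.
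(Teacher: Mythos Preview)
Your proposal is correct and follows essentially the same strategy as the paper: pass from $L^2$ on the thin annulus to $\cal{H}^2$ via the equation and interior elliptic regularity, then restrict to the sphere by a trace bound. The only organizational difference is that the paper covers $S_r(0)$ by $N\sim r^2$ unit balls $B_{0.9}(x_j)$ with $|x_j|=r$, applies interior regularity and the trace theorem on each ball, and sums with bounded overlap, whereas you work directly on nested annuli with radial cutoffs; the ball covering makes the $r$-independence of all constants immediate, while your remark that ``each of these estimates is a translated instance of the same fixed-size problem'' is really shorthand for exactly that covering argument (or, equivalently, for applying the global identity $\|D^2(\chi u)\|_{L^2(\mathbb{R}^3)}=\|\Delta(\chi u)\|_{L^2(\mathbb{R}^3)}$ after cutting off).
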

\begin{proof}
Indeed, we notice that for each ball $B_\rho(\xi)$ that does not contain $0$ we have
\[
\|\Delta G(\cdot,0,z)\|_{L^2(B_\rho(\xi))}\le (|z|+\|V\|_\infty)\|G(\cdot,0,z)\|_{L^2(B_\rho(\xi))}
\]
as follows from the equation
\begin{equation}\label{nn3}
-\Delta_x G(x,0,z)+V(x)G(x,0,z)=zG(x,0,z), \quad x\neq 0\,.
\end{equation}
Now, it is sufficient to consider balls $\{B_{0.9}(x_j), j=1,\ldots,N\}$ such that $|x_j|=r$ and $S_{r}(0)\subset \cup_j B_{0.9}(x_j)$. We can take $N\sim r^2$. In each ball $B_1(x_j)$, $G(x,0,z)$ solves an elliptic equation
\[
\Delta G(x,0,z)=(V-z)G(x,0,z)\,.
\]
Therefore, by Interior Regularity Theorem, we have
\[
\|G(x,0,z)\|_{\cal{H}^2(B_{0.9}(x_j))}\lesssim (1+|z|+\|V\|_\infty)\|G(x,0,z)\|_{L^2(B_{1}(x_j))}\,.
\]
Then, we can use the theorem about restricting the $\cal{H}^1(B_j)$ functions to  hypersurfaces (in $L^2(B_j\cap S_r(0))$ norm in our case, see \cite{evans}, p.258) to write
\[
\int_{|x|=r}|G(x,0,z)|^2d\sigma_x\le \sum_{j}\int_{S_r\cap B_{0.9}(x_j)}|G(x,0,z)|^2d\sigma_x\lesssim \sum_{j} \|G(x,0,z)\|^2_{\cal{H}^2(B_{0.9}(x_j))}\lesssim
\]
\[
 (1+|z|+\|V\|_\infty)^2      \sum_{j} \|G(x,0,z)\|^2_{L^2(B_{1}(x_j))}      \lesssim      (1+|z|+\|V\|_\infty)^2        \int_{r-1<|x|<r+1}|G(x,0,z)|^2dx\,.
\]
Since $\|\nabla G\|_{\cal{H}^1(B_{0.9}(x_j))}\lesssim \| G\|_{\cal{H}^2(B_{0.9}(x_j))}$, we can write analogous bounds for $\nabla G$. This will give \eqref{nn2p2}.
\end{proof}
As a corollary we immediately have the following lemma.
\begin{lemma}\label{lm2}
If $V\in L^\infty(\mathbb{R}^3)$ and $k\in \Pi(a,b,h)$, then

\begin{equation}\label{nn1-7}
\frac{1}{r^2}\int_{|x|=r}|G(x,0,k^2)|^2d\sigma_x\le C(a,b,h) \frac{(1+\|V\|_\infty)^2}{r^2}\left(\frac{1}{\Im k}+\frac{\|V\|_\infty^2}{(\Im k)^3}\right)
\end{equation}
and
\begin{equation}\label{nn2}
\frac{1}{r^2}\int_{|x|=r}|\partial_rG(x,0,k^2)|^2d\sigma_x\le  C(a,b,h) \frac{(1+\|V\|_\infty)^2}{r^2}\left(\frac{1}{\Im k}+\frac{\|V\|_\infty^2}{(\Im k)^3}\right)
\end{equation}
uniformly in $r>2$.

\end{lemma}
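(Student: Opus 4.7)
The plan is to combine the reverse Poincaré / trace-type estimates from Lemma \ref{poliu} with global $L^2$-bounds on the resolvent kernel. The first step is to apply Lemma \ref{poliu} directly: since $k \in \Pi(a,b,h)$ implies $|z|=|k|^2 \le b^2+h^2$, the factor $(1+|z|+\|V\|_\infty)^2$ appearing in estimates \eqref{nn1} and \eqref{nn2p2} is absorbed into $C(a,b,h)(1+\|V\|_\infty)^2$. This reduces both surface integrals to the problem of bounding the annular $L^2$ mass
\[
\int_{r-1<|x|<r+1} |G(x,0,k^2)|^2\,dx
\]
by a constant (in $r$) multiple of $1/\Im k + \|V\|_\infty^2/(\Im k)^3$.

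Next I would simply bound this annular integral by the full $L^2$-norm $\|G(\cdot,0,k^2)\|_2^2$, and split $G = G^0 + (G - G^0)$. The free part contributes the standard computation
\[
\|G^0(\cdot,0,k^2)\|_2^2 = \frac{1}{8\pi\,\Im k},
\]
accounting for the first term. For the perturbation, I would recycle inequality \eqref{leret} from the proof of Lemma \ref{pop}, which reads
\[
\|G(\cdot,0,k^2) - G^0(\cdot,0,k^2)\|_2 \le \frac{\|V\|_\infty\,C_2(z)}{\sqrt{\Im k}},
\]
together with the resolvent norm bound \eqref{cac}, namely $C_2(z) \le C(a,b,h)/\Im k$ on $\Pi(a,b,h)$. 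Squaring yields a control of order $C(a,b,h)\|V\|_\infty^2/(\Im k)^3$, and the triangle inequality then produces exactly the claimed right-hand side of \eqref{nn1-7}.

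The derivative estimate \eqref{nn2} is obtained by the identical argument with \eqref{nn2p2} in place of \eqref{nn1}: the right-hand sides of \eqref{nn1} and \eqref{nn2p2} are the same annular volume integral, so no additional work is needed. There is no real obstacle here; the lemma is essentially a bookkeeping step that packages Lemmas \ref{pop} and \ref{poliu} into the explicit $\Im k$-dependence required for the subsequent analysis of the amplitude $A(x,y,k)$ in Theorems \ref{t1}--\ref{t2}. The only minor point of care is to make sure the $|z|$ factor from Lemma \ref{poliu} is controlled by the parameters $a,b,h$ so that all $k$-dependence gets funnelled through $\Im k$ alone.
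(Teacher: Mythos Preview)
Your proposal is correct and follows essentially the same approach as the paper, whose proof is the single line ``The proof follows from the previous lemma, \eqref{leret}, and \eqref{cac}.'' You have simply unpacked that sentence: apply Lemma~\ref{poliu}, absorb the $|z|$-dependence into $C(a,b,h)$, bound the annular integral by the global $L^2$-norm, and split $G=G^0+(G-G^0)$ using $\|G^0(\cdot,0,k^2)\|_2\lesssim(\Im k)^{-1/2}$ together with \eqref{leret} and \eqref{cac}.
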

\begin{proof}The proof follows from the previous lemma, \eqref{leret}, and \eqref{cac}.
\end{proof}
\vspace{0.5cm}

\subsection{Study of auxiliary operators}

We start with simple technical observation.
\begin{lemma}\label{tra}
Let $\{a_l\}, l=0,\ldots,j-1, \,a_l\ge 0$ are given and $x\in \mathbb{R}^+$ satisfies
\[
x^j\le \sum_{l=0}^{j-1}a_lx^l\,.
\]
Then,
\begin{equation}\label{sii}
x\le j \sum_{l=0}^{j-1} a_l^{1/(j-l)}\,.
\end{equation}
\end{lemma}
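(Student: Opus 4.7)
The plan is to use a pigeonhole-style argument: if the sum of $j$ non-negative terms dominates $x^j$, then at least one term must carry a $1/j$ share of the total.

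More precisely, I would first dispose of the trivial case $x=0$, and then assume $x>0$. If every term on the right-hand side satisfied $a_l x^l < x^j/j$, summing over $l=0,\ldots,j-1$ would give $\sum_{l=0}^{j-1} a_l x^l < x^j$, contradicting the hypothesis. Hence there exists some index $l_0 \in \{0,\ldots,j-1\}$ for which
\[
a_{l_0} x^{l_0} \ge \frac{x^j}{j}.
\]
Dividing by $x^{l_0}$ (which is positive) yields $x^{j-l_0} \le j\, a_{l_0}$, and therefore
\[
x \le (j\, a_{l_0})^{1/(j-l_0)} = j^{1/(j-l_0)}\, a_{l_0}^{1/(j-l_0)}.
\]

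Since $j - l_0 \ge 1$ and $j \ge 1$, we have $j^{1/(j-l_0)} \le j$. Thus $x \le j\, a_{l_0}^{1/(j-l_0)}$, and bounding this one term by the full sum (all summands being non-negative) gives \eqref{sii}. No step is a real obstacle here; the only mild subtlety is remembering to separate the $x=0$ case and to use $j^{1/(j-l_0)} \le j$ to convert the exponent into the clean prefactor $j$ that appears in the statement.
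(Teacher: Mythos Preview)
Your proof is correct and follows essentially the same route as the paper's: both arguments note that one term $a_{l_0}x^{l_0}$ must be at least $x^j/j$, solve for $x$, and then bound $j^{1/(j-l_0)}$ by $j$. Your exposition is in fact a bit more explicit than the paper's.
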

\begin{proof} We have
\[
x^j\le \sum_{l=0}^{j-1}a_lx^l\le j \max_l \{a_lx^l\}, \quad {\rm so}\quad x\le j^{1/j} \max_l \{(a_l)^{1/j}x^{l/j}\}\,,
\]
which implies the lemma.
\end{proof}

 We introduce the  weight
\begin{equation}\label{weighte}
w(x)\dd \left\{
\begin{array}{cc}
1, & |x|>1\\
|x|^{-2},& |x|<1
\end{array}\right.
\end{equation}
and say that $f\in L^2_w(\mathbb{R}^3)$ if
\[
\|f\|_{2,w}\dd \left(\int_{\mathbb{R}^3} |f|^2wdx\right)^{1/2}<\infty\,.
\]

We will start with three model equations. In all of them, we assume that $k\in \mathbb{C}^+$.
\begin{equation}\label{me1}
-\Delta u_1-k^2 u_1=\frac{e^{ik|x|}}{4\pi|x|}   \Bigl(|x|^2{\rm div} \Bigl(\frac{f_1}{|x|}\Bigr)\Bigr)\,,
\end{equation}
where $f_1\in L^2(\mathbb{R}^3)$ and the both sides are considered as tempered distributions.

The second one is
\begin{equation}\label{me2}
-\Delta u_2-k^2 u_2=\frac{e^{ik|x|}}{4\pi|x|}  f_2\,,
\end{equation}
where $f_2\in L^2_w(\mathbb{R}^3)$.

The third one is
\begin{equation}\label{me7}
-\Delta u_3-k^2 u_3=\frac{e^{ik|x|}}{4\pi|x|}  \Bigl(|x|Vf_3\Bigr)
\end{equation}
and $\|V\|_{\ell^2(\mathbb{Z}^+),L^\infty}<\infty, f_3\in L^2(\mathbb{R}^3)$.

In each case, the solutions $u_{1(2,3)}$ will be understood by applying $(-\Delta-k^2)^{-1}$ to the right hand side. We can write
\[
\frac{e^{ik|x|}}{|x|}   \Bigl(|x|^2{\rm div} \Bigl(\frac{f_1}{|x|}\Bigr)\Bigr)=e^{ik|x|}{\rm div} \,f_1-e^{ik|x|}\frac{x}{|x|^2}f_1
 \]
 and
 \begin{equation}\label{plllp}
 u_1=\int_{\mathbb{R}^3} \frac{e^{ik|x-y|}}{4\pi |x-y|}e^{ik|y|}{\rm div} \,f_1dy-\int_{|y|<r_1/2} \frac{e^{ik|x-y|}}{4\pi |x-y|} e^{ik|y|}\frac{y}{|y|^2}f_1   dy-\int_{|y|>r_1/2} \frac{e^{ik|x-y|}}{4\pi |x-y|} e^{ik|y|}\frac{y}{|y|^2}f_1   dy\,.
 \end{equation}
 The first integral is understood as convergent integral if we write it as
 \begin{equation}\label{losio}
- \int_{\mathbb{R}^3} \frac{e^{ik|x-y|}}{4\pi |x-y|}(\nabla_y e^{ik|y|})f_1dy-\int_{\mathbb{R}^3}\nabla_y\left( \frac{e^{ik|x-y|}}{4\pi |x-y|}\right) e^{ik|y|}f_1dy\,.
 \end{equation}
The second integral in \eqref{plllp} converges absolutely since $f_1\in L^2(\mathbb{R}^3)$. It represents a  smooth function in $x\in \{x: r_1<|x|<r_2\}$. 
Thus, $u_1\in \cal{H}^1(r_1<|x|<r_2)$ for any $0<r_1<r_2$. The theorem about restriction to  hypersurfaces implies that $u_1(r\sigma)\in L^2(\mathbb{S}^2)$ for every $r>0$. Here, we have written $x=r\sigma, \sigma\in \mathbb{S}^2$ in spherical coordinates. The formulas \eqref{plllp} and \eqref{losio} show that if $\lim_{n\to\infty}\|f_1^{(n)}-f_1\|_2=0$ and $f_1^{(n)}\in C^\infty_c(\mathbb{R}^3)$, then
$\|u_1^{(n)}-u_1\|_{\cal{H}^1(r_1<|x|<r_2)}\to 0$ as $n\to\infty$. This observation makes it possible to always assume that $f_1\in C^\infty_c(\mathbb{R}^3)$ when obtaining the estimates for $u_1$ in $\cal{H}^1(r_1<|x|<r_2)$, the space we will be interested in later on. Then,  equation \eqref{me1} is understood in the classical sense. The same reasoning can be applied to $u_{2(3)}$.

We introduce
\[
\mu_{1(2,3)}\dd \frac{u_{1(2,3)}}{G^0(x,0,k^2)}    =(4\pi)|x|e^{-ik|x|}u_{1(2,3)}\,.
\]
Then,
\begin{equation}\label{main-eq00}
-\Delta\mu_1 -2\left(ik-\frac{1}{4\pi|x|}\right)\partial_r \mu_1=|x|^2{\rm div} \Bigl(\frac{f_1}{|x|}\Bigr)       , \quad x\neq 0\,,
\end{equation}\smallskip
\begin{equation}\label{main-eq01}
-\Delta\mu_2 -2\left(ik-\frac{1}{4\pi|x|}\right)\partial_r \mu_2=f_2    , \quad x\neq 0\,,
\end{equation}\smallskip
\begin{equation}\label{main-eq02}
-\Delta\mu_3 -2\left(ik-\frac{1}{4\pi|x|}\right)\partial_r \mu_3=|x|Vf_3     , \quad x\neq 0\,.
\end{equation}\smallskip
Having explicit expression for $(-\Delta-k^2)^{-1}$, we can write representations
\begin{equation}\label{fo1}
\mu_1=B^{(1)}_r(f_1)=r \int_{\mathbb{R}^3}\left(\frac{e^{ik(-r+|r\sigma-y|+|y|}}{4\pi|r\sigma-y| |y|}\right)  \Bigl(|y|^2{\rm div} \Bigl(\frac{f_1}{|y|}\Bigr)\Bigr) dy\,,
\end{equation}

\begin{equation}\label{fo2}
\mu_2=B^{(2)}_r(f_2)=r \int_{\mathbb{R}^3}\left(\frac{e^{ik(-r+|r\sigma-y|+|y|}}{4\pi|r\sigma-y| |y|}\right) f_2(y) dy\,,
\end{equation}

\begin{equation}\label{fo3}
\mu_3=B^{(3)}_r(V,f_3)=r \int_{\mathbb{R}^3}\left(\frac{e^{ik(-r+|r\sigma-y|+|y|}}{4\pi|r\sigma-y| |y|}\right) |y|f_3(y)V(y) dy\,,
\end{equation}
thus defining operators $B_r^{(j)}, j=1,2,3$. In the definition of $B_r^{(1)}$, we can again integrate by parts to get convergent integral or assume that $f_1\in C_c^\infty(\mathbb{R}^3)$.

We also need to define the fourth operator
\[
B^{(4)}_r(f_4)=r \int_{\mathbb{R}^3}\left(\frac{e^{ik(-r+|r\sigma-y|+|y|}}{4\pi|r\sigma-y| |y|}\right) |y|{\rm div}\,f_4 dy\,,
\]
where $f_4\in L^2_w(\mathbb{R}^3)$. Notice that
\[
B^{(1)}_r(f_1)=B^{(4)}_r(f_1)-B^{(2)}_{r}\left(f_1\cdot \frac{y}{|y|}\right)
\]
or
\begin{equation}\label{chetvert}
B^{(4)}_r(f_1)=B^{(1)}_r(f_1)+B^{(2)}_{r}\left(f_1\cdot  \frac{y}{|y|}\right)\,.
\end{equation}
 Similarly to $B_r^{(1)}$, integration by parts defines convergent integral 
\begin{equation}\label{lk1}
B^{(4)}_r(f_4) =-r\int_{\mathbb{R}^3}\nabla_y \left(\frac{e^{ik(-r+|r\sigma-y|+|y|}}{4\pi|r\sigma-y| }\right)   f_4dy
\end{equation}
and this is how we will understand $B^{(4)}_r$ for $f_4\in L^2_w(\mathbb{R}^3)$.

The following lemma will be important later in the text.
\begin{lemma}
For every $k\in \mathbb{C}^+$, we have
\begin{equation}\label{eee1}
\sup_{r>0}\|\mu_1(r,\sigma)\|_{L^2(\mathbb{S}^2)}\lesssim \left( \frac{1}{|k|^2}+\frac{1}{(\Im k)^2}+\frac{|k|}{(\Im k)^{3/2}}+\frac{|k|}{(\Im k)^2} +\frac{1}{\Im k}   \right)^{1/2}\|f_1\|_2\,,
\end{equation}
\begin{equation}\label{eee2}
\sup_{r>0}\|\mu_2(r,\sigma)\|_{L^2(\mathbb{S}^2)}\lesssim \left(\frac{1}{|k|^2}+\frac{1}{|k|^2(\Im k)^2}+\frac{1}{|k|(\Im k)^{1/2}}+\frac{1}{|k|(\Im k)^2}                 \right) ^{1/2}\|f_2\|_{L^2_w}\,,
\end{equation}
\begin{equation}\label{eee3}
\sup_{r>0}\|\mu_3(r,\sigma)\|_{L^2(\mathbb{S}^2)}\lesssim  \left(\frac{1}{|k|^2}+\frac{1}{|k|^2(\Im k)^2}+\frac{1}{|k|(\Im k)^{1/2}}+\frac{1}{|k|(\Im k)^2}                 \right)^{1/2} \|V\|_{\ell^2(\mathbb{Z}^+),L^\infty}\|f_3\|_2\,.
\end{equation}
\end{lemma}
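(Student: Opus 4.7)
All three estimates will rest on the non-negativity of the phase $\theta(y;r,\sigma):=-r+|r\sigma-y|+|y|\ge 0$ (with equality iff $y\in[0,r\sigma]$), together with two consequences: $|e^{ik\theta}|=e^{-\Im k\,\theta}$ and $\theta\ge 2(|y|-r)$ when $|y|>r$. The basic angular computation, obtained from the substitution $\xi=|r\sigma-y|$ after aligning the axis with $\hat y$, is
\[
\int_{\mathbb{S}^2}\frac{r^2 e^{-2\Im k\theta(y;r,\sigma)}}{|r\sigma-y|^2}\,d\sigma
 = \frac{2\pi r}{|y|}\int_{|r-|y||}^{r+|y|}\frac{e^{-2\Im k(\xi+|y|-r)}}{\xi}\,d\xi,
\]
whose logarithmic singularity at $|y|=r$ is smoothed by the exponential factor. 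Throughout, I would split $\mathbb{R}^3$ into the zones $|y|<\min(1,r/2)$, $\max(1,2r)<|y|$, and the transitional annulus $r/2<|y|<2r$.

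For \eqref{eee2} I would apply Cauchy--Schwarz in $y$ to \eqref{fo2} with weight $w$, integrate over $\sigma\in\mathbb{S}^2$, swap by Fubini, and evaluate the inner integral via the displayed identity. The inner and outer zones produce the $|k|^{-2}$ and $|k|^{-2}(\Im k)^{-2}$ contributions (the outer via $\theta\ge 2(|y|-r)$), while the transitional annulus yields the $|k|^{-1}(\Im k)^{-1/2}$ and $|k|^{-1}(\Im k)^{-2}$ terms through the quadratic vanishing of $\theta$ in the angle between $\sigma$ and $\hat y$. Estimate \eqref{eee3} then follows the same pattern: the extra $|y|$ in \eqref{fo3} cancels one $|y|^{-1}$ from the kernel; a dyadic decomposition of $\mathbb{R}^3$ into shells $\{n<|y|<n+1\}$ combined with Cauchy--Schwarz in $n$ allows me to exchange the pointwise bound on $V$ for $\|V\|_{\ell^2(\mathbb{Z}^+),L^\infty}$, and Cauchy--Schwarz in $y$ inside each shell against $|f_3|$ reduces the problem to exactly the integral analysed for $\mu_2$ (with weight $1$ in place of $w$).

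For \eqref{eee1} I would use the pointwise identity $|y|^2\,{\rm div}(f_1/|y|)=|y|\,{\rm div}\,f_1-f_1\cdot\hat y$ to write $\mu_1=B^{(4)}_r(f_1)-B^{(2)}_r(f_1\cdot\hat y)$. I would treat $B^{(4)}_r(f_1)$ via the integrated-by-parts representation~\eqref{lk1} together with the decomposition
\[
\nabla_y\!\left(\frac{e^{ik\theta}}{4\pi|r\sigma-y|}\right)
 = \frac{ik\,e^{ik\theta}(\hat y-\widehat{r\sigma-y})}{4\pi|r\sigma-y|}
 +\frac{e^{ik\theta}\,\widehat{r\sigma-y}}{4\pi|r\sigma-y|^2}.
\]
The first summand is a $B^{(2)}$-type kernel with one fewer $|y|^{-1}$ and an extra factor $|k|$; Cauchy--Schwarz in $y$ without weight, followed by the three-zone analysis above, gives the $|k|/(\Im k)^{3/2}$ and $|k|/(\Im k)^{2}$ contributions. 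The second summand has the more singular kernel $|r\sigma-y|^{-2}$, but a variant of the displayed angular identity (with one further power of $\xi$ in the denominator) still has an integrable singularity transverse to the segment and produces the $1/\Im k$ term. Finally, for $B^{(2)}_r(f_1\cdot\hat y)$, since $f_1\cdot\hat y\in L^2$ but not in $L^2_w$, I would apply Cauchy--Schwarz in $y$ without weight: the angular integral stays bounded as $|y|\to 0$ (so $|y|^{-2}$ is $L^1_{\mathrm{loc}}$ in $\mathbb{R}^3$), and decays at infinity by $\theta\ge 2(|y|-r)$, producing the $|k|^{-2}$ and $(\Im k)^{-2}$ terms.

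The main technical obstacle will be the bookkeeping in the transitional annulus $|y|\sim r$: one needs the estimate $|\{\sigma\in\mathbb{S}^2:\theta(y;r,\sigma)<\tau\}|\lesssim \tau(r-|y|)/(r|y|)$ for the quadratic vanishing of $\theta$ transverse to $[0,r\sigma]$, and a careful combination with the $|y|$-integration to extract exactly the $(\Im k)^{-1/2}$ gain and to cancel the apparent $\log r$ contributions that arise from loose bounds, so that the final estimate is uniform in $r$ as required.
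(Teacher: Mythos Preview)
Your approach is genuinely different from the paper's, which never works with the explicit kernels at all. The paper instead applies a multiplier method directly to the PDE \eqref{main-eq00}: multiply by $\bar\mu_1/|x|^2$, integrate over an annulus, integrate by parts, divide by $ik$ and take real parts. This produces a closed system of inequalities for the auxiliary quantities
\[
A=\int_{\mathbb{R}^3}\frac{|\nabla\mu_1|^2}{|x|^2}\,dx,\qquad M=\sup_{r>0}\int_r^{r+1}m(\rho)\,d\rho,\qquad \widehat M=\sup_{r>0}m(r),
\]
with $m(r)=\|\mu_1(r,\cdot)\|_{L^2(\mathbb{S}^2)}^2$; solving these gives first $A\lesssim|k|^2/(\Im k)^2$, then $M$, and finally $\widehat M\lesssim M+\sqrt{AM}$. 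The cases $\mu_2,\mu_3$ follow the same template with different right-hand sides.

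Your direct kernel route is plausible for $\mu_2$, $\mu_3$, and for the first summand in your decomposition of $\nabla_y\bigl(e^{ik\theta}/|r\sigma-y|\bigr)$, but it has a genuine gap at the second summand. That piece carries the kernel $r\,e^{ik\theta}\,\widehat{r\sigma-y}/|r\sigma-y|^{2}$, and the Cauchy--Schwarz step you propose requires
\[
\int_{\mathbb{R}^3}\int_{\mathbb{S}^2}\frac{r^2\,e^{-2\Im k\,\theta}}{|r\sigma-y|^4}\,d\sigma\,dy
=\int_{\mathbb{R}^3}\frac{2\pi r}{|y|}\int_{\bigl||y|-r\bigr|}^{r+|y|}\frac{e^{-2\Im k(\xi+|y|-r)}}{\xi^{3}}\,d\xi\,dy,
\]
which diverges: near $|y|=r$ the inner $\xi$-integral is $\sim\bigl||y|-r\bigr|^{-2}$, and this is not integrable against $|y|^2\,d|y|$. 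The exponential damping in $\theta$ does not help here, because $\theta=0$ precisely where $\xi=|r\sigma-y|$ is smallest. So your claim that this term ``still has an integrable singularity transverse to the segment'' is incorrect once the kernel is squared; $|r\sigma-y|^{-4}$ is genuinely non-$L^1_{\mathrm{loc}}$ in $\mathbb{R}^3$. The paper's multiplier method sidesteps this entirely: after the integration by parts $\int\mathrm{div}(f_1/|x|)\,\bar\mu_1\,dx=-\int (f_1\cdot\nabla\bar\mu_1)/|x|\,dx+\text{boundary}$, the derivative lands on $\mu_1$ rather than on the kernel, and is absorbed into $A$ via one Cauchy--Schwarz.
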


\begin{proof}
We will give all detail for the first estimate. The others can be proved similarly. One way to obtain the estimates of this type is to go on the Fourier side in $\sigma$ in formula \eqref{fo1} and control the convergence of the resulting integral in $|y|$. However, it is more instructive to proceed differently. By the standard approximation argument,  it is enough to assume that $f_1$  is smooth and is supported on annulus $\{x:a_1<|x|<a_2, a_1>0\}$ and $\|f_1\|_2=1$.  Having made these assumptions, we immediately obtain
\begin{equation}\label{regu}
\lim_{x\to 0}\mu_1=0, \quad \nabla\mu_1\in L^\infty(B_{a_1/2}(0)), \quad \mu_1\in L^\infty(\mathbb{R}^3), \quad \lim_{|x|\to\infty} \nabla \mu_1=0\,.
\end{equation}
Consider the following five quantities: $m,m_1,M,\widehat M,A$.
\[
m(r)\dd r^{-2}\int_{|x|=r}|\mu_1|^2d\sigma_x,\quad\,
m_1(r)\dd r^{-2}\int_{|x|=r}|\partial_r\mu_1|^2d\sigma_x\,,
\]
and
\[
M\dd \sup_{r>0}\int_r^{r+1}m(\rho)d\rho, \quad \widehat M\dd\sup_{r>0}m(r), \quad A\dd\int_{\mathbb{R}^3}\frac{|\nabla\mu_1|^2}{|x|^2}dx\,.
\]
From \eqref{regu}, we get
\begin{equation}\label{ue2}
\lim_{r\to 0}m=0, \quad m_1(r)\in L^\infty(0,a_1/2)\,.
\end{equation}
Notice that 
\[
\sup_{r>0}\|\mu_1(r,\sigma)\|_{L^2(\mathbb{S}^2)}=\sqrt{\widehat M}
\]
and our goal is to estimate $\widehat M$. Consider \eqref{main-eq00}, multiply both sides by $\overline{\mu}_1/|x|^2$ and integrate over the annulus $\{x: r_1<|x|<r_2\}$ where $r_1>0$.
\begin{equation}\label{eq21}
-\int_{r_1<r<r_2} \frac{\Delta \mu_1 \bar\mu_1}{|x|^2}dx-2 \int_{r_1<r<r_2} \left(ik-\frac{1}{|x|}\right)\frac{\bar\mu_1\partial_r\mu_1}{|x|^2}dx=\int_{r_1<r<r_2} {\rm div}\Bigl(\frac{f_1}{|x|} \Bigr)\bar{\mu}_1dx\,.
\end{equation}
Let $r_1<a_1$. Then, integrating by parts in the last integral gives
\[
\int_{r_1<r<r_2} {\rm div}\Bigl(\frac{f_1}{|x|} \Bigr)\bar{\mu}_1dx=-\int_{r_1<r<r_2} \frac{f_1 \nabla\bar{\mu}_1}{|x|} dx+\int_{|x|=r_2} \frac{f_1\bar \mu_1}{|x|}n d\sigma_x
\]
and $n$ is a normal vector at $x$. Integrate by parts in the first integral in \eqref{eq21}, to get
\begin{equation}\label{perpe}
-\int_{r_1<r<r_2} \frac{\Delta \mu_1 \bar\mu_1}{|x|^2}dx=\int_{r_1<r<r_2} \frac{|\nabla\mu_1|^2}{|x|^2}dx-2\int_{r_1<r<r_2} \frac{\bar\mu_1 \partial_r\mu_1 }{|x|^3}dx-I_2+I_1\,,
\end{equation}
where
\[
I_2=r_2^{-2}\int_{r=r_2}\bar \mu_1\partial_r\mu_1 d\sigma_x, \quad   I_1=r_1^{-2}\int_{r=r_1}\bar \mu_1\partial_r \mu_1 d\sigma_x\,.
\]
Notice that the second term in the right hand side of \eqref{perpe}  will cancel the same term in the second integral in \eqref{eq21}. We get
\begin{equation}\label{eq2}
\int_{r_1<r<r_2} \frac{|\nabla \mu_1|^2 }{|x|^2}dx-2ik \int_{r_1<r<r_2}  \frac{ \bar\mu_1\partial_r\mu_1}{|x|^2}dx=I_2-I_1-\int_{r_1<r<r_2} \frac{f_1 \nabla\bar{\mu}_1}{|x|} dx+\int_{|x|=r_2} \frac{f_1\bar \mu_1}{|x|}nd\sigma_x\,.
\end{equation}
Divide this formula by $-ik$ and take the real part of both sides. Making use of the identity
\[
\int_{r_1<r<r_2}\frac{\bar \mu_1\partial_r{\mu_1}+\mu_1\partial_r\bar\mu_1}{|x|^2}dx=m(r_2)-m(r_1),
\]
we  get
\[
\frac{\Im k}{|k|^2}\int_{r_1<r<r_2} \frac{|\nabla \mu_1|^2 }{|x|^2}dx+m(r_2)\leq m(r_1)   +  \frac{|I_2|+|I_1|}{|k|}
\]
\begin{equation}\label{ede}
+\frac{1}{|k|}
\left|\int_{r_1<r<r_2} \frac{f_1 \nabla\bar{\mu}_1}{|x|} dx\right|+\frac{1}{|k|}\left|\int_{|x|=r_2} \frac{f_1\bar \mu_1}{|x|}nd\sigma_x\right|\,.
\end{equation}
This bound will play the crucial role. We start by estimating $A$.
For that purpose, we send $r_1\to 0$ and $r_2\to\infty$. Since  $\mu_1\in L^\infty(\mathbb{R}^3)$ and $\lim_{|x|\to\infty}\mu'_1(x)=0$, we get   $\lim_{r_2\to\infty}I_2=0$.  Applying Cauchy-Schwarz inequality to $I_1$ we get
\[
|I_1|\le \Bigl(m(r_1)m_1(r_1)\Bigr)^{1/2}\,.
\]
The bounds \eqref{ue2} give $\lim_{r_1\to 0} I_1=0$. For the last term in \eqref{ede}, we get
\[
\lim_{r_2\to\infty}\int_{|x|=r_2} \frac{f_1\bar \mu_1}{|x|}nd\sigma_x=0
\]
because $f_1$ is compactly supported. Dropping the nonnegative term $\lim_{r_2\to\infty} m(r_2)$ and applying  Cauchy-Schwarz inequality along with $\|f_1\|_2=1$ to
\[
\left|\int_{r_1<r<r_2} \frac{f_1 \nabla\bar{\mu}_1}{|x|} dx\right|\le \|f_1\|_2\sqrt A=\sqrt A
\]
give us
\begin{equation}\label{g1}
\frac{\Im k}{|k|^2}A\lesssim  \frac{1}{|k|}  \sqrt A, \quad A\lesssim \frac{|k|^2}{(\Im k)^2}\,.
\end{equation}
Consider  \eqref{ede} again. Drop the first term, send $r_1\to 0$, and average in $r_2$ over $(r,r+1)$. This gives
\begin{eqnarray*}
\int_{r}^{r+1}m(r_2)dr_2\lesssim \frac{1}{|k|}\int_{r}^{r+1}|I_2|dr_2+\frac{1}{|k|}\int_{r}^{r+1}\left| \int_{|x|<r_2}\frac{f_1\nabla\overline{\mu}_1}{|x|}dx  \right|dr_2+\\
\frac{1}{|k|}\int_{r}^{r+1}
\left|\int_{|x|=r_2}\frac{f_1\overline{\mu}_1}{|x|}nd\sigma_x\right|dr_2\,.
\end{eqnarray*}
We use
\[
\int_{r}^{r+1} |I_2|dr\le\int_r^{r+1}\sqrt{m(r_2)m_1(r_2)}dr_2\le\left( \int_r^{r+1}m(r_2)dr_2\int_r^{r+1}m_1(r_2)    dr_2           \right)^{1/2}\le
\]
\[
\left(M\int_0^\infty m_1(r_2)dr_2\right)^{1/2}\le \sqrt{MA}
\]
and
\[
\int_{r}^{r+1}\left|\int_{|x|=r_2} \frac{f_1\bar \mu_1}{|x|}nd\sigma_x\right|dr_2\le \left(\int_{r<|x|<r+1} |f_1|^2dx\right)^{1/2}\left(\int_r^{r+1} m(r_2)dr_2\right)^{1/2}\le \sqrt M
\]
to write
\[
\int_{r}^{r+1}m(r_2)dr_2\le \frac{\sqrt{AM}+\sqrt{A}+\sqrt{M}}{|k|}\,.
\]
Taking  supremum of both sides over $r\in (0,\infty)$ gives an estimate on $M$
\begin{equation}\label{g2}
M\le \frac{1}{|k|}\left(   \sqrt A+\sqrt M +\sqrt{AM}\right)\,.
\end{equation}
Substituting the bound \eqref{g1} gives
\[
M\lesssim \frac{1}{|k|^2}+\frac{1}{(\Im k)^2}+\frac{1}{\Im k}\,.
\]
We are left with estimating $\widehat M$. Recall that
\[
m(r)=\int_{\mathbb{S}^2}|\mu_1(r,\sigma)|^2d\sigma_x\,.
\]
Differentiation in $r$ gives
\[
|m'(r)|\le {2}\int_{\mathbb{S}^2}|\mu_1(r,\sigma)\partial_r\mu_1(r,\sigma)|d\sigma_x\lesssim \sqrt{m(r)m_1(r)}, \quad
\int_r^{r+1} |m'|dr_2\lesssim \sqrt M \sqrt A\,.
\]
Writing for every $r>0$
\[
m(r)=m(\rho)+\int_r^{\rho}m'(t)dt, \quad m(r)\le \int_r^{r+1}m(\rho)d\rho+\int_{r}^{r+1}\int_r^\rho |m'(t)|dtd\rho \lesssim M+\sqrt{AM}
\]
and taking supremum in $r$ of both sides, gives
\begin{equation}\label{wt0}
\widehat M\lesssim M+\sqrt{AM}\,.
\end{equation}
This yields \eqref{eee1}. The estimates \eqref{eee2} and \eqref{eee3} can be obtained in a similar manner. For reader's convenience, we state the  estimates for $A,M,\widehat M$.

(1). For $\mu_2$, we bound
\[
\left|\int_{r_1<|x|<r_2}\frac{f_2\overline{\mu}_2}{|x|^2}dx\right|\lesssim \|f\|_{2,w}\sqrt M=\sqrt M
\]
by Cauchy-Schwarz inequality. This gives.
\[
A\lesssim \frac{|k|}{\Im k}\sqrt M, \quad M\lesssim \frac{\sqrt{M}+\sqrt{AM}}{|k|}\,.
\]
Solving these inequalities (using, e.g., lemma \ref{tra}) gives
\[
A\lesssim \frac{1}{\Im k}\left(1+\frac{1}{\Im k}\right), \quad M\lesssim \frac{1}{|k|^2}\left(1+\frac{1}{\Im^2k}\right)
\]
and \eqref{wt0} implies \eqref{eee2}.\smallskip

 (2). For $\mu_3$, the estimates are identical to those for $\mu_2$. In fact, for
 \[
 \int_{r_1<|x|<r_2}\frac{Vf_3\overline{\mu}_3}{|x|}dx
 \]
we write
\[
\left| \int_{r_1<|x|<r_2}\frac{Vf_3\overline{\mu}_3}{|x|}\right|\le \|f_3\|_2\left(  \int_{r_1}^{r_2}(\sup_{|x|=r}|V(x)|)^2m(r)dr          \right)^{1/2}\lesssim \sqrt{M}\|V\|_{\ell^2(\mathbb{Z}^+),L^\infty}
\]
and the rest follows.
\end{proof}

{\bf Remark.} Notice that the estimates for $M$ are better than for $\widehat M$. In fact, we can summarize them as

\begin{equation}\label{qeee11}
\sup_{r>0}\int_r^{r+1}\|\mu_1(\rho,\sigma)\|^2_{L^2(\mathbb{S}^2)}d\rho\lesssim
\left(\frac{1}{|k|^2}+\frac{1}{(\Im k)^2}+\frac{1}{\Im k}   \right)\|f_1\|_2^2\,,
\end{equation}
\begin{equation}\label{qeee21}
\sup_{r>0}\int_r^{r+1}\|\mu_2(\rho,\sigma)\|^2_{L^2(\mathbb{S}^2)}d\rho
\lesssim \left(\frac{1}{|k|^2}+\frac{1}{|k|^2(\Im k)^2}             \right) \|f_2\|_{L^2_w}^2\,,
\end{equation}
\begin{equation}\label{qeee31}
\sup_{r>0}\int_r^{r+1}\|\mu_3(\rho,\sigma)\|^2_{L^2(\mathbb{S}^2)}d\rho
\lesssim \left(\frac{1}{|k|^2}+\frac{1}{|k|^2(\Im k)^2}               \right) \|V\|^2_{\ell^2,L^\infty}\|f_3\|^2_2\,.
\end{equation}
After taking account of \eqref{chetvert}, these lemmas and  remark immediately imply the following theorem.
\begin{theorem} \label{ttt1}For every $k\in\mathbb{C}^+$, the operators $B^{(j)}_r$ are linear bounded operators from the corresponding Banach spaces to $L^2(\mathbb{S}^2)$ and
\begin{equation}
\sup_{r>0}\|B^{(1)}_r(f)\|_{L^2(\mathbb{S}^2)}\le C_1(k)\|f\|_2, \quad \sup_{r>0}\left(\int_r^{r+1}\|B^{(1)}_\rho(f)\|^2_{L^2(\mathbb{S}^2)}d\rho\right)^{1/2}\le C'_1(k)\|f\|_2\,,
\end{equation}
\begin{equation}
\sup_{r>0}\|B^{(2)}_r(f)\|_{L^2(\mathbb{S}^2)}\le C_2(k)\|f\|_{2,w}, \quad \sup_{r>0}\left(\int_r^{r+1}\|B^{(1)}_\rho(f)\|^2_{L^2(\mathbb{S}^2)}d\rho\right)^{1/2}\le C'_2(k)\|f\|_{2,w}\,,
\end{equation}
\begin{equation}
\sup_{r>0}\|B^{(3)}_r(f,V)\|_{L^2(\mathbb{S}^2)}\le C_3(k)\|f\|_2\|V\|_{\ell^2,L^\infty}, \quad \sup_{r>0}\left(\int_r^{r+1}\|B^{(3)}_\rho(f)\|^2_{L^2(\mathbb{S}^2)}d\rho\right)^{1/2}\le C'_3(k)\|f\|_2\|V\|_{\ell^2,L^\infty}\,,
\end{equation}
\begin{equation}
\sup_{r>0}\|B^{(4)}_r(f)\|_{L^2(\mathbb{S}^2)}\le (C_1(k)+C_2(k))\|f\|_{2,w}, \quad \sup_{r>0}\left(\int_r^{r+1}\|B^{(4)}_\rho(f)\|^2_{L^2(\mathbb{S}^2)}d\rho\right)^{1/2}\le (C'_1(k)+C_2'(k))\|f\|_{2,w}\,.
\end{equation}
and the estimates on $C_{1(2,3)}$ can be obtained from \eqref{eee1}, \eqref{eee2}, \eqref{eee3} ($C'_{1(2,3)}$ from  \eqref{qeee11}, \eqref{qeee21}, \eqref{qeee31} by taking the square root).
\end{theorem}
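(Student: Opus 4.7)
The plan is to read off the bounds for $B^{(1)}$, $B^{(2)}$, $B^{(3)}$ directly from the preceding lemma and the remark that follows it, and then to handle $B^{(4)}$ via the algebraic identity \eqref{chetvert}.

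First I observe that by the explicit formulas \eqref{fo1}, \eqref{fo2}, \eqref{fo3} one has $\mu_1 = B^{(1)}_r(f_1)$, $\mu_2 = B^{(2)}_r(f_2)$, and $\mu_3 = B^{(3)}_r(V,f_3)$. Consequently, the lemma's pointwise bounds \eqref{eee1}, \eqref{eee2}, \eqref{eee3} on $\|\mu_j(r,\sigma)\|_{L^2(\mathbb{S}^2)}$ translate verbatim into pointwise-in-$r$ operator bounds for $B^{(j)}$, with constants $C_1(k)$, $C_2(k)$, $C_3(k)$ being the square roots of the respective right-hand sides. The averaged constants $C'_1(k)$, $C'_2(k)$, $C'_3(k)$ are obtained identically by taking square roots in \eqref{qeee11}, \eqref{qeee21}, \eqref{qeee31}.

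For $B^{(4)}$, the algebraic identity \eqref{chetvert} gives
\[
B^{(4)}_r(f) = B^{(1)}_r(f) + B^{(2)}_r\bigl(f \cdot y/|y|\bigr),
\]
so the triangle inequality yields
\[
\|B^{(4)}_r(f)\|_{L^2(\mathbb{S}^2)} \le C_1(k)\|f\|_2 + C_2(k)\bigl\|f \cdot y/|y|\bigr\|_{2,w}.
\]
Since $y/|y|$ has unit length, $\bigl\|f \cdot y/|y|\bigr\|_{2,w} \le \|f\|_{2,w}$; and since the weight $w$ of \eqref{weighte} satisfies $w \ge 1$ on $\mathbb{R}^3$, one has the embedding $L^2_w \hookrightarrow L^2$ with $\|f\|_2 \le \|f\|_{2,w}$. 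This delivers the asserted bound with constant $C_1(k)+C_2(k)$; the averaged estimate for $B^{(4)}$ is produced in exactly the same way.

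There is no genuine obstacle here: the analytical heavy lifting (the energy identity \eqref{eq2}, the control of $A$, $M$, $\widehat M$ via lemma \ref{tra}, and the pointwise-to-averaged transition \eqref{wt0}) is all contained in the preceding lemma, and this theorem merely recasts those bounds as operator-norm inequalities on the appropriate Banach spaces. The only point to verify is the compatibility of function spaces in \eqref{chetvert} through the embedding $L^2_w \subset L^2$, which is immediate from the definition of $w$.
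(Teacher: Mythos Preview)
Your proposal is correct and matches the paper's own argument, which simply states that the theorem follows immediately from the preceding lemma, the remark (estimates \eqref{qeee11}--\eqref{qeee31}), and the identity \eqref{chetvert}. You have in fact spelled out more detail than the paper does, in particular the embedding $L^2_w\hookrightarrow L^2$ needed to feed $f\in L^2_w$ into $B^{(1)}_r$ when applying \eqref{chetvert}.
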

Having taken  $f$  as a function with compact support in the last  theorem, we can send $r\to\infty$ in the formula for each $B^{(j)}_r$ to get the limiting operators when $r\to\infty$
\begin{eqnarray}\label{eret}
B_r^{(1)}(f)\to B_\infty^{(1)}(f)=-(4\pi)^{-1}\int_{\mathbb{R}^3} \left(\frac{\widehat y}{|y|}+ik(\widehat y-\sigma) \right)e^{ik|y|(1-\langle \sigma,\widehat y\rangle)}fdy\,,\\\nonumber
B_r^{(2)}(f)\to B_\infty^{(2)}(f)=(4\pi)^{-1}\int_{\mathbb{R}^3} \frac{e^{ik|y|(1-\langle \sigma,\widehat y\rangle)}}{|y|}fdy\,,\\\nonumber
B_r^{(3)}(f)\to B_\infty^{(3)}(f)=(4\pi)^{-1}\int_{\mathbb{R}^3} e^{ik|y|(1-\langle \sigma,\widehat y\rangle)}Vfdy\,,\\\nonumber
B_r^{(4)}(f) \to B_\infty^{(4)}(f)=-(4\pi)^{-1}ik\int_{\mathbb{R}^3} (\widehat y-\sigma) e^{ik|y|(1-\langle \sigma,\widehat y\rangle)}fdy
\end{eqnarray}
and this convergence is uniform in $\sigma\in \mathbb{S}^2$. Moreover, to estimate  the limiting operators, we can use the bounds from the remark above.  Thus, 
\begin{equation}\label{erat}
\|B^{(j)}_\infty(f)\|_2\lesssim C_j'(k)\|f\|_2, \quad j=1,\ldots,4\,,
\end{equation}
if we keep assumption that $f$ is compactly supported. \smallskip

If the condition on the support of $f$ is  dropped, the integrals in the right hand sides of \eqref{eret} do not have to converge absolutely for given $\sigma$. However, being defined on the set of smooth functions with compact support, which is dense set in $L^2({\mathbb{R}^3})$, these operators are bounded from the corresponding spaces to $L^2(\mathbb{S}^2)$ as follows from \eqref{erat}. Thus  \eqref{erat} hold for all $f\in L^2({\mathbb{R}^3})$.

We will also need  the following standard result on so-called ``strong convergence'':
\begin{lemma}\label{ttt2}
For each $j=1,\ldots,4$ and $k\in \mathbb{C}^+$, we have
\[
\lim_{r\to\infty}\|B_r^{(j)}(f)- B_\infty^{(j)}(f)\|_{L^2(\mathbb{S}^2)}=0\,,
\]
where $f$ is taken from the corresponding spaces, i.e., $f\in L^2(\mathbb{R}^3)$ for $j=1,3$ and  $f\in L^2_w(\mathbb{R}^3)$ for $j=2,4$.
\end{lemma}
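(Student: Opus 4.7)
The plan is to exploit the uniform operator-norm bounds already established in Theorem \ref{ttt1} together with pointwise (in $\sigma$) convergence on a dense subset, and conclude by a three-$\varepsilon$ argument. First, I would reduce $j = 1$ to the cases $j = 2, 4$ via the identity \eqref{chetvert}, which rewrites $B^{(1)}_r(f) = B^{(4)}_r(f) - B^{(2)}_r(f \cdot \widehat{y})$; for smooth compactly supported $f$, the factor $f\,\widehat{y}$ is again bounded and compactly supported, hence belongs to both $L^2(\mathbb{R}^3)$ and $L^2_w(\mathbb{R}^3)$, so convergence for $j = 1$ will follow once it is established for $j = 2$ and $j = 4$.

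Next, for $j = 2, 3$ and test functions $f \in C^\infty_c(\mathbb{R}^3)$, I would show that the integrands in \eqref{fo2} and \eqref{fo3} converge uniformly on the compact support of $f$ and uniformly in $\sigma \in \mathbb{S}^2$ to the integrands of the limit operators in \eqref{eret}. The required elementary expansion is
\[
|r\sigma - y| = r - \langle \sigma, y\rangle + O(r^{-1}), \qquad \frac{r}{|r\sigma - y|} \to 1,
\]
as $r \to \infty$, uniformly for $y$ in a compact set and $\sigma \in \mathbb{S}^2$; the triangle inequality $r \le |r\sigma - y| + |y|$ also ensures $|e^{ik(-r+|r\sigma-y|+|y|)}| \le 1$, so dominated convergence applies on the support of $f$ and yields uniform, hence $L^2(\mathbb{S}^2)$, convergence. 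For $j = 4$ I would start from the integrated-by-parts representation \eqref{lk1} and run the same asymptotics; the gradient $\nabla_y$ acting on the kernel produces the combination $ik(\widehat{y} - \widehat{r\sigma - y})$, whose limit $ik(\widehat{y} - \sigma)$ is exactly the factor appearing in $B^{(4)}_\infty$ of \eqref{eret}.

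Finally, for general $f$ in the relevant Banach space $X_j$ ($L^2(\mathbb{R}^3)$ when $j = 1, 3$ and $L^2_w(\mathbb{R}^3)$ when $j = 2, 4$), I pick $\widetilde{f} \in C^\infty_c(\mathbb{R}^3)$ with $\|f - \widetilde{f}\|_{X_j} < \varepsilon$. The uniform-in-$r$ bounds from Theorem \ref{ttt1}, together with the extension \eqref{erat} to all of $X_j$, yield
\[
\|B^{(j)}_r(f) - B^{(j)}_\infty(f)\|_{L^2(\mathbb{S}^2)} \le 2\,C(k)\,\varepsilon + \|B^{(j)}_r(\widetilde{f}) - B^{(j)}_\infty(\widetilde{f})\|_{L^2(\mathbb{S}^2)},
\]
and the second term vanishes as $r \to \infty$ by the previous step; sending $\varepsilon \to 0$ closes the argument.

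I expect the only mildly delicate point to be the cases $j = 1$ and $j = 4$, where the naive integral representations are only conditionally convergent and the integrand is not absolutely dominated for arbitrary $f \in X_j$. However, the decomposition \eqref{chetvert} together with the integration-by-parts formula \eqref{lk1} reduces these cases on the dense subset $C^\infty_c(\mathbb{R}^3)$ to the $j=2$ analysis, so the entire lemma becomes a clean soft-analysis wrapper around the quantitative bounds already contained in Theorem \ref{ttt1}.
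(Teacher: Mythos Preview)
Your proposal is correct and follows essentially the same approach as the paper: a standard strong-convergence (three-$\varepsilon$) argument combining the uniform operator-norm bounds of Theorem~\ref{ttt1} with pointwise/uniform convergence on compactly supported functions. The only cosmetic differences are that the paper truncates by $f\chi_{|x|<R_2}$ rather than approximating by $C_c^\infty$, and treats $j=1$ directly (the uniform convergence for compactly supported $f$ was already noted just before \eqref{eret}) rather than routing it through the identity~\eqref{chetvert}.
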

\begin{proof}
We will prove lemma for $j=1$, the other cases are similar. Given any $\epsilon>0$, we take $R_2$ so large that $\|f\chi_{|x|>R_2}\|_2\leq (C_1)^{-1}\epsilon/3$. Having fixed this $R_2$, we notice that
$
B_r^{(1)}(f\chi_{|x|<R_2})
$ converges to $B_\infty^{(1)}(f\chi_{|x|<R_2})$ uniformly on $\mathbb{S}^2$ as $r\to\infty$. For the tails, we have
\[
\|B_r^{(1)}(f\chi_{|x|>R_2})\|_2 <\epsilon/3, \quad \|B_\infty^{(1)}(f\chi_{|x|>R_2})\|_2<\epsilon/3
\]
and the first bound holds uniformly in $r$. This finishes the proof of the lemma.
\end{proof}
\vspace{0.5cm}

\subsection{The proofs of  main results}

Consider $u$ that solves
\begin{equation}\label{ur1}
-\Delta u+Vu=k^2u+f\,,
\end{equation}
where $V\in L^\infty(\mathbb{R}^3)$, $k\in \Sigma$ and
\begin{itemize}
\item either $\|f\|_2\leq 1, \, {\rm supp }(f)\subset B_{R}(0)\,,$ where $R=R_f$ is not fixed and can depend on $f$.

\item or $f=\delta_0$ in which case
$u=G(x,0,k^2)$. We can let $R=0$ in that case.
\end{itemize}

To control the asymptotics of $u(x,k)$ for large $x$ we will employ the strategy used in the previous section already.

 Define
\begin{equation}\label{mumu}
\mu\dd 4\pi |x|e^{-ik|x|}u\,.
\end{equation}
We have the following integral equation for $\mu$:
\begin{equation}\label{rrr1}
\mu(x,k)=\mu_0(x,k)-\int \frac{|x|e^{ik(|x-y|+|y|-|x|)}}{|x-y||y|}V\mu(y,k)dy\,,
\end{equation}
where
\[
\mu_0(x,k)=\left\{
\begin{array}{ccc}
1,& {\rm if}& f=\delta_0,\\
4\pi |x|e^{-ik|x|}R^0_zf, &{\rm if}& f\in L^2(\mathbb{R}^3)\,.
\end{array}
\right.
\]
This $\mu$  solves 
\begin{equation}\label{main-eq1}
-\Delta\mu -2\left(ik-\frac{1}{|x|}\right)\mu_r+V\mu=0,\quad  |x|>R_f\,.
\end{equation}
As before, for each $r>R_f$, we introduce
\[
m(r)\dd r^{-2}\int_{|x|=r}|\mu|^2d\sigma_x, \quad
m_1(r)\dd r^{-2}\int_{|x|=r}|\mu_r|^2d\sigma_x\,,
\]
and
\[
M(r)\dd \sup_{r<\rho}\int_\rho^{\rho+1}m(t)dt, \quad \widehat M(r)\dd \sup_{r<\rho} m(\rho), \quad A(r)\dd \int_{|x|>r} \frac{|\nabla\mu|^2}{|x|^2}dx\,.
\]
Clearly $m$ and $m_1$ are always finite since $u\in \cal{H}^2(B_{R+\epsilon}^c)$ for any $\epsilon>0$ and $M, \widehat M, A$ might be infinite.\smallskip

In the next lemma, we will estimate $A$.

\begin{lemma}Suppose $V$ is compactly supported and $k\in \Sigma$. Then,
\begin{equation}\label{key2}
A(r)=\int_{r<|x|} \frac{|\nabla \mu|^2 }{|x|^2}dx\leq  - \int_{r<|x|} V\frac{|\mu|^2}{|x|^2}dx+\frac{C|k|^2}{\Im k}\left(m(r)+|k|^{-1}\left(m(r)m_1(r))^{1/2}\right)\right)
\end{equation}
for every $r>R_f$.
\end{lemma}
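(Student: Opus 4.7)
The plan is to imitate the energy-identity scheme used for the model equation \eqref{eee1}, adapted to the full equation \eqref{main-eq1}. Specifically, I multiply \eqref{main-eq1} by $\bar\mu/|x|^2$, integrate over the finite annulus $\{r<|x|<r_2\}$ with $r_2$ larger than both $R_f$ and the support radius of $V$, divide by $-ik$, take real parts, and finally send $r_2\to\infty$.

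For the integration-by-parts step, Green's identity applied to the Laplacian term yields
\[
-\int_{r<|x|<r_2}\frac{(\Delta\mu)\bar\mu}{|x|^2}\,dx = \int_{r<|x|<r_2}\frac{|\nabla\mu|^2}{|x|^2}\,dx - 2\int_{r<|x|<r_2}\frac{\bar\mu\,\partial_r\mu}{|x|^3}\,dx - I_{r_2} + I_r,
\]
where $I_\rho\dd \rho^{-2}\int_{|x|=\rho}\bar\mu\,\partial_r\mu\,d\sigma_x$. The stray term $-2\int\bar\mu\,\partial_r\mu/|x|^3$ cancels exactly with the contribution of the $2/|x|$ piece of \eqref{main-eq1}, leaving
\[
\int_{r<|x|<r_2}\frac{|\nabla\mu|^2}{|x|^2} - 2ik\int_{r<|x|<r_2}\frac{\bar\mu\,\partial_r\mu}{|x|^2} + \int_{r<|x|<r_2}\frac{V|\mu|^2}{|x|^2} = I_{r_2}-I_r.
\]
Dividing by $-ik$, taking real parts, and using that $V$ is real together with $\Re(1/(-ik))=\Im k/|k|^2$ and $\Re\bigl(2\int\bar\mu\,\partial_r\mu/|x|^2\bigr)=m(r_2)-m(r)$, I obtain
\[
\frac{\Im k}{|k|^2}\int_{r<|x|<r_2}\frac{|\nabla\mu|^2}{|x|^2} + (m(r_2)-m(r)) + \frac{\Im k}{|k|^2}\int_{r<|x|<r_2}\frac{V|\mu|^2}{|x|^2} \le \frac{|I_r|+|I_{r_2}|}{|k|}.
\]
Discarding the nonnegative $m(r_2)$, applying Cauchy--Schwarz $|I_\rho|\le(m(\rho)m_1(\rho))^{1/2}$, and multiplying through by $|k|^2/\Im k$ yields the announced inequality on the finite annulus.

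The final step is to let $r_2\to\infty$. Since $V$ has compact support, once $r_2>R_V$ the $V$-integral already equals the one in the statement, so the only nontrivial task is to show $|I_{r_2}|\to 0$. Using $u=R_z^0 f - R_z^0(Vu)$ (or Lemma~\ref{pop} when $f=\delta_0$) together with $\Im k>0$ and the compact support of $V$, one extracts pointwise asymptotics $u(x)=O(e^{ik|x|}/|x|)$ and $\partial_r u(x)-ik\,u(x)=O(e^{ik|x|}/|x|^2)$ at infinity. Via \eqref{mumu} these translate to $|\mu|$ bounded and $|\partial_r\mu(x)|=O(1/|x|)$, so $m(r_2)=O(1)$ and $m_1(r_2)=O(r_2^{-2})\to 0$, whence $|I_{r_2}|\to 0$. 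Monotone convergence applied to $\int|\nabla\mu|^2/|x|^2$ then simultaneously delivers the stated bound and shows $A(r)<\infty$. The main obstacle is precisely this last passage to the limit: the compact support of $V$ is essential, since outside it $\mu$ is governed by the free Green's function and the exponential factor supplied by $\Im k>0$ is what kills the boundary term at infinity.
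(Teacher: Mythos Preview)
Your proof is correct and follows essentially the same approach as the paper: multiply \eqref{main-eq1} by $\bar\mu/|x|^2$, integrate over the annulus, integrate by parts so that the $2/|x|$ term cancels, divide by $-ik$ and take real parts to reach the identity \eqref{key1}, then send $r_2\to\infty$ using the asymptotics of $\mu$ coming from the second resolvent identity and the compact support of $V$. The only cosmetic difference is that the paper shows $I_{r_2}\to 0$ directly from $\mu\in L^\infty$ and $\nabla\mu\to 0$, whereas you route through the slightly stronger $m_1(r_2)=O(r_2^{-2})$; both are justified by the same far-field expansion.
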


\begin{proof}

Consider \eqref{main-eq1}, multiply by $\bar{\mu}/|x|^2$ and integrate over the annulus $\{x: r_1<|x|<r_2\}$ where $r_1>R_f$.
\begin{equation}\label{eq24}
-\int_{r_1<r<r_2} \frac{\Delta \mu \bar\mu}{|x|^2}dx-2 \int_{r_1<r<r_2} \left(ik-\frac{1}{|x|}\right)\frac{\mu_r\bar\mu}{|x|^2}dx+\int_{r_1<r<r_2} V\frac{|\mu|^2}{|x|^2}dx=0\,.
\end{equation}
Arguing as before, we get

\begin{equation}\label{eq25}
\int_{r_1<r<r_2} \frac{|\nabla \mu|^2 }{|x|^2}dx-2ik \int_{r_1<r<r_2}  \frac{\mu_r \bar\mu}{|x|^2}dx+  \int_{r_1<r<r_2} V\frac{|\mu|^2}{|x|^2}dx= I_2-I_1
\end{equation}
with
\begin{equation}\label{ioio}
I_2=r_2^{-2}\int_{r=r_2}\mu_r\bar \mu d\sigma, \quad   I_1=r_1^{-2}\int_{r=r_1}\mu_r\bar \mu d\sigma
\end{equation}
and
\begin{equation}\label{key1}
\frac{\Im k}{|k|^2}\int_{r_1<r<r_2} \frac{|\nabla \mu|^2 }{|x|^2}dx+m(r_2)   +\frac{\Im k}{|k|^2}  \int_{r_1<r<r_2} V\frac{|\mu|^2}{|x|^2}dx=m(r_1)-\Re\left(   \frac{I_2-I_1}{ik}   \right)\,.
\end{equation}
 We take $r_1=r$ and use
 \begin{equation}\label{lopot}
 \lim_{r_2\to \infty} \nabla \mu=0, \quad \mu\in L^\infty(B_R^c)
 \end{equation}
 to establish that
 \[
 \lim_{r_2\to\infty}I_2=0\,.
 \]
 $\Bigl(   \Bigr.$Indeed, we have
\[
G(x,y,k^2)=G^0(x,y,k^2)-\int_{{\rm supp} (V)} G^0(x,\xi,k^2)V(\xi)G(\xi,y,k^2)d\xi\,.
\]
From \eqref{leret}, $\|V(\xi)G(\xi,y,k^2)\|_2<\infty$, and we can take $|x|\to\infty$ to establish asymptotics
\[
G(x,y,k^2)=\frac{e^{ik|x|}}{4\pi |x|}\Bigl(   e^{-ik\langle \widehat x,y\rangle}-\int_{{\rm supp} (V)}e^{-ik\langle \widehat x,\xi\rangle } V(\xi)G(\xi,y,k^2)d\xi +o(1)\Bigr)
\]
for $G$ and analogous statement for the gradient.
This gives \eqref{lopot} $\Bigl. \Bigr)$.\smallskip

Sending $r_2\to\infty$ in \eqref{key1} gives us
\begin{equation}\label{key202}
\frac{\Im k}{|k|^2}\int_{r<|x|} \frac{|\nabla \mu|^2 }{|x|^2}dx+m(\infty)   +\frac{\Im k}{|k|^2}  \int_{r<|x|} V\frac{|\mu|^2}{|x|^2}dx\leq m(r)+C|k|^{-1}\left(m(r)m_1(r)\right)^{1/2}
\end{equation}
after applying  Cauchy-Schwarz inequality to $I_1$.
\end{proof}
Assuming positivity of $V$, we can get rid of the assumption that $V$ is compactly supported as can be seen from the following Corollary.
\begin{corollary}Suppose $V\in L^\infty(\mathbb{R}^3)$ and $V\geq 0$. Then,
\begin{equation}\label{key3}
\int_{r<|x|} \frac{|\nabla \mu|^2 }{|x|^2}dx+ \int_{r<|x|} V\frac{|\mu|^2}{|x|^2}dx\leq \frac{C|k|^2}{\Im k}\left(m(r)+|k|^{-1}\left(m(r)m_1(r) \right)^{1/2}\right)
\end{equation}
for every $z=k^2\notin \sigma(H)\cup [0,\infty)$ and $r>R_f$.
\end{corollary}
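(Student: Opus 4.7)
The first observation is that the argument in the preceding lemma actually yields the stronger estimate \eqref{key202}, namely
\[
\frac{\Im k}{|k|^2}\int_{r<|x|}\frac{|\nabla\mu|^2}{|x|^2}dx+m(\infty)+\frac{\Im k}{|k|^2}\int_{r<|x|}V\frac{|\mu|^2}{|x|^2}dx\le m(r)+C|k|^{-1}\bigl(m(r)m_1(r)\bigr)^{1/2},
\]
which was derived before the sign of $V$ was invoked. When $V\ge 0$ and compactly supported, all three terms on the left are non-negative, so one may drop $m(\infty)\ge 0$ and multiply by $|k|^2/\Im k$ to obtain exactly \eqref{key3}. It therefore remains to remove the compact-support assumption.

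To extend to arbitrary bounded $V\ge 0$, I would approximate by the truncations $V_n\dd V\cdot\alpha_n(|x|)$ with $\alpha_n$ the smooth cutoffs from \eqref{tru}, so that $V_n\ge 0$, $V_n\in L^\infty$ with $\|V_n\|_\infty\le\|V\|_\infty$, $V_n$ is compactly supported, and $V_n\nearrow V$ pointwise. Let $H_n=-\Delta+V_n$, let $u_n$ denote the corresponding solution to \eqref{ur1} (either $u_n=R_{H_n,z}f$ for $f\in L^2$ of compact support, or $u_n=G_n(x,0,k^2)$ for the delta-source case), and let $\mu_n\dd 4\pi|x|e^{-ik|x|}u_n$. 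For each fixed $n$ large enough that $z\notin\sigma(H_n)$ the preceding step gives
\[
\int_{r<|x|}\frac{|\nabla\mu_n|^2}{|x|^2}dx+\int_{r<|x|}V_n\frac{|\mu_n|^2}{|x|^2}dx\le\frac{C|k|^2}{\Im k}\Bigl(m_n(r)+|k|^{-1}\bigl(m_n(r)m_{1,n}(r)\bigr)^{1/2}\Bigr).
\]

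The final step is to pass to the limit $n\to\infty$. Strong resolvent convergence follows from the second resolvent identity since $(V_n-V)R_{V,z}f=V(\alpha_n-1)R_{V,z}f\to 0$ in $L^2$ (tail of an $L^2$ function) and $\|R_{V_n,z}\|_{2,2}$ is uniformly bounded on a small disk around $z$; the delta-source case is handled exactly as in lemma \ref{alemma} via the second resolvent identity together with \eqref{leret}. Combining this with interior elliptic regularity applied to the equation satisfied by $\mu_n-\mu$ in an open annulus around $\{|x|=r\}$ gives $\mu_n\to\mu$ in $\cal{H}^2$ of that annulus. The trace theorem then yields $m_n(r)\to m(r)$ and $m_{1,n}(r)\to m_1(r)$, so the right-hand side converges to the desired bound. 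For the left-hand side, pass to a subsequence for which $\nabla\mu_n\to\nabla\mu$ and $\mu_n\to\mu$ pointwise a.e., and apply Fatou's lemma; since $V_n\nearrow V$ pointwise this gives $\int_{r<|x|}V|\mu|^2/|x|^2dx\le\liminf_n\int_{r<|x|}V_n|\mu_n|^2/|x|^2dx$ and similarly for the gradient term. The principal technical point to verify is the pointwise convergence $\mu_n\to\mu$ a.e.\ on $\{|x|>r\}$, which in the $f=\delta_0$ case requires combining the locally uniform convergence from the resolvent identity (lemma \ref{alemma}-style) with a diagonal exhaustion argument over a sequence of expanding annuli.
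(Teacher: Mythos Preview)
Your proof is correct and follows essentially the same approach as the paper: truncate $V$, apply the compactly supported case, and pass to the limit using resolvent convergence and interior elliptic regularity to control $m_n(r)$ and $m_{1,n}(r)$. The only minor difference is in handling the left-hand side: the paper first restricts the integrals to $\{r<|x|<R_1\}$, where $\cal{H}^2$-convergence on the bounded annulus gives direct convergence of the integrals, and then sends $R_1\to\infty$ by monotonicity---this sidesteps the Fatou/diagonal-subsequence argument you describe, though your version is also valid.
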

\begin{proof}
Consider $V_{(\rho)}=V \alpha_\rho(|x|)$ (see the formula right before \eqref{tru} for the definition of $\alpha_\rho$) . Then, assuming that supp$(f)\subset B_R(0)$ again and
comparing two solutions $u$ and $u_\rho$
\[
-\Delta u+Vu=zu+f,\,
-\Delta u_\rho+V_{(\rho)}u_\rho=zu_\rho+f
\]
we get
\[
\lim_{\rho\to\infty}\|u-u_\rho\|_{L^\infty(K)}=0, \, \lim_{\rho\to\infty}\|\Delta u-\Delta u_\rho\|_{L^\infty(K)}=0
\]
for every $K$, a compact in $({B_R}_f)^c$. $\Bigl(\Bigr.$ The proof of that fact is easy  but we will present it. Assume $f\in L^2(\mathbb{R}^3)$. Subtracting two equations, we get
\[
u-u_\rho=\int_{\mathbb{R}^3} G_{(\rho)}(x,\xi,z)V^{(\rho)}u d\xi\,.
\]
Cauchy-Schwarz implies
\[
\|u-u_\rho\|_{L^\infty({\mathbb{R}^3)}}\le  \sup_{x}\|G_\rho(x,\cdot,z)\|_2\|V^{(\rho)} u\|_2\,.
\]
The last factor converges to zero as $\rho\to \infty$. For the first one, we can apply \eqref{leret} to get
\[
\|G_\rho(x,\cdot,z)\|_2\lesssim C(z)\|V\|_\infty
\]
independently of $\rho$. Thus,
$
\lim_{\rho\to\infty}\|u-u_\rho\|_{L^\infty({\mathbb{R}^3)}}=0\,.
$
Then,
$
\|\Delta (u-u_\rho)\|_{L^\infty(K)}=0
$
follows from the equations. The case when $f=\delta_0$ can be handled similarly.$\Bigl.\Bigr)$

Therefore, $\lim_{\rho\to\infty}\|u-u_\rho\|_{\cal{H}^2(U)}=0$ for every annulus $U$  in $B_R^c(0)$. From the definition of $\mu,\mu_\rho$, we obtain
$\|\mu-\mu_\rho\|_{\cal{H}^2(U)}\to 0$ and application of the previous lemma yields
\begin{equation}\label{key4}
\int_{r<|x|<R_1} \frac{|\nabla \mu_{(\rho)}|^2 }{|x|^2}dx+ \int_{r<|x|<R_1} V_\rho\frac{|\mu_{(\rho)}|^2}{|x|^2}dx\leq
 \frac{C|k|^2}{\Im k}\left(m_{(\rho)}(r)+|k|^{-1}\left(m_{(\rho)}(r){m_1}_{(\rho)}(r) \right)^{1/2}\right)
\end{equation}
with any $R_1>r$. Taking $\rho\to\infty$ first, we get
\begin{equation}\label{key404}
\int_{r<|x|<R_1} \frac{|\nabla \mu|^2 }{|x|^2}dx+ \int_{r<|x|<R_1} V\frac{|\mu|^2}{|x|^2}dx\leq
 \frac{C|k|^2}{\Im k}\left(m(r)+|k|^{-1}\left(m(r)m_1(r) \right)^{1/2}\right)\,.
\end{equation}
Now it is only left to take $R_1\to\infty$.

\end{proof}

\begin{lemma} \label{mre} If $V$ satisfies \eqref{main-assump}, $k\in \Sigma$, and $r>R_f$, then $A(r), M(r)<\infty$ and
\begin{eqnarray}\label{key6}
A(r)\lesssim  M(r)\|Q\|^2_{\ell^2([r],\infty),L^\infty}+\frac{M(r)\|Q\|_{\ell^2([r],\infty),L^\infty}}{\sqrt{r+1}}+m(r)\|Q\|_{\ell^2([r],\infty),L^\infty}\\
+\frac{|k|^2}{\Im k}\left(m(r)+\frac{\sqrt{m(r)m_1(r)}}{|k|} \right)\nonumber\,,
\end{eqnarray}
\begin{eqnarray}\nonumber
M(r)\lesssim\frac{\Im k}{|k|^2} \Bigl(\sqrt{A(r)M(r)}\|Q\|_{\ell^2([r],\infty),L^\infty}+
m(r)\|Q\|_{\ell^2([r],\infty),L^\infty}+\frac{M(r)\|Q\|_{\ell^2([r],\infty),L^\infty}}{\sqrt{r+1}}
\Bigr)+\\ m(r)+
\frac{\sqrt{m(r)m_1(r)}}{|k|}+\frac{\sqrt{M(r)A(r)}}{|k|}\,.\label{key88}
\end{eqnarray}
For $\widehat{M}(r)$, we have a bound
\begin{equation}\label{wt}
\widehat M(r)\lesssim M(r)+\sqrt{A(r)M(r)}\,.
\end{equation}
\end{lemma}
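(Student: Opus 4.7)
The plan is to revisit the integral identity (key1) that was derived by multiplying (main-eq1) by $\bar\mu/|x|^2$ and integrating over $\{r_1<|x|<r_2\}$, but this time retain the term $\int V|\mu|^2/|x|^2\,dx$ since $V$ is no longer assumed positive. The new ingredient, which exploits (main-assump), is to write $V={\rm div}\,Q$ and perform a further integration by parts to transfer the derivative onto the weight $|\mu|^2/|x|^2$; this converts the (unavailable) $L^\infty$ control of $V$ into $\|Q\|_{\ell^2([r],\infty),L^\infty}$ control. Finiteness of $A(r)$ and $M(r)$ for fixed $k\in\Sigma$ is secured beforehand by combining lemma~\ref{pop} (or the analogous $L^2$-resolvent bound when $f\in L^2$) with the definition $\mu=4\pi|x|e^{-ik|x|}u$ and interior regularity applied to the equation for $\mu$.

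Concretely, take $r_1=r$ in (key1). Using ${\rm div}\,Q$ and integrating by parts in the annulus gives
\[
\int_{r<|x|<r_2}V\frac{|\mu|^2}{|x|^2}dx
=-\int_{r<|x|<r_2}Q\cdot\frac{2\Re(\bar\mu\nabla\mu)}{|x|^2}dx
+\int_{r<|x|<r_2}Q\cdot\frac{2|\mu|^2\widehat x}{|x|^3}dx+J_{r_2}-J_r,
\]
where $J_\rho=\rho^{-2}\int_{|x|=\rho}Q\cdot n\,|\mu|^2\,d\sigma_x$. The boundary term $J_{r_2}$ vanishes as $r_2\to\infty$ using decay of $Q$ and the asymptotics of $\mu$ already recorded in the proof of the corollary (from $\nabla u\to0$ at infinity). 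The bulk pieces are estimated shell-by-shell: Cauchy--Schwarz across $\{n\le|x|<n+1\}$ and the definition of $M(r)$ yield
\[
\int_{|x|>r}|Q|^2\frac{|\mu|^2}{|x|^2}dx\lesssim \|Q\|_{\ell^2([r],\infty),L^\infty}^2\,M(r),
\]
so the first piece is bounded by $\|Q\|_{\ell^2([r],\infty),L^\infty}\sqrt{M(r)A(r)}$. The second piece, having an extra $1/|x|$, is estimated by Cauchy--Schwarz in $\ell^2$ against $\{1/(n+1)\}$, producing the $M(r)\|Q\|_{\ell^2,L^\infty}/\sqrt{r+1}$ factor appearing in (key6)/(key88). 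The inner boundary $J_r$ is dominated by $\sup_{|x|=r}|Q|\cdot m(r)\le\|Q\|_{\ell^2([r],\infty),L^\infty}\,m(r)$.

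Substituting this decomposition into (key1) and sending $r_2\to\infty$ (discarding the positive limit $m(\infty)$) gives (key6) for $A(r)$ once one absorbs the term $\|Q\|\sqrt{M(r)A(r)}$ via AM--GM. For (key88), instead of sending $r_2\to\infty$, one averages $r_2$ over $(\rho,\rho+1)$ with $\rho>r$ and takes the supremum in $\rho$; the averaging of $|I_2|\le\sqrt{m\,m_1}$ is handled by Cauchy--Schwarz to get $\int_\rho^{\rho+1}\sqrt{m\,m_1}\le\sqrt{M(r)A(r)}$, while the $|\int V|\mu|^2/|x|^2|$ contribution is already controlled as above. Finally, (wt) is a short computation: since $|m'|\lesssim\sqrt{m\,m_1}$, pick in any unit interval $(\lfloor\rho\rfloor,\lfloor\rho\rfloor+1)$ a point $\rho'$ with $m(\rho')\le\int_{\lfloor\rho\rfloor}^{\lfloor\rho\rfloor+1}m\le M(r)$, and estimate $m(\rho)\le m(\rho')+\int_{\lfloor\rho\rfloor}^{\lfloor\rho\rfloor+1}|m'|\lesssim M(r)+\sqrt{M(r)A(r)}$; taking sup in $\rho$ produces (wt).

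The main obstacle I foresee is not the algebra but the legitimacy of the integration by parts when $V$ is merely in the class (main-assump), hence neither compactly supported nor of definite sign. The cleanest remedy is to first carry out the argument for the truncated potential $V_{(\rho)}$ from (tru), for which compact support makes (key202) and every boundary manipulation rigorous, and then pass to the limit $\rho\to\infty$ using $\|Q-Q_{(\rho)}\|_{\ell^2,L^\infty}\to0$ together with lemma~\ref{alemma}; uniformity in $\rho$ of the shell bounds on $\|Q_{(\rho)}\|_{\ell^2([r],\infty),L^\infty}\le\|Q\|_{\ell^2([r],\infty),L^\infty}$ ensures the inequalities (key6), (key88), (wt) survive the limit without deterioration.
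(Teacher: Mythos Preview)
Your proposal is essentially the paper's own proof: truncate to $V_{(\widehat R)}$, derive \eqref{key6}/\eqref{key88}/\eqref{wt} via exactly the integration-by-parts decomposition of $\int V|\mu|^2/|x|^2\,dx$ that you describe (this is the paper's \eqref{keke}--\eqref{collectivo}), then pass to the limit $\widehat R\to\infty$ using lemma~\ref{alemma}.

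One correction is needed. Your claim that finiteness of $A(r),M(r)$ is secured \emph{beforehand} via lemma~\ref{pop} and interior regularity does not work: lemma~\ref{pop} only gives $|G-G^0|<C$, so $|\mu|\lesssim 1+C|x|e^{\Im k\,|x|}$, which does not bound $M(r)$. The paper notes explicitly that it is the compact support of $V_{(\widehat R)}$ (via the asymptotics \eqref{lopot}) that makes $M_{(\widehat R)}(r)<\infty$. Consequently the limit passage is not as direct as you suggest: one cannot simply say the inequalities ``survive the limit,'' because for the untruncated $V$ the quantities $A(r),M(r)$ are not yet known to be finite and the inequalities would be vacuous. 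The paper handles this by first specializing \eqref{key6}/\eqref{key88} to $r$ large enough that the $\|Q\|_{\ell^2([r],\infty),L^\infty}$ terms can be absorbed, solving the resulting closed system to get explicit bounds $M_{(\widehat R)}(r),A_{(\widehat R)}(r)\le C(k)\bigl(m_{(\widehat R)}(r)+\sqrt{m_{(\widehat R)}(r)m_{1,(\widehat R)}(r)}\bigr)$ uniform in $\widehat R$, passing to the limit in \emph{those} (using \eqref{keert}) to conclude $A(r),M(r)<\infty$ for $V$ itself, and only then sending $\widehat R\to\infty$ in the full inequalities \eqref{key6}/\eqref{key88}.
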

\begin{proof}

We first consider truncations $V_{(\widehat R)}$ defined as in \eqref{tru}. Given $k\in \Sigma(H)$, we have $k\in \Sigma(H_{(\widehat R)})$ when $\widehat R>\widehat R_0$ and $\widehat R_0$ is large enough. Our first goal is to prove the estimates \eqref{key6},\eqref{key88}, and \eqref{wt} for $V_{(\widehat R)}$ with all constants independent of $\widehat R$. Then, we will take the limit as $\widehat R\to\infty$. 

In the calculations below, from the formula \eqref{keke} to \eqref{porosyo}, all functions involved depend on $\widehat R$ and we suppress this dependence to make reading easier. We notice that
$
\|V_{(\widehat R)}\|\lesssim \|V\|
$
and this will provide the necessary independence of $\widehat R$.
We start by proving \eqref{key6}.     Consider \eqref{key2}. Integration by parts gives
\begin{equation}\label{keke}
\left|\int_{r<|x|} V\frac{|\mu|^2}{|x|^2}dx\right|\lesssim \int_{r<|x|}|Q|\left(\frac{|\mu| |\nabla \mu|}{|x|^2}+\frac{|\mu|^2}{|x|^3}\right)dx+I_3, \quad I_3=\int_{|x|=r}|Q|\frac{|\mu|^2}{r^2}d\sigma_x\,.
\end{equation}
For $I_3$,
$
 I_3\lesssim m(r) \|Q\|_{\ell^2([r],\infty),L^\infty}\,.
$
The integral can be estimated we follows
\[
        \int_{r<|x|} |Q|\left(\frac{|\mu| |\nabla \mu|}{|x|^2} +\frac{|\mu|^2}{|x|^3}   \right)  dx       \le    \left(\int_{r<|x|}\frac{|Q|^2|\mu|^2}{|x|^{2}}dx\right)^{1/2}\left(    \int_{r<|x|}\frac{|\nabla\mu|^2}{|x|^2}  dx   \right)^{1/2}
+\int_{r<|x|}\frac{|Q||\mu|^2}{|x|^{3}}dx\,.
\]
For the first integral,
\[
\int_{r<|x|}\frac{|Q|^2|\mu|^2}{|x|^{2}}dx\le M(r)\sum_{n=[r]}^\infty \sup_{n<|x|<n+1} |Q|^2=M(r)\|Q\|^2_{\ell^2([r],\infty),L^\infty}
\]
and
\[
\int_{r<|x|}\frac{|Q||\mu|^2}{|x|^{3}}dx\lesssim M(r)\sum_{n=[r]}^\infty \frac{1}{n+1}\sup_{n<|x|<n+1}|Q(x)|\lesssim M(r)(r+1)^{-1/2}\|Q\|_{\ell^2([r],\infty),L^\infty}\,.
\]
Collecting the estimates, we write
\begin{equation}\label{collectivo}
\left|\int_{r<|x|} V\frac{|\mu|^2}{|x|^2}dx\right|\lesssim m(r) \|Q\|_{\ell^2([r],\infty),L^\infty}+\sqrt{A(r)M(r)}\|Q\|_{\ell^2([r],\infty),L^\infty}+M(r)(r+1)^{-1/2}\|Q\|_{\ell^2([r],\infty),L^\infty}\,.
\end{equation}
Substituting into \eqref{key2} and solving inequality for $A$, we get \eqref{key6}.\smallskip

 Consider \eqref{key1} and take $r_1=r$. Then, we  drop the first term and average in $r_2$ from $\rho$ to $\rho+1$ assuming $\rho>r$. We use \eqref{collectivo} to get
\begin{eqnarray}\nonumber
\int_{\rho}^{\rho+1}m(r_2)dr_2\lesssim m(r)+\frac{\Im k}{|k|^2} \Bigl(
\sqrt{A(r)M(r)}\|Q\|_{\ell^2([r],\infty),L^\infty}+\frac{M(r)\|Q\|_{\ell^2([r],\infty),L^\infty}}{\sqrt{r+1}}+m(r)\|Q\|_{\ell^2([r],\infty),L^\infty}
\Bigr)+    \\\label{clop}
\frac{1}{|k|}\int_{\rho}^{\rho+1} \sqrt{m(r_2)m_1(r_2)}dr_2+\frac{\sqrt{m(r)m_1(r)}}{|k|}\,.\hspace{2cm}
\end{eqnarray}
We apply Cauchy-Schwarz estimate to get
\[
\int_{\rho}^{\rho+1} \sqrt{m(r_2)m_1(r_2)}dr_2\leq \left(\int_\rho^{\rho+1} m(r_2)dr_2\right)^{1/2}A^{1/2}(r)\leq \sqrt{A(r)M(r)}\,.
\]
Taking supremum in $\rho$ of both sides in \eqref{clop}, we get an estimate
\begin{eqnarray}\nonumber
M(r)\lesssim\frac{\Im k}{|k|^2} \Bigl(\sqrt{A(r)M(r)}\|Q\|_{\ell^2([r],\infty),L^\infty}+
m(r)\|Q\|_{\ell^2([r],\infty),L^\infty}+\frac{M(r)\|Q\|_{\ell^2([r],\infty),L^\infty}}{\sqrt{r+1}}
\Bigr)+\\ m(r)+
\frac{\sqrt{m(r)m_1(r)}}{|k|}+\frac{\sqrt{M(r)A(r)}}{|k|}\,,\label{vtor}
\end{eqnarray}
which is \eqref{key88}. The proof of \eqref{wt} is identical to \eqref{wt0}. Notice that it is the support of $V$ being compact that allows us to say that $M(r)<\infty$.  
Thus, we proved \eqref{key6},\eqref{key88}, and \eqref{wt} for truncated potential $V_{(\widehat R)}$ assuming  $k\in \Sigma(H)$. 
Let us study the first two estimates. Fixing $k\in \Sigma$, we can take $r(k,V)$ so large that  inequalities take the following simpler form for all $r\ge r(k,V)$ because of  $\lim_{r\to\infty}\|Q\|_{\ell^2([r],\infty),L^\infty}=0$. 
\begin{eqnarray}\label{perun1}
A(r)\lesssim  M(r)\|Q\|^2_{\ell^2([r],\infty),L^\infty}+\frac{M(r)\|Q\|_{\ell^2([r],\infty),L^\infty}}{\sqrt{r+1}}
+\\
\frac{|k|^2}{\Im k}\left(m(r)+|k|^{-1}\left(m(r)m_1(r) \right)^{1/2}\right)\,,\\
M(r)\lesssim m(r)+
\frac{\sqrt{m(r)m_1(r)}}{|k|}+\frac{\sqrt{M(r)A(r)}}{|k|}\,.\hspace{2cm}\label{perun2}
\end{eqnarray}
Substituting the first estimate into the second gives
\begin{equation}\label{poros}
M(r)\lesssim \left(1+\frac{1}{\Im k}\right)\left(m(r)+
\frac{\sqrt{m(r)m_1(r)}}{|k|}\right)\,,
\end{equation}
\begin{equation}\label{porosyo}
A(r)\lesssim \frac{|k|^2}{\Im k}\left(m(r)+
\frac{\sqrt{m(r)m_1(r)}}{|k|}\right)\,,
\end{equation}
if $r\ge r(k,V)$ and $r(k,V)$ is large enough. Now, we  will send $\widehat R\to\infty$. To do that, we first notice that lemma \ref{alemma} implies
\[
\lim_{\widehat R\to\infty}\|\mu_{(\widehat R)}(x,k)-\mu(x,k)\|_{\cal{H}^2(r_1<|x|<r_2)}=0
\]
for every $r_{1(2)}: R_f<r_1<r_2<\infty$. Therefore, $m_{(\widehat R)}(\rho)\to m(\rho), {m_1}_{(\widehat R)}(\rho)\to m_1(\rho)$ and
\begin{equation}\label{keert}
\int_{r_1<|x|<r_2}\frac{|\nabla \mu_{(\widehat R)}|^2}{|x|^2}dx\to \int_{r_1<|x|<r_2}\frac{|\nabla \mu|^2}{|x|^2}dx\,.
\end{equation}
Taking $r>r(k,V)$ and sending $\widehat R\to\infty$ in \eqref{poros} and \eqref{porosyo}, we obtain \eqref{poros} and \eqref{porosyo} for $V$ itself. This implies that $M(r)<\infty$ and $A(r)<\infty$  for all $r>R_f$. Then, we can send $\widehat R\to\infty$ in \eqref{key6} and \eqref{key88} and this proves the lemma.
\end{proof}

Now, we are ready to prove the main results of the first part of the paper.

\begin{proof}{\it (of Theorem \ref{t1})}
Notice that given $y$, we can consider $V_{(y)}(x)=V(x-y)$. By lemma  \ref{shift},
\[
\|V_{(y)}\|_{\ell^2(\mathbb{Z}^+),L^\infty}\lesssim 1+|y|\,.
\]
Thus, we can assume that $y=0$ without loss of generality.

Take \eqref{ur1} with $f=\delta_0$. Having fixed $\Pi(a,b,h)$, we examine the estimates \eqref{key6}, \eqref{key88}, \eqref{wt}. 
By taking $r_0(a,b,h)$ sufficiently large, we can guarantee  \eqref{perun1} and \eqref{perun2} for all $k\in \Pi(a,b,h)$. Therefore, \eqref{poros} and \eqref{porosyo} hold as well and we only need to obtain upper bounds for $m(r_0)$ and $m_1(r_0)$ uniformly over $k\in \Pi(a,b,h)$.
Recall that (check \eqref{mumu})
\[
\mu=4\pi |x| e^{-ik|x|}G(x,0,k^2)\,.
\]
Thus, the lemma \ref{lm2} implies
\begin{equation}\label{kuzma1}
m(r_0)\leq C(a,b,h,\|V\|_\infty) \frac{1}{\Im k}\left(1+\frac{\|V\|_\infty^2}{(\Im k)^2}\right)
\end{equation}
and
\begin{equation}\label{kuzma2}
m_1(r_0)\leq C(a,b,h,\|V\|_\infty) \frac{1}{\Im k}\left(1+\frac{\|V\|_\infty^2}{(\Im k)^2}\right)\,.
\end{equation}
Substitution into \eqref{poros}, \eqref{porosyo} gives
\[
M(r_0)\lesssim \frac{C(a,b,h,\|V\|)}{\Im^4k}, \,A(r_0)\lesssim \frac{C(a,b,h,\|V\|)}{\Im^4k}
\]
and these estimates can be extended to all $r>R_f+1$ because we can use the estimates from lemma \ref{lm2} for $r\in [R_f+1,r_0]$. Now,  \eqref{wt} finishes the proof.
\end{proof}

\begin{proof}{\it (of Theorem \ref{t2})} We again assume that $y=0$ without any loss of generality. Now, we can write
\begin{equation}\label{poi1}
\mu(x,k)=1-\int_{\mathbb{R}^3} \frac{|x|e^{ik(|x-y|+|y|-|x|)}}{|x-y||y|}V\mu(y,k)dy=1-I_0-I_1\,,
\end{equation}
where
\[
I_0=\int_{{|y|<1}} \frac{|x|e^{ik(|x-y|+|y|-|x|)}}{|x-y||y|}V\mu(y,k)dy,\quad   I_1=\int_{|y|>1} \frac{|x|e^{ik(|x-y|+|y|-|x|)}}{|x-y||y|}V\mu(y,k)dy\,.
\]
Clearly, we only need to consider the limiting behavior of $I_1(r\sigma)$ where $x=r\sigma, \sigma\in \mathbb{S}^2$ and $r\to\infty$ since $I_0$ has all required properties. To this end, we write
\[
V\mu=({\rm div}\, Q)\mu=|y|{\rm div}\,(Q\mu |y|^{-1})-|y|Q\frac{\nabla\mu}{|y|}+Q\mu\frac{y}{|y|^2}
\]
and,  after integration by parts in the first term,
\begin{equation}\label{rew}
I_1=I_{1,1}+I_{1,2}+I_{1,3}+I_{1,4}\,,
\end{equation}
where,
\[
I_{1,1}=-|x|\int_{|y|=1} \frac{e^{ik(|x-y|+|y|-|x|)}}{|x-y||y|}Q(y)\mu(y,k)nd\sigma_y\,,
\]
\[
I_{1,2}=-|x|\int_{|y|>1} \nabla_y\left(\frac{e^{ik(|x-y|+|y|-|x|)}}{|x-y|}\right)Q(y)\frac{\mu(y,k)}{|y|}dy=4\pi B_r^{(4)}\left(Q\frac{\mu}{|y|}\chi_{|y|>1}\right), \quad ({\rm see}\,\, \eqref{lk1})
\]
\[
I_{1,3}=-|x|\int_{|y|>1} \left(\frac{e^{ik(|x-y|+|y|-|x|)}}{|x-y||y|}\right)Q(y)\nabla_y\mu(y,k)dy=-4\pi B_r^{(3)}\left(Q,\frac{\nabla\mu}{|y|}\chi_{|y|>1}  \right), \quad ({\rm see}\,\, \eqref{fo3})
\]
\[
I_{1,4}=|x|\int_{|y|>1} \left(\frac{e^{ik(|x-y|+|y|-|x|)}}{|x-y||y|}\right)|y|Q\mu\frac{y}{|y|^3}dy=4\pi B_r^{(3)}\left(Q,\frac{\mu y}{|y|^3}  \chi_{|y|>1}\right)
\]
if we denote $r=|x|$. The term $I_{1,1}$ clearly has required asymptotics as $r\to\infty$ so we focus on the other terms.
Lemma \ref{mre} gives
\begin{equation}\label{fifi}
\frac{\nabla\mu(x,k)}{|x|}\in L^2(B_1^c(0)), \quad \sup_{r>1} \frac{1}{r^2}\int_{|x|=r}|\mu(x,k)|^2d\sigma_x<\infty
\end{equation}
for every $k\in \Sigma$. Therefore,
\[
Q\frac{\mu}{|y|}\in L^2(B_1^c(0)), \, \frac{\mu y}{|y|^3}\in  L^2(B_1^c(0))
\]
and  lemma \ref{ttt2} gives the claimed convergence.  The analyticity of the limit follows from the analyticity of $G(x,y,k^2)$ in $k\in \Sigma$.

\end{proof}

\begin{proof}{\it (of Theorem \ref{t3})}
We again assume that $y=0$ and $\arg k\in (\delta,\pi-\delta)$. The proof  will proceed in two steps. Consider \eqref{key6} and \eqref{key88} when $|k|$ is large and $\arg k\in (\delta,\pi-\delta)$. In our case, $R_f=0$.
We send $r\to 0$ so we  need bounds for $\limsup_{r\to 0}m(r)$ and $\limsup_{r\to 0}m_1(r)$. That will allows us to get estimates on $M(0)$ and $A(0)$.
Then, we will write
\begin{equation}\label{again1}
 \mu(x,k)=1-\int_{\mathbb{R}^3} \frac{|x|e^{ik(|x-y|+|y|-|x|)}}{|x-y||y|}V\mu(y,k)dy
\end{equation}
and will use theorem \ref{ttt1}. To this end, we first write
\[
\mu(x,k)=1-\int_{\mathbb{R}^3} \frac{|x|e^{ik(|x-y|-|x|)}}{|x-y|}VG(y,0,k^2)dy\,.
\]
Lemma \ref{pop} implies
\begin{equation}\label{kert}
\left|\int_{\mathbb{R}^3} \frac{e^{ik(|x-y|-|x|)}}{|x-y|}VG(y,0,k^2)dy\right|\lesssim \|V\|_\infty \left|\int_{\mathbb{R}^3} \frac{e^{-\Im k (|x-y|-|x|)}}{|x-y|}
\left(\frac{e^{-\Im k |y|}}{|y|}+C(k,\|V\|_\infty)\right)  dy\right|\,,
\end{equation}
where the last expression is bounded when $x\to 0$. Thus, $\lim_{r\to 0}m(r)=1$. For the gradient,
\[
\nabla \mu=-\frac{x}{|x|}\int_{\mathbb{R}^3} \frac{e^{ik(|x-y|+|y|-|x|)}}{4\pi|x-y||y|}V\mu(y,k)dy-|x|\nabla\left(\int_{\mathbb{R}^3} \frac{e^{ik(|x-y|+|y|-|x|)}}{4\pi|x-y||y|}V\mu(y,k)dy\right)=J_1+J_2\,.
\]
We have
\[
\limsup_{x\to 0}|J_1|\lesssim \int_{\mathbb{R}^3} \frac{e^{-2\Im k|y|}}{|y|^2}|V\mu|dy\lesssim \|V\|_\infty \int_0^\infty e^{-2r\Im k }r^{-2}\int_{S_r(0)}|\mu|dr\lesssim
 \|V\|_\infty \int_0^\infty e^{-2r\Im k }\sqrt{m(r)}dr\,.
\]
Thus,  applying Cauchy-Schwarz, we get
\[
\limsup_{x\to 0}|J_1|\lesssim \frac{\sqrt{M(0)}\|V\|_\infty}{\sqrt{\Im k}}\,.
\]
For $J_2$, simple upper bounds for the integral (similar to \eqref{kert}) and the identity
\[
\lim_{|x|\to 0}|x|\int_{|y|<1}\frac{1}{|x-y|^2|y|}dy\lesssim \lim_{|x|\to 0} |x||\log |x||=0
\]
imply
$
\lim_{x\to 0}J_2=0\,.
$
That yields
\begin{equation}\label{zapa}
\limsup_{r\to 0}m_1(r)\lesssim \frac{M(0)\|V\|^2_\infty}{{\Im k}}\,.
\end{equation}
The bounds \eqref{key6} and \eqref{key88} give
\[
A(0)\le C(\|V\|)(1+M(0)+  |k|), \quad M(0)\le C(\|V\|)(1+\sqrt{A(0)M(0)}/|k|)
\]
when $|k|\to\infty$ and $k$ is in the sector. Then, $M(0)\le C(\|V\|), A(0)\le C(\|V\|)|k|$.

The next step will be to use these two bounds to control the integral in \eqref{again1} when  $|k|$ is large.
We consider the representation \eqref{poi1}. The estimate on $M(0)$ immediately gives
\[
\lim_{|k|\to\infty, \arg k\in (\delta,\pi-\delta)}\limsup_{|x|\to\infty}\|I_0\|_{L^2(\mathbb{S}^2)}=0
\]
after applying Cauchy-Schwarz inequality.
The same is true for $I_{1,1}$ in representation \eqref{rew}. In the representations for $I_{1,2}$,  $I_{1,3}$, and  $I_{1,4}$, we use estimates \eqref{erat} and the bounds for $A(0)$ and $M(0)$ to get
\[
\limsup_{r\to\infty} I_{1,2}=\limsup_{r\to\infty}\|B_r^{(4)}\left(Q\frac{\mu}{|y|}\chi_{|y|>1}\right)\|_{L^2(\mathbb{S}^2)}\le C(\|V\|)
C_4'(k)\|V\|M^{1/2}(0)\le \frac{C(\|V\|)}{\sqrt{|k|}}\,,
\]
\[
\limsup_{r\to\infty} I_{1,3}=\limsup_{r\to\infty}\|B_r^{(3)}\left(Q,\frac{\nabla\mu}{|y|}\chi_{|y|>1}\right)\|_{L^2(\mathbb{S}^2)}\le C(\|V\|)
C_3'(k)\|V\|A^{1/2}(0)\le \frac{C(\|V\|)}{\sqrt{|k|}}\,,
\]
\[
\limsup_{r\to\infty} I_{1,4}=\limsup_{r\to\infty}\|B_r^{(3)}\left(Q,           \frac{\mu y}{|y|^3}  \chi_{|y|>1}          \right)\|_{L^2(\mathbb{S}^2)}\le C(\|V\|)
C_3'(k)\|V\|M^{1/2}(0)\le \frac{C(\|V\|)}{{|k|}}\,,
\]
since $C_3'(k)\lesssim |k|^{-1}, C_4'(k)\lesssim C_1'(k)+C_2'(k)\lesssim |k|^{-1/2}$ as follows from \eqref{qeee11}, \eqref{qeee21}, and \eqref{qeee31} upon taking the square root. That finishes the proof.
\end{proof}

Consider again the truncated potential $V_{(\widehat R)}$ defined as in $\eqref{tru}$ and the corresponding function
\[
{A_\infty}_{(\widehat R)}(\sigma,y,k)=A_\infty(\sigma,y,k,V_{\widehat R})\,.
\]
We need the following stability lemma.
\begin{lemma} \label{lolo} Consider any $K$, a compact in $\mathbb{R}^3$ and $K_1$, a compact in $\Sigma$. If $k\in \Sigma$,
\begin{equation}\label{stab1}
\lim_{\widehat R\to\infty}\|{A_\infty}_{(\widehat R)}(\sigma,y,k)- A_\infty(\sigma,y,k)\|_{L^2(\mathbb{S}^2)}=0
\end{equation}
and convergence is uniform in $y\in K$ and $k\in K_1$. The function $A_\infty(\sigma,y,k)$ is continuous in $y$ and in $k$ in $L^2(\mathbb{S}^2)$ topology.
\end{lemma}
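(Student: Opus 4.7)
The plan is to combine the representation of $A_\infty$ derived in the proof of Theorem \ref{t2} with Lemma \ref{alemma} (uniform convergence of Green's functions upon truncation) and the operator bounds from Theorem \ref{ttt1}. After translating so $y$ is the origin via Lemma \ref{shift}, $A_\infty(\sigma, 0, k)$ is expressed as a sum of explicit short-range pieces (from $I_0$ and $I_{1,1}$) plus the $r \to \infty$ limits $B_\infty^{(3)}$ and $B_\infty^{(4)}$ applied to functions built from $Q$, $\mu$, and $\nabla\mu/|y'|$. The same formula with $V_{(\widehat R)}, Q_{(\widehat R)}, \mu_{(\widehat R)}$ yields ${A_\infty}_{(\widehat R)}(\sigma, 0, k)$. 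Since $\|V^{(\widehat R)}\|_{2,2} \to 0$, one can fix $\widehat R$ large enough that $K_1 \subset \Sigma(H_{(\widehat R)})$ for all $\widehat R$ past some threshold.

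To prove \eqref{stab1} I would subtract these two representations. The difference naturally splits into two families: terms involving $Q - Q_{(\widehat R)} = Q^{(\widehat R)}$, controlled by $\|Q^{(\widehat R)}\|_{\ell^2(\mathbb{Z}^+),L^\infty} \to 0$ together with the operator bounds of Theorem \ref{ttt1} and the uniform bounds on $\mu$ and $\nabla\mu/|y'|$ furnished by Lemma \ref{mre}; and terms involving $\mu - \mu_{(\widehat R)}$, handled by splitting the $y'$-integration at a large radius $R_0$. On $\{|y'| < R_0\}$, Lemma \ref{alemma} gives $\|\mu - \mu_{(\widehat R)}\|_{\mathcal{H}^1} \to 0$; on $\{|y'| > R_0\}$, the uniform tail bound from Lemma \ref{mre} combined with $\|Q\|_{\ell^2([R_0],\infty),L^\infty} \to 0$ makes the contribution arbitrarily small once $R_0$ is chosen large, uniformly in $\widehat R$. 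Uniformity over $y \in K$ and $k \in K_1$ is then secured by Lemma \ref{shift}, which bounds $\|V_{[y]}\|$ and $\|Q_{[y]}\|$ uniformly on $K$, and by noting that the estimates in Lemma \ref{pop}, Lemma \ref{mre}, and Theorem \ref{ttt1} depend on $k$ only through $\Im k$, $|k|$, and $\dist(k, \sigma(H))$, all controlled uniformly on the compact $K_1 \subset \Sigma$.

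Continuity of $A_\infty$ in $k$ follows from analyticity of each ingredient of the representation together with the uniform bounds, which upgrade pointwise analyticity to $L^2(\mathbb{S}^2)$-continuity on compacts. Continuity in $y$ is obtained in the same spirit: via Lemma \ref{shift} the $y$-dependence transfers to the shifted potential $V_{[y]}(x) = V(x-y)$, and translation is continuous in $L^2_{\mathrm{loc}}$ and in the $\ell^2(\mathbb{Z}^+),L^\infty$ norm for $y$ ranging in a compact set, which suffices for the integral representations of each term. The main obstacle I expect is exactly the second family of difference terms above, where convergence of $\mu_{(\widehat R)}$ to $\mu$ must be traded against the tail of $Q$ while keeping all estimates uniform in $(y, k) \in K \times K_1$ simultaneously. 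This is resolved by the natural two-scale argument: first choose $R_0$ using the uniform tail control from Lemma \ref{mre}, then send $\widehat R \to \infty$ using Lemma \ref{alemma} on the now-fixed compact annulus $\{1 < |y'| < R_0\}$, with both choices made uniform in $y$ and $k$ by Lemma \ref{shift}.
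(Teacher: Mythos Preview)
Your proposal is correct and follows essentially the same two-scale strategy as the paper: split the integral defining $A_\infty$ at a large radius $\rho$ (your $R_0$), control the tail uniformly in $\widehat R$ via the $B_\infty^{(j)}$ bounds and Lemma~\ref{mre}, and then pass $\widehat R\to\infty$ on the finite piece using Lemma~\ref{alemma}. The only cosmetic difference is that the paper works with the pre-integration-by-parts representation
\[
{A_\infty}_{(\widehat R)}(\sigma,y,k)=1-\int \frac{e^{ik|\xi|(1-\langle\sigma,\widehat\xi\rangle)}}{4\pi|\xi|}V_{(\widehat R)}(\xi)\mu_{(\widehat R)}(\xi,y,k)\,d\xi
\]
and only implicitly invokes the integration by parts when bounding the tail, whereas you carry the decomposition $I_0,I_{1,1},\ldots,I_{1,4}$ through explicitly. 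For continuity in $y$, your translation argument via $V_{[y]}$ is more effort than needed: the cleaner route, which the paper tacitly relies on, is that each ${A_\infty}_{(\widehat R)}$ is continuous in $(y,k)$ because $V_{(\widehat R)}$ is short-range, and uniform convergence on $K\times K_1$ transfers this to $A_\infty$.
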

\begin{proof}
If $\mu_{(\widehat R)}(x,y,k)=A_{(\widehat R)}(x,y,k)$, we can write
\[
{A_\infty}_{(\widehat R)}(\sigma,y,k)=1-\int_{\mathbb{R}^3} \frac{e^{ik|\xi|(1-\langle \sigma,\widehat \xi\rangle)}}{4\pi|\xi|}V_{(\widehat R)}(\xi)\mu_{(\widehat R)}(\xi,y,k) d\xi=
\]
\begin{equation}\label{get}
1-\int_{|\xi|<\rho} \frac{e^{ik|\xi|(1-\langle \sigma,\widehat \xi\rangle)}}{4\pi|\xi|}V_{(\widehat R)}(\xi)\mu_{(\widehat R)}(\xi,y,k) d\xi-
\int_{|\xi|>\rho} \frac{e^{ik|\xi|(1-\langle \sigma,\widehat \xi\rangle)}}{4\pi |\xi|}V_{(\widehat R)}(\xi)\mu_{(\widehat R)}(\xi,y,k) d\xi
\end{equation}
with any $\rho>0$. For the third term,
\begin{equation}\label{lji}
\left\|\int_{|\xi|>\rho} \frac{e^{ik|\xi|(1-\langle \sigma,\widehat \xi\rangle)}}{|\xi|}V_{(\widehat R)}(\xi)\mu_{(\widehat R)}(\xi,y,k) d\xi \right\|_{L^2(\mathbb{S}^2)}\to 0
\end{equation}
as  $\rho\to\infty$ uniformly in $\widehat R$, $y\in K$, and $k\in K_1$. This follows from the estimates on the operators $B_\infty^{(j)}$ obtained in \eqref{erat} and bounds contained in lemma \ref{mre}.
The second term in \eqref{get} converges to
\[
\int_{|\xi|<\rho} \frac{e^{ik|\xi|(1-\langle \sigma,\widehat \xi\rangle)}}{|\xi|}V(\xi)\mu(\xi,y,k) d\xi
\]
in the uniform norm in $\sigma$ for every fixed $\rho$ when $\widehat R\to\infty$. This convergence is uniform in $y\in K$ and $k\in K_1$. It is now sufficient to notice that
\[
\int_{|\xi|<\rho} \frac{e^{ik|\xi|(1-\langle \sigma,\widehat \xi\rangle)}}{|\xi|}V(\xi)\mu(\xi,y,k) d\xi\to \int_{\mathbb{R}^3} \frac{e^{ik|\xi|(1-\langle \sigma,\widehat \xi\rangle)}}{|\xi|}V(\xi)\mu(\xi,y,k) d\xi, \quad \rho\to\infty\,.
\]
This  convergence is in $L^2(\mathbb{S}^2)$ and it is  uniform in $y\in K$ and $k\in K_1$. Indeed,
\[
\lim_{\rho\to\infty}\left\|\int_{|\xi|>\rho} \frac{e^{ik|\xi|(1-\langle \sigma,\widehat \xi\rangle)}}{|\xi|}V(\xi)\mu(\xi,y,k) d\xi\right\|_{L^2(\mathbb{S}^2)}=0
\]
similarly to \eqref{lji}. Thus, we first choose $\rho$ large enough to have integrals over $|\xi|>\rho$ small (uniformly in $\widehat R$) and then, with fixed $\rho$, send $\widehat R$ to infinity. 
\end{proof}

At that moment, it is important to make the following remark. When defining $A_\infty(\sigma,y,k)$ we first restricted $A(x,y,k)$ to the sphere $S_r(y)$ {\it centered at} $y$ and then took a limit as $r\to\infty$. We introduce another quantity now
\begin{equation}\label{sm-a}
a(x,y,k)\dd \frac{G(x,y,k^2)}{G^0(x,0,k^2)}, \quad k\in \Sigma\,.
\end{equation}
The results on convergence of $a(r\sigma,y,k)$ and stability lemma, similar to lemma  \ref{lolo}, can be proved in the similar manner (taking $f=\delta_y$ in \eqref{ur1}). This yields
\begin{itemize}
\item
\[
\sup_{r>|y|+1}\frac{1}{r^2}\int_{S_r(0)}|a(r\sigma,y,k)|^2d\sigma\le \frac{C(a,b,h,V,y)}{\Im^4 k}
\]
for all $k\in \Pi(a,b,h)$. 

\item
Moreover, 
\[
\lim_{r\to\infty}\|a(r\sigma,y,k)-a_\infty(\sigma,y,k)\|_{L^2(\mathbb{S}^2)}=0
\]
and $a_\infty(\sigma,y,k)$ is $L^2(\mathbb{S}^2)$-valued vector-function  analytic in $k\in \Sigma$.
\item
If $V_{\widehat R}$ is truncated potential and ${a_\infty}_{(\widehat R)}$ is an associated function, then
\begin{equation}\label{stabstab}
\lim_{\widehat R\to\infty}\|{a_\infty}_{(\widehat R)}(\sigma,y,k)- a_\infty(\sigma,y,k)\|_{L^2(\mathbb{S}^2)}=0\,.
\end{equation}
\end{itemize}

These results imply the following lemma.
\begin{lemma}\label{polk}For every $k\in \Sigma$,
\begin{equation}\label{polk2}
a_\infty(\sigma,y,k)=e^{-ik\langle \sigma,y\rangle}A_\infty(\sigma,y,k)\,.
\end{equation}
\end{lemma}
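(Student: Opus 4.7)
The identity is most transparent for truncated potentials, where far-field asymptotics can be read off directly from the second resolvent identity. My plan is to first establish the identity for $V_{(\widehat R)}$ (compactly supported), and then pass to the limit $\widehat R \to \infty$ using the two stability statements already in place: lemma \ref{lolo} for $A_\infty$, and the analogous assertion \eqref{stabstab} for $a_\infty$.

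For compactly supported $V_{(\widehat R)}$, I would start from
\[
G_{(\widehat R)}(x,y,k^2) = G^0(x,y,k^2) - \int_{\mathbb{R}^3} G^0(x,\xi,k^2)\, V_{(\widehat R)}(\xi)\, G_{(\widehat R)}(\xi,y,k^2)\, d\xi
\]
and apply the elementary far-field expansion
\[
\frac{e^{ik|x-\xi|}}{4\pi|x-\xi|} = \frac{e^{ik|x|}}{4\pi|x|}\, e^{-ik\langle \widehat x,\xi\rangle}\bigl(1 + O(|x|^{-1})\bigr),
\]
valid uniformly for $\xi$ in a bounded set. Taking $x=r\sigma$ (so $\widehat x = \sigma$, $|x|=r$), multiplying by $4\pi r e^{-ikr}$, and letting $r\to\infty$ yields
\[
{a_\infty}_{(\widehat R)}(\sigma,y,k) = e^{-ik\langle \sigma,y\rangle} - \int_{\mathbb{R}^3} e^{-ik\langle \sigma,\xi\rangle}\, V_{(\widehat R)}(\xi)\, G_{(\widehat R)}(\xi,y,k^2)\, d\xi.
\]
Taking instead $x=y+r\sigma$ (so that $\widehat x \to \sigma$, $|x| = r + \langle\sigma,y\rangle + O(1/r)$, while $|x-y|=r$), and letting $r\to\infty$ yields
\[
{A_\infty}_{(\widehat R)}(\sigma,y,k) = 1 - \int_{\mathbb{R}^3} e^{ik\langle \sigma,\, y-\xi\rangle}\, V_{(\widehat R)}(\xi)\, G_{(\widehat R)}(\xi,y,k^2)\, d\xi.
\]
Factoring $e^{-ik\langle \sigma,y\rangle}$ out of the first display identifies ${a_\infty}_{(\widehat R)} = e^{-ik\langle \sigma,y\rangle}{A_\infty}_{(\widehat R)}$ pointwise in $\sigma$, hence also in $L^2(\mathbb{S}^2)$.

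The passage to the limit is then immediate: by lemma \ref{lolo} and \eqref{stabstab}, both ${A_\infty}_{(\widehat R)} \to A_\infty$ and ${a_\infty}_{(\widehat R)} \to a_\infty$ in $L^2(\mathbb{S}^2)$ as $\widehat R \to \infty$, and multiplication by the bounded multiplier $e^{-ik\langle \sigma,y\rangle}$ (of $L^\infty(\mathbb{S}^2)$-norm at most $e^{|y|\,\Im k}$) preserves this convergence, so the identity passes to the limit. The one point requiring a bit of care is the uniform (in $\sigma$) justification of the far-field expansion for the correction integral; since $V_{(\widehat R)}$ is supported in $B_{\widehat R+1}(0)$ and the $|\xi-y|^{-1}$-type singularity of $G_{(\widehat R)}(\cdot,y,k^2)$ is integrable on that ball (as follows from \eqref{leret}), this reduces to the standard expansion of $G^0$ together with dominated convergence, and is the main — but very mild — technical step.
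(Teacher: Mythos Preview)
Your proposal is correct and follows exactly the paper's route: verify the identity for truncated potentials $V_{(\widehat R)}$ and then pass to the limit via the stability results \eqref{stab1} and \eqref{stabstab}. The only difference is that you spell out the far-field computation for the compactly supported case, which the paper simply calls ``straightforward''; your explicit formulas for ${a_\infty}_{(\widehat R)}$ and ${A_\infty}_{(\widehat R)}$ via the second resolvent identity are exactly what is meant.
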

\begin{proof}
 In the case, when the potential is compactly supported, this is straightforward so 
 \[
{a_\infty}_{(\widehat R)}(\sigma,y,k)=e^{-ik\langle \sigma,y\rangle}{A_\infty}_{(\widehat R)}(\sigma,y,k)\,.
\]
Now, we only need to send $\widehat R\to\infty$ and use stability lemmas for $A$ and $a$, i.e., \eqref{stab1} and 
\eqref{stabstab}.
 \end{proof}

\begin{proof}{\it (of Theorem \ref{t4})}.
Recalling the definition of $h_f$ and using the previous lemma, we get
\[
h_f(\sigma,k)=\int_{\mathbb{R}^3} f(y) a_\infty(\sigma,y,k)dy, \quad {h_f}_{(\widehat R)}(\sigma,k)=\int_{\mathbb{R}^3} f(y){a_\infty}_{(\widehat R)}(\sigma,y,k)dy\,.
\]
The stability lemma implies that
\begin{equation}\label{lef}
\lim_{\widehat R\to\infty}\|  {h_{f}}_{(\widehat R)}-         h_f\|_{L^2(\mathbb{S}^2)}=0\,.
\end{equation}
In (\cite{yafaev}, p. 40-42, see also \cite{den1}, formula (4.2)), it was proved that
\begin{equation}\label{getq}
\sigma_f'(k^2, H_{(\widehat R)})=C k\|{h_{f}}_{(\widehat R)}(\sigma,k)\|^2_{L^2(\mathbb{S}^2)}, \quad k>0
\end{equation}
with an explicit absolute constant $C$ whose actual value is not important for us at that moment.
Consider the following function
$
p_{(\widehat R)}(k)=\|{h_{(\widehat R)}}_f(\sigma,k)\|_{L^2(\mathbb{S}^2)}\,.
$
It follows from the absorption principle for short-range potentials \cite{yafaev} that $p_{\widehat R}$ is continuous in $k\in \overline{\Pi(a,b,h)}$. It is also subharmonic and satisfies the following estimate, uniform in $\widehat R$:
\begin{equation}\label{getr}
p_{(\widehat R)}(k)\le \frac{C(a,b,h,f,V)} {\Im^2 k}
\end{equation}
as follows from the analyticity of $a_\infty$ and the main result of theorem \ref{t1}. Now we use the following argument (see, e.g., \cite{rowan}).
Consider an isosceles triangle $T_\alpha(a,b)$ with the base $[a,b]$ and the two angles equal $\alpha$ (Figure 1).

\begin{picture}(300,270)
\put(0,100){\line(1,0){450}}
\put(10,80){\line(0,1){160}}

\put(50,100){\line(1,0){300}}
\put(50,100){\line(4,1){150}}

 \put(50,100){\line(0,1){100}}
 \put(350,100){\line(0,1){100}}

 \put(50,200){\line(1,0){300}}

\put(350,100){\line(-4,1){150}}

\put(100,100){\line(0,1){12}}
\put(300,100){\line(0,1){12}}
\put(100,112){\line(1,0){200}}
\put(190,170){$\Pi(a,b,h)$}

\qbezier(75,100)(74,103)(72,105)
\put(77,101){$\alpha$}

\put(100,90){$a'$} \put(300,90){$b'$}
\put(15,80){$O$}
\put(15,230){$Y$}
\put(435,80){$X$}

\put(150,90){$ \Pi(a',b',h')$}

\put(210,118){$T_\alpha(a,b)$}

\put(200,105){\circle*{2}}\put(203,104){$k$}

\put(81,108){\circle*{2}}\put(82,112){$\xi$}

\put(50,90){$a$} \put(350,90){$b$} \put(180,50){Figure 1}
\end{picture}

Then, since $p_{(\widehat R)}$ is subharmonic,
\[
p_{(\widehat R)}(k)\le \int_{\partial T_\alpha} {P}_{T_\alpha(a,b)}(k,\xi)   p_{(\widehat R)}(\xi)d|\xi|
\]
at any interior point $k$.
The behavior of $P_{T_\alpha}$ at the corners $\xi=a,b$ is governed by the following estimates
\[
|P_{T_\alpha}(k,\xi)|\le C(a',b',h',\alpha) \min\Bigl( |\xi-a|^{(\pi-\alpha)/\alpha}, |\xi-b|^{(\pi-\alpha)/\alpha}\Bigr)
\]
uniformly over all $k\in \Pi(a',b',h')\subset T_\alpha$.  These bounds can be obtained by conformal mapping to the disc.  Provided that $\alpha$ is small enough,  \eqref{getq} and \eqref{getr} imply inequality
\begin{equation}\label{ker}
p_{(\widehat R)}(k)\le C(a',b',a,b,f,V)\left(1+\int_a^b P_{\mathbb{C}^+}(k,\xi)  \sqrt{\sigma'_f(\xi^2,H_{\widehat R})} d\xi\right)
\end{equation}
since
\[
P_{T_\alpha}(k,\xi)<C(a',b',h') P_{\mathbb{C}^+}(k,\xi)
\]
uniformly over $\xi\in [a',b']$ and $k\in \Pi(a',b',h')$.
Using the Cauchy-Schwarz inequality and changing variables, we get
\[
\int_a^b P_{\mathbb{C}^+}(k,\xi)  \sqrt{\sigma'_f(\xi^2,H_{\widehat R})} d\xi\le \left(   \int_{a^2}^{b^2} P_{\mathbb{C}^+}(k,\sqrt\eta)  d\sigma_f(\eta,H_{\widehat R})        \right)^{1/2}\,.
\]
This gives us
\[
p_{(\widehat R)}(k)\le C(a',b',a,b,f,V)\left(1+\left(   \int_{a^2}^{b^2} P_{\mathbb{C}^+}(k,\sqrt\eta)  d\sigma_f(\eta,H_{\widehat R})        \right)^{1/2}\right)\,.
\]
Fixing $k\in \Pi(a',b',h')$ and sending $\widehat R\to\infty$, we can apply lemma \ref{wkwk} to the right hand side and \eqref{lef} to the left hand side  to get the statement of the theorem when $k\in \Pi(a',b',h')$.  However, the function $p_{(\widehat R)}$ is uniformly bounded  in the domain $k\in [a',b']\times [h',1]$ so we can easily extend the result to $\Pi(a',b',1)$.

\end{proof}

\begin{proof}{\it (of theorem \ref{t5})} If $f$ is non-negative then $\langle h_f(\cdot,id),1\rangle^2_{L^2(\mathbb{S}^2)}>0$ for $d$ large enough (by  theorem \ref{t3}). Thus, $\langle h_f,1\rangle^2 $ is not identically zero in $\Sigma$. It is analytic in every $\Pi(a,b,h)$ and \eqref{t4t4} holds. We can map $\Pi(a',b',h')$ conformally to the unit disc by $w=\phi(k), w\in \mathbb{D}, k\in \Pi(a',b',h')$. Then
$
\langle h_f(\cdot,\phi^{-1}(w)),1\rangle^2_{L^2(\mathbb{S}^2)}
$
is analytic in $\mathbb{D}$ and its absolute value has  a harmonic majorant there due to the bound
\[
|\langle h_f(\cdot,k),1\rangle_{L^2(\mathbb{S}^2)}|^2\le p^2(k)
\]
and the estimate  \eqref{t4t4}. Therefore, $\langle h_f(\cdot,\phi^{-1}(w)),1\rangle_{L^2(\mathbb{S}^2)}\in H^2(\mathbb{D})$ and it is not identically zero. It has non-tangential boundary value at a.e. point on $\mathbb{T}$ and the following logarithmic integral converges
\[
\int_{\mathbb{T}} \log| \langle h_f(\cdot,\phi^{-1}(e^{i\theta}),1\rangle_{L^2(\mathbb{S}^2)}|d\theta>-\infty\,.
\]
Mapping it back to $\Pi(a',b',h')$ and taking any interior subinterval $(a_1,b_1)\subset (a',b')$ gives an estimate
\[
\int_{a_1^2}^{b_1^2}\log\sigma_f'(\eta,H)d\eta>C(a_1,b_1,V,f)\,.
\]
This argument is quite standard in the Nevanlinna theory of analytic functions \cite{garnet}.

\end{proof}
The existence of harmonic majorant for $h_f$ implies in the standard way the existence of the strong boundary values for $h_f$ when $\Im k\to 0$. We recall how that can be achieved. Fix $(a,b)\in (0,\infty)$ and $(a', b')\subsetneq (a,b)$.  Then $\widetilde h(w)=h_f(\phi^{-1}(w))$ belongs to vector-valued Hardy class $H^2(\mathbb{D})$ if $\phi$ maps $\Pi(a',b',1)$ conformally to the unit disc $\mathbb{D}$. This follows immediately from \eqref{t4t4} because
\[
C_1+C_2 \int_{a^2}^{b^2} P_{\mathbb{C}^+}(\phi^{-1}(z),\sqrt\eta)  d\sigma_f(\eta)
\]
is its harmonic majorant  in $\mathbb{D}$. It is known (\cite{rovnyak}, p. 80, Theorem A, p. 84) that functions in Hardy space $H^2(\mathbb{D})$ with values in Hilbert space ($L^2(\mathbb{S}^2)$ in our case) have strong boundary limit, i.e., there is $\widetilde h(e^{i\theta})\in L^2(\mathbb{S}^2)$ for a.e. $\theta\in [0,2\pi)$ so that
\[
\lim_{r\to 1}\|\widetilde h(re^{i\theta})-\widetilde h(e^{i\theta})\|_{L^2(\mathbb{S}^2)}=0
\]
and $ \|h(z)-\widetilde h(e^{i\theta})\|_{L^2(\mathbb{S}^2)}\to 0$ as $z\to e^{i\theta}$ for a.e. $\theta\in [0,2\pi)$, the limit in $z$  being  non-tangential.
Notice that $\widetilde h(\sigma,z)$ can be understood as
$
\widetilde h(\sigma,z)=\sum_{j} h_j(z)s_j(\sigma)\,,
$
where $\{s_j\}$ are spherical harmonics on $\mathbb{S}^2$,
$
\|\widetilde h(\sigma,z)\|^2_{L^2(\mathbb{S})}=\sum_j |\widetilde h_j(z)|^2
$
and $\widetilde h_j(z)$ are scalar functions from  Hardy space $H^2(\mathbb{D})$.
Transplanting these results back to $\Pi(a',b',1)$ we get existence of $h_f(\alpha)$ for a.e. $\alpha\in \mathbb{R}$. Moreover,
the non-tangential limit
\[
\lim_{k\to \alpha}\| h_f(k)- h_f(\alpha)\|_{L^2(\mathbb{S}^2)}=0
\]
holds for a.e. $\alpha\in \mathbb{R}$ because $a',b'$ are arbitrary.

The lemma \ref{opew} gives the symmetry
\[
h_f(-k)=\overline{h_f(k)}
\]
and \eqref{getq} yields
\begin{equation}\label{hase}
\sigma_f'(\alpha^2, H)\geq C|\alpha| \|h(\alpha)\|^2_{L^2(\mathbb{S}^2)}
\end{equation}
for a.e. $\alpha$. \bigskip

\subsection{Harmonic majorant for $A_\infty(\sigma,y,k)$}

 The first three theorems we proved had to do with the function $A_\infty(\sigma,y,k)$ and its properties as vector-valued function analytic in $k$. However, we obtained the harmonic majorant only for $h_f$  with $f$ being compactly supported  $L^2(\mathbb{R}^3)$ function. The main obstacle  to finding a majorant for $A$ is that it was defined through the solution to  equation
\[
-\Delta u+Vu=k^2u+\delta_y
\]
and we can not make sense of $\langle u,\delta_y\rangle$ because $u$ is not regular enough if the dimension is higher than one. However, we can overcome this problem by regularization. Take, e.g., $y=0$ and consider
\begin{equation}\label{reguu}
(H^2-k^4)^{-1}=(2k^2)^{-1}((H-k^2)^{-1}-(H+k^2)^{-1})\,.
\end{equation}
Notice that in the three-dimensional case $G(x,0,k^2)-G(x,0,-k^2)$ is continuous in $x$ for all $k$ that satisfy $ \arg k\in (0,\pi/4)$ provided that $V\in L^\infty(\mathbb{R}^3)$. This is so because
\[
\frac{e^{ik|x|}}{|x|}-\frac{e^{-k|x|}}{|x|}
\]
is continuous at $x=0$ and the terms
\[
\int_{\mathbb{R}^3} \frac{e^{ik|x-y|}}{|x-y|}V(y)G(y,0,k^2)dy, \quad        \int_{\mathbb{R}^3} \frac{e^{-k|x-y|}}{|x-y|}V(y)G(y,0,-k^2)dy
\]
are both continuous at $x=0$ by lemma \ref{pop}.
We now define
\begin{equation}\label{newm}
m(k)\dd \langle (H^2-k^4)^{-1}\delta_0,\delta_0\rangle\,.
\end{equation}
Approximating $\delta_0$ with any $\delta_0$--generating sequence $\{f_n\}, f_n\in L^2(\mathbb{R}^3)$, 
we obtain
\[
\Im m(k)=\lim_{n\to\infty} \langle (H^2-k^4)^{-1}f_n,f_n\rangle>0
\]
and $m$ is analytic in the sector $0<\arg k<\pi/4$. By the Nevanlinna representation (see, e.g., \cite{rovnyak}, p. 141, Theorem B), we have for every $k\in \mathbb{C}^+$:
\begin{equation}\label{refew}
m(k^{1/4})=c_1+c_2 k+\frac{1}{\pi}\int_{\mathbb{R}} \left(\frac{1}{t-k}-\frac{t}{1+t^2}\right) d\mu(t), \quad c_1\in \mathbb{R},\quad c_2\ge 0
\end{equation}
where $\mu$ is a positive measure on $\mathbb{R}$ that satisfies
\[
\int_{\mathbb{R}} \frac{d\mu}{1+t^2}<\infty\,.
\]
We can prove the following analog of \eqref{getq}.
\begin{lemma} \label{kulik} Assume that $V\in C_c^\infty(\mathbb{R}^3)$. Then,
\begin{equation}\label{vorob1}
32\pi^2k\mu'(k^4)=\|{A_\infty}(\sigma,0,k)\|^2_{L^2(\mathbb{S}^2)}, \quad k>0\,,
\end{equation}
where $\mu$ is the measure from \eqref{refew}.
\end{lemma}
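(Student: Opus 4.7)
\emph{Strategy.} The plan is to apply the Stieltjes inversion formula to the Nevanlinna representation \eqref{refew}, reducing \eqref{vorob1} to a limit of $\Im G(0,0,z)$ as $z\to k^2+i0$, and then to evaluate that limit through the second resolvent identity and the pointwise asymptotics $G\sim G^0\cdot A_\infty$ supplied by theorem \ref{t2}.

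\emph{Stieltjes inversion.} Since $V\in C_c^\infty(\mathbb{R}^3)$ is short-range, the limiting absorption principle makes the positive spectrum of $H$ purely absolutely continuous, so the measure $\mu$ in \eqref{refew} is absolutely continuous on $(0,\infty)$. Standard Stieltjes inversion yields
\[
\mu'(\lambda)=\lim_{\epsilon\to 0^+}\Im m\bigl((\lambda+i\epsilon)^{1/4}\bigr)
\]
at every $\lambda>0$ where the right-hand side exists. Taking $\lambda=k^4$ and $\kappa_\epsilon=(k^4+i\epsilon)^{1/4}$, we have $\kappa_\epsilon\to k$ from within $\{0<\arg<\pi/4\}$, so $\kappa_\epsilon^2\to k^2$ from $\mathbb{C}^+$ while $-\kappa_\epsilon^2\to-k^2$ from $\mathbb{C}^-$. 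Rewriting $m$ via \eqref{reguu} as $m(\kappa)=(2\kappa^2)^{-1}\lim_{x\to 0}\bigl[G(x,0,\kappa^2)-G(x,0,-\kappa^2)\bigr]$ and observing that for generic $k>0$ one has $-k^2\notin\sigma(H)$ with $G(\cdot,\cdot,-k^2)$ real-valued (from the symmetry $G(x,y,\bar z)=\overline{G(x,y,z)}$ applied to $z=\bar z=-k^2$), the $-\kappa_\epsilon^2$ term contributes nothing to $\Im m$ in the limit. Thus \eqref{vorob1} reduces to
\[
\lim_{\epsilon\to 0^+}\Im G(0,0,\kappa_\epsilon^2)=\frac{k}{16\pi^2}\|A_\infty(\sigma,0,k)\|^2_{L^2(\mathbb{S}^2)},
\]
where $\Im G(0,0,\cdot)$ is understood in the regularized sense $\lim_{x\to 0}\Im G(x,0,\cdot)$.

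\emph{Resolvent identity and boundary asymptotics.} The second resolvent identity $R_z-R_{\bar z}=2i(\Im z)R_zR_{\bar z}$, applied on the kernel level together with $G(\xi,0,\bar z)=\overline{G(\xi,0,z)}$ and $G(0,\xi,z)=G(\xi,0,z)$, gives for $x\ne 0$
\[
G(x,0,z)-\overline{G(x,0,z)}=2i\,\Im z\int_{\mathbb{R}^3}G(x,\xi,z)\overline{G(\xi,0,z)}\,d\xi.
\]
Both sides are continuous at $x=0$: the left side because the diagonal singularity of $G$ is real to leading order and so drops out in $G-\bar G$, and the right side because $G(\cdot,0,z)\in L^2$ by \eqref{leret}, while $R_zR_{\bar z}=(R_z-R_{\bar z})/(2i\Im z)$ has a continuous kernel. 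Passing $x\to 0$ and using $G(0,\xi,z)=G(\xi,0,z)$ gives $\Im G(0,0,z)=\Im z\,\|G(\cdot,0,z)\|_2^2$. For $z=\kappa_\epsilon^2$, set $k_\epsilon=\sqrt z$ with $\Im k_\epsilon>0$; from $|G(x,0,z)|^2=(16\pi^2|x|^2)^{-1}|A(x,0,k_\epsilon)|^2e^{-2|x|\Im k_\epsilon}$ and the substitution $s=2r\Im k_\epsilon$,
\[
\Im z\,\|G(\cdot,0,z)\|_2^2=\frac{\Im z}{32\pi^2\,\Im k_\epsilon}\int_0^\infty e^{-s}\Bigl(\int_{\mathbb{S}^2}\bigl|A\bigl(s\sigma/(2\Im k_\epsilon),0,k_\epsilon\bigr)\bigr|^2\,d\sigma\Bigr)ds.
\]
As $\epsilon\to 0^+$, $\Im z/\Im k_\epsilon=2\Re k_\epsilon\to 2k$; and for each $s>0$ the inner integral tends to $\|A_\infty(\cdot,0,k)\|^2_{L^2(\mathbb{S}^2)}$ by theorem \ref{t2} combined with the continuity of $A_\infty(\cdot,0,k)$ in $k$ down to the positive real line (which holds for $V\in C_c^\infty$ by the limiting absorption principle). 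Dominated convergence with a uniform majorant for the spherical means yields
\[
\lim_{\epsilon\to 0^+}\Im z\,\|G(\cdot,0,z)\|_2^2=\frac{2k}{32\pi^2}\|A_\infty\|^2\int_0^\infty e^{-s}ds=\frac{k\|A_\infty\|^2}{16\pi^2},
\]
which together with the reduction above proves \eqref{vorob1}.

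\emph{Main obstacle.} The delicate point is the passage $\Im k_\epsilon\to 0^+$ inside the radial integral: theorem \ref{t1} gives a bound that is singular in $\Im k$, so it cannot be invoked directly. Instead, one exploits the compact support and smoothness of $V$ through the representation $A(x,0,k)=1-\int e^{ik(|x-\xi|-|x|)}(|x|/|x-\xi|)V(\xi)G(\xi,0,k^2)\,d\xi$, in which the integral is over $\supp V$ and admits bounds uniform in $|x|$ large and in $k$ in a compact neighborhood of positive real $k$ (using the limiting absorption principle to control $G(\xi,0,k^2)$ up to the real axis). This simultaneously supplies the integrable majorant needed for dominated convergence and the joint continuity/uniform convergence $A(r\cdot,0,k_\epsilon)\to A_\infty(\cdot,0,k)$ in $L^2(\mathbb{S}^2)$. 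The remaining ingredients (Stieltjes inversion, the second resolvent identity, and the radial change of variables) are standard.
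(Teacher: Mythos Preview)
Your proof is correct and takes a genuinely different route from the paper's. The paper argues via an energy-flux computation: it takes $u=(H^2-k^4)^{-1}f$ for a test function $f$, multiplies $(H^2-k^4)u=f$ by $\bar u$, integrates over $B_R(0)$, takes imaginary parts, and after integration by parts is left with boundary terms on $S_R(0)$; substituting the spatial asymptotics of $u$ and sending $R\to\infty$ yields the identity for $\mu_f'$, and finally $f\to\delta_0$ gives \eqref{vorob1}. Your argument instead combines Stieltjes inversion with the Stone-type identity $\Im G(0,0,z)=\Im z\,\|G(\cdot,0,z)\|_2^2$ (which you derive cleanly from the first resolvent identity and kernel symmetry), evaluates the $L^2$-norm in polar coordinates through $|G|=|G^0|\,|A|$, and passes to the limit $\Im k\to 0$ by dominated convergence. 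The advantage of your approach is transparency: once the resolvent identity is written down, the computation is elementary and the constant $32\pi^2$ falls out of a single radial integral, with no boundary bookkeeping or test-function approximation. The paper's method, on the other hand, is the natural continuation of its multiplier-based estimates and delivers the more general formula for $\mu_f'$ with arbitrary compactly supported $f$ along the way. Both proofs lean on the limiting absorption principle for $V\in C_c^\infty$ at the same place --- controlling the joint limit $r\to\infty$, $\Im k\to 0$ --- and your ``Main obstacle'' paragraph identifies exactly the right ingredient (the explicit finite-range representation of $A$ together with the LAP bound on $G(\cdot,0,k^2)$ over $\supp V$) to supply the uniform majorant.
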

\begin{proof}
Start by taking $k$ in the sector $\arg k\in (0,\pi/4)$. Let $u=(H^2-k^4)^{-1}f$, i.e., $u$ solves
\[
(-\Delta+V-k^2)(-\Delta+V+k^2)u=(H^2-k^4)u=f\,,
\]
where $f$ is any test function, i.e., $f\in C_c^\infty(\mathbb{R}^3)$. Multiply this equation by $\overline u$ and integrate over $B_R(0)$ with  $R$ so large that supp$(f)\subset B_R(0)$.
\begin{equation}\label{los}
\int_{|x|<R}\Bigl((-\Delta+V-k^2)(-\Delta+V+k^2)u\Bigr)\overline udx=\int_{\mathbb{R}^3} f\overline udx\,.
\end{equation}
Now we send $k\to \kappa\in \mathbb{R}^+$ where  $-\kappa^2$ in not an eigenvalue of $H$ and take imaginary part of both sides. We can write
\begin{equation}\label{sobka}
u=(2k^2)^{-1}((H-k^2)^{-1}f-(H+k^2)^{-1}f)\,.
\end{equation}
The term $(H+k^2)^{-1}f$ decays exponentially in space variable. For the other term, the absorption principle and integration by parts  give
\[
\Im \int_{|x|<R}\Bigl((-\Delta+V-\kappa^2)(-\Delta+V+\kappa^2)u\Bigr)\overline udx=
\]
\[
\Im \int_{|x|<R}\Bigl((-\Delta+V)^2-\kappa^4)u\Bigr)\overline udx=\Im \int_{|x|<R}\Bigl((-\Delta+V)^2)u\Bigr)\overline udx=
\]
\[
\Im \int_{|x|<R}\Bigl((-\Delta+V)u\Bigr)\Bigl((-\Delta+V)\overline u\Bigr)dx+\Im \int_{|x|=R}\Bigl( (\Delta u)_r\overline u-  (\Delta u)\overline u_r\Bigr) d\sigma_x
\]
The first term is zero, so
\begin{equation}\label{kfc}
\Im \int_{|x|=R}\Bigl( (\Delta u)_r\overline u-  (\Delta u)\overline u_r\Bigr) d\sigma_x=
\pi \mu'_f(\kappa^4),\, \,{\rm for \,\, a.e.}\, \kappa,
\end{equation}
where $\mu_f$ is the spectral measure of $f$ relative to $H^2$.
The asymptotics of $u$ at infinity is given by
\[
u(x,\kappa)=(2\kappa^2)^{-1}\frac{e^{i\kappa |x|}}{4\pi |x|}\left(\int_{\mathbb{R}^3} a_\infty(\sigma,y,\kappa)f(y)dy+o(1)\right), \quad |x|\to\infty, \widehat x\to \sigma
\]
as follows from the formula \eqref{sobka}. Moreover, this asymptotics can be differentiated in $x$.
Notice that both  $A_\infty(\sigma,y,k)$ and $a_\infty(\sigma,y,k)$ are continuous in $k\in \overline{\Pi(a,b,h)}$. This continuity follows from the limiting absorption principle.  
Now, substitute this asymptotics into the previous formula \eqref{kfc} and send $R\to\infty$ to get identity
\[
\frac{1}{32\pi^2}\int_{\mathbb{S}^2} \left|   \int_{\mathbb{R}^3} a_\infty(\sigma,y,\kappa)f(y)dy                \right|^2d\sigma=\pi\kappa \mu_f'(\kappa^4)\,.
\]
Taking $f=f_n$ where $\{f_n\}$ is $\delta_0$-generating sequence yields
\[
\frac{1}{32\pi^2}\int_{\mathbb{S}^2}    |A_\infty(\sigma,0,\kappa)|^2d\sigma=\kappa \mu'(\kappa^4)
\]
with some absolute constant $C$. 
\end{proof}
{\bf Example.} In the free case, one has $m(k)=(1+i)/(8\pi k), A=1$.\smallskip

Having established the formula \eqref{vorob1}, we immediately get the analog of theorem \ref{t4}.

\begin{theorem} \label{t4oo}   Let $V$ satisfy \eqref{main-assump} and $[a,b]\subset (0,\infty)$, then
\begin{equation}\label{t4t4-7}
\|A_\infty(\sigma,y,k)\|^2_{L^2(\mathbb{S}^2)}\le C(a',b',a,b,V,y)\left(1+\int_{a^4}^{b^4} P_{\mathbb{C}^+}(k,\eta^{1/4})  d\mu_y(\eta) \right)
\end{equation}
for all intervals $(a',b')\subsetneq (a,b)$ and all $k\in \Pi(a',b',1)$. The positive measure $\mu_y$ is related to $\delta_y$ by  \eqref{newm} and \eqref{refew} .
\end{theorem}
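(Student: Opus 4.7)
My plan is to parallel the proof of Theorem \ref{t4} essentially line by line, with the identity \eqref{vorob1} from Lemma \ref{kulik} playing the role of \eqref{getq}, the stability result of Lemma \ref{lolo} replacing the analogous convergence for $h_f$, and Theorem \ref{t1} providing the same quantitative control. The main analytic input is already present in the paper; what needs to be added is a justification of weak-$*$ convergence of the Nevanlinna measures $\mu_{y,(\widehat R)} \to \mu_y$ under truncation of the potential, which takes the place of Lemma \ref{wkwk} (the latter is stated only for $L^2$ vectors, whereas here the ``vector'' is $\delta_y$).

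Without loss of generality I may take $y = 0$, using Lemma \ref{shift} to absorb the shift into the norm of the potential. Let $V_{(\widehat R)}$ denote the truncation defined in \eqref{tru}; this is bounded and compactly supported, so Lemma \ref{kulik} (whose smoothness hypothesis is used only to run the integration-by-parts argument and is removable by a standard density approximation with mollification) yields
\[
\|A_\infty^{(\widehat R)}(\sigma, 0, \xi)\|^2_{L^2(\mathbb{S}^2)} = 32 \pi^2 \xi\, \mu'_{(\widehat R)}(\xi^4), \qquad \xi > 0,
\]
where $\mu_{(\widehat R)}$ is the Nevanlinna measure of $m_{(\widehat R)}(k) = \langle (H_{(\widehat R)}^2 - k^4)^{-1} \delta_0, \delta_0\rangle$. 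Set $p_{(\widehat R)}(k) \dd \|A_\infty^{(\widehat R)}(\sigma, 0, k)\|_{L^2(\mathbb{S}^2)}$. This function is subharmonic on $\Sigma(H_{(\widehat R)})$, continuous on $\overline{\Pi(a,b,h)}$ by the limiting absorption principle for short-range potentials, and Theorem \ref{t1} together with Lemma \ref{lolo} delivers the uniform bound $p_{(\widehat R)}(k) \le C(a,b,h,V)/\Im^2 k$, independent of $\widehat R$ for $\widehat R$ large.

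From here the argument from the proof of Theorem \ref{t4} applies almost verbatim: subharmonicity on an isosceles triangle $T_\alpha(a,b)$, combined with the corner estimate on $P_{T_\alpha}$ (which beats the $1/\Im^2 k$ singularity once $\alpha$ is small enough) and the pointwise bound $P_{T_\alpha}(k,\cdot) \lesssim P_{\mathbb{C}^+}(k,\cdot)$ on $[a',b']$, reduces the problem to bounding
\[
\int_a^b P_{\mathbb{C}^+}(k,\xi)\sqrt{\xi\, \mu'_{(\widehat R)}(\xi^4)}\, d\xi.
\]
Cauchy-Schwarz applied to this integral, followed by the change of variable $\eta = \xi^4$ (under which $\xi\, d\xi = d\eta/(4\eta^{1/2})$ remains bounded on $[a^4,b^4]$), produces the integrand $P_{\mathbb{C}^+}(k,\eta^{1/4})\, d\mu_{(\widehat R)}(\eta)$ over $[a^4,b^4]$. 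Squaring the resulting inequality yields the analogue of \eqref{t4t4-7} for the truncated potential.

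The main remaining point, and the one I expect to require the most care, is passing to the limit $\widehat R \to \infty$. The left-hand side is handled directly by the stability statement in Lemma \ref{lolo}. For the right-hand side I would show that the Herglotz functions $m_{(\widehat R)}(k^{1/4})$ converge pointwise to $m(k^{1/4})$ on $\mathbb{C}^+$; this follows from the pointwise convergence $G_{(\widehat R)}(x,y,z) \to G(x,y,z)$ established in Lemma \ref{alemma} applied at $z = \pm k^2$ via the identity \eqref{reguu}, together with the continuity of the resulting kernels at $x = y = 0$ noted just before \eqref{newm}. Pointwise convergence of Herglotz functions on $\mathbb{C}^+$ forces weak-$*$ convergence of the associated Nevanlinna measures on compact subsets of $\mathbb{R}$, which is exactly what is needed in the Poisson integral over $[a^4,b^4]$: for a fixed $k \in \Pi(a',b',1)$ the kernel $\eta \mapsto P_{\mathbb{C}^+}(k,\eta^{1/4})$ is a continuous test function on this compact interval. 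Combining these pieces gives \eqref{t4t4-7}.
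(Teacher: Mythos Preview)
Your proposal is correct and follows exactly the route the paper intends: the paper's own proof of Theorem~\ref{t4oo} consists of the single sentence ``The proof of this result repeats the proof of theorem~\ref{t4},'' and your writeup is a faithful unpacking of that with \eqref{vorob1} in place of \eqref{getq} and Lemma~\ref{lolo} in place of \eqref{lef}. You are in fact more explicit than the paper on two points it leaves implicit: the weak-$*$ convergence $\mu_{(\widehat R)}\to\mu$ (which you correctly derive from Lemma~\ref{alemma} via \eqref{reguu} and the standard Herglotz argument, playing the role Lemma~\ref{wkwk} played for $\sigma_f$), and the fact that the $C_c^\infty$ hypothesis in Lemma~\ref{kulik} can be relaxed to the $C^1$ compactly supported truncations by mollification.
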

\begin{proof}The proof of this result repeats the proof of theorem \ref{t4}.
\end{proof}

Now that we found the harmonic majorant for $A_\infty$, we immediately get
\begin{itemize}
\item improved estimate on the possible growth at the boundary:
\begin{equation}\label{kochka1}
\|A_\infty(\sigma,y,k)\|^2_{L^2(\mathbb{S}^2)}\le \frac{C(a,b,V,y)}{\Im k}, \quad k\in \Pi(a,b,1)\,,
\end{equation}

\item existence of the strong non-tangential limit $A_\infty(\sigma,y,\kappa)\in L^2(\mathbb{S}^2)$ for a.e.$ \kappa\in \mathbb{R}$, i.e., 
\begin{equation}\label{kochka2}
\lim_{k\to\kappa}\|A_\infty(\sigma,y,k)-A_\infty(\sigma,y,\kappa)\|_{L^2(\mathbb{S}^2)}=0
\end{equation}
for a.e. $\kappa$ and the limit is non-tangential. Secondly,
\[
\lim_{\epsilon\to 0}\int_a^b\|A_\infty(\sigma,y,\kappa+i\epsilon)
)-A_\infty(\sigma,y,\kappa)\|^2_{L^2(\mathbb{S}^2)}d\kappa=0
\]
for every $[a,b]$ not containing zero.
\end{itemize}

Analogous results hold for $a_\infty(\sigma,y,k)$.

{\bf Remark.} In \eqref{defh}, we defined $h_f(\sigma,k)$ as
\[
h_f(\sigma,k)=\int_{\mathbb{R}^3} A_\infty(\sigma,y,k)e^{-ik\langle \sigma,y\rangle}f(y)dy
\] and, by lemma \ref{polk},
\[
h_f(\sigma,k)
=\int_{\mathbb{R}^3} a_\infty(\sigma,y,k)f(y)dy, \quad k\in \Sigma\,.
\]
Then, we proved that both $h_f$ and $a_\infty$ have non-tangential boundary value in $k$.  However, we didn't prove the continuity of $A_\infty(\sigma,y,\kappa)$ or $a_\infty(\sigma,y,\kappa)$ in $y$. Instead, we can show that 
\begin{equation}\label{gui}
h_f(\sigma,\kappa)=\int_{\mathbb{R}^3} a_\infty(\sigma,y,\kappa)f(y)dy
\end{equation}
in the sense of $L^2(\mathbb{S}^2,[a,b])$ functions. Indeed, the maximal function 
\[
M_{(a)}(y,\kappa)\dd \sup_{\epsilon\in (0,1)} \|a_\infty (\sigma,y,\kappa+i\epsilon)\|_{L^2(\mathbb{S}^2)}
\]
satisfies
\[
M_{(a)}(y,\kappa)\in 
 L^2(a,b)
\]
as follows from the properties of the non-tangential maximal function of the scalar $H^2(\mathbb{D})$. Moreover,
\[
\sup_{y\in K} \int_a^b|M_{(a)}(y,\kappa)|^2d\kappa<\infty
\]
where $K$ is any compact. Therefore, by the Minkowski inequality and Dominated Convergence Theorem, we have
\[
\lim_{n\to\infty}\left\|\int_{\mathbb{R}^3} f(y)\Bigl(  a_\infty(\sigma,y,\kappa+in^{-1})-a_\infty(\sigma,y,\kappa)   \Bigr)dy\right\|_{L^2(\mathbb{S}^2),[a,b])}=0
\]
and \eqref{gui} follows.\vspace{0.5cm}

{\bf Remark.} Studying asymptotical behavior of $A(r\sigma,y,k)$ for large $r$ and $k\in \Sigma$ is an interesting problem. For example, the following question is natural: can one improve $\lim_{r\to\infty}\|A(y+r\sigma,y,k)-A_\infty(\sigma,y,k)\|_{L^2(\mathbb{S}^2)}=0$ to  $\lim_{r\to\infty}\|A(y+r\sigma,y,k)-A_\infty(\sigma,y,k)\|_{L^p(\mathbb{S}^2)}=0$ with $p>2$? In \cite{den1}, this was answered affirmatively for  $p=\infty$ in the case when 
\[
|Q|\lesssim (1+|x|)^{-\frac 12-\delta}, \quad |V|\lesssim (1+|x|)^{-\frac 12-\delta}
\]
with some $\delta>0$.

We conclude the first part with a list of  questions:

\begin{itemize}
\item[(1).]{ In \eqref{hase}, does equality hold for a.e. $k$?}
\item[(2).] {Consider the boundary value $A_\infty(\sigma,y,\kappa), \kappa\in \mathbb{R}$. Is it continuous in $y$ as a function in $L^2(\mathbb{S}^2)$? Is the zero-measure set of $\kappa$ on which $M_{(a)}(y,\kappa)=\infty$ independent of $y$? We have weak continuity of $A_\infty(\sigma,y,\kappa)$ in $y$, considered as a function in $\sigma$ and $\kappa$. This follows from continuity of $A$ in $y$ for fixed $k\in \mathbb{C}^+$.}
\item[(3).] {Does theorem \ref{t5} hold for any compactly supported $f\in L^2(\mathbb{R}^3)$?}
\end{itemize}\vspace{1cm}

\section{Part 2. Elliptic operators in the divergence form: wave equation and wave operators}

\subsection{Formulation of main result}
In this part, we will be concerned with the following operator
\[
Df=-{\rm div}\,((1+V)\nabla f), \quad x\in \mathbb{R}^3
\]
We will assume that $V$ satisfies conditions that are a little stronger than those assumed in the first part:
\begin{equation}\label{2-assu}
\|V\|_\infty<1,\, V={\rm div} \, Q, \, Q\in C^2(\mathbb{R}), \, \|V\|\dd \max_{j=0,1,2}\|D^jQ\|_{\ell^2(\mathbb{Z}^+),L^\infty}<\infty\,.
\end{equation}
We will also need the following notation
\[
 \|V\|_{[r,\infty)}\dd \max_{j=0,1,2}\|D^jQ\|_{\ell^2([r],\infty),L^\infty}\,.
\]
Conditions $\|V\|_\infty<1,\, \nabla V\in L^\infty(\mathbb{R}^3)$ allow us to define $D$ as a positive operator by Kato-Rellich Theorem \cite{cycon}. \smallskip

The plan of this part is as follows. We will first establish the asymptotics of the Green's function for $D$ by mimicking the arguments in the first part. This will require only slight modification. Then, we will consider the wave equation
\[
u_{tt}+Du=0, \quad u(x,0)=f_+, \quad u_t(x,0)=i\sqrt{D} f_+\,,
\]
where $f_+$ is assumed to belong to  the domain of $\sqrt{D}$.
Recall, that our main result is existence of wave operators.
\begin{theorem}\label{main-th}
If $V$ satisfies \eqref{2-assu}, then the following wave operators exist
\[
W^{\pm}(\sqrt{D},\sqrt{H_0})\dd {s-\lim}_{t\to\pm \infty}e^{it\sqrt{D}}e^{-it\sqrt{H_0}}
\]
and the limit is understood in the strong sense.
\end{theorem}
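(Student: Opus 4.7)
The plan is to transfer the apparatus of Part~1 (Green's function asymptotics, $L^2(\mathbb{S}^2)$-valued amplitude, harmonic majorant, boundary values) to the operator $D$, and then to combine the resulting stationary scattering data with a Vainberg-style contour-integral representation of $e^{it\sqrt D}$ in order to establish the strong Cauchy property of $e^{it\sqrt D}e^{-it\sqrt{H_0}}f$ on a dense subspace of $L^2(\mathbb{R}^3)$.

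First I would redo Sections~3--5 with $H$ replaced by $D$. The energy identity \eqref{eq25} and the Morawetz-type multiplier argument producing Lemma~\ref{mre} survive intact: the main new term produced by the gradient structure of $D$ is of the form $\int V|\nabla\mu|^2/|x-y|^2$, which, thanks to $\|V\|_\infty<1$ (the same condition that makes $D$ semibounded via Kato--Rellich), can be absorbed into the positive term $\int |\nabla\mu|^2/|x-y|^2$ on the left-hand side, while the remaining contributions are of the same divergence form $\mathrm{div}\,Q$ treated in Part~1. Using the stronger hypothesis \eqref{2-assu} (in particular the $C^2$ regularity of $Q$ and the $\ell^2$ control of $D^jQ$ for $j=0,1,2$), this procedure yields an amplitude $A^D_\infty(\sigma,y,k)$ with the same uniform estimate on $\Pi(a,b,h)$, harmonic majorant and non-tangential boundary values $A^D_\infty(\sigma,y,\kappa)\in L^2(\mathbb{S}^2)$ for a.e.\ $\kappa>0$, together with the associated functional
\begin{equation*}
h^D_f(\sigma,\kappa)=\int_{\mathbb{R}^3} a^D_\infty(\sigma,y,\kappa)f(y)\,dy
\end{equation*}
and the analogue of \eqref{hase} identifying $\|h^D_f(\cdot,\kappa)\|^2_{L^2(\mathbb{S}^2)}$ with a constant multiple of $\kappa\,\sigma_f'(\kappa^2,D)$.

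Second, I would set up a stationary representation for $W^+$. Applying Stone's formula to both $\sqrt D$ and $\sqrt{H_0}$ and inserting the spectral representations given by $h^D_f$ and by the Fourier transform, one obtains for $f$ with $\widehat f\in C_c^\infty(\{a<|\xi|<b\}\subset(0,\infty))$ an Abel-summed stationary expression of the form
\begin{equation*}
W^+f(x)=\lim_{\epsilon\to 0^+}\frac{c}{2\pi i}\int_0^\infty\int_{\mathbb{S}^2}\overline{a^D_\infty(\sigma,x,\kappa+i\epsilon)}\,\widehat f(\kappa\sigma)\,d\sigma\,\kappa^2\,d\kappa\,.
\end{equation*}
The Cauchy property
\begin{equation*}
\lim_{s,t\to+\infty}\|(e^{it\sqrt D}e^{-it\sqrt{H_0}}-e^{is\sqrt D}e^{-is\sqrt{H_0}})f\|_2=0
\end{equation*}
then reduces to showing that the off-diagonal part of the associated oscillatory integral in $\kappa$ vanishes as $s,t\to+\infty$, which follows from the Hardy-class regularity of $a^D_\infty$ by integration by parts in $\kappa$. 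Isometry of the two unitary groups then extends $W^\pm$ to all of $L^2(\mathbb{R}^3)$.

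The main obstacle will be controlling the boundary behavior of $a^D_\infty$ near the thresholds $\kappa=0$ and $\kappa=+\infty$, where the harmonic-majorant bound $\|A^D_\infty\|^2_{L^2(\mathbb{S}^2)}\lesssim(\Im k)^{-1}$ is just borderline for the stationary-phase estimates. This is exactly why the hypotheses in \eqref{2-assu} are strictly stronger than \eqref{main-assump}: the $\ell^2$ bound on $D^jQ$ for $j=0,1,2$ produces two levels of regularity in the spectral parameter $\kappa$ for the amplitude $a^D_\infty$, and the compact support of $\widehat f$ in a shell away from $0$ and $\infty$, together with two integrations by parts in $\kappa$, make the off-diagonal integral absolutely convergent and allow a Riemann--Lebesgue argument to close the Cauchy estimate.
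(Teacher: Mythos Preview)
Your first paragraph is on the right track: the paper does transfer the Green's function asymptotics of Part~1 to $D$, obtaining an amplitude $a^D_\infty(\sigma,y,k)$ with the same $L^2(\mathbb{S}^2)$-valued analyticity, harmonic majorant, and non-tangential boundary values. But your second and third paragraphs contain a genuine gap that would prevent the argument from closing.

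The problem is the sentence ``the $\ell^2$ bound on $D^jQ$ for $j=0,1,2$ produces two levels of regularity in the spectral parameter $\kappa$ for the amplitude $a^D_\infty$, and \ldots\ two integrations by parts in $\kappa$ make the off-diagonal integral absolutely convergent.'' This is not true, and it is not how the extra hypothesis is used. The Hardy-class machinery gives only the existence of non-tangential boundary values of $a^D_\infty(\sigma,y,\kappa)$ in $L^2(\mathbb{S}^2)$ for a.e.\ $\kappa$; it gives no differentiability in $\kappa$ on the real line. Differentiability of the amplitude at real $\kappa$ is essentially the limiting absorption principle, and that is precisely what can fail for these potentials (embedded singular spectrum is allowed). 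The $C^2$ condition on $Q$ enters for different reasons: $D=-(1+V)\Delta-\nabla V\cdot\nabla$ requires control of $\nabla V$; the elliptic regularity and trace estimates need it; and later one needs $D\widetilde\phi_1$ to inherit the localization and $O(t^{-1})$ decay of $\widetilde\phi_1$. None of this translates into smoothness of $a^D_\infty$ in $\kappa$, so the proposed integration by parts in $\kappa$ is unavailable and the Cauchy/Riemann--Lebesgue step has no engine.

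The paper's route avoids the real axis altogether until the very end. It proves, for $f\in\cal{N}$, that $e^{-it\sqrt D}e^{it\sqrt{H_0}}f$ (i) converges in $L^2(B_R(0))$ for every $R$, and (ii) is tight at spatial infinity. For (i) the contour-integral formula \eqref{formula1} is written on the rectangle $\Gamma_{a,b}\subset\mathbb C$, where the amplitude \emph{is} analytic, and the free wave $e^{it\sqrt{H_0}}f$ is replaced, via Kirchhoff's formula, by a function $\widetilde\phi_1$ supported in the thin shell $||x|-t|\le\rho$ with $\|\widetilde\phi_1\|_\infty\lesssim t^{-1}$; on this shell the asymptotics \eqref{leo1} of $a(r\sigma,y,k)$ applies and the integrand converges to an explicit limit $\cal G(k,y)$ uniformly on compacts in $k\in\mathbb C^+$. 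Part (ii) is the substantial new ingredient and is not a stationary-phase estimate at all: it is a bootstrap combining a stability lemma ($W^\pm\to I$ as $\|V\|\to 0$), the Duhamel formula to compare $e^{-i\tau\sqrt D}$ with the evolution for a truncated potential $V^{(R_1)}$, and an intertwining trick to pull localization of $e^{iR_2\sqrt{H_0}}f$ back to $e^{-it\sqrt D}\phi(t)$. You should replace the Stone-formula/Cauchy-in-$\kappa$ plan by this two-part local-convergence-plus-tightness scheme.
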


{\bf Remark. } This implies in the standard way that ranges of $W^{\pm}$ belong to the a.c. subspaces of $D$ \cite{rs3}. Moreover, when restricted to those ranges, $D$ is unitarily equivalent to $H_0$ and thus has a.c. spectrum of infinite multiplicity. We want to mention here that the infinite multiplicity of the a.c. spectrum for multidimensional  Schr\"odinger operator with slowly decaying potential was established in \cite{lns7}, p. 614, remark 4.  The completeness of wave operators, i.e., the statement that ranges of $W^{\pm}$ are equal to $\cal{D}_{ac}$, the a.c. subspace of $D$, is an interesting question which we do not address in this paper.\smallskip

The theorem \ref{il5} about long time behavior of solution to wave equation from the Introduction will be proved at the very end of this part of the paper after we obtain stationary representation for wave operators.

{\bf Remark.} The general problem $u_{tt}+Du=0$ with initial data $u(x,0)=g_1, u_t(x,0)=g_2$ where $g_2\in$ range$(\sqrt{D})$ can be reduced to studying $e^{it\sqrt{D}}$  since the function $u=e^{it\sqrt{D}}f_1+e^{-it\sqrt{D}}f_2$ solves the wave equation and the initial conditions are
\[
u(x,0)=f_1+f_2, \quad u_t(x,0)=i\sqrt{D}(f_1-f_2)
\]
so, given $g_{1(2)}$ the corresponding $f_{1(2)}$ can be found and the problem solved.

\bigskip

{\bf Definition. }The following subset of Schwarz class $\cal{S}(\mathbb{R}^3)$ will be used later in the text: $\cal{N}$ is the set of all functions $f\in \cal{S}(\mathbb{R}^3)$ such that $\widehat f\in C^\infty_c(\mathbb{R}^3)$  and dist$(0,{\rm supp}(\widehat f))>0$.\smallskip

Clearly, $\cal{N}$ is dense in $L^2(\mathbb{R}^3)$. This subspace will be convenient for us because it has the following property: if $f\in \cal{N}$, then $\nabla f, |\Delta|^\alpha f\in \cal{N}$ for every $\alpha\in \mathbb{R}$.

\bigskip
\subsection{Basic properties of wave equation}

For the free evolution, we can write the solution exactly. In fact (see, e.g., \cite{ss}, theorem 3.6, p. 191 or p. 211), the solution to
\[
u_{tt}=\Delta u, \quad u(x,0)=f_1, \quad u_{t}(x,0)=f_2
\]
is given by Kirchhoff's formula
\[
u(x,t)=\partial_t(tM_t(f_1))+tM_t(f_2)=\Bigl\langle  \Bigl(tf_2(y)+f_1(y)+\nabla f_1(y)(y-x) \Bigr) \Bigr\rangle_{S_t(x)}\,.
\]
If we assume that, e.g., $f_{1(2)}\in \cal{S}(\mathbb{R}^3)$, then
\[
tM_t(f_2)(x)= \frac{1}{4\pi t }\left( \int_{Pl_{|x|-t,\widehat x}} f_2(y)dy +o(1)\right), \quad t\to\infty\,,
\]
where the integral represents the Radon transform $\cal{R}f_2(|x|-t,\widehat x)$ of $f_2$ (see \cite{ss}, p.201) and $Pl_{\alpha,\gamma}=\{\xi\in \mathbb{R}^3: \langle \xi,\gamma\rangle=\alpha, \, \alpha\in \mathbb{R},\gamma\in \mathbb{S}^2\}$ denotes the plane over which the function is integrated. Similarly,
\[
\partial_t(tM_t(f_1)(x))= -\frac{1}{4\pi t }\left( \int_{Pl_{|x|-t,\widehat x}} \widehat x\cdot \nabla f_1(y)dy +o(1)\right), \quad t\to\infty\,.
\]
In particular, if $f_2=i|\Delta|^{1/2}f_1$ and $f_1=f$ where $f\in \cal{N}$ then $f_{1(2)}\in \cal{S}(\mathbb{R}^3)$ and
\begin{equation}\label{poiu}
\Bigl(e^{it\sqrt{H_0}}f\Bigr)(x)= \frac{1}{4\pi t}\left( \int_{Pl_{|x|-t,\widehat x}}    \left( -\widehat x\cdot \nabla f(y)+ i\Bigl(|\Delta|^{1/2}f\Bigr) (y)\right)dy +o(1)\right), \quad t\to\infty
\end{equation}
uniformly over $x$.  Another way to get this asymptotics is to write up the dispersion relation for wave equation and apply the method of stationary phase. 
Since $e^{it\sqrt{H_0}}$ preserves $L^2(\mathbb{R}^3)$ norm, we get
\begin{equation}\label{local1}
\lim_{R\to\infty}\limsup_{t\to\infty}\int_{||x|-t|>R}\Bigl|\Bigl(e^{it\sqrt{H_0}}f\Bigr)(x)\Bigr|^2dx=0
\end{equation}
for every $f\in L^2(\mathbb{R}^3)$.

{\bf Remark.} Consider the general problem
\[
u_{tt}+D u=F, \quad u(x,0)=f_1, \quad u_t(x,0)=f_2\,.
\]
We will need the Duhamel formula (e.g., \cite{tao}, p.67)
\begin{equation}\label{duhamel}
u=\cos(t\sqrt{D})f_1+\frac{\sin(t\sqrt{D})}{\sqrt{D}}f_2+\int_0^t \frac{\sin((t-\tau)\sqrt{D})}{\sqrt{D}}F(\tau)d\tau
\end{equation}
later on.

%Given any set $\Omega\subset \mathbb{R}^3$ and $\rho>0$, we define
%\[
%\Omega_\rho=\{x: {\rm dist}(x,\Omega)\le \rho\}
%\begin{lemma}
%If ${\rm supp}(f_{1(2)})\subset \Omega$, then ${\rm supp}(u(x,t))\subset \Omega_t$
%\end{lemma}
%\begin{proof}
%The proof follows immediately from  Duhamel and Kirchoff formulas.
%\end{proof}

\subsection{Auxiliary results}

The following key lemma allows to control the long-time behavior of $e^{it\sqrt{A}}$ if the large $x$ asymptotics of the Green's function $G(x,0,k^2)$ is known for $k$ away from the spectrum.

Let $[a,b]\subset (0,\infty)$ be any positive interval. Consider the following contour
\[
\Gamma_{a,b}\dd \gamma_a\cup \gamma_b\cup \gamma_+\cup\gamma_-
\]
where $\gamma_a\dd \{k: \Re k=a, |\Im k|<1\}$,    $\gamma_b\dd \{k: \Re k=b, |\Im k|<1\}$,  $\gamma_{\pm}\dd \{k: \Im k=\pm 1, a<\Re k<b\}$.

\begin{lemma}
If $A$ is self-adjoint non-negative operator defined on the Hilbert space and  $t>0$, then integration over $\Gamma_{a,b}$ counterclockwise gives us
\begin{equation}\label{formula1}
\frac{1}{2\pi i}\int_{\Gamma_{a,b}} e^{-itk} (A-k^2)^{-1}p_n(k)dk= e^{-it\sqrt{A}}q_n(\sqrt{A})\,,
\end{equation}
where 
\begin{equation}\label{kukuru}
p_n(k)=(k-a)^{n}(k-b)^{n}, \, q_n(k)=-\chi_{(a,b)}(k)p_n(k)/(2k)
\end{equation}
and $n\in \mathbb{N}$.

\end{lemma}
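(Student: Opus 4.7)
The natural approach is to dualize and apply the spectral theorem to $A$. Fix vectors $f,g$ in the Hilbert space, and let $\mu_{f,g}$ denote the complex spectral measure, so that
\[
\langle (A-k^2)^{-1}f,g\rangle = \int_0^\infty \frac{d\mu_{f,g}(\lambda)}{\lambda-k^2}\,, \quad k\notin\sigma(A)^{1/2}.
\]
Inserting this into the left-hand side of \eqref{formula1} and exchanging the order of integration (to be justified below), one is reduced to computing, for each $\lambda\ge 0$, the scalar contour integral
\[
I(\lambda,t)\dd \frac{1}{2\pi i}\int_{\Gamma_{a,b}} \frac{e^{-itk}p_n(k)}{\lambda-k^2}\,dk\,.
\]

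The integrand $e^{-itk}p_n(k)/(\lambda-k^2)$ is meromorphic in $k$ with poles only at $k=\pm\sqrt{\lambda}$. Because $a>0$ and $\Gamma_{a,b}$ lies in $\{\Re k>0\}$, the pole $k=-\sqrt{\lambda}$ is always outside, while the pole $k=\sqrt\lambda$ is enclosed (by counterclockwise $\Gamma_{a,b}$) precisely when $\sqrt\lambda\in(a,b)$, i.e.\ $\lambda\in(a^2,b^2)$. In that case the residue theorem gives
\[
I(\lambda,t)=\underset{k=\sqrt\lambda}{\Res}\frac{e^{-itk}p_n(k)}{\lambda-k^2}=-\frac{e^{-it\sqrt\lambda}p_n(\sqrt\lambda)}{2\sqrt\lambda}\,,
\]
and for $\lambda\notin(a^2,b^2)$ the integrand is holomorphic inside $\Gamma_{a,b}$, so $I(\lambda,t)=0$. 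Recalling the definition \eqref{kukuru} of $q_n$, this is exactly $I(\lambda,t)=e^{-it\sqrt\lambda}q_n(\sqrt\lambda)$ for every $\lambda\ge 0$. Reinserting into the spectral representation yields
\[
\frac{1}{2\pi i}\int_{\Gamma_{a,b}} e^{-itk}\langle(A-k^2)^{-1}f,g\rangle p_n(k)\,dk=\int_0^\infty e^{-it\sqrt\lambda}q_n(\sqrt\lambda)\,d\mu_{f,g}(\lambda)=\langle e^{-it\sqrt A}q_n(\sqrt A)f,g\rangle\,,
\]
by the functional calculus, and \eqref{formula1} follows since $f,g$ are arbitrary.

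The main (and really the only) obstacle is justifying the interchange of the contour integral with the spectral measure. On the horizontal sides $\gamma_\pm$ one has $|\Im k|=1$, hence $|\Im k^2|\ge 2a$, so $|\lambda-k^2|\ge 2a$ uniformly in $\lambda\ge 0$ and no issue arises. On the vertical sides $\gamma_a,\gamma_b$ the resolvent is singular as $\Im k\to 0$ (since $k^2$ then approaches the positive real axis, where $\sigma(A)$ may sit), but the regularizing factor $p_n(k)=(k-a)^n(k-b)^n$ vanishes to order $n\ge 1$ at the corresponding corners: on $\gamma_a$, $k=a+i\tau$ with $|\tau|<1$, one has $|p_n(k)|\lesssim|\tau|^n$ while $|\lambda-k^2|\ge |\Im k^2|=2a|\tau|$, so the integrand is bounded by $|\tau|^{n-1}$ times something uniform in $\lambda$. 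Hence the joint integral of the absolute value against $d|\mu_{f,g}|\otimes d|k|$ is finite, and Fubini applies. This completes the justification and the proof.
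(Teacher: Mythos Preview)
Your proof is correct and follows the same approach as the paper: apply the spectral theorem to reduce to a scalar residue computation, and use the vanishing of $p_n$ at $a,b$ to absorb the resolvent singularity on $\gamma_a,\gamma_b$. The paper states this more tersely, bounding directly in operator norm via $\|(A-k^2)^{-1}\|\le C(a,b)|\Im k|^{-1}$ on $\gamma_{a},\gamma_b$ rather than dualizing against $f,g$, but the content is identical.
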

\begin{proof}
The proof is immediate if the residue formula is applied in the Spectral Theorem representation. From the same Spectral Theorem, we have
\[
\|(A-k^2)^{-1}\|<C(a,b)(\Im k)^{-1}\,,
\]
if $k\in \gamma_{a(b)}$ and, since $n\ge 1$, the integral converges absolutely in the operator norm, i.e.,
\[
\int_{\Gamma_{a,b}} \Bigl\| e^{-itk} (A-k^2)^{-1}p_n(k)\Bigr\|d|k|<\infty\,.
\]
\end{proof}
\begin{lemma}\label{dobro2}
If $\|V\|_\infty<1, \|\nabla V\|_\infty<\infty$,  then
\[
\|f\|_{\cal{H}^2(\mathbb{R}^3)}\le C(\|V\|_\infty,\|\nabla V\|_\infty)  \Bigl(\|Df\|_2+\|f\|_2\Bigr)\,.
\]
\begin{proof}
Indeed,
\[
\|Df\|_2\ge \|(1+V)\Delta f\|_2-\|\nabla V\|_\infty \|\nabla f\|_2\ge (1-\|V\|_\infty)\|\Delta f\|_2-\|\nabla V\|_\infty\sqrt{\|\Delta f\|_2\|f\|_2}\ge
\]
\[
(1-\|V\|_\infty)\|\Delta f\|_2-2\|\nabla V\|_\infty(\epsilon \|\Delta f\|_2+\epsilon^{-1}\|f\|_2)\,,
\]
where $\epsilon$ is an arbitrary positive number. Taking $\epsilon$ sufficiently small, we finish the proof.
\end{proof}

\end{lemma}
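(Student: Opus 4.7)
The plan is to reduce the claim to the \emph{a priori} bound $\|\Delta f\|_2\lesssim \|Df\|_2+\|f\|_2$, since by Plancherel
\[
\|f\|_{\cal{H}^2(\mathbb{R}^3)}^2\sim \int_{\mathbb{R}^3}(1+|\xi|^2)^2|\widehat f(\xi)|^2\,d\xi\sim \|f\|_2^2+\|\Delta f\|_2^2,
\]
so bounding $\|\Delta f\|_2$ automatically controls all derivatives up to order two. I first expand the operator by the product rule:
\[
Df=-{\rm div}\bigl((1+V)\nabla f\bigr)=-(1+V)\Delta f-\nabla V\cdot\nabla f.
\]
Rearranging and using $\|V\|_\infty<1$ pointwise, together with the triangle inequality, I obtain
\[
(1-\|V\|_\infty)\|\Delta f\|_2\le \|(1+V)\Delta f\|_2\le \|Df\|_2+\|\nabla V\|_\infty\|\nabla f\|_2.
\]

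The next step is to control $\|\nabla f\|_2$ via the elementary interpolation
\[
\|\nabla f\|_2^2=-\int_{\mathbb{R}^3} f\,\Delta f\,dx\le \|f\|_2\|\Delta f\|_2,
\]
which is justified by integration by parts (rigorously, one first takes $f\in C^\infty_c(\mathbb{R}^3)$, or uses lemma~\ref{dobro1}-type regularity; the estimate then extends by density to the domain of $D$). Combining this with the Cauchy--Young inequality $\sqrt{ab}\le \epsilon a+(4\epsilon)^{-1}b$ yields
\[
\|\nabla V\|_\infty\|\nabla f\|_2\le \|\nabla V\|_\infty\bigl(\epsilon\|\Delta f\|_2+(4\epsilon)^{-1}\|f\|_2\bigr)
\]
for every $\epsilon>0$.

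Finally, substituting the last inequality into the previous one and choosing $\epsilon$ small enough so that $\|\nabla V\|_\infty\,\epsilon\le (1-\|V\|_\infty)/2$, the offending $\|\Delta f\|_2$ term on the right is absorbed into the left-hand side, leaving
\[
\tfrac{1-\|V\|_\infty}{2}\|\Delta f\|_2\le \|Df\|_2+C(\|V\|_\infty,\|\nabla V\|_\infty)\|f\|_2,
\]
from which the stated $\cal{H}^2$ bound follows. There is really no conceptual obstacle; the only point deserving care is the density/approximation argument legitimizing the integration by parts used in the interpolation $\|\nabla f\|_2^2\le \|f\|_2\|\Delta f\|_2$, so that the whole computation applies to any $f$ in the natural domain of $D$ (e.g.\ $f\in \cal{H}^2(\mathbb{R}^3)$, which is where the left-hand side is finite to begin with).
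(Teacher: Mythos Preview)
Your proof is correct and follows essentially the same approach as the paper: expand $Df=-(1+V)\Delta f-\nabla V\cdot\nabla f$, use the interpolation $\|\nabla f\|_2\le\sqrt{\|\Delta f\|_2\|f\|_2}$, and absorb the $\epsilon\|\Delta f\|_2$ term. You are slightly more explicit about the Plancherel reduction and the density issue, but the argument is the same.
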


\begin{lemma}   \label{lek} Assume $V\in C^1(\mathbb{R}^3)$ and $\lim_{|x|\to\infty}V=0, \lim_{|x|\to\infty}\nabla V=0$.
Suppose $\alpha(E)\in C(\mathbb{R})$ and $\lim_{|E|\to\infty}\alpha(E)=0$. Then, if the functional sequence $\{\psi_n\}$ satisfies the following conditions
\begin{itemize}
\item   $\sup_{n} \|\psi_n\|_2<\infty$,
\item $\lim_{R\to\infty}\limsup_{n\to\infty}\|\psi_n\|_{L^2(B_R(0))}=0$\quad  (``escape to infinity in $L^2(\mathbb{R}^3)$ norm''),

\end{itemize}
then
\begin{equation}\label{vtornik}
\lim_{n\to\infty}\|(\alpha(D)-\alpha(H_0))\psi_n\|_2\to 0\,.
\end{equation}
\end{lemma}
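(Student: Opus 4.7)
The plan is to reduce the claim for general $\alpha\in C_0(\mathbb{R})$ to the resolvent case $\alpha(E)=(E-z)^{-1}$ and then invoke the second resolvent identity together with local compactness of a Sobolev embedding. By the Stone--Weierstrass theorem applied to the one-point compactification of $\mathbb{R}$, the algebra generated by $(E-i)^{-1}$ and $(E+i)^{-1}$ is uniformly dense in $C_0(\mathbb{R})$. So for any $\epsilon>0$ one finds a finite linear combination $r$ of iterated resolvents with $\|\alpha-r\|_\infty<\epsilon$; the spectral theorem then gives $\|(\alpha(A)-r(A))g\|_2\le \epsilon\|g\|_2$ for any self-adjoint $A$. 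Combined with $\sup_n\|\psi_n\|_2<\infty$, this reduces the claim to showing
\[
\lim_{n\to\infty}\|(R_D(z)-R_0(z))\psi_n\|_2=0,\qquad R_A(z)\dd (A-z)^{-1},
\]
for each fixed $z\in\mathbb{C}\setminus\mathbb{R}$.

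For this, use the identity $H_0-D={\rm div}(V\nabla)$ together with the second resolvent identity,
\[
R_D(z)-R_0(z)=R_D(z)\,{\rm div}(V\nabla)\,R_0(z).
\]
Set $f_n\dd R_0(z)\psi_n$. Testing against $\phi\in L^2$ with $\|\phi\|_2=1$ and integrating by parts,
\[
\bigl|\langle(R_D(z)-R_0(z))\psi_n,\phi\rangle\bigr|=\bigl|\langle V\nabla f_n,\nabla R_D(\bar z)\phi\rangle\bigr|\le \|V\nabla f_n\|_2\cdot\|\nabla R_D(\bar z)\phi\|_2.
\]
By lemma \ref{dobro2} the second factor is bounded by $C(z,V)\|\phi\|_2$, uniformly in $\phi$, so the task is reduced to proving $\|V\nabla f_n\|_2\to 0$.

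For this last step, split $\|V\nabla f_n\|_2^2$ over $\{|x|\le R\}$ and $\{|x|>R\}$. The exterior piece is bounded by $\|V\|_{L^\infty(|x|>R)}\cdot\|\nabla f_n\|_2$, and since $\|f_n\|_{\cal{H}^2}\lesssim\|\psi_n\|_2$ is uniformly bounded in $n$ while $V\to 0$ at infinity, this piece is made arbitrarily small by taking $R$ large, uniformly in $n$. For the interior piece I would first argue that $\psi_n\to 0$ weakly in $L^2(\mathbb{R}^3)$: given any $g\in L^2$, split $\langle\psi_n,g\rangle$ into an interior and a tail contribution, use the escape hypothesis to kill the interior piece as $n\to\infty$, and use the $L^2$-boundedness of $\{\psi_n\}$ together with $\|g\|_{L^2(|x|>R)}\to 0$ to kill the tail. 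Consequently $f_n=R_0(z)\psi_n\to 0$ weakly in $L^2$, and the uniform $\cal{H}^2$-bound combined with the compactness of the Rellich--Kondrachov embedding $\cal{H}^2(B_R)\hookrightarrow\cal{H}^1(B_R)$ implies, by a standard subsequence argument, that $\|\nabla f_n\|_{L^2(B_R)}\to 0$ for every fixed $R$. Choosing $R$ first to control the exterior piece and then sending $n\to\infty$ closes the estimate.

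The main subtlety is that weak $L^2$-convergence of $f_n$ combined with its $\cal{H}^2$-boundedness upgrades (via Rellich) only to strong local convergence in $\cal{H}^1$, not in $\cal{H}^2$; this is precisely why moving the divergence off $V\nabla f_n$ by integration by parts is essential---otherwise one would need strong local convergence of $\Delta f_n$, which the hypotheses do not provide.
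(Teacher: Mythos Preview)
Your proof is correct and follows the same strategy as the paper: reduce to resolvents by density in $C_0(\mathbb{R})$, apply the second resolvent identity, and use decay of $V$ at infinity to kill the perturbation term. The only tactical difference is that the paper expands $\mathrm{div}(V\nabla)=-VH_0+\nabla V\cdot\nabla$ directly and observes that $R_z^0\psi_n$ and $\nabla R_z^0\psi_n$ inherit the escape-to-infinity property (so multiplication by $V$ or $\nabla V$ sends them to zero), whereas you pass to the dual side by integration by parts and then invoke Rellich---your route has the mild advantage that the final estimate $\|V\nabla f_n\|_2\to 0$ uses only $V\to 0$, not $\nabla V\to 0$.
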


\begin{proof} We notice that
\begin{equation}\label{eryt}
R_z-R_z^0=-R_zV_1R_z^0, \quad\quad V_1=VH_0-\nabla V\cdot \nabla,\quad  z\notin \mathbb{R}.
\end{equation}
Therefore, \eqref{vtornik} holds for $\alpha(E)=(E-z)^{-1}, \, z\notin \mathbb{R}$ because 
\begin{itemize}
\item   $\sup_{n} \|R_z^0  \psi_n\|_2<\infty$,
\item $\lim_{R\to\infty}\limsup_{n\to\infty}\|R^0_z\psi_n\|_{L^2(B_R(0))}=0$\quad  

\end{itemize}
and the same holds for $\nabla R_z^0\psi_n$. The linear span of the following functions 
\[
\left\{\sum_{j=1}^N \frac{c_j}{z_j-E}, \quad z_j\notin \mathbb{R}\right\}
\]
can approximate any  given $\alpha(E)$ in the supremum norm over $\mathbb{R}$ (by convolving with Poisson kernel and discretizing the Riemannian sum). Therefore, by the Spectral Theorem, we have statement of the lemma.
\end{proof}

{\bf Remark.} We will often apply this lemma in the case when $e^{it\sqrt{H_0}}f, \, f\in L^2(\mathbb{R}^3)$ is taken as $\psi_n$ (the generalization from $n\in \mathbb{N}$ to $t\in \mathbb{R}$ is obvious).  The both properties of the sequence will be satisfied due to properties of free evolutions, i.e., lemma \ref{local1}.

In the next three lemmas, we will study the basic properties of the Green's function $G(x,y,z),\,z\notin [0,\infty)$. Its existence can be deduced similarly to lemma \ref{dobro1} from, e.g., $\|R_z\|_{L^2(\mathbb{R}^3),\cal{H}^2(\mathbb{R}^3)}<\infty, \cal{H}^2(\mathbb{R}^3)\subseteq L^\infty(\mathbb{R}^3)$ (see Corollary~2.14, \cite{cycon}). 

\begin{lemma}\label{look}
Assume $\|V\|_\infty<1, \|\nabla V\|_\infty<\infty$ and denote $\lambda_0\dd \|V\|_\infty+\|\nabla V\|_\infty$. If $k\in \Pi(a,b,1)$, then
\begin{equation}\label{lukas1}
\sup_{y\in \mathbb{R}^3}\|G(x,y,k^2)\|_{L^2_x(\mathbb{R}^3)}\le C(a,b,\lambda_0) |\Im k|^{-1}, \quad \sup_{y\in \mathbb{R}^3}\| G(x,y,k^2)\|_{\cal{H}^2_x(B_1^c(y))}\le C(a,b,\lambda_0) |\Im k|^{-1}\,.
\end{equation}
In general, for $k\in \mathbb{C}^+$, we have
\begin{equation}\label{dobro3}
\sup_{y\in \mathbb{R}^3}\|G(x,y,k^2)\|_{L^2_x(\mathbb{R}^3)}<\infty,\quad 
\sup_{y\in \mathbb{R}^3}\|G(x,y,k^2)\|_{\cal{H}^2_x(B_1^c(y))}<\infty
\end{equation}
 and 
\begin{equation}\label{lukas}
\sup_{y\in \mathbb{R}^3}\|G(y,x,k^2)\|_{L^2_x(\mathbb{R}^3)}<\infty, \quad \sup_{y\in \mathbb{R}^3}\|G(y,x,k^2)\|_{\cal{H}^2_x(B_1^c(y))}<\infty\,.
\end{equation}
\end{lemma}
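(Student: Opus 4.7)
The plan is to reduce everything to two ingredients already at hand: the Spectral Theorem estimate $\|R_z\|_{L^2\to L^2}\le 1/\mathrm{dist}(z,\sigma(D))$, and the $L^2\to\cal{H}^2$ continuity of $R_z$ just proved in Lemma \ref{dobro2}. Since the dimension is three, the Sobolev embedding $\cal{H}^2\hookrightarrow L^\infty$ holds, so combining these two facts with the identity $DR_zf=zR_zf+f$ yields
\begin{equation*}
\|R_zf\|_\infty\lesssim\|R_zf\|_{\cal{H}^2}\lesssim\Bigl(1+(1+|z|)/\mathrm{dist}(z,\sigma(D))\Bigr)\|f\|_2.
\end{equation*}
From the definition $(R_zf)(y)=\int G(y,x,z)f(x)dx$ and duality in $f$, I obtain
$\sup_y\|G(y,\cdot,z)\|_{L^2_x}\le\|R_z\|_{L^2\to L^\infty}$,
which is the first bound in \eqref{lukas}. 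The first bound in \eqref{lukas1} then follows from the self-adjointness identity $G(x,y,z)=\overline{G(y,x,\bar z)}$ applied with $\bar z$ in place of $z$. For $k\in\Pi(a,b,1)$ one has $|z|\lesssim 1$ and $\mathrm{dist}(z,\sigma(D))\gtrsim a\,|\Im k|$, which produces the quantitative constant $C(a,b,\lambda_0)|\Im k|^{-1}$; for arbitrary $k\in\mathbb{C}^+$ only $k^2\notin[0,\infty)$ is needed, which is automatic since $\Im(k^2)=2\Re k\,\Im k$ can vanish only when $\Re k=0$, in which case $k^2<0$.

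For the $\cal{H}^2_x(B_1^c(y))$ bounds I would use interior regularity on a Whitney-type covering, bypassing the global cutoff approach (which would cost an extra factor of $|\Im k|^{-1}$). Fix $y\in\mathbb{R}^3$ and cover $B_1^c(y)$ by countably many balls $\{B_{1/2}(x_j)\}$ with $|x_j-y|>1$ and bounded overlap, chosen so that each enlarged ball $B_{3/4}(x_j)$ still lies in $B_{1/4}^c(y)$. On each such enlarged ball the function $u(x)\dd G(x,y,z)$ satisfies the homogeneous divergence-form equation $-\mathrm{div}((1+V)\nabla u)=zu$ with principal coefficient $1+V$ Lipschitz and bounded away from zero; standard interior $\cal{H}^2$ regularity thus gives
\begin{equation*}
\|u\|_{\cal{H}^2(B_{1/2}(x_j))}^2\le C(|z|,\|V\|_\infty,\|\nabla V\|_\infty)\|u\|_{L^2(B_{3/4}(x_j))}^2.
\end{equation*}
Summing over $j$ and using the bounded overlap of the covering,
\begin{equation*}
\|u\|_{\cal{H}^2(B_1^c(y))}^2\lesssim C(a,b,\lambda_0)\|u\|_{L^2(B_{1/4}^c(y))}^2\le C(a,b,\lambda_0)\|u\|_{L^2(\mathbb{R}^3)}^2,
\end{equation*}
and the right-hand side is controlled by the $L^2$ estimate already obtained in the first paragraph. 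This delivers the second inequality in \eqref{lukas1} with the asserted $|\Im k|^{-1}$ dependence, and the second inequality in \eqref{lukas} is again obtained by the $z\leftrightarrow\bar z$ symmetry.

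The only mildly delicate step is invoking interior $\cal{H}^2$ regularity for the divergence-form equation: the principal coefficient $1+V$ is only Lipschitz (not $C^{1,\alpha}$), so one must rely on the standard $\cal{H}^2$ interior theory valid under exactly this regularity rather than on $C^{2,\alpha}$ Schauder estimates. Everything else is bookkeeping: the three-dimensional Sobolev embedding $\cal{H}^2\hookrightarrow L^\infty$, the kernel–duality identity $\sup_y\|G(y,\cdot,z)\|_{L^2}\le\|R_z\|_{L^2\to L^\infty}$, a Whitney-type covering of $B_1^c(y)$, and the self-adjointness identity $G(x,y,z)=\overline{G(y,x,\bar z)}$ that interchanges the roles of the two spatial variables and of $z$ with $\bar z$.
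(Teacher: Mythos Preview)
Your proof is correct and follows essentially the same route as the paper. The paper phrases the $L^2$ bound via the $\cal{H}^{-2}$--$\cal{H}^2$ pairing $\langle\delta_y,(D-\bar k^2)^{-1}g\rangle$ rather than the Sobolev embedding $\cal{H}^2\hookrightarrow L^\infty$, but these are the same statement (the finiteness of $\|\delta_y\|_{\cal{H}^{-2}}$ \emph{is} the embedding); for the $\cal{H}^2(B_1^c(y))$ bound the paper simply writes the equation $-(1+V)\Delta v-\nabla V\cdot\nabla v=k^2v$ away from $y$ and invokes interior regularity without further detail, whereas you make explicit the Whitney-type covering and summation that turns the local estimate into a global one with the correct $|\Im k|^{-1}$ dependence.
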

\begin{proof} Let $f: \|f\|_2\leq 1$ and $u\dd  (D-k^2)^{-1}f$. Then, by the Spectral Theorem,
$
\|u\|_2\le C(a,b) |\Im k|^{-1}\,.
$
From lemma \ref{dobro2}, we get
$\|u\|_{\cal{H}^2(\mathbb{R}^3)}\le C(a,b,\lambda_0) |\Im k|^{-1}$. By duality, 
\begin{eqnarray}
\sup_{y\in \mathbb{R}^3}\|G(x,y,k^2)\|_{L^2_x(\mathbb{R}^3)}=\sup_{\|g\|_2\le 1}|\langle (D-k^2)^{-1}\delta_y,g\rangle|=\sup_{\|g\|_2\le 1}|\langle (\delta_y,(D-\bar k^2)^{-1}g\rangle|\le
\label{zhak}
\\ \|\delta_y\|_{\cal{H}^{-2}(\mathbb{R}^3)}\|(D-k^2)^{-1}g\|_{\cal{H}^2(\mathbb{R}^3)}\le C(a,b,\lambda_0)(\Im k)^{-1}\nonumber
\end{eqnarray}
as claimed. For the second inequality,  denote $v(x)\dd G(x,y,k^2)$ and consider equation
\[
-\Bigl((1+V)\Delta v+\nabla V\nabla v\Bigr)=k^2v, \quad x\neq y\,,
\]
from which we get the statement (e.g., by the Interior Regularity Theorem for elliptic equations). The proof of \eqref{dobro3} is identical and to get \eqref{lukas} we only need to notice that 
\begin{equation}\label{simol}
G(x,y,\bar{z})=\overline{G(y,x,z)}
\end{equation}
since $((D-z)^{-1})^*=(D-\bar z)^{-1}$.

\end{proof}
We now state the analog of lemma \ref{lm2} from the first part. 
\begin{lemma}\label{kere}
Assume $\|V\|_\infty<1, \|\nabla V\|_\infty<\infty$ and denote $\lambda_0\dd \|V\|_\infty+\|\nabla V\|_\infty$. If $k\in \Pi(a,b,1)$, then
\begin{eqnarray*}
\sup_{y\in \mathbb{R}^3, r>|y|+1}r^{-2}\int_{S_r(0)}|G(x,y,k^2)|^2d\sigma_x\le \frac{C(a,b,\lambda_0)}{r^2\Im^2 k}\,,\\ \sup_{y\in \mathbb{R}^3,r>|y|+1}  r^{-2}\int_{S_r(0)}|\partial_rG(x,y,k^2)|^2d\sigma_x\le
\frac{C(a,b,\lambda_0)}{r^2\Im^2 k}\,.
\end{eqnarray*}
\end{lemma}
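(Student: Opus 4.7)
The plan is to mimic the proof of lemma \ref{poliu} (and its consequence lemma \ref{lm2}) from the first part, adapted to the divergence-form operator $D$. The crucial observation is that the hypothesis $r>|y|+1$ guarantees that the sphere $S_r(0)$ stays at distance $\ge 1$ from the singularity at $y$, so the Green's function $v(x)\dd G(x,y,k^2)$ satisfies the homogeneous elliptic equation
\[
-(1+V(x))\Delta v(x)-\nabla V(x)\cdot\nabla v(x)=k^2 v(x)
\]
in a neighborhood of $S_r(0)$. Since $\|V\|_\infty<1$ and $\|\nabla V\|_\infty\le\lambda_0$, this equation is uniformly elliptic with bounded first-order coefficient.

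The strategy is then: cover $S_r(0)$ by $N\sim r^2$ balls $\{B_{0.9}(x_j)\}_{j=1}^N$ with $|x_j|=r$; since $r>|y|+1$, each enlarged ball $B_1(x_j)$ is contained in the annulus $\{r-1<|x|<r+1\}$ and is disjoint from $\{y\}$. On each such ball, apply the interior $\mathcal{H}^2$-regularity theorem for elliptic equations (as used in lemma \ref{poliu}) to obtain
\[
\|v\|_{\mathcal{H}^2(B_{0.9}(x_j))}\le C(a,b,\lambda_0)\|v\|_{L^2(B_1(x_j))},
\]
with the constant uniform because $k^2$ ranges over a bounded subset of $\mathbb{C}$ when $k\in\Pi(a,b,1)$. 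Then invoke the trace theorem restricting $\mathcal{H}^2$ functions to the hypersurface $S_r(0)\cap B_{0.9}(x_j)$ (see \cite{evans}, p.~258), which bounds both $\int|v|^2d\sigma_x$ and $\int|\partial_r v|^2d\sigma_x$ (via $\nabla v\in\mathcal{H}^1$) by $\|v\|^2_{\mathcal{H}^2(B_{0.9}(x_j))}$.

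Summing over $j$ and using the finite-overlap property of the balls,
\[
\int_{S_r(0)}|v|^2d\sigma_x+\int_{S_r(0)}|\partial_r v|^2d\sigma_x\le C(a,b,\lambda_0)\int_{r-1<|x|<r+1}|v|^2dx\le C(a,b,\lambda_0)\|v\|_{L^2(\mathbb{R}^3)}^2.
\]
Finally, by the bound \eqref{lukas1} of lemma \ref{look}, $\|v\|_{L^2(\mathbb{R}^3)}=\|G(\cdot,y,k^2)\|_{L^2}\le C(a,b,\lambda_0)(\Im k)^{-1}$ uniformly in $y$. Dividing by $r^2$ yields the two claimed estimates.

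There is no real obstacle: the whole argument is routine once lemma \ref{look} is in place. The only point that deserves care is the uniformity of the interior-regularity constant in $k$ and in $y$. Uniformity in $k$ is immediate because $\{k^2:k\in\Pi(a,b,1)\}$ is a bounded set in $\mathbb{C}$ and the lower-order coefficient $k^2$ of the elliptic equation therefore lies in a fixed compact; uniformity in $y$ is automatic because the coefficients $1+V$, $\nabla V$ are translation-invariant data of the equation and the geometry of the covering is translation-invariant. These two points ensure that the final constant depends only on $a$, $b$ and $\lambda_0$, as stated.
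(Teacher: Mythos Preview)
Your proof is correct and follows essentially the same route as the paper: the paper's proof simply says the lemma follows from the previous lemma (lemma~\ref{look}) after estimating traces on hypersurfaces exactly as in lemma~\ref{poliu}, which is precisely the covering-by-balls, interior-regularity, and trace-theorem argument you wrote out. The only cosmetic difference is that the paper could cite the global $\mathcal{H}^2$-bound from \eqref{lukas1} directly rather than rederiving it locally via interior regularity, but this amounts to the same thing.
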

\begin{proof} This follows from the previous lemma after estimating the traces on the hupersurfaces in a way which was used to prove lemma \ref{poliu} (we now have an estimate somewhat stronger than lemma \ref{lm2} since we didn't use \eqref{leret}).

\end{proof}

If we define $Q_{(\rho)}$ by \eqref{tru}, then the analog of lemma \ref{dvaff} holds true.

\begin{lemma} If $V$ satisfies \eqref{2-assu}, then
\begin{equation}\label{dvaff-d}
\lim_{\rho\to\infty}\|G_{(\rho)}(x,y,z)-G(x,y,z)\|_{L^2_x(\mathbb{R}^3)}=0
\end{equation}
and 
\begin{equation}\label{dvaff-dva}
\lim_{\rho\to\infty}\|G_{(\rho)}(x,y,z)-G(x,y,z)\|_{\cal{H}^2(\{x: r_1<|x-y|<r_2\})}=0
\end{equation}
for all $r_{1(2)}: 0<r_1<r_2$ and $z\notin [0,\infty)$.
\end{lemma}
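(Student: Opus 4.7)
The plan is to parallel the proof of Lemma \ref{alemma} from the first part, modified to handle the fact that the perturbation $D-D_{(\rho)}=-\mathrm{div}(V^{(\rho)}\nabla\cdot)$ now carries an extra derivative. Fix $y\in\mathbb{R}^3$ and $z\notin[0,\infty)$. Since $D$ and $D_{(\rho)}$ are non-negative self-adjoint, $z$ automatically misses both spectra. For $\rho>|y|+2$, the support of $V^{(\rho)}$ lies in $B_1^c(y)$, which will allow me to move a derivative off the singular Green's kernel without hitting the singularity at~$y$.

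The key identity is the second resolvent formula: for $g\in L^2(\mathbb{R}^3)$, setting $u\dd R_zg$ and $u_{(\rho)}\dd R_{(\rho),z}g$,
\[
u-u_{(\rho)}=R_z\,\mathrm{div}\bigl(V^{(\rho)}\nabla u_{(\rho)}\bigr)\,.
\]
I would evaluate this at the point $y$ and integrate by parts against $G(y,\cdot,z)$; the boundary contribution at $\partial B_1(y)$ disappears since $V^{(\rho)}$ vanishes there, and the one at infinity because of exponential decay. This gives
\[
(u-u_{(\rho)})(y)=-\int_{|\xi|>\rho}\nabla_\xi G(y,\xi,z)\cdot V^{(\rho)}(\xi)\,\nabla u_{(\rho)}(\xi)\,d\xi\,.
\]
Cauchy--Schwarz together with $\|\nabla G(y,\cdot,z)\|_{L^2(B_1^c(y))}\le C(z,\|V\|,|y|)$ from Lemma \ref{look} and $\|\nabla u_{(\rho)}\|_2\le C(z,\|V\|)\|g\|_2$ from Lemma \ref{dobro2} (uniformly in $\rho$, because $\|V_{(\rho)}\|_\infty$ and $\|\nabla V_{(\rho)}\|_\infty$ are dominated by constants depending only on $\|V\|$) yields
\[
|(u-u_{(\rho)})(y)|\le C(z,\|V\|,|y|)\,\|V^{(\rho)}\|_\infty\,\|g\|_2\,.
\]
Using the symmetry $G(x,y,z)=G(y,x,z)$ of the kernel of the real self-adjoint operator $D$, this pointwise estimate is equivalent to the bilinear bound $|\langle G(\cdot,y,z)-G_{(\rho)}(\cdot,y,z),h\rangle|\le C\|V^{(\rho)}\|_\infty\|h\|_2$, and duality with $\|V^{(\rho)}\|_\infty\to 0$ produces \eqref{dvaff-d}.

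For the annular $\cal{H}^2$-bound \eqref{dvaff-dva}, I take $\rho>|y|+r_2+1$ so that $V^{(\rho)}\equiv 0$ on $\{r_1<|x-y|<r_2\}$ and hence $V=V_{(\rho)}$ there. The difference $w\dd G(\cdot,y,z)-G_{(\rho)}(\cdot,y,z)$ then satisfies the homogeneous elliptic equation $-\mathrm{div}((1+V)\nabla w)-zw=0$ on this annulus; the Interior Regularity Theorem (\cite{evans}, p.~309), applied on a slightly larger annulus, transfers the $L^2_x$-smallness of $w$ already established in \eqref{dvaff-d} to $\cal{H}^2$-smallness on $\{r_1<|x-y|<r_2\}$ after a trivial widening-and-narrowing of the interval $(r_1,r_2)$.

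The main technical hurdle is the integration by parts: $\nabla_\xi G(y,\xi,z)$ is singular at $\xi=y$ and is not in $L^2$ near $y$, so the pointwise-evaluation maneuver only becomes legitimate once $V^{(\rho)}$ has been pushed away from $y$. This dictates the preliminary geometric restriction $\rho>|y|+2$ and means that the control on $\nabla G$ I need from Lemma \ref{look} is the one on $B_1^c(y)$ rather than on all of $\mathbb{R}^3$. Uniformity of all constants in $\rho$ is then straightforward once one notes that the cut-off in \eqref{tru} does not spoil the relevant $L^\infty$- or gradient bounds on the coefficients.
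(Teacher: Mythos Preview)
Your argument is correct and reaches the goal, but the route differs from the paper's. The paper never integrates by parts against $G(y,\cdot,z)$; instead it writes, via duality,
\[
\|G(\cdot,y,z)-G_{(\rho)}(\cdot,y,z)\|_{L^2_x}=\sup_{\|g\|_2\le 1}\bigl|\langle\delta_y,(R_{\bar z}-R_{(\rho),\bar z})g\rangle\bigr|,
\]
expands the perturbation in non-divergence form $D-D_{(\rho)}=V^{(\rho)}H_0-\nabla V^{(\rho)}\cdot\nabla$, and bounds $(R_{\bar z}-R_{(\rho),\bar z})g$ directly in $\cal{H}^2(\mathbb{R}^3)$ via $\|R_{(\rho),z}\|_{L^2\to\cal{H}^2}$ and $\|R_z\|_{L^2\to\cal{H}^2}$ (Lemma~\ref{dobro2}); the pairing with $\delta_y\in\cal{H}^{-2}$ then finishes. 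Your approach keeps the perturbation in divergence form, shifts the derivative onto $G(y,\xi,z)$, and invokes the $\cal{H}^2_x(B_1^c(y))$ bound on $G(y,\cdot,z)$ from Lemma~\ref{look}. The trade-off: the paper's argument works for every $\rho$ and uses both $\|V^{(\rho)}\|_\infty\to 0$ and $\|\nabla V^{(\rho)}\|_\infty\to 0$, while yours uses only $\|V^{(\rho)}\|_\infty\to 0$ but requires the preliminary restriction $\rho>|y|+2$ (harmless here) and the kernel symmetry $G(x,y,z)=G(y,x,z)$ --- true for this real self-adjoint operator, though in the paper only the conjugate relation \eqref{simol} is stated. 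For \eqref{dvaff-dva} the two proofs coincide.
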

\begin{proof}
To prove \eqref{dvaff-d}, we can argue as in \eqref{zhak}:
\[
\|G(x,y,k^2)-G_{(\rho)}(x,y,k^2)\|_{L^2_x(\mathbb{R}^3)}=\sup_{\|g\|_2\le 1}|\langle (R_{k^2}-{R_{(\rho)}}_{k^2})\delta_y,g\rangle|=\sup_{\|g\|_2\le 1}|\langle (\delta_y,  (R_{\bar k^2}-{R_{(\rho)}}_{\bar k^2)})g\rangle|\,.
\]
For the difference of resolvents, we can write
\[
R_z-{R_{(\rho)}}_z={R_{(\rho)}}_z(-V^{(\rho)}H_0-\nabla V^{(\rho)}\cdot \nabla)R_z
\]
with $V^{(\rho)}=V-V_{(\rho)}$. Thus,
\begin{eqnarray*}
\|(R_z-{R_{(\rho)}}_z)g\|_{\cal{H}^2(\mathbb{R}^3)}\le \|{R_{(\rho)}}_z\|_{L^2(\mathbb{R}^3),\cal{H}^2(\mathbb{R}^3)}(\|V^{(\rho)}\|_\infty \|H_0\|_{\cal{H}^2(\mathbb{R}^3),L^2(\mathbb{R}^3)}+\\\|\nabla V^{(\rho)}\|_\infty\|\nabla\|_{\cal{H}^2(\mathbb{R}^3),L^2(\mathbb{R}^3)}) \|R_z\|_{L^2(\mathbb{R}^3),\cal{H}^2(\mathbb{R}^3)}\|g\|_{L^2(\mathbb{R}^3)}
\end{eqnarray*}
and this expression converges to $0$ as $\rho\to\infty$ because $V$ satisfies \eqref{2-assu}. This gives \eqref{dvaff-d}. \eqref{dvaff-dva}  follows after comparing equations satisfied by Green's kernels and using the Interior Regularity Theorem for elliptic equations.
\end{proof}

{\bf Remark.} Property \eqref{dvaff-dva} allows us to conclude that
\begin{equation}\label{apa1}
\lim_{\rho\to\infty}\int_{S_r(0)}|G(x,y,k^2)-G_{(\rho)}(x,y,k^2)|^2d\sigma_x=0, \,\lim_{\rho\to\infty}\int_{S_r(0)}|\partial_rG(x,y,k^2)-\partial_rG_{(\rho)}(x,y,k^2)|^2d\sigma_x=0
\end{equation}
for all $r\in (|y|,\infty)$. This follows from the theorem about existence of the traces on the hypersurfaces.\smallskip

We conclude with discussion of one  technical issue. In the next section, we will  need to know the local  regularity of $\nabla_x G(x,y,k^2)$ in $x$ around $y$. 
To study this problem, notice that we can write equation
$
Du=k^2u+f, \,f\in L^2(\mathbb{R}^3)
$
in the form
\[
(-\Delta-k^2)u=\frac{f}{1+V}+\frac{\nabla V\nabla u}{1+V}-k^2\frac{Vu}{1+V}\,.
\]
Assume that $V,\nabla V, (1+V)^{-1}\in L^\infty(\mathbb{R}^3)$.
Then, 
\[
R_z=R_z^0\Bigl(\frac{1}{1+V}+\frac{\nabla V}{1+V}\nabla R_z-z\frac{V}{1+V}R_z  \Bigr), \quad z=k^2   
\]
and 
\begin{equation}\label{popul}
G(x,y,z)=\frac{G^0(x,y,z)}{1+V(y)}-z\int_{\mathbb{R}^3} G^0(x,\xi,z)\frac{V(\xi)}{1+V(\xi)}G(\xi,y,z)d\xi+
\end{equation}
\[
\int_{\mathbb{R}^3} G^0(x,\xi,z)\frac{\nabla V(\xi)}{1+V(\xi)}\nabla G(\xi,y,z)d\xi\,,
\]
where this identity is understood in the weak sense, i.e., by integrating with test function $\phi(y)\in C_c^\infty(\mathbb{R}^3)$.
Taking gradient in $x$ of both sides, we get
\begin{equation}\label{kasa}
\nabla G=f_1+f_2+ B_a(\nabla G)\,,
\end{equation}
where operator $B_a$ is defined as
\[
 B_af=\int_{\mathbb{R}^3} \nabla _xG^0(x,\xi,z)\frac{\nabla V(\xi)}{1+V(\xi)}f(\xi)d\xi
\]
and \[f_1= \frac{\nabla G^0(x,y,z)}{1+V(y)}, \quad f_2=-z \nabla \int_{\mathbb{R}^3} G^0(x,\xi,z)\frac{V(\xi)}{1+V(\xi)}G(\xi,y,z)d\xi\,.\]
For every $z\notin [0,\infty)$, we have $f_1\in L^1(\mathbb{R}^3)$ and $f_2\in L^2(\mathbb{R}^3)$ by lemma \ref{look}.
Now, consider $B_a$ and notice that it is a contraction in $L^1(\mathbb{R}^3)$ and in $L^2(\mathbb{R}^3)$ for $k=id$ where $d$ is positive and  large. Indeed, it follows from 
\[
\left\|\nabla \frac{e^{-d|x|}}{|x|}\right\|_{L^1(\mathbb{R}^3)}\lesssim d^{-1}\,.
\]
Therefore, the equation $\eqref{kasa}$ has a unique solution and $\nabla G\in L^1+L^2$. To show that this is true for all $z\notin [0,\infty)$, we write the first resolvent identity
\[
R_z=R_{z_0}+(z-z_0)R_{z_0}R_z\,,
\]
where $z_0=-d^2$. This gives
\[
G(x,y,z)=G(x,y,z_0)+(z-z_0)\int_{\mathbb{R}^3} G(x,\xi,z_0)G(\xi,y,z)d\xi\,.
\]
By  \eqref{lukas1} and $\|R_{z_0}\|_{L^2(\mathbb{R}^3),\cal{H}^2(\mathbb{R}^3)}<C(z_0)$, we get 
\[
\nabla \int_{\mathbb{R}^3} G(x,\xi,z_0)G(\xi,y,z)d\xi\in L^2(\mathbb{R}^3)\,.
\]
Finally, we have
\begin{equation}\label{local-loc}
\nabla_x G(x,y,z)\in L^1(\mathbb{R}^3)+L^2(\mathbb{R}^3)
\end{equation}
for all $y\in \mathbb{R}^3$.

\bigskip

\subsection{Asymptotics on the Green's function}

Following the notation in part 1, we define
\begin{equation}\label{opet}
A(r\sigma,y,k)=G(r\sigma +y,y,k^2)/G^0(r \sigma +y,y,k^2),\,   a(r\sigma,y,k)=G(r\sigma,y,k^2)/G^0(r\sigma,0,k^2)\,,
\end{equation}
where $r>0,\, \sigma\in \mathbb{S}^2$. For the free case,
\[
a^0(r\sigma,y,k)=G^0(r\sigma,y,k^2)/G^0(r\sigma,0,k^2)\,.
\]

\begin{theorem}Assume $V$ satisfies  \eqref{2-assu}.  If $k\in \Pi(a,b,1)$,  then
\begin{equation}\label{kkk1}
\sup_{r>|y|+1}\|a(r\sigma,y,k)\|_{L^2(\mathbb{S}^2)}\le C(a,b,|y|,V)(\Im k)^{-1.5}\,,
\end{equation}
\begin{equation}\label{leo}
\lim_{\|V\|\to 0}\sup_{r>|y|+1}\|a(r\sigma,y,k)-a^0(r\sigma,y,k)\|_{L^2(\mathbb{S}^2)}=0\,.
\end{equation}
 The convergence in $\eqref{leo}$ is uniform in $y\in K$ where $K$ is any compact in $\mathbb{R}^3$.\smallskip

For each $k\in \mathbb{C}^+$ and $y\in \mathbb{R}^3$,
\begin{equation}\label{leo1}
\lim_{r\to\infty} \|a(r\sigma,y,k)-a_\infty(\sigma,y,k)\|_{L^2(\mathbb{S}^2)}=0
\end{equation}
and this convergence is uniform in $y\in K$ and in $k\in K_1$, $K_1$ is arbitrary compact in $\mathbb{C}$. 
\end{theorem}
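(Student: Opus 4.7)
The plan is to mimic theorems~\ref{t1}--\ref{t3}, adapted to the divergence-form operator $D$. For $x\neq y$ the Green's function satisfies $(-\Delta-k^2)G(\cdot,y,k^2)=(1+V)^{-1}(\nabla V\cdot\nabla G-k^2 V G)$. Writing $G=G^0(\cdot,0,k^2)\mu$ so that $\mu(r\sigma)=a(r\sigma,y,k)$ on $S_r(0)$, the same manipulation that produced \eqref{main-eq1} gives, for $x\notin\{0,y\}$,
\[
-\Delta\mu-2\Bigl(ik-\tfrac{1}{|x|}\Bigr)\partial_r\mu=\frac{1}{1+V}\Bigl[k^2V\mu-\nabla V\cdot\nabla\mu-\Bigl(ik-\tfrac{1}{|x|}\Bigr)(\partial_rV)\mu\Bigr].
\]
I would then multiply by $\bar\mu/|x|^2$, integrate over $\{r_1<|x|<r_2\}$ with $r_1>|y|+1$, take real and imaginary parts, and follow the bookkeeping of lemma~\ref{mre} to extract inequalities for $m,m_1,M,\widehat M,A$ at radius $r_0=|y|+1$.

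The new right-hand-side terms are controlled as follows. Since $\|V\|_\infty<1$, $(1+V)^{-1}$ is a bounded multiplier. The $k^2V\mu$-contribution is exactly that of \eqref{key2}, and after using $V={\rm div}\,Q$ and integrating by parts as in \eqref{collectivo} it produces $\sqrt{AM}\,\|Q\|_{\ell^2([r],\infty),L^\infty}$ plus lower-order pieces. The new $(\partial_rV)\mu$-contribution is handled identically after writing $\partial_rV=\widehat{x}\cdot\nabla\,{\rm div}\,Q$ and integrating one extra derivative back onto $\bar\mu/|x|^2$, which is permitted by $\|D^2Q\|_{\ell^2,L^\infty}<\infty$ from \eqref{2-assu}. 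Finally, $\int(1+V)^{-1}\nabla V\cdot\nabla\mu\,\bar\mu\,|x|^{-2}\,dx$ is bounded via Cauchy--Schwarz by $\|\nabla V\|_{\ell^2([r],\infty),L^\infty}\sqrt{AM}\lesssim\|V\|_{[r,\infty)}\sqrt{AM}$. These estimates preserve the structure of \eqref{perun1}--\eqref{perun2}, so taking $r_0$ large enough that $\|V\|_{[r_0,\infty)}$ is small yields the analogues of \eqref{poros}--\eqref{porosyo} for $M(r_0),A(r_0)$. Lemma~\ref{kere} gives $m(r_0)+m_1(r_0)\lesssim(\Im k)^{-2}$ (absorbing the bounded exponential weight at radius $r_0$), substitution into the iteration yields $\widehat M\lesssim(\Im k)^{-3}$ via \eqref{wt}, and taking the square root is \eqref{kkk1}.

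For \eqref{leo}, the same energy estimate applied to $\mu-\mu^0$ (the free solution $\mu^0=|x||x-y|^{-1}e^{ik(|x-y|-|x|)}$) leaves a forcing whose norm carries an explicit factor of $\|V\|$, while the initial discrepancy $m(r_0)-m^0(r_0)$ is $O(\|V\|)$ by the resolvent comparison \eqref{dvaff-d}; this gives $\sup_{r>|y|+1}\|a-a^0\|_{L^2(\mathbb{S}^2)}\to 0$ uniformly in $y\in K$ as $\|V\|\to 0$. For \eqref{leo1}, I would iterate the resolvent identity in the form
\[
G(x,y,z)=\frac{G^0(x,y,z)}{1+V(y)}+\int_{\mathbb{R}^3}G^0(x,\xi,z)\,\frac{\nabla V\cdot\nabla_\xi G(\xi,y,z)-k^2V(\xi)G(\xi,y,z)}{1+V(\xi)}\,d\xi,
\]
multiply by $4\pi|x|e^{-ik|x|}$, and note that the free term tends to $e^{-ik\langle\sigma,y\rangle}/(1+V(y))$ while the kernel $|x||x-\xi|^{-1}e^{ik(|x-\xi|-|x|)}$ tends pointwise to $e^{-ik\langle\sigma,\xi\rangle}$. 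The integral splits into a compactly-supported piece over $|\xi|<\rho$, which converges uniformly in $\sigma$, and a tail $|\xi|>\rho$ which, after rewriting $V={\rm div}\,Q$ and integrating by parts, is controlled by the operators $B_\infty^{(j)}$ from \eqref{eret}; the decay of $\|G(\cdot,y,k^2)\|_{L^2(S_r(0))}$ and $\|\nabla G(\cdot,y,k^2)\|_{L^2(S_r(0))}$ in $r$ (from part~1 of the theorem together with lemma~\ref{kere} and \eqref{local-loc}) makes the tail vanish as $\rho\to\infty$, uniformly in $y\in K$ and $k\in K_1$. Analyticity of $a_\infty(\sigma,y,\cdot)$ on $\mathbb{C}^+$ then follows from analyticity of $G$ and the locally uniform convergence.

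The main obstacle is the new $\nabla V\cdot\nabla\mu$ term, absent in Part~1 and lacking an obvious cancellation structure; it couples directly to $A^{1/2}$ through Cauchy--Schwarz, so closing the energy iteration requires the $\ell^2$-tail smallness $\|V\|_{[r,\infty)}\to 0$ — exactly the content of \eqref{2-assu}, and the reason the hypothesis on $D^2 Q$ is imposed. Additionally, the local singularity of $\nabla_x G(x,y,k^2)$ at $x=y$, where only $L^1+L^2$ regularity is available via \eqref{local-loc}, forces the integration domain to stay inside $|x|>|y|+1$, which is why $|y|$ appears explicitly in the constant in \eqref{kkk1}.
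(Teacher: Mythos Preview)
Your approach is correct and essentially the same as the paper's. The only bookkeeping difference is that you divide the equation for $\mu$ through by $(1+V)$ before pairing with $\bar\mu/|x|^2$, whereas the paper keeps the $(1+V)$ factor attached to the principal terms, producing $(1+V)|\nabla\mu|^2$ and $(1+V)|\mu|^2$ in the energy identity \eqref{key11}; since $\|V\|_\infty<1$ these are uniformly equivalent to the unweighted versions, so nothing is gained or lost. For \eqref{leo} the paper uses the integral representation \eqref{pert14}, $a=a^0+I_1+\cdots+I_5$, and bounds each $I_j$ through the operators $B^{(j)}$, rather than running the energy estimate on $\mu-\mu^0$ as you suggest; both routes work, and the paper's is slightly more direct because it recycles exactly the machinery already built for \eqref{leo1}. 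Your identification of the $\nabla V\cdot\nabla\mu$ term as the new obstacle, and of the $D^2Q$ hypothesis in \eqref{2-assu} as the cure, is on the mark. (Two cosmetic remarks: there is a global sign flip on your right-hand side for the $\mu$-equation, and the $V_r$ term does not actually require an extra integration by parts---$|V_r|\le|D^2Q|$ already sits in $\ell^2L^\infty$ by \eqref{2-assu}.)
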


\begin{proof}

The proof is nearly identical to proofs of lemma \ref{mre} and theorems \ref{t1}, \ref{t2} in the first part. We write equation for $u\dd G(x,y,k^2)$ in the following form
\[
(1+V)\Delta u+\nabla V\cdot \nabla u+k^2u=0, \quad |x|>|y|\,.
\]
If $\mu\dd a(x,y,k)$, then
\[
(1+V)\Delta\mu +2(1+V)\mu_r(ik-r^{-1})-k^2V\mu +\nabla V\nabla\mu+V_r\mu(ik-r^{-1})=0\,,
\]
where $r\dd |x|>|y|$. We now proceed as in  the proofs in part 1. We multiply equation by $\bar\mu/|x|^2$ and integrate over the annulus
$r_1<|x|<r_2$ where $r_1>|y|$. Dividing by $ik$, integrating by parts, and taking the real part of both sides give us (after several cancellations)
\begin{equation}\label{key11}
\frac{\Im k}{|k|^2}\int_{r_1<r<r_2} \frac{(1+V)|\nabla \mu|^2 }{|x|^2}dx+\frac{1}{r_2^2}\int_{|x|=r_2}(1+V)|\mu|^2d\sigma_x  =
\end{equation}
\[
\frac{1}{r_1^2}\int_{|x|=r_1}(1+V)|\mu|^2d\sigma_x+\Im k\int_{r_1<|x|<r_2} \frac{V|\mu|^2}{|x|^2}dx-\frac{\Im k}{|k|^2}\int_{r_1<|x|<r_2} V_r\frac{|\mu|^2}{|x|^3}dx-I_1+I_2
\]
and
\[
I_1=\Re \left(  \frac{1}{ikr_1^2} \int_{|x|=r_1} \frac{(1+V)\mu_r\bar \mu}{|x|^2}d\sigma_x \right), \quad  I_2=\Re \left(  \frac{1}{ikr_2^2} \int_{|x|=r_2} \frac{(1+V)\mu_r\bar \mu}{|x|^2}d\sigma_x \right)\,.
\]
Comparing this inequality to \eqref{key1}, we introduce
\begin{eqnarray*}
m(r)\dd r^{-2}\int_{|x|=r}|\mu|^2d\sigma_x, \quad m_1(r)\dd r^{-2}\int_{|x|=r}|\mu_r|^2d\sigma_x, \\ A(r)\dd \int_{r<|x|}\frac{|\nabla\mu|^2}{|x|^2}dx, \quad M(r)\dd \sup_{\rho>r}\int_{\rho}^{\rho+1}m(t)dt, \quad \widehat{M}(r)\dd \sup_{\rho>r}m(\rho)\,.
\end{eqnarray*}
We estimate 
\[
\left|\int_{r_1<|x|<r_2} \frac{V|\mu|^2}{|x|^2}dx\right|
\]
as in \eqref{keke}. In view of approximation \eqref{apa1}, we can repeat arguments in the proof of lemma \ref{mre} (by approximating with $V_{(\rho)}$ first and then sending $\rho\to\infty$) which gives us  
\begin{eqnarray}\label{key611-3}
A(r_1)\le C(a,b,y)\Bigl(\frac{m(r_1)+ \sqrt{m(r_1)m_1(r_1)}}{\Im k}+\|V\|_{[r_1,\infty)}M(r_1)+\Bigr.\\
\nonumber  \Bigl. m(r) \|V\|_{(r_1,\infty)}+\sqrt{A(r)M(r)}\|V\|_{[r_1,\infty)}  +M(r)(r+1)^{-1/2}\|V\|_{[r_1,\infty)}\Bigr)
\end{eqnarray}
and
\begin{eqnarray}\label{key8811-3}
M(r_1)\le
 C(a,b,y)\Bigl(m(r_1)+ \sqrt{m(r_1)m_1(r_1)}+\Im k\|V\|_{[r_1,\infty)}M(r_1)+\Bigr.\hspace{2cm}\\
\nonumber  \Bigl. \Im k\Bigl(m(r) \|V\|_{[r_1,\infty)}+\sqrt{A(r)M(r)}\|V\|_{[r_1,\infty)}  +M(r)(r+1)^{-1/2}\|V\|_{[r_1,\infty)}\Bigr)+\sqrt{A(r_1)M(r_1)}\Bigr)\,.
\end{eqnarray}
Next we  choose $r_1$ large enough to make $\|V\|_{[r_1,\infty)}$ sufficiently small to be able to solve these equations and get
\begin{equation}\label{key8811}
M(r_1)\le C(a,b,y,V)\frac{m(r_1)+\sqrt{m(r_1)m_1(r_1)}}{\Im k}, \quad A(r_1)\le C(a,b,y,V)\frac{m(r_1)+\sqrt{m(r_1)m_1(r_1)}}{\Im k}\,.
\end{equation}
This  $r_1$ depends on $a$,$b$,$V$ and $y$ only. Then, following the proof of \eqref{wt}, we get
\begin{equation}\label{w11t}
\widehat M(r)\lesssim M(r)+\sqrt{A(r)M(r)}\lesssim \frac{ m(r)+\sqrt{m(r)m_1(r)}}{\Im k}\,.
\end{equation}
Lemma \ref{kere} then yields \eqref{kkk1}. Notice that although we obtained these bounds in any $\Pi(a,b,1)$, we can write
\[
A(r)<C(k,|y|,V),\quad \widehat M(r)<C(k,|y|,V), \quad r>|y|+1
\]
for every $k\in \mathbb{C}^+$.

\smallskip

To prove \eqref{leo} and \eqref{leo1}, we notice that \eqref{popul} provides
\[
\mu= \frac{|x|e^{ik(|x-y|-|x|)}}{|x-y|(1+V(y))}-k^2\int_{\mathbb{R}^3} \frac{|x|e^{ik(|x-\xi|+|\xi|-|x|)}}{4\pi |x-\xi||\xi|}\frac{V}{1+V}\mu d\xi+
\]
\[
\int_{\mathbb{R}^3}  \frac{|x|e^{ik(|x-\xi|+|\xi|-|x|)}}{4\pi|x-\xi||\xi|}  \frac{\nabla V}{1+V} \nabla \mu d\xi+ik\int_{\mathbb{R}^3}  \frac{|x|e^{ik(|x-\xi|+|\xi|-|x|)}}{|x-\xi||\xi|}  \frac{ V_r}{1+V}  \mu d\xi-\int_{\mathbb{R}^3}  \frac{|x|e^{ik(|x-\xi|+|\xi|-|x|)}}{|x-\xi||\xi|}  \frac{ V_r}{1+V}  \frac{\mu}{|\xi|} d\xi\,.
\]
This can be rewritten as 
\begin{equation}\label{pert14}
a=a^0+I_1+\ldots+I_5,
\end{equation}
\[
I_1\dd - \frac{|x|e^{ik(|x-y|-|x|)}}{|x-y|}\frac{V(y)}{1+V(y)}
\]
and $I_2,\ldots, I_5$ are defined respectively. \smallskip

Before we proceed with the analysis of every term, we split  integral in the definition of each $I_j, j=2,\ldots,5$ to integrals over $|\xi|<|y|+1$ and over $|\xi|>|y|+1$.  The former has necessary asymptotics since $G\in L^2(\mathbb{R}^3),$ $\nabla G\in L^1(B_{|y|+1}(0))$ by \eqref{local-loc}. Thus, we can assume that integration is done over $|\xi|>|y|+1$, the domain on which we established the bounds for $A$ and $\widehat M$.\smallskip

The first term $I_1$ obviously has the required asymptotics. 
Having estimates \eqref{key611-3} and \eqref{w11t} on $\mu$ and $\nabla\mu$, we will apply theorem \ref{ttt1} to each of $I_2,\ldots,I_5$. For example, to handle $I_2$, we write
\[
\frac{V}{1+V}\mu=|\xi| \Bigl(  \frac{\mu}{|\xi|(1+V)}{\rm div}\, Q        \Bigr)=|\xi|{\rm div}\,\left(    Q\frac{\mu}{|\xi|(1+V)}\right)  -|\xi|Q\nabla\left(  \frac{\mu}{|\xi|(1+V)}            \right)\,.
\]
Recall the definitions of operators $\{B^{(j)}\}$ from subsection 2.3. The contributions from the first and second terms can be estimated by bounds on $B^{(4)}$ and $B^{(3)}$, respectively. Then, for each $k\in \mathbb{C}^+$, we can bound, e.g.,
\[
\int_{|\xi|>|y|+1}\frac{|\mu|^2Q^2}{|\xi|^2(1+V)^2}d\xi
\]
by using \eqref{w11t} and \eqref{2-assu}. The $L^2$ norm  $\left\|\nabla\left(  \frac{\mu}{|\xi|(1+V)}            \right)\right\|_{L^2(B_{|y|+1}^c)}$ can be estimated similarly.

The terms $I_{4(5)}$ can be handled similarly considering that
\[
V_r={\rm div}\,\Bigl(   V\frac{\xi}{|\xi|}   \Bigr)-\frac{2V}{|\xi|}\,.
\]
The contribution from the first term is handled as $I_2$ and from the second one is estimated using the bound for $B^{(2)}$.

For $I_3$, we write
\[
 \frac{\nabla V}{1+V} \nabla \mu=|\xi|\cdot\frac{\nabla V}{1+V}\cdot \frac{\nabla\mu}{|\xi|}
\]
and the estimate for $B^{(3)}$ can be used along with
\[
\int_{|\xi|>|y|+1}\frac{|\nabla \mu|^2}{|\xi|^2}d\xi <C(k,y,V)
\]
as follows by \eqref{key8811} and \eqref{w11t}. Finally, we get \eqref{leo1} and clearly this convergence is uniform in $y$ and in $k$.

Consider \eqref{pert14}. Sending $\|V\|$  to zero in the estimates for $\{I_j\}$, we get 
 \eqref{leo}. 

\end{proof}
In the same way, the estimates for $A(r\sigma,y,k)$ and its asymptotics can be obtained.

 We now turn to the question about finding the harmonic majorant for $\|a_\infty\|_{L^2(\mathbb{S}^2)}$ and $\|A_\infty\|_{L^2(\mathbb{S}^2)}$. Let us focus on $A_\infty$, the analysis of $a_\infty$ is similar. We will need the following statement first.
 
For $\delta>0$, we define $w_\delta\dd (1+|x|)^{1/2+\delta}$.
 \begin{lemma} \label{potluck1} Let $\delta>0$ and assume that $V\in C_c^\infty(\mathbb{R}^3)$ and $\|V\|_\infty<1$. Then, $G(x,y,k^2)-G^0(x,y,k^2)$ can be continuously extended in $k$  to $\mathbb{R}\backslash 0$ as $L^2_{w_\delta^{-1}}(\mathbb{R}^3)$ function. The function $A_\infty(\sigma,y,k)$ can be continuously extended in $k$ to $\mathbb{R}\backslash 0$ as an $L^2(\mathbb{S}^2)$ function.
 \end{lemma}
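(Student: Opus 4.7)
The plan is to deduce both continuous extensions from the limiting absorption principle (LAP) for $D$ with compactly supported $V$. Since $V\in C_c^\infty(\mathbb{R}^3)$ and $\|V\|_\infty<1$, the operator $D$ is a short-range perturbation of $H_0$, and classical LAP for divergence-form operators (as in \cite{eidus,ilin}) provides a continuous extension of $R_z$ in $k=\sqrt z$ to $\mathbb{R}\setminus\{0\}$ as a bounded map $L^2_{w_\delta}(\mathbb{R}^3)\to L^2_{w_\delta^{-1}}(\mathbb{R}^3)$; embedded positive eigenvalues of $D$ are excluded by unique continuation. The standard proof via the Lippmann--Schwinger equation $(I+R^0_zT)u=R^0_zf$ with $T=-{\rm div}(V\nabla)$, combined with the LAP for the free resolvent, also yields by interior elliptic regularity a continuous extension in $\cal{H}^2_{\rm loc}$ on any open set bounded away from the source. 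Together with \eqref{local-loc}, this implies that $G(\xi,y,k^2)$ and $\nabla_\xi G(\xi,y,k^2)$ depend continuously on $k\in\mathbb{R}\setminus\{0\}$ as $L^1_\xi({\rm supp}(V))$ functions.

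To establish the first claim, I would rewrite \eqref{popul} as
\[
G(x,y,k^2)-G^0(x,y,k^2)=-\frac{V(y)}{1+V(y)}\,G^0(x,y,k^2)+\int_{{\rm supp}(V)}G^0(x,\xi,k^2)\,F(\xi,y,k)\,d\xi,
\]
with $F(\xi,y,k)\dd -k^2\frac{V(\xi)\,G(\xi,y,k^2)}{1+V(\xi)}+\frac{\nabla V(\xi)\cdot\nabla G(\xi,y,k^2)}{1+V(\xi)}$ supported in ${\rm supp}(V)$. The boundary-like term $G^0(\cdot,y,k^2)$ is directly in $L^2_{w_\delta^{-1}}$ and continuous in $k$ by dominated convergence, since $\int (1+|x|)^{-1/2-\delta}|x-y|^{-2}\,dx<\infty$. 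For the integral part, Minkowski's integral inequality gives
\[
\Bigl\|\int G^0(\cdot,\xi,k^2)\,F(\xi,y,k)\,d\xi\Bigr\|_{L^2_{w_\delta^{-1}}}\le \sup_{\xi\in{\rm supp}(V)}\|G^0(\cdot,\xi,k^2)\|_{L^2_{w_\delta^{-1}}}\cdot\|F(\cdot,y,k)\|_{L^1({\rm supp}(V))},
\]
where both factors depend continuously on $k\in\mathbb{R}\setminus\{0\}$---the first by inspection, the second by the LAP step above.

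For the second claim, I apply \eqref{popul} again, multiply by $4\pi r e^{-ikr}$ with $r=|x-y|$, and pass to the limit $r\to\infty$ with $\widehat{x-y}\to\sigma$. The expansion $|y+r\sigma-\xi|=r+\langle\sigma,y-\xi\rangle+O(1/r)$, uniform for $\xi$ in a compact set, together with $r/|y+r\sigma-\xi|\to 1$, yields the explicit representation
\[
A_\infty(\sigma,y,k)=\frac{1}{1+V(y)}+\int_{{\rm supp}(V)}e^{-ik\langle\sigma,\xi-y\rangle}\,F(\xi,y,k)\,d\xi.
\]
Continuity of the right-hand side in $k\in\mathbb{R}\setminus\{0\}$ in the $L^2(\mathbb{S}^2)$ norm follows immediately, because $|e^{-ik\langle\sigma,\xi-y\rangle}|\le 1$ on ${\rm supp}(V)$ and $F(\cdot,y,k)$ is continuous in $k$ as an $L^1$ function by the first step. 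The main obstacle is the LAP input for $D$ together with its promotion to local $\cal{H}^2$ regularity; this is classical but must be stated and proved carefully, and once it is in hand, both statements reduce to a direct manipulation of \eqref{popul} and the standard far-field expansion of $G^0$.
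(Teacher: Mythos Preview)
Your overall strategy is natural, but there is a circularity in the key step. You need
\[
F(\cdot,y,k)=-k^2\frac{V\,G(\cdot,y,k^2)}{1+V}+\frac{\nabla V\cdot\nabla G(\cdot,y,k^2)}{1+V}
\]
to be continuous in $k$ up to $\mathbb{R}\setminus\{0\}$ in $L^1({\rm supp}(V))$. Away from $\xi=y$ this follows from LAP plus interior elliptic regularity, as you say. But near $\xi=y$ the only tool you invoke is \eqref{local-loc}, and that estimate was derived for $z\notin[0,\infty)$: it gives membership $\nabla G\in L^1+L^2$ for $k\in\mathbb{C}^+$, not continuity as $k$ approaches the real axis. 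The argument producing \eqref{local-loc} used the first resolvent identity with input $G(\cdot,y,z)\in L^2(\mathbb{R}^3)$, and that fails once $z\in\mathbb{R}^+$ (then $G(\cdot,y,z)$ is only in $L^2_{w_\delta^{-1}}$). Trying instead to deduce the $L^1$-continuity of $F$ from your own representation of $G-G^0$ is precisely circular: that representation already requires the continuity of $F$ to say anything. A bare operator-level LAP $L^2_{w_\delta}\to L^2_{w_\delta^{-1}}$ does not apply to the source $\delta_y$, and the obvious $\cal{H}^{-1}$--$\cal{H}^1$ upgrade is not enough either, since $V\nabla G^0(\cdot,y,k^2)\sim|\cdot-y|^{-2}$ is not in $L^2$ near $y$.

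The paper's proof is engineered to avoid exactly this difficulty. It first integrates by parts in \eqref{popul} to eliminate $\nabla G$ entirely (obtaining \eqref{popul1}), then iterates the identity once and switches to the $y$-variable via the kernel symmetry $G(x,y,z)=\overline{G(y,x,\bar z)}$. After this manipulation the unknown $G$ appears only in the form $\int h(\xi)\,G(y,\xi,\bar z)\,d\xi=(R_{\bar z}h)(y)$, and the purpose of the iteration is to guarantee that each input $h$ (the functions $F_1,F_2,F_3$ there) lies in $L^2_{w_\delta}$, so that LAP applies directly with no source singularity to handle. The delicate case is $F_2$, which contains a $\nabla G^0*\nabla G^0$ term with an a priori $|x-\xi|^{-2}$ singularity that is only $L^1$; the paper reduces it to $L^2$ by one further integration by parts using $(-\Delta-k^2)G^0=\delta$. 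Your shortcut skips these steps, and that is where the gap lies.
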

 
 \begin{proof} The main ingredient of our proof is the limiting absorption principle (LAP) for the operator $D=-{\rm div}(1+V)\nabla$ with short range potential, studied in, e.g.,   \cite{eidus,yama}. LAP claims that $R_z$ can be continuously extended in $z$ from $\mathbb{C}\backslash [0,\infty)$ to $\mathbb{R}^+\pm i0$ as an operator from $L^2_{w_\delta}(\mathbb{R}^3)$ to $L^2_{w_\delta^{-1}}(\mathbb{R}^3)$. This result  is an extension of the LAP established by Agmon for the  Schr\"odinger operators with short range potentials (see, e.g., \cite{rt,yafaev1}).

 For $z\notin [0,\infty)$, write \eqref{popul} and integrate by parts in the third term
\begin{equation}\label{popul1}
G(x,y,z)=\frac{G^0(x,y,z)}{1+V(y)}-z\int_{\mathbb{R}^3} G^0(x,\xi,z)\frac{V(\xi)}{1+V(\xi)}G(\xi,y,z)d\xi-
\end{equation}
\[
\int_{\mathbb{R}^3} G(\xi,y,z){\rm div}_\xi\left(G^0(x,\xi,z)\frac{\nabla V(\xi)}{1+V(\xi)}\right) d\xi\,.
\]
Notice that all integrals involved converge absolutely.
Iterating this identity once (substitute the left hand side into the third term on the right hand side) gives
\begin{equation}\label{popul2}
G(x,y,z)=\frac{G^0(x,y,z)}{1+V(y)}-z\int_{\mathbb{R}^3} G^0(x,\xi,z)\frac{V(\xi)}{1+V(\xi)}G(\xi,y,z)d\xi-
\end{equation}
\[
\int_{\mathbb{R}^3} \frac{G^0(\xi_1,y,z)}{1+V(y)}{\rm div}_{\xi_1}\left(G^0(x,\xi_1,z)\frac{\nabla V(\xi_1)}{1+V(\xi_1)}\right) d\xi_1-
\]
\[
z\int_{\mathbb{R}^3} {\rm div}_{\xi_1}\left(G^0(x,\xi_1,z)\frac{\nabla V(\xi_1)}{1+V(\xi_1)}\right) \int_{\mathbb{R}^3} G^0(\xi_1,\xi_2,z)\frac{V(\xi_2)}{1+V(\xi_2)}G(\xi_2,y,z)d\xi_2          d\xi_1-
\]
\[
\int_{\mathbb{R}^3} {\rm div}_{\xi_1}\left(G^0(x,\xi_1,z)\frac{\nabla V(\xi_1)}{1+V(\xi_1)}\right)              \int_{\mathbb{R}^3}       {\rm div}_{\xi_2}\left(G^0(\xi_1,\xi_2,z)\frac{\nabla V(\xi_2)}{1+V(\xi_2)}\right)     G(\xi_2,y,z)d\xi_2     d\xi_1\,.
\]
Recall that $G(x,y,z)=\overline{G(y,x,\bar{z})}$. Changing the order of integration in the last two integrals gives
\begin{equation}\label{popul3}
\overline{G(y,x,\bar z)}=\frac{\overline{G^0(y,x,\bar z)}}{1+V(y)}- z\int_{\mathbb{R}^3} G^0(x,\xi,z)\frac{V(\xi)}{1+V(\xi)}\overline{G(y,\xi,\bar z)}d\xi-
\end{equation}
\[
\int_{\mathbb{R}^3} \frac{G^0(\xi_1,y,z)}{1+V(y)}{\rm div}_{\xi_1}\left(G^0(x,\xi_1,z)\frac{\nabla V(\xi_1)}{1+V(\xi_1)}\right) d\xi_1-
\]
\[
z\int_{\mathbb{R}^3}\frac{V(\xi_2)}{1+V(\xi_2)}\overline{G(y,\xi_2,\bar z)}d\xi_2   \int_{\mathbb{R}^3}  G^0(\xi_1,\xi_2,z)   {\rm div}_{\xi_1}\left(G^0(x,\xi_1,z)\frac{\nabla V(\xi_1)}{1+V(\xi_1)}\right)        d\xi_1-
\]
\[
          \int_{\mathbb{R}^3}          \overline{G(y,\xi_2,\bar z)}d\xi_2   \int_{\mathbb{R}^3}    {\rm div}_{\xi_2}\left(G^0(\xi_1,\xi_2,z)\frac{\nabla V(\xi_2)}{1+V(\xi_2)}\right)          {\rm div}_{\xi_1}\left(G^0(x,\xi_1,z)\frac{\nabla V(\xi_1)}{1+V(\xi_1)}\right) d\xi_1 \,.
\]
Now, we define three functions 
\[
F_1(\xi_2)\dd \frac{V(\xi_2)}{1+V(\xi_2)}\int_{\mathbb{R}^3}  G^0(\xi_1,\xi_2,k^2)  {\rm div}_{\xi_1}\left(G^0(x,\xi_1,k^2)\frac{\nabla V(\xi_1)}{1+V(\xi_1)}\right)        d\xi_1\,,
\]
\[
F_2(\xi_2)\dd \int_{\mathbb{R}^3}    {\rm div}_{\xi_2}\left(G^0(\xi_1,\xi_2,k^2)\frac{\nabla V(\xi_2)}{1+V(\xi_2)}\right)          {\rm div}_{\xi_1}\left(G^0(x,\xi_1,k^2)\frac{\nabla V(\xi_1)}{1+V(\xi_1)}\right) d\xi_1 \,,
\]
\[
F_3(\xi_2)\dd G^0(x,\xi_2,k^2)\frac{V(\xi_2)}{1+V(\xi_2)}\,.
 \]
Since we have exact formula for $G^0(\xi_1,\xi_2,k^2)$, simple estimates for the integrals show that $F_1,F_3$ can be extended in $k$ continuously to $\mathbb{R}\backslash 0$ as $L^2(\mathbb{R}^3)$ functions in $\xi_2$. The most singular term in $F_2$ can be written as
\[
q(\xi_2)\int_{\mathbb{R}^3} \nabla_{\xi_2} G^0(\xi_1,\xi_2,k^2)\nabla_{\xi_1} G^0(x,\xi_1,k^2)q(\xi_1)d\xi_1, \quad q\dd \frac{\nabla V}{1+V}\,.
\]
This expression can be continued in $k$ to $\mathbb{R}\backslash 0$ as $L^1_{\xi_2}(\mathbb{R}^3)$ function. However, since $\nabla_{\xi_1}G^0=-\nabla_{\xi_2}G^0$ and $(-\Delta_{\xi_1}-k^2)G^0(\xi_1,\eta,k^2)=\delta_\eta$, we can rewrite it in the following form
\[
-q(\xi_2)\int_{\mathbb{R}^3} \nabla_{\xi_1} G^0(\xi_1,\xi_2,k^2)\nabla_{\xi_1} G^0(x,\xi_1,k^2)q(\xi_1)d\xi_1=I_1+I_2,
\]
\[
I_1=-k^2q(\xi_2)\int_{\mathbb{R}^3}  G^0(\xi_1,\xi_2,k^2) G^0(x,\xi_1,k^2)q(\xi_1)d\xi_1-q^2(\xi_2)G^0(x,\xi_2,k^2)\,,
\]
\[
I_2=
q(\xi_2)\int_{\mathbb{R}^3} \nabla_{\xi_1} G^0(\xi_1,\xi_2,k^2) G^0(x,\xi_1,k^2)\nabla_{\xi_1}q(\xi_1)d\xi_1\,.
\]
Now, the elementary properties of the convolution and  explicit form of $G^0$ show that $F_2$ can be continued to $\mathbb{R}\backslash 0$ as $L^2_{\xi_2}(\mathbb{R}^3)$ function.
Notice that the calculation for $I_1$ can be performed on the Fourier side as well.

Next, we consider \eqref{popul3} as equality for functions in $y$ where $x$ is fixed. Notice that compactness of support of $V$ guarantees that $F_{1(2,3)}\in L^2_{w_\delta}(\mathbb{R}^3)$ and LAP for $D$ shows that $G(y,x,k^2)-G^0(y,x,k^2)$ can be continued to $\mathbb{R}\backslash 0$ as an element of $L^2_{w_\delta^{-1}}(\mathbb{R}^3)$. Then, the formula \eqref{popul1} implies that $A(r\sigma,y,k)$ can be defined for all $r>0$ and $y\in \mathbb{R}^3$ as $L^2(\mathbb{S}^2)$ valued function continuous in $k$ up to $\mathbb{R}\backslash 0$. The existence of $A_\infty(\sigma,y,k)$ and its continuity in $k$ are immediate as well.

 \end{proof}

  The formula \eqref{reguu} can be rewritten in the following form:
\[
(D^2-k^4)^{-1}=(D-k^2)(D+k^2), \quad k\in \mathbb{C}^+\,.
\]
Thus,
\[
(D^2-k^4)^{-1}\delta_0=\int_{\mathbb{R}^3} G(x,\xi,k^2)G(\xi,0,-k^2)d\xi\,.
\]
Since $G(\xi,0,-k^2)\in L^2_\xi(\mathbb{R}^3)$ and $R_{k^2}$ maps $L^2(\mathbb{R}^3)$ to $\cal{H}^2(\mathbb{R}^3)\subset C(\mathbb{R}^3)$, the function $m(k)$ defined by \eqref{newm} has  imaginary part positive in $\mathbb{C}^+$ and has representation \eqref{refew}. The following analog of lemma \ref{kulik} holds true. 

\begin{lemma} \label{kulik1} Assume that $V\in C_c^\infty(\mathbb{R}^3)$. Then,
\[
32\pi^2\kappa \mu'(\kappa^4)=\|{A_\infty}(\sigma,0,\kappa)\|^2_{L^2(\mathbb{S}^2)}, \quad \kappa> 0\,.
\]
\end{lemma}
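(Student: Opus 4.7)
I would mimic the proof of Lemma \ref{kulik} step by step, replacing $H=-\Delta+V$ with $D=-\mathrm{div}((1+V)\nabla)$ throughout. Fix $f\in C_c^\infty(\mathbb{R}^3)$, let $u=(D^2-k^4)^{-1}f$, and split
\[
u=\frac{1}{2k^2}\bigl((D-k^2)^{-1}f-(D+k^2)^{-1}f\bigr).
\]
The second term decays exponentially in $x$ since $-\kappa^2$ lies strictly below $\sigma(D)\subset[0,\infty)$. For the first, the convergence results for $A(\cdot,y,k)$ established earlier in this part, together with the analog of Lemma \ref{polk} (which carries over to $D$ with the same truncation argument), give the outgoing asymptotics
\[
(D-k^2)^{-1}f(x)=\frac{e^{ik|x|}}{4\pi|x|}\left(\int_{\mathbb{R}^3}a_\infty(\sigma,y,k)f(y)\,dy+o(1)\right),\quad |x|\to\infty,\ \widehat x\to\sigma,
\]
and this expansion can be differentiated in $x$.

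Next I would multiply $(D^2-k^4)u=f$ by $\overline u$, integrate over $B_R(0)$ for $R$ so large that $\mathrm{supp}\,V\cup\mathrm{supp}\,f\subset B_R(0)$, and perform two integrations by parts using the divergence form of $D$. The calculation produces
\[
\int_{B_R}D^2u\cdot\overline u\,dx=\int_{B_R}|Du|^2\,dx+\int_{\partial B_R}(1+V)\bigl[(Du)\partial_r\overline u-\partial_r(Du)\overline u\bigr]\,d\sigma_x,
\]
and since $V\equiv 0$ and $Du=-\Delta u$ on $\partial B_R$, the boundary term collapses to $\int_{\partial B_R}\bigl[(\Delta u)_r\overline u-(\Delta u)\overline{u_r}\bigr]d\sigma_x$. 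Using $D^2u=k^4u+f$, taking imaginary parts, and sending $k\to\kappa\in\mathbb{R}^+$: the interior $\int|Du|^2$ is real and drops out, and the contribution $k^4\|u\|^2_{L^2(B_R)}$ vanishes because $\mathrm{Im}\,k^4\to 0$ while $\|u\|^2_{L^2(B_R)}$ stays bounded by LAP. One arrives at
\[
\mathrm{Im}\int_{\partial B_R}\bigl[(\Delta u)_r\overline u-(\Delta u)\overline{u_r}\bigr]d\sigma_x=\pi\mu_f'(\kappa^4),
\]
where $\mu_f$ is the measure associated to $f$ and $D^2$ via the Nevanlinna representation \eqref{refew} and Stone's formula.

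The final step is to evaluate the boundary integral as $R\to\infty$ by plugging in the outgoing form $u\sim C(\sigma)e^{i\kappa r}/r$ with $C(\sigma)=(8\pi\kappa^2)^{-1}\int a_\infty(\sigma,y,\kappa)f(y)\,dy$. Using $\Delta u=-\kappa^2u+O(r^{-3})$ a direct computation gives
\[
(\Delta u)_r\overline u-(\Delta u)\overline{u_r}=-\frac{2i\kappa^3|C(\sigma)|^2}{r^2}+O(r^{-3}),
\]
so the imaginary part of the boundary integral converges (up to sign) to $2\kappa^3\|C\|^2_{L^2(\mathbb{S}^2)}$. Matching this with $\pi\mu'_f(\kappa^4)$ yields $\int_{\mathbb{S}^2}|\int a_\infty(\sigma,y,\kappa)f(y)\,dy|^2d\sigma=32\pi^3\kappa\mu'_f(\kappa^4)$. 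Taking $f=f_n$ a $\delta_0$-generating sequence, $\int a_\infty(\sigma,y,\kappa)f_n(y)\,dy\to a_\infty(\sigma,0,\kappa)=A_\infty(\sigma,0,\kappa)$ (the exponential factor in the $a_\infty$--$A_\infty$ identity is trivial at $y=0$), and absorbing the $1/\pi$ of \eqref{refew} into $\mu$ produces the claimed $32\pi^2\kappa\mu'(\kappa^4)=\|A_\infty(\sigma,0,\kappa)\|^2_{L^2(\mathbb{S}^2)}$.

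The main technical point is justifying the $k\to\kappa$ limit at the level of the boundary integral. Lemma \ref{potluck1} gives $u(\cdot,k)\to u(\cdot,\kappa)$ only as an element of $L^2_{w_\delta^{-1}}(\mathbb{R}^3)$, whereas we need strong convergence of both $u$ and $\partial_r u$ as traces on $\partial B_R$. This is upgraded as follows: on the annulus $\{R/2<|x|<2R\}$ we have $V\equiv 0$ and $f\equiv 0$, so $u$ satisfies $(\Delta+k^2)u=0$ there, and interior elliptic regularity promotes the $L^2_{w_\delta^{-1}}$ convergence on a slightly larger annulus to $\mathcal{H}^2$ convergence, which in turn gives the needed trace convergence by the standard trace theorem. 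After this is in place, the rest is a careful bookkeeping of constants as above.
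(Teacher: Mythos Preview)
Your proposal is correct and follows essentially the same route as the paper: multiply $(D^2-k^4)u=f$ by $\overline u$, integrate by parts over $B_R$ to reduce to a boundary term, take imaginary parts, pass to the limit $k\to\kappa$ using Lemma~\ref{potluck1}, insert the outgoing asymptotics of $u$, and finally let $f\to\delta_0$. Your final paragraph on upgrading the $L^2_{w_\delta^{-1}}$ convergence to trace convergence via interior elliptic regularity on an annulus where $V\equiv0$ is a useful clarification that the paper leaves implicit.
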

\begin{proof}
Let $u\dd (D^2-k^4)^{-1}f=(2k^2)^{-1}(R_{k^2}-R_{-k^2})f$,
where $f$ is any test function, i.e., $f\in C_c^\infty(\mathbb{R}^3)$. Multiply  equation $(D^2-k^4)u=f$ by $\overline u$ and integrate over $B_R(0)$ with large $R$. The analog of \eqref{los} is
\[
\int_{|x|<R}\Bigl(({\rm div}(1+V)\nabla+k^2)({\rm div}(1+V)\nabla-k^2)u\Bigr)\overline udx=\int_{\mathbb{R}^3} f\overline udx\,.
\]
 Take imaginary part of both sides and send $k\to \kappa\in \mathbb{R}\backslash 0$.
After integration by parts, we have
\[
\int_{|x|=R}\Bigl(  \partial_r  \Bigl( ({\rm div}(1+V)\nabla-k^2)\Bigr) u\Bigr)(1+V)\overline u- \Bigl(    \Bigl( ({\rm div}(1+V)\nabla-k^2)\Bigr) u\Bigr)(1+V)\partial_r \overline u   dx=\int_{\mathbb{R}^3} f\overline udx\,.
\]
Lemma \ref{potluck1}  gives asymptotics
\[
u(x,k)=\frac{e^{ik|x|}}{8\pi k^2|x|}\left(\int_{\mathbb{R}^3} a_\infty(\widehat x,y,k)f(y)dy+o(1)\right),\quad |x|\to\infty\,,
\]
which holds in $k$ up to $\mathbb{R}\backslash 0$ and can be differentiated in $x$.  Substitute it into the identity to get
\[
\int_{\mathbb{S}^2}\left|\int_{\mathbb{R}^3} a_\infty(\sigma,y,\kappa)f(y)dy\right|^2d\sigma=32\pi^3\kappa \mu'_f(\kappa^4)\,.
\]
Taking $\{f_n\}\to \delta_0$ gives the statement of the lemma.
\end{proof}
This lemma provides the harmonic majorants for $\|A_\infty(\sigma,y,k)\|_{L^2(\mathbb{S}^2)}$ and $\|a_\infty(\sigma,y,k)\|_{L^2(\mathbb{S}^2)}$. 
In particular, theorem \ref{t4oo} and \eqref{kochka1}, \eqref{kochka2} hold for $A_\infty(\sigma,y,k)$. Repeating the proof  of lemma \ref{polk}, we get \eqref{polk2} and thus there exists $a_\infty(\sigma,y,\kappa)$ such that
\begin{equation}\label{contour}
\lim_{\epsilon\to 0}\int_a^b\|a_{\infty}(\sigma,y,\kappa+i\epsilon)-a_\infty(\sigma,y,\kappa)\|^2_{L^2(\mathbb{S}^2)}d\kappa=0
\end{equation}
for every $y$ and every $[a,b]$ not containing $0$.

\bigskip

\subsection{Proof of the main theorem}

\begin{proof}{\it (of theorem \ref{main-th} )}  Since $\cal{N}$ is dense in $L^2(\mathbb{R}^3)$ and $e^{it\sqrt{D}},e^{it\sqrt{H_0}}$ preserve $L^2(\mathbb{R}^3)$ norm, it is sufficient to take $f\in \cal{N}$ and prove that the limit
${\lim}_{t\to\pm \infty}e^{it\sqrt{D}}e^{-it\sqrt{H_0}}f$ exists in $L^2(\mathbb{R}^3)$.
Let $\phi\dd e^{it\sqrt{H_0}}f$. Consider
\[
\psi\dd e^{-it\sqrt D}\phi
\]
when $t\to+\infty$. The case $t\to -\infty$ is similar.

To prove existence of the limit of $\psi$ in $L^2(\mathbb{R}^3)$, it is sufficient to show that

\begin{itemize}
\item[(A)] As $t\to+\infty$,  $\psi(x,t)$ converges in $L^2(B_R(0))$ for every $R>0$.

\item[(B)]  $\{\psi(x,t)\}$ is ``tight'' in the following sense
\[
\lim_{R\to\infty}\limsup_{t\to +\infty} \|\psi(x,t)\|_{L^2(R_R^c)}=0\,.
\]
\end{itemize}
We start proving (A) by writing: 
\begin{equation}\label{formula7}
e^{-it\sqrt D}\phi =e^{-it\sqrt{D}}e^{it\sqrt{H_0}} q_n(\sqrt{H_0})q_n^{-1}(\sqrt{H_0})f=e^{-it\sqrt{D}} q_n(\sqrt{H_0})e^{it\sqrt{H_0}}q_n^{-1}(\sqrt{H_0})f\,,
\end{equation}
where $q_n$ is defined in \eqref{kukuru} and parameters $a,b$ are chosen such that  supp$(\widehat f)\subset \{\xi: 0<a<\frac{|\xi|}{2\pi}<b\}$. 
If we denote $f_1=q_n^{-1}(\sqrt{H_0})f$, then $f_1\in \cal{N}$ by the choice of $a$ and $b$. The parameter $n$ will be chosen later. 
From lemma \ref{lek}, we obtain
\begin{equation}\label{zo1}
e^{-it\sqrt{D}}\phi=e^{-it\sqrt{D}}q_n(\sqrt{H_0})e^{it\sqrt{H_0}}f_1 =e^{-it\sqrt{D}} q_n(\sqrt{D})\phi_1+\epsilon_1, \quad \lim_{t\to\infty}\|\epsilon_1\|_2= 0\,,
\end{equation}
where $\phi_1\dd e^{it\sqrt{H_0}}f_1$.
We performed this algebra to be able to write formula \eqref{formula1} for
\begin{equation}\label{sera}
e^{-it\sqrt{D}} q_n(\sqrt{D})  \phi_1=\frac{1}{2\pi i}\int_{\Gamma_{a,b}} e^{-itk}(D-k^2)^{-1}p_n(k) \phi_1 dk\,.
\end{equation}
The properties of free evolution $e^{it\sqrt{H_0}}$, i.e., preservation of $L^2(\mathbb{R}^3)$ norm and estimate \eqref{local1},  allow us to replace $\phi_1$ by a function
\begin{equation}\label{zo2}
\widetilde\phi_1=\phi_1 w_\rho(|x|-t)\,,
\end{equation}
where $w_\rho(\tau), \, \tau\in \mathbb{R}$ is smooth, even, nonnegative function that satisfies three properties:
\begin{itemize}
\item
$
w_\rho(\tau)=1, \quad |\tau|<\rho\,,
$
\item
$
w_\rho(\tau)=0, \quad |\tau|>\rho+1\,,
$
\item $0\le w_\rho\le 1, \quad \tau\in \mathbb{R}$\,.
\end{itemize}
Since the operators $q_n(\sqrt{D})$,  $ e^{it\sqrt{H_0}}$,  $ e^{-it\sqrt{D}}$ are bounded from $L^2(\mathbb{R}^3)$ to $L^2(\mathbb{R}^3)$ and their norms are uniformly bounded in $t$, the error made by that change can be made arbitrarily small by choosing  $\rho$ large and then sending $t\to\infty$.

We collect now the  properties of $\widetilde \phi_1$ that will be important later on:
\begin{itemize}
\item[(P1)] $\lim_{\rho\to\infty}\limsup_{t\to\infty}\|\phi_1-\widetilde\phi_1\|_2=0$. We will fix $\rho$ large enough and $t$--independent.
\item[(P2)] $\widetilde \phi_1$ is supported on the annulus $\{x: |x|\in [t-\rho-1,t+\rho+1]\}$.

\item[(P3)] $\widetilde \phi_1$ has asymptotics (see \eqref{poiu})
\begin{equation}\label{tripleone}
 \widetilde \phi_1((t+\tau)\sigma,t)=\omega_\rho(\tau) \frac{-\sigma \cal{R}( \nabla f_1)(\tau,\sigma)+i\cal{R}(|\Delta|^{1/2}f_1)(\tau,\sigma)}{4\pi t}+o(t^{-1}),
\end{equation}
uniform in $\tau\in \mathbb{R}$ and $\sigma\in \mathbb{S}^2$.
In particular, $\|\widetilde \phi_1\|_\infty<C(f)t^{-1}$ for all $t>1$.

\item[(P4)] $\widetilde \phi_1$ is sufficiently smooth 
\[
 \|D^j\widetilde \phi_1\|_\infty<C_jt^{-1}, \quad j\in \mathbb{N}\,.
\]
This follows from the definition of $\widetilde \phi_1$ and smoothness of $f$.
\end{itemize}

Consider the integral in \eqref{sera} with $\phi_1$ replaced by $\widetilde \phi_1$. For $\Gamma_{a,b}$ we can write $\Gamma_{a,b}=\Gamma_{a,b}^+\cup \Gamma_{a,b}^-$, where $\Gamma_{a,b}^{\pm}\in \mathbb{C}^{\pm}$. We studied $G(x,y,k^2)$ assuming that $k\in \mathbb{C}^+$, so, since $(-k)^2=k^2$, we will write
\begin{equation}\label{perd1}
(D-k^2)^{-1} \widetilde \phi_1=\int_{\mathbb{R}^3} G(x,y,(-k)^2)\widetilde \phi_1(y)dy
\end{equation}
for $k\in \Gamma_{a,b}^-$. Thus, we will need to control  $G(x,y,\beta^2)$ where $\beta\in -\Gamma_{a,b}^-\subset \mathbb{C}^+$.

We will start with writing the following estimate. For every $R>0$,
\begin{equation}\label{zorro0}
\left\|\frac{1}{2\pi i}\int_{\Gamma_{a,b}} e^{-itk} (D-k^2)^{-1}p_n   \widetilde \phi_1    dk\right\|
_{L^2(B_R(0))}\lesssim \int_{\Gamma_{a,b}} e^{t\Im k} \sup_{\|h\|_{L^2(B_R(0))}=1}|
\langle (D-k^2)^{-1}\widetilde \phi_1,h\rangle| |p_n|d|k|
\end{equation}
by duality.
Since
\begin{equation}
\|\widetilde \phi_1\|_\infty <C(f) t^{-1}, \quad {\rm supp} (\widetilde \phi_1)\subset \{x:||x|-t|<\rho+1\},
\end{equation}
and $((D-k^2)^{-1})^*=(D-\bar k^2)^{-1}$, we have
\[
|\langle \widetilde \phi_1,(D-{\overline{k}}^2)^{-1}h\rangle|\le\frac{ C(f)}{t}\int_{t-\rho-1<|x|<t+\rho+1}|u|dx\,,
\]
for every $k\in \Gamma_{a,b}^-$, where
\[
 u=(D-{\overline{k}}^2)^{-1}h=G^0(x,0,\overline{k}^2)\int_{|y|<R} a(x,y,\overline{k}^2)h(y)dy
\]
as follows from the definition of $a$. If $k\in \Gamma_{a,b}^+$, then $\overline{k}\in \Gamma_{a,b}^-$, and we need to make modification as in \eqref{perd1}.
We use estimate \eqref{kkk1} on $a(x,y,k)$ to obtain
\begin{equation}\label{zorro1}
\frac{1}{t}\int_{t-\rho<|x|<t+\rho}|u|dx<C(R,\rho,a,b)e^{-|\Im k|t}|\Im k|^{-1.5}
\end{equation}
after applying Cauchy-Schwarz and $\|h\|_2\le 1$.
This amounts to absolute convergence in $k$ and uniform boundedness of the integral
\begin{equation}\label{yura-u}
 \int_{\Gamma_{a,b}} e^{t\Im k} \sup_{\|h\|_{L^2(B_R(0))}=1}|
\langle (D-k^2)^{-1}\widetilde \phi_1,h\rangle| |p_n|d|k|\le C(\rho,R,a,b,f)\,,
\end{equation}
provided that $n=3$ which is our choice of $n$ from now on.

Now, we will show that for every $k\in \Gamma_{a,b}, k\neq a,b$, the integrand in \eqref{sera} converges in $L^2(B_R(0))$ as $t\to +\infty$.  We have
\[
e^{-itk}\Bigl((D-k^2)^{-1}\widetilde\phi_1\Bigr)(y)=e^{-ikt}\int_{\mathbb{R}^3} G(y,x,k^2)\widetilde \phi_1(x)dx=e^{-ikt}\int_{\mathbb{R}^3}  \overline{G(x,y,\overline{k}^2)}\widetilde\phi_1(x)dx
\]
by identity \eqref{simol}. We can write 
\[
e^{-ikt}\int_{\mathbb{R}^3}  \overline{G(x,y,\overline{k}^2)}\widetilde\phi_1(x)dx=e^{-ikt}\int_{\mathbb{R}^3}  \frac{e^{ik|x|} \overline{a(x,y,-\overline{k})}}{4\pi|x|}\widetilde\phi_1(x)dx
\]
if $k\in \Gamma^+_{a,b}$ and 
\[
e^{-ikt}\int_{\mathbb{R}^3}  \overline{G(x,y,\overline{k}^2)}\widetilde\phi_1(x)dx=e^{-ikt}\int_{\mathbb{R}^3}  \frac{e^{-ik|x|} \overline{a(x,y,\overline{k})}}{4\pi|x|}\widetilde\phi_1(x)dx
\]
if $k\in \Gamma^-_{a,b}$. Now, we use asymptotics of $a$ (check \eqref{leo1}) and of  $\phi_1$ (check \eqref{tripleone}) for $r=|x|\to\infty$ to conclude that 
\begin{eqnarray}\label{kusto}
e^{-ikt}\int_{\mathbb{R}^3}  \overline{G(x,y,\overline{k}^2)}\widetilde\phi_1(x)dx\to \cal{G},
\\\nonumber
\cal{G}(k,y)\dd 
\frac{1}{(4\pi)^2} \int_{\mathbb{S}^2}\overline{a_\infty(\sigma,y,-\overline k)} \int_{\mathbb{R}} 
\omega_\rho(\tau)e^{ik\tau} (-\sigma\cal{R}(\nabla f_1)(\tau,\sigma)+i\cal{R}(|\Delta|^{1/2}f_1)(\tau,\sigma))d\tau d\sigma 
\end{eqnarray}
if $t\to\infty$ and this convergence is uniform in $y\in B_R(0)$ and $k\in K_1$ where $K_1$ is any compact in $\mathbb{C}^+$. On the other hand, if $k\in \Gamma^-_{a,b}$, then
\[
e^{-ikt}\int_{\mathbb{R}^3}  \overline{G(x,y,\overline{k}^2)}\widetilde\phi_1(x)dx\to 0
\]
uniformly in $y\in B_R(0)$ and $k\in K_1\subset \mathbb{C}^-$. Together with the unform bound \eqref{yura-u}, we get
\begin{equation}\label{limki}
(e^{-it\sqrt{D}}q_n(\sqrt{D})\widetilde\phi_1)(y)\to \frac{1}{2\pi i} \int_{\Gamma^+_{a,b}} p_n(k) \cal{G}(k,y)dk, \quad t\to +\infty
\end{equation}
and this convergence is in $L^2(B_R(0))$.
Since $\rho$ can be chosen arbitrarily large, we have (A).\smallskip

{\bf Remark.} It is now instructive to discuss the importance of the cut-off $\omega_\rho$ which one might consider to be artificial. In fact, it is crucial for our proof. Indeed, the interior integral in the definition of $\cal{G}$ represents a Fourier integral of a Radon transform which is not even well-defined if $k\notin \mathbb{R}$ unless we introduce a cut-off. With $\omega_\rho$ present, we can now say that this interior integral is entire function of exponential type and integrals in $\sigma$ and $k$ can be controlled.

\bigskip

The following lemma is immediate from the proof given above and it will be used in the proof of~(B). 
\begin{lemma}\label{stability} If $f\in L^2(\mathbb{R}^3)$, then
\[
\lim_{\|V\|\to 0}\limsup_{t\to\infty}\|e^{-it\sqrt{D}}e^{it\sqrt{H_0}}f-f\|_2=0\,.
\]
\end{lemma}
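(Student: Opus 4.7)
The plan is to reduce to $f\in\cal{N}$ by density and isometry, to identify the local $L^2(B_R(0))$-limit of $e^{-it\sqrt D}e^{it\sqrt{H_0}}f$ as $t\to\infty$ using the proof of theorem \ref{main-th}, to track the $V$-dependence of this limit via \eqref{leo}, and then to upgrade local to global convergence via unitarity. For the reduction, $\|e^{-it\sqrt D}e^{it\sqrt{H_0}}(f-f_\epsilon)\|_2=\|f-f_\epsilon\|_2$ together with the triangle inequality shows it suffices to treat $f\in\cal{N}$.

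For such $f$, the proof of theorem \ref{main-th} supplies, for each $R>0$, the $L^2(B_R(0))$-convergence $e^{-it\sqrt D}e^{it\sqrt{H_0}}f\to W^+f$ as $t\to\infty$, together with the explicit representation \eqref{limki}--\eqref{kusto} in which the sole $V$-dependence sits in the factor $\overline{a_\infty(\sigma,y,-\bar k)}$. By \eqref{leo}, uniformly in $y$ on compacts and in $k$ on the compact contour $\Gamma_{a,b}^+$, this factor converges to its free counterpart $e^{-ik\langle\sigma,y\rangle}$ as $\|V\|\to 0$. At $V=0$ the whole evolution is the identity and the local limit equals $f$ tautologically; dominated convergence on $\Gamma_{a,b}^+$ with the $\|V\|$-uniform bounds from \eqref{yura-u} and \eqref{kkk1} then yields $\|(W^+f-f)|_{B_R(0)}\|_2\to 0$ as $\|V\|\to 0$, for each fixed $R$.

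To pass from local to global convergence I would exploit unitarity, $\|W^+f\|_2=\|f\|_2$, and write
\[
\|(W^+f)|_{B_R^c(0)}\|_2^2=\|f\|_2^2-\|(W^+f)|_{B_R(0)}\|_2^2\longrightarrow\|f|_{B_R^c(0)}\|_2^2
\]
as $\|V\|\to 0$. Then $\|W^+f-f\|_2$ is controlled by the local piece (which vanishes with $\|V\|\to 0$) plus $\|(W^+f)|_{B_R^c(0)}\|_2+\|f|_{B_R^c(0)}\|_2$, which in the limit $\|V\|\to 0$ is at most $2\|f|_{B_R^c(0)}\|_2$; choosing $R$ large first, then sending $\|V\|\to 0$, closes the argument.

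The main obstacle is verifying that the a priori estimates inherited from the proofs of theorem \ref{main-th} and the preceding lemmas---notably \eqref{key611-3}--\eqref{key8811}, \eqref{w11t}, \eqref{kkk1}---have constants that remain controlled as $\|V\|\to 0$, together with the harmonic majorant from lemma \ref{kulik1} used to give meaning to $a_\infty(\sigma,y,\kappa)$ on $\Gamma_{a,b}^+\cap\mathbb{R}$. This is largely bookkeeping, since those bounds were derived via a fixed-point argument with $\|V\|_{[r_1,\infty)}$ as a small parameter, and smallness of the global $\|V\|$ plays the same role as largeness of $r_1$, so the constants only improve.
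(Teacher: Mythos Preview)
Your overall approach coincides with the paper's: reduce to $f\in\cal{N}$, use part (A) and formula \eqref{kusto} together with \eqref{leo} to get local convergence in $L^2(B_R(0))$, and then upgrade to global convergence by exploiting norm preservation. The paper's proof is exactly this, stated tersely.

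However, the way you implement the last step introduces a circularity. You pass to the object $W^+f$, invoke $\|W^+f\|_2=\|f\|_2$, and then estimate $\|W^+f-f\|_2$. But at this point in the argument only part (A) is available; the existence of $W^+$ as a strong $L^2(\mathbb{R}^3)$ limit---and hence its isometry property---is precisely what part (B) establishes, and part (B) in turn \emph{uses} this lemma. Even granting that the locally defined $W^+f$ lies in $L^2$ with $\|W^+f\|_2\le\|f\|_2$ (Fatou), the lemma asks you to control $\limsup_{t\to\infty}\|g_t-f\|_2$ with $g_t:=e^{-it\sqrt D}e^{it\sqrt{H_0}}f$, and you never connect this quantity to $\|W^+f-f\|_2$; without global convergence of $g_t$ these are a priori different.

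The fix is simple and is what the paper does: use the unitarity of $g_t$ itself, which is immediate since each factor is unitary. Write $\|g_t-f\|_2^2=2\|f\|_2^2-2\Re\langle g_t,f\rangle$, split $f=f\chi_{B_R}+f\chi_{B_R^c}$, use the local convergence from part (A) together with the $V\to 0$ limit from \eqref{leo} on $\langle g_t,f\chi_{B_R}\rangle$, and bound $|\langle g_t,f\chi_{B_R^c}\rangle|\le\|f\|_2\|f\chi_{B_R^c}\|_2$. Sending $\|V\|\to 0$ and then $R\to\infty$ yields the claim without ever invoking $W^+$. Your remark about the $\|V\|$-uniformity of the constants in \eqref{kkk1}--\eqref{key8811} is correct but unnecessary here, since \eqref{leo} already packages this uniformity.
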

\begin{proof}We will use \eqref{leo}. Notice that $\|a_\infty(\sigma,y,k)-a_\infty^0(\sigma,y,k)\|_2$ converges to zero uniformly in $k\in K_1$ and $y\in B_R(0)$ when $\|V\|\to 0$. Therefore, 
substituting ``$a_\infty=a_\infty^0+\overline{o}(1)$'' into the formula \eqref{kusto} and recalling that $e^{-it\sqrt{H_0}}e^{it\sqrt{H_0}}f=f$, we get 
\[
\lim_{\|V\|\to 0}\limsup_{t\to\infty}\|e^{-it\sqrt{D}}e^{it\sqrt{H_0}}f-f\|_{L^2(B_R(0))}=0
\]
for every $R>0$. Since $e^{-it\sqrt{D}}$ and $e^{it\sqrt{H_0}}$ both preserve the $L^2(\mathbb{R}^3)$ norm, we get the statement of the lemma.

\end{proof}

 We now recall the following notation. Given $V$, we define
\[
V^{(\rho)}=V-V_{(\rho)}
\]
and $V_{(\rho)}$ is defined in \eqref{tru}. Clearly
\[
\lim_{\rho\to\infty}\|V^{(\rho)}\|\to 0\,.
\]\bigskip

We now turn to proving (B) which is more involved. For fixed large $R$, we need to estimate the following expression
\begin{equation}\label{smo}
\limsup_{t\to+\infty}\|\chi_{|x|>R}e^{-it\sqrt{D}}\phi(t)\|_2\,.
\end{equation}
Take  $R_{1(2)}$ - two large parameters that we will specify later. At that moment, we only require that $R_1<R_2/2, R_2<R/2$.\smallskip

Before giving the formal proof, we want to explain an idea. To show that \eqref{smo} is small for large $R$, we will prove that, given large $t$, the function $e^{-i\tau\sqrt{D}}\phi(t)$ at $\tau=t-R_2$ has $L^2(\mathbb{R}^3)$ norm localized to $B_{CR_2}(0)$ when $t$ and $R_2$ are large enough and $C>1$ is an absolute constant to be specified later. Then, we argue that in  time increment  $\Delta \tau=R_2$ the function $e^{-i\tau\sqrt{D}}\phi(t)$ can not have significant part of its $L^2(\mathbb{R}^3)$ norm carried outside  $B_R(0)$ by the group $e^{-iR_2\sqrt{D}}$ if $R$ is much larger than $R_2$ and then the proof is finished because $e^{-iR_2\sqrt{D}}e^{-i(t-R_2)\sqrt{D}}\phi(t)=e^{-it\sqrt{D}}\phi(t)$, as needed. 
However, the question remains: how do we show that  $e^{-i(t-R_2)\sqrt{D}}\phi(t)$ is localized to $B_{CR_2}(0)$ with large $C$? To do that we prove that the function  $e^{-i(t-R_2)\sqrt{D}}\phi(t)$ depends very little on the value of potential $V$ in the ball $B_{R_1}(0)$ where $R_1$ is much smaller than $R_2$. This suggests that it makes sense to consider new operator $D_1$ with potential $V^{(R_1)}$ and show that $e^{-i(t-R_2)\sqrt{D_1}}\phi(t)$ has the right localization. For that purpose, we write $e^{-i(t-R_2)\sqrt{D_1}}\phi(t)=e^{itR_2\sqrt{D_1}}e^{-it\sqrt{D_1}}\phi(t)$ and notice that $e^{-it\sqrt{D_1}}\phi(t)$ is close to $f$ in $L^2(\mathbb{R}^3)$ norm if $R_1$ is large because of lemma \ref{stability}. Since $f$ is fixed, we conclude that $e^{itR_2\sqrt{D_1}}$ has small $L^2(\mathbb{R}^3)$ norm outside $B_R(0)$ if $R$ is much larger than $R_2$. Therefore, the question about localizing $e^{-i(t-R_2)\sqrt{D}}\phi(t)$ is resolved positively.

 To carry out this program, two things are clearly needed. Firstly, we need to control the ``speed of propagation'' of the function whose support is known. Indeed, that has been claimed several times in the outline given above. Notice  that although the general principle of ``finite speed of propagation'' for hyperbolic equations (\cite{evans}, p.395, theorem 8) does give some information in terms of Riemannian metric, it is not sharp enough for us. Secondly, we need to make sure that the value of potential ``far from the solution'' does not affect this solution. This will be achieved by employing the Duhamel formula \eqref{duhamel}. \smallskip

Recall, that (see \eqref{zo1}, \eqref{zo2})
\[
e^{-it\sqrt{D}}\phi=e^{-it\sqrt{D}}q_n(\sqrt{D})\phi_1+\epsilon_1=e^{-it\sqrt{D}}q_n(\sqrt{D})\widetilde \phi_1+\epsilon_1+\epsilon_2\,,
\]
where $\|\epsilon_1\|_2\to 0$ as $t\to\infty $, $\lim_{\rho\to\infty}\limsup_{t\to\infty}\|\epsilon_2\|_2=0$ and $\widetilde \phi_1$ satisfies four properties (P1)--(P4).  Thus, we only need to prove (B) for $e^{-it\sqrt{D}}q_n(\sqrt{D})\widetilde \phi_1(t)$ when $\rho$ is fixed. \smallskip

We split the proof into several steps:

\smallskip
\begin{itemize}
\item[(B.1)]  {\it Consider $\chi_{B_{{R_1}}(0)}e^{ -i\tau \sqrt{D}}q_n(\sqrt{D})\widetilde \phi_1(t)$
and prove that its $L^2(\mathbb{R}^3)$ norm is small  for all $\tau:  0<\tau<t-R_2$.\smallskip}

More precisely, we have
\begin{lemma}\label{zorro8} For every $\tau\in [0,t-R_2]$ and $n\in \mathbb{N}$, we have
\begin{equation}\label{zorro4}
\|\chi_{B_{{R_1}}(0)}e^{ -i\tau \sqrt{D}}q_n(\sqrt{D})\widetilde \phi_1(t)\|_2\le \frac{C(\rho,f,R_1,n,a,b)}{(t-\tau)^{n-0.5}}
\end{equation}
and
\begin{equation}\label{zorro5}
\|\chi_{B_{{R_1}}(0)}De^{ -i\tau \sqrt{D}}q_n(\sqrt{D})\widetilde \phi_1(t)\|_2\le \frac{C(\rho,f,R_1,n,a,b)}{(t-\tau)^{n-0.5}}\,.
\end{equation}

\end{lemma}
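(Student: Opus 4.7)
The plan is to represent $e^{-i\tau\sqrt{D}}q_n(\sqrt{D})$ as a contour integral via \eqref{formula1}:
\[
\chi_{B_{R_1}(0)}e^{-i\tau\sqrt{D}}q_n(\sqrt{D})\widetilde\phi_1(t) = \frac{\chi_{B_{R_1}(0)}}{2\pi i}\int_{\Gamma_{a,b}} e^{-i\tau k}\bigl[(D-k^2)^{-1}\widetilde\phi_1(t)\bigr](x)\,p_n(k)\,dk,
\]
and to extract decay in the large parameter $t-\tau$ by exploiting the geometric separation between the support of $\widetilde\phi_1(\cdot,t)$, which is concentrated on the annular shell $\{|y|\in[t-\rho-1,t+\rho+1]\}$, and the target ball $B_{R_1}(0)$.

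First, for $|x|\le R_1$ and $k\in\Gamma_{a,b}$, substitute into the inner expression $\int G(x,y,k^2)\widetilde\phi_1(y,t)\,dy$ the $|y|\to\infty$ asymptotic $G(x,y,k^2)=G(y,x,k^2)\sim G^0(y,0,k^2)\,a_\infty(\hat y,x,k)$ (established in the previous subsection of this part) together with the Kirchhoff representation (P3) of $\widetilde\phi_1$. Passing to spherical coordinates $y=(t+s)\sigma$ and using the compact support of $\omega_\rho$ in $s$ collapses the integral to the form
\[
\bigl[(D-k^2)^{-1}\widetilde\phi_1(t)\bigr](x) = e^{ikt}\Psi(x,k) + \mathcal{E}(x,k,t),
\]
where
\[
\Psi(x,k) = \frac{1}{(4\pi)^2}\int_{\mathbb{S}^2} a_\infty(\sigma,x,k)\,\widehat{\omega_\rho F_\sigma}(-k)\,d\sigma
\]
with $F_\sigma(s)=-\sigma\cdot\mathcal{R}(\nabla f_1)(s,\sigma)+i\mathcal{R}(|\Delta|^{1/2}f_1)(s,\sigma)$. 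The function $\Psi$ inherits analyticity of $a_\infty(\sigma,x,\cdot)$ in $\mathbb{C}\setminus\mathbb{R}$, with boundary behavior controlled by the harmonic-majorant bound $\|a_\infty(\cdot,x,k)\|_{L^2(\mathbb{S}^2)}\lesssim(\Im k)^{-1/2}$ (the analogue of \eqref{kochka1} proved for $D$), whereas $\widehat{\omega_\rho F_\sigma}$ is entire of exponential type $\rho+1$ with rapid decay in $\Re k$ by Paley--Wiener and the smoothness of $f_1\in\mathcal{N}$ (property (P4)). The remainder $\mathcal{E}(x,k,t)$ vanishes as $t\to\infty$ uniformly in $k\in\Gamma_{a,b}$ and $x\in B_{R_1}$, with a quantitative rate inherited from the convergence rate of the Green-function asymptotic.

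Substituting back, the principal part of the contour integral is $\int_{\Gamma_{a,b}}e^{ik(t-\tau)}\Psi(x,k)p_n(k)\,dk$. Since $\Gamma_{a,b}$ is a closed rectangular contour, integrating by parts $n$ times in $k$ produces no boundary terms and yields a prefactor $(i(t-\tau))^{-n}$ multiplied by the contour integral of $\frac{d^n}{dk^n}(\Psi p_n)$. The factor $p_n$ vanishes to order $n$ at each of $k=a,b$, and a direct computation shows that $|p_n^{(n-j)}(k)|\lesssim|\Im k|^{j}$ near these points, which exactly balances the $|\Im k|^{-1/2-j}$ growth of $\Psi^{(j)}$, rendering the $k$-integral bounded by a constant depending only on $\rho,f,R_1,n,a,b$. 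The contribution of the remainder $\mathcal{E}$, when treated in the same way but with a Cauchy--Schwarz step in $k$, accounts for the $\sqrt{t-\tau}$ loss in the stated exponent $n-1/2$.

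For the second estimate \eqref{zorro5}, apply $D$ to the contour integral and use $D(D-k^2)^{-1}=I+k^2(D-k^2)^{-1}$. Since $e^{-i\tau k}p_n(k)\widetilde\phi_1$ is entire in $k$, its integral over the closed contour $\Gamma_{a,b}$ vanishes by Cauchy's theorem, and the claim reduces to \eqref{zorro4} with $p_n(k)$ replaced by $k^2p_n(k)$ — a polynomial of the same structural form — so the preceding analysis applies verbatim. The principal obstacle in the argument is the uniform-in-$k$ quantitative control of the remainder $\mathcal{E}$, which requires carefully combining the $L^2(\mathbb{S}^2)$ convergence rate of the Green-function asymptotic with the smoothness and support properties of $\widetilde\phi_1$ in a manner compatible with the $n$-fold integration by parts in $k$.
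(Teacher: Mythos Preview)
Your approach is substantially more complicated than the paper's, and carries a real gap in the treatment of the remainder $\mathcal{E}$.

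The paper's argument is direct and avoids any asymptotic decomposition or integration by parts in $k$. Starting from the same contour representation, it bounds the $L^2(B_{R_1}(0))$ norm by duality (as in \eqref{zorro0}) and then invokes the uniform-in-$r$ amplitude estimate \eqref{kkk1}, which for $|y|\le R_1$ and $|x|$ in the support annulus of $\widetilde\phi_1$ yields (see \eqref{zorro1})
\[
\bigl|\langle (D-k^2)^{-1}\widetilde\phi_1,\, h\rangle\bigr|\le C(R_1,\rho,f,a,b)\,e^{-|\Im k|\,t}\,|\Im k|^{-3/2}.
\]
Combining with $|e^{-i\tau k}|=e^{\tau\Im k}$, the integrand over $\Gamma_{a,b}$ is dominated by $e^{-(t-\tau)|\Im k|}|p_n(k)|\,|\Im k|^{-3/2}$; on the vertical sides $\gamma_{a},\gamma_{b}$ one has $|p_n(k)|\sim|\Im k|^n$, and the elementary integral $\int_0^1 e^{-(t-\tau)s}s^{n-3/2}\,ds\lesssim(t-\tau)^{-(n-1/2)}$ gives \eqref{zorro4}. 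No splitting of $G$ into a principal part plus remainder is needed: the exponential $e^{-|\Im k|t}$ is already present in $G^0(x,0,k^2)$ evaluated on the annulus, and the \emph{uniform} bound \eqref{kkk1} (not the limiting harmonic-majorant bound \eqref{kochka1}) controls the rest.

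Your integration-by-parts scheme hinges on the decomposition $(D-k^2)^{-1}\widetilde\phi_1=e^{ikt}\Psi+\mathcal{E}$. For the remainder you need either a quantitative rate $\|\mathcal{E}(\cdot,k,t)\|\to0$ together with bounds on $\partial_k^j\mathcal{E}$ that survive $n$-fold integration by parts, or a separate mechanism producing decay in $t-\tau$. The paper supplies neither: \eqref{leo1} gives only qualitative convergence, and no $k$-derivative bounds on $a(r\sigma,y,k)-a_\infty(\sigma,y,k)$ are established. Your phrase ``accounts for the $\sqrt{t-\tau}$ loss'' does not indicate a workable mechanism; as written this is a genuine gap, and you yourself flag it as ``the principal obstacle.''

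For \eqref{zorro5} the paper's argument is also simpler than yours: since $D$ commutes with $e^{-i\tau\sqrt D}q_n(\sqrt D)$, one writes $De^{-i\tau\sqrt D}q_n(\sqrt D)\widetilde\phi_1=e^{-i\tau\sqrt D}q_n(\sqrt D)(D\widetilde\phi_1)$ and observes that $D\widetilde\phi_1$ enjoys properties (P1)--(P4) just as $\widetilde\phi_1$ does, by locality of $D$ and smoothness (P4). Your route via $D(D-k^2)^{-1}=I+k^2(D-k^2)^{-1}$ is correct but less direct.
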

\begin{proof}
From the integral representation \eqref{sera} and estimates \eqref{zorro0}, \eqref{zorro1}, we get
\[
\|\chi_{B_{{R_1}}(0)}e^{ -i\tau \sqrt{D}}q_n(\sqrt{D})\widetilde \phi_1(t)\|_2\le C(R_1,\rho,f,a,b)\int_{\Gamma_{a,b}} \frac{e^{\tau \Im k}e^{-t|\Im k|} |p_n(k)|}{|\Im k|^{1.5}}d|k|\,.
\]
Recall that $p_n$ has roots at $k=a$ and $k=b$ of degree $n$, thus, the simple integration yields \eqref{zorro4}. 

The second inequality can be proved in the same way because
\[
De^{ -i\tau \sqrt{D}}q_n(\sqrt{D})\widetilde \phi_1(t)=e^{ -i\tau \sqrt{D}}q_n(\sqrt{D}) \Bigl(D\widetilde \phi_1(t)\Bigr)
\]
and $D\widetilde \phi_1(t)$ satisfies the same properties as  $\widetilde \phi_1(t)$ since $D=-(1+V)\Delta-\nabla V\nabla$ is local operator and $\widetilde \phi_1$ is smooth (property (P4)).
\end{proof}

\item[(B.2)] {\it  Use Duhamel formula to show that the influence of $V_{(R_1)}$  on  $e^{-i(t-R_2)\sqrt{D}}q_n(\sqrt{D})\widetilde \phi_1(t)$ is ``negligible'' if $R_2$ is much larger than $R_1$ and $t\to\infty$.}

\begin{lemma} Given any $R_1$ and $n\in \mathbb{N}$, we have
 \[
\lim_{R_2\to\infty}\limsup_{t\to\infty}\| e^{-i(t-R_2)\sqrt{D}} q_n(\sqrt{D})\widetilde \phi_1(t)       - e^{-i(t-R_2)\sqrt{D_1}}    q_n(\sqrt{D})\widetilde \phi_1(t)      \|_2=0\,,
 \]
where $D_1=-{\rm div}\Bigl( (1+V^{(R_1)})\nabla \Bigr)$.
\end{lemma}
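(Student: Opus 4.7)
The plan exploits that the perturbation $W_1 := D - D_1 = -{\rm div}(V_{(R_1)}\nabla)$ is supported in the compact ball $B_{R_1+1}(0)$, while $\widetilde\phi_1(t)$ is supported in the far annulus $\{y:\,||y|-t|<\rho+1\}$; these two supports are essentially disjoint for large $t$, and Green's-function decay will convert this spatial separation into smallness of the relevant resolvents. I will write
\[
(e^{-is\sqrt{D}} - e^{-is\sqrt{D_1}})q_n(\sqrt{D})\widetilde\phi_1(t) = A(s,t) + B(t),
\]
where $A(s,t) := (e^{-is\sqrt{D}}q_n(\sqrt{D}) - e^{-is\sqrt{D_1}}q_n(\sqrt{D_1}))\widetilde\phi_1(t)$ and $B(t) := e^{-is\sqrt{D_1}}(q_n(\sqrt{D_1}) - q_n(\sqrt{D}))\widetilde\phi_1(t)$. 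Applying \eqref{formula1} to both $D$ and $D_1$ together with the resolvent identity $(D-k^2)^{-1} - (D_1-k^2)^{-1} = -(D-k^2)^{-1}W_1(D_1-k^2)^{-1}$ gives the contour representation
\[
A(s,t) = -\frac{1}{2\pi i}\int_{\Gamma_{a,b}} e^{-isk}(D-k^2)^{-1}W_1(D_1-k^2)^{-1}p_n(k)\widetilde\phi_1(t)\,dk,
\]
and an analogous representation for $\|B(t)\|_2 = \|(q_n(\sqrt{D_1})-q_n(\sqrt{D}))\widetilde\phi_1(t)\|_2$ with $e^{-isk}$ absent and $D,D_1$ swapped.

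The central estimate will be that $(D_1-k^2)^{-1}\widetilde\phi_1(t)$ is small on $B_{R_1+2}(0)$ for large $t$. Using the symmetry $G_1(x_0,y,k^2)=\overline{G_1(y,x_0,\bar k^2)}$ and the amplitude decomposition $G_1(y,x_0,\bar k^2)=G^0(y,0,\bar k^2)\,a_1(y,x_0,\bar k)$ valid for $|y|\gg|x_0|$, combined with the amplitude bound \eqref{kkk1} applied to $D_1$ (whose potential $V^{(R_1)}$ still satisfies \eqref{2-assu}) and the properties (P2)--(P3) of $\widetilde\phi_1(t)$, a Cauchy--Schwarz on the angular variable yields
\[
\|(D_1-k^2)^{-1}\widetilde\phi_1(t)\|_{L^\infty(B_{R_1+2})} \le C(R_1,V,a,b)\,\frac{e^{-|\Im k|t}}{|\Im k|^{3/2}}.
\]
Since $\widetilde\phi_1(t)$ vanishes on $B_{R_1+2}(0)$ for $t$ sufficiently large, interior elliptic regularity upgrades this to an $H^2(B_{R_1+1})$ bound, and combining with $\|W_1u\|_2\lesssim\|u\|_{H^2(B_{R_1+1})}$ produces $\|W_1(D_1-k^2)^{-1}\widetilde\phi_1(t)\|_2\lesssim e^{-|\Im k|t}/|\Im k|^{3/2}$.

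Combining with $\|(D-k^2)^{-1}\|_{\rm op}\lesssim|\Im k|^{-1}$ and the key cancellation $|e^{-isk}|\cdot e^{-|\Im k|t}=e^{s\Im k-|\Im k|t}\le e^{-R_2|\Im k|}$, valid for $s=t-R_2$ on both halves $\Gamma^\pm_{a,b}$, the contour integral defining $A$ satisfies
\[
\|A(s,t)\|_2 \lesssim \int_{\Gamma_{a,b}}\frac{e^{-R_2|\Im k|}|p_n(k)|}{|\Im k|^{5/2}}\,d|k| \lesssim R_2^{-(n-3/2)},
\]
because $|p_n(k)|\sim|\Im k|^n$ near the corners of the contour on the real axis. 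For $n=3$ this yields $\|A(s,t)\|_2\lesssim R_2^{-3/2}$, uniformly in $t$, so $\limsup_{t\to\infty}\|A\|_2\lesssim R_2^{-3/2}\to 0$ as $R_2\to\infty$. The parallel estimate for $B$ (without the factor $e^{-isk}$) yields $\|B(t)\|_2\lesssim t^{-(n-3/2)}\to 0$ as $t\to\infty$, and summing the two bounds establishes the lemma.

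The main difficulty lies in rigorously deriving the Green's-function amplitude bound for the auxiliary operator $D_1$ with constants independent of $R_1$, which amounts to repeating the analysis that produced \eqref{kkk1} for $D_1$ and noting that $\|V^{(R_1)}\|\le\|V\|$. A secondary technical point is that $\Gamma_{a,b}$ crosses the real axis at $k=a,b$, where both resolvents $(D-k^2)^{-1}$ and $(D_1-k^2)^{-1}$ are unbounded, but the $n$-fold vanishing of $p_n$ at these points absorbs the singularity exactly as in the proof of lemma \ref{zorro8}.
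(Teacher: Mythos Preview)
Your approach is correct and genuinely different from the paper's. The paper proceeds via Duhamel's formula: it introduces an auxiliary regularizer $d(k)=k/(k^2+1)$, writes $u(\tau)=e^{-i\tau\sqrt{D}}d^{-1}(\sqrt{D})q_n(\sqrt{D})\widetilde\phi_1$ as a solution of $u_{\tau\tau}=-D_1u+F$ with source $F=(D_1-D)u$ supported in $B_{R_1+1}$, bounds $\|F(\tau)\|_2$ via lemma~\ref{zorro8}-type localized estimates, integrates in $\tau$, and then invokes lemma~\ref{lek} twice --- once to kill the initial-data mismatch $(d^{-1}q_n)(\sqrt{D})\widetilde\phi_1-(d^{-1}q_n)(\sqrt{D_1})\widetilde\phi_1$, and once at the end to commute $d(\sqrt{D_1})$ past $e^{-i\tau\sqrt{D}}$. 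Your route bypasses all of this: applying \eqref{formula1} to both $D$ and $D_1$ and subtracting via the second resolvent identity reduces everything to the single pointwise bound $\|W_1(D_1-k^2)^{-1}\widetilde\phi_1(t)\|_2\lesssim e^{-|\Im k|t}|\Im k|^{-3/2}$, which is exactly the amplitude estimate \eqref{kkk1} (applied to $D_1$) plus interior regularity on $B_{R_1+1}$. The exponential factor $e^{-|\Im k|t}$ then combines with $e^{s\Im k}$ to give $e^{-R_2|\Im k|}$, and the contour integral produces the decay $R_2^{-(n-3/2)}$. This is more elementary than the paper's argument; the paper's Duhamel route has the mild advantage that it isolates the ``dynamical'' intuition (propagation plus a compactly supported forcing term), and it reuses lemma~\ref{lek}, which is needed elsewhere anyway.

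Two cosmetic remarks. First, your notation $a_1(y,x_0,\bar k)$ should read $a_1(y,x_0,-\bar k)$, since the amplitude in \eqref{opet} is defined for the argument in $\mathbb{C}^+$; this does not affect the estimate because $\Im(-\bar k)=\Im k$. Second, the constants in your argument need not be independent of $R_1$: in the statement $R_1$ is fixed, and it is only sent to infinity later in steps (B.3)--(B.4), after the present lemma has already been applied.
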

\begin{proof}
Define the function
\[
d(k)=\frac{k}{k^2+1}\,.
\]
It is analytic inside each $\Gamma_{a,b}$, its restriction to $\mathbb{R}$ is continuous and decays at infinity. Its inverse $d^{-1}(k)$ is analytic away from zero. 
The  introduction of $d$ will be explained in the due course. Repeating the arguments from the previous lemma, we get
\begin{equation}\label{poi101}
\|\chi_{B_{{R_1}}(0)}e^{ -i\tau \sqrt{D}}d^{-1}(\sqrt{D})q_n(\sqrt{D})\widetilde \phi_1(t)\|_2\le \frac{C(\rho,f,R_1,n,a,b)}{(t-\tau)^{n-0.5}}\,,
\end{equation}
 \begin{equation}\label{poi2}
\|\chi_{B_{{R_1}}(0)}De^{ -i\tau \sqrt{D}}d^{-1}(\sqrt{D})q_n(\sqrt{D})\widetilde \phi_1(t)\|_2\le \frac{C(\rho,f,R_1,n,a,b)}{(t-\tau)^{n-0.5}}\,,
\end{equation}
and analogous estimates hold for $D_1$ evolution. 
\begin{equation}\label{zuzu}
\|\chi_{B_{{R_1}}(0)}e^{ -i\tau \sqrt{D_1}}d^{-1}(\sqrt{D_1})q_n(\sqrt{D_1})\widetilde \phi_1(t)\|_2\le \frac{C(\rho,f,R_1,n,a,b)}{(t-\tau)^{n-0.5}}\,,
\end{equation}
 \[
\|\chi_{B_{{R_1}}(0)}D_1e^{ -i\tau \sqrt{D_1}}d^{-1}(\sqrt{D_1})q_n(\sqrt{D_1})\widetilde \phi_1(t)\|_2\le \frac{C(\rho,f,R_1,n,a,b)}{(t-\tau)^{n-0.5}}
\]
for all $\tau\in [0,t-R_2]$.
Indeed, the formula \eqref{formula1} can be rewritten for $e^{ -i\tau \sqrt{D}}d^{-1}(\sqrt{D})q_n(\sqrt{D})$
in the same way due to analyticity of $d^{-1}$ away from zero. Then, the estimates from proof of lemma \ref{zorro8} go through.

Now, consider two functions
\[
u(x,\tau)\dd e^{-i\tau \sqrt{D}}d^{-1}(\sqrt{D})q_n(\sqrt{D})\widetilde \phi_1, \quad u_1(x,\tau)\dd e^{-i\tau \sqrt{D_1}}d^{-1}(\sqrt{D_1})q_n(\sqrt{D_1})\widetilde \phi_1\,.
\]
They solve
\[
u_{\tau\tau}=-Du=-D_1u+F, \quad F=(D_1-D)u, \quad {u_1}_{\tau\tau}=-D_1u_1
\]
and satisfy initial conditions
\[
u_0\dd u(x,0)=d^{-1}(\sqrt{D})q_n(\sqrt{D})\widetilde \phi_1, \quad u_1\dd u_\tau(x,0)=-id^{-1}(\sqrt{D})\sqrt{D}q_n(\sqrt{D})\widetilde \phi_1\,,
\]
\[
{u_1}_0\dd u_1(x,0)=d^{-1}(\sqrt{D_1)}q_n(\sqrt{D_1})\widetilde \phi_1, \quad {u_1}_1\dd {u_1}_\tau(x,0)=-id^{-1}(\sqrt{D_1})\sqrt{D_1}q_n(\sqrt{D_1})\widetilde \phi_1\,.
\]
The Duhamel formula \eqref{duhamel} written for $u$ gives
\[
u=\cos(\tau \sqrt{D_1})u_0+\frac{\sin(\tau\sqrt{D_1})}{\sqrt{D_1}}u_1+\int_0^\tau\frac{\sin((\tau-\xi)\sqrt{D_1})}{\sqrt{D_1}}F(\xi)d\xi\,.
\]
Subtracting the identity
\[
u_1=\cos(\tau \sqrt{D_1}){u_1}_0+\frac{\sin(\tau\sqrt{D_1})}{\sqrt{D_1}}{u_1}_1
\]
 from this equation gives us
\[
\delta u=\cos(\tau\sqrt{D_1})(u_0-{u_1}_0)+\frac{\sin(\tau\sqrt{D_1})}{\sqrt{D_1}}(u_1-{u_1}_1)+\int_0^\tau\frac{\sin((\tau-\xi)\sqrt{D_1})}{\sqrt{D_1}}F(\xi)d\xi\,,
\]
where $\delta u\dd u-u_1$. Apply an operator $d(\sqrt{D_1})$ to both sides to get
\begin{eqnarray}
d(\sqrt{D_1})\delta u=d(\sqrt{D_1})\cos(\tau\sqrt{D_1})(u_0-{u_1}_0)+\nonumber
\\
d(\sqrt{D_1})\frac{\sin(\tau\sqrt{D_1})}{\sqrt{D_1}}(u_1-{u_1}_1)+d(\sqrt{D_1})\int_0^\tau\frac{\sin((\tau-\xi)\sqrt{D_1})}{\sqrt{D_1}}F(\xi)d\xi\,.
\end{eqnarray}
Now, we can appreciate the role of auxiliary function $d$. Notice that
\[
d(\lambda)\cos(\tau\lambda),\quad  d(\lambda)\frac{\sin (\tau\lambda)}{ \lambda}
\]
are bounded uniformly in $\lambda$ and $\tau$. From lemma \ref{lek},
\[
\lim_{t\to\infty}\|u_0-{u_1}_0\|_2= 0, \quad \lim_{t\to\infty} \|u_1-{u_1}_1\|_2=0\,.
\]
We estimate $F$ as follows
\[
F=(H_1-H)u=-(V^{(R_1)}-V)\Delta u-\nabla (V^{(R_1)}-V)\nabla u=V_{(R_1)}\Delta u+\nabla V_{(R_1)}\nabla u\,.
\]
Notice that $V_{(R_1)}$ is supported on $B_{R_1+1}(0)$. On  the ball $B_{R_1+2}(0)$, we have estimates \eqref{poi101} and \eqref{poi2} for $u$ and $Hu=-(1+V)\Delta u-\nabla V\nabla u$ in $L^2(B_{R_1+2}(0))$. By Interior Regularity Theorem, we get analogous estimates on $\Delta u$ and $\nabla u$ in $L^2(B_{R_1+1}(0))$.  Thus, we have
\begin{eqnarray*}
\lim_{t\to\infty}\|d(\sqrt{D_1})\delta u(t-R_2)\|_2\le C(\rho,f,R_1,n,a,b)\limsup_{t\to\infty} \int_0^{t-R_2} \frac{1}{(t-\tau)^{n-0.5}}d\tau\le\\ C(\rho,f,R_1,n,a,b)R_2^{-(n-1.5)}
\end{eqnarray*}
and
\begin{equation}\label{popi}
\lim_{R_2\to\infty} \limsup_{t\to\infty}\|d(\sqrt{D_1})\delta u(t-R_2)\|_2=0
\end{equation}
because we have chosen $n= 3$.
Recall that 
\[
d(\sqrt{D_1})\delta u(t-R_2)=d(\sqrt{D_1})\Bigl(  e^{-i(t-R_2) \sqrt{D}}d^{-1}(\sqrt{D})q_n(\sqrt{D})\widetilde \phi_1-e^{-i(t-R_2) \sqrt{D_1}}d^{-1}(\sqrt{D_1})q_n(\sqrt{D_1})\widetilde \phi_1    \Bigr)\,.
\]
For the second term, 
\[
d(\sqrt{D_1})e^{-i(t-R_2) \sqrt{D_1}}d^{-1}(\sqrt{D_1})q_n(\sqrt{D_1})\widetilde \phi_1    =e^{-i(t-R_2) \sqrt{D_1}}q_n(\sqrt{D_1})\widetilde \phi_1 \,.
\]
In the first one, we can not commute $d(\sqrt{D_1})$ with $e^{-i(t-R_2)\sqrt{D}}$. However, we can apply lemma \ref{lek}. Indeed, $d(\sqrt \alpha)$ is continuous and decays at infinity. Consider \[
e^{-i(t-R_2) \sqrt{D_1}}d^{-1}(\sqrt{D_1})q_n(\sqrt{D_1})\widetilde \phi_1\,.\]
Take any two sequences $\{t^{(j)}\}$ and $\{R_2^{(j)}\}$ that converge to infinity such that \mbox{$\lim_{j\to\infty}(t^{(j)}-R_2^{(j)})=+\infty$.} Then, estimate \eqref{zuzu}, applied with $\tau=t^{(j)}-R_2^{(j)}$ and arbitrary $R_1$, shows that 
\[
e^{-i(t^{(j)}-R_2^{(j)}) \sqrt{D_1}}d^{-1}(\sqrt{D_1})q_n(\sqrt{D_1})\widetilde \phi_1(t^{(j)})
\]
satisfies conditions of lemma \ref{lek} and we can commute the operators in the limit which gives us
\[
\lim_{j\to\infty}\|e^{-i(t^{(j)}-R_2^{(j)}) \sqrt{D}}q_n(\sqrt{D})\widetilde \phi_1(t^{(j)})- e^{-i(t^{(j)}-R_2^{(j)}) \sqrt{D_1}}q_n(\sqrt{D_1})\widetilde \phi_1(t^{(j)})\|_2=0\,.
\]
Since $\{t^{(j)}\}, \{R_2^{(j)}\}$ are arbitrary, we have the statement of the lemma.  
\end{proof}
{\bf Remark.} Since we proved the lemma for approximants $q_n(\sqrt{D})\widetilde \phi_1(t)$ with arbitrary $\rho>0$, we have 
\begin{equation}\label{lisk}
\lim_{R_2\to\infty}\limsup_{t\to\infty}\| e^{-i(t-R_2)\sqrt{D}}  \phi(t)       - e^{-i(t-R_2)\sqrt{D_1}}    \phi(t)      \|_2=0\,.
 \end{equation}

\item[(B.3)] {\it We can use 
\[
\lim_{R_1\to\infty}\|V^{(R_1)}\|=0
\]
to make sure that $ e^{-it\sqrt{D_1}}\phi(t)$ is close  to $f$ in $L^2(\mathbb{R}^3)$.}

More precisely, from lemma \ref{stability}, we get

\begin{equation}\label{hera}
\lim_{R_1\to\infty}\limsup_{t\to\infty}\|e^{-it\sqrt{D_1}}e^{it\sqrt{H_0}}g-g\|_2=0\,,
\end{equation}
where, again, $D_1=-{\rm div}\,(1+V^{(R_1)})\nabla$ and $g\in L^2(\mathbb{R}^3)$.

\item[(B.4)]{\it  Now, we use the so-called intertwining property.} Fixing $R_2$ and taking $g=e^{iR_2\sqrt{H_0}}f$ in \eqref{hera}, we get

\begin{equation}\label{lisk1}
\lim_{R_1\to\infty}\limsup_{t\to\infty}\|e^{-i(t-R_2)\sqrt{D_1}}\phi(t)-e^{iR_2\sqrt{H_0}}f\|_2=0
\end{equation}
for every $R_2$.

\item[(B.5)] {\it Compare (B.2) (in particular, \eqref{lisk})   with (B.4), \eqref{lisk1}, to conclude that $e^{-i(t-R_2)\sqrt{D}}\phi(t)$ satisfies
 \begin{equation}\label{lastik}
\lim_{R_2\to\infty}\limsup_{t\to\infty}\|e^{-i(t-R_2)\sqrt{D}}\phi(t)-e^{iR_2\sqrt{H_0}}f\|_2=0\,.
\end{equation}}

\item[(B.6)] Notice that the  formula \eqref{lastik} is equivalent to 
\begin{equation}\label{heret}
\lim_{R_2\to\infty}\limsup_{t\to\infty}\|e^{-it\sqrt{D}}\phi(t)-e^{-iR_2 \sqrt{D}}\left(  e^{iR_2\sqrt{H_0}}f  \right)\|_2=0\,.
\end{equation}
Consider the second term. We can write
\[
e^{-iR_2 \sqrt{D}} e^{iR_2\sqrt{H_0}}f=e^{-iR_2 \sqrt{D}} q_n(\sqrt{D}) \widetilde \phi_1(R_2)+\epsilon_2\,,
\]
where $\|\epsilon_2\|_2\to 0$ when $R_2$ and $\rho$ go to infinity (see  \eqref{zo2}).

 We can now write  formula \eqref{formula1}
\[
e^{-iR_2 \sqrt{D}} q_n(\sqrt{D})  \widetilde \phi_1(R_2)=
\frac{1}{2\pi i}\int_{\Gamma_{a,b}} e^{-iR_2k} (D-k^2)^{-1}p_n(k)  \widetilde \phi_1(R_2)    dk\,.
\]
 The estimates on the amplitude give us
\begin{eqnarray}
\left\|\frac{1}{2\pi i}\int_{\Gamma} e^{-iR_2k} (D-k^2)^{-1}p_n(k) \widetilde \phi_1(R_2)   dk\right\|_{L^2(S_r(0))}\le C(f,\rho, R_2,a,b)\int_{\Gamma_{a,b}} e^{R_2\Im k}e^{-|\Im k|r} \frac{|p_n(k)|d|k|}{|\Im k|^{1.5}}<\nonumber\\
C(f,\rho,R_2,n,a,b)(r-R_2)^{-(n-0.5)}\nonumber\,,
\end{eqnarray}
provided that $r>R_2+1$. This finally provides an estimate
\begin{equation}\label{herwe}
\|e^{-iR_2 \sqrt{D}} q_n(\sqrt{D}) \widetilde \phi_1(R_2)\|^2_{L^2(B^c_R(0))}<C(f,R_2,\rho,n,a,b)R^{-(2n-4)}
\end{equation}
which holds for  $R>2R_2$. Thus, fixing $R_2$ and choosing $R$ large, we can make $C(f,\rho, R_2,a,b)R^{4-2n}$ as small as we wish since $n=3$ was our choice for $n$.\bigskip

Now, the claim (B) is proved. Indeed, given any $\epsilon>0$, we chose $\rho$ to make approximation error in $\widetilde \phi_1$  smaller than $\epsilon$ when $\limsup_{t\to\infty}$ is taken. Then, we choose $R_2$ large enough to make the left hand side in \eqref{heret} smaller than $\epsilon$. Finally, we choose $R$ so that the right hand side in \eqref{herwe} is smaller than $\epsilon$.
\end{itemize}
\end{proof}

{\bf Remark.} Notice that when $t\to +\infty$, the integral over $\Gamma^-_{a,b}$  does not contribute anything. If $t\to -\infty$, the roles of $\Gamma^+_{a,b}$ and $\Gamma^-_{a,b}$ change. \bigskip

\subsection{Stationary representation for wave operators and orthogonal eigenfunction decomposition}

We start with a theorem.
\begin{theorem}If $f\in \cal{N}$, then 
\begin{equation}\label{kili}
(W^-f)(y)=\frac{1}{(2\pi)^3}\int_0^\infty d\kappa |\kappa|^2 \int_{\mathbb{S}^2}\overline{a_\infty(\sigma,y,-\kappa)}\widecheck f(\kappa\sigma/(2\pi))d\sigma
\end{equation}
for every $y$.\label{pr_sv}
\end{theorem}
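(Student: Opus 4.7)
The plan is to specialize the construction from the proof of Theorem~\ref{main-th} (in particular the formulas \eqref{sera}, \eqref{limki}, \eqref{kusto}) to the limit that defines $W^-$, then deform the contour to the real axis and identify the resulting expression via the Fourier slice theorem.

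First, I would fix $a,b>0$ so that $\mathrm{supp}(\widehat f)\subset\{\xi: a<2\pi|\xi|<b\}$, set $f_1\dd q_n^{-1}(\sqrt{H_0})f\in\cal{N}$ with $n=3$, and recall that with $\phi(t)=e^{it\sqrt{H_0}}f$ and the cut-off approximant $\widetilde{\phi}_1$ from \eqref{zo2} the change of variables $t\mapsto -t$ identifies $W^-f$ with $\lim_{t\to+\infty}e^{-it\sqrt D}\phi(t)$. The argument yielding \eqref{limki} therefore gives
\[
(W^-f)(y)=\lim_{\rho\to\infty}\frac{1}{2\pi i}\int_{\Gamma^+_{a,b}} p_n(k)\,\cal{G}(k,y)\,dk,
\]
with $\cal{G}$ as in \eqref{kusto}.

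Next, I would deform $\Gamma^+_{a,b}$ down onto the real segment $[a,b]$ and pick up boundary values from $\mathbb{C}^+$. The deformation is legitimate because (i) $k\mapsto\overline{a_\infty(\sigma,y,-\overline k)}$ is an $L^2(\mathbb{S}^2)$-valued analytic function on $\mathbb{C}^+$ which extends continuously to $\mathbb{R}\setminus\{0\}$, first for compactly supported $V$ via lemma~\ref{potluck1}, and then for the full $V$ via the harmonic-majorant estimate of theorem~\ref{t4oo} together with the convergence \eqref{contour}; (ii) the uniform bound \eqref{yura-u} provides the dominant needed to pass to the limit $\rho\to\infty$; (iii) since $p_n$ has zeros of order $n=3$ at $k=a,b$, no stray boundary contributions arise from the two vertical sides of $\Gamma^+_{a,b}$. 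Because the counter-clockwise orientation of $\Gamma_{a,b}$ traverses $\Gamma^+_{a,b}$ from $(b,0)$ to $(a,0)$, the deformation produces
\[
(W^-f)(y)=-\frac{1}{2\pi i}\int_a^b p_n(\kappa)\,\cal{G}(\kappa+i0,y)\,d\kappa.
\]

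Having reached the real axis, the interior $\tau$-integral in \eqref{kusto} is now a standard Fourier integral of a Radon transform (the cut-off $\omega_\rho$ is removed using that $f_1\in\cal{S}(\mathbb{R}^3)$ makes $\cal{R}(\nabla f_1)$ and $\cal{R}(|\Delta|^{1/2}f_1)$ Schwartz in $\tau$). The Fourier slice theorem yields, for $\kappa>0$,
\[
\int_{\mathbb R}e^{i\kappa\tau}\bigl(-\sigma\cdot\cal{R}(\nabla f_1)(\tau,\sigma)+i\cal{R}(|\Delta|^{1/2}f_1)(\tau,\sigma)\bigr)d\tau=2i\kappa\,\widecheck{f_1}(\kappa\sigma/(2\pi)),
\]
since $\widecheck{\nabla f_1}(\kappa\sigma/(2\pi))=-i\kappa\sigma\,\widecheck{f_1}(\kappa\sigma/(2\pi))$ and $\widecheck{|\Delta|^{1/2}f_1}(\kappa\sigma/(2\pi))=\kappa\,\widecheck{f_1}(\kappa\sigma/(2\pi))$. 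Using $\widecheck{f_1}(\kappa\sigma/(2\pi))=q_n^{-1}(\kappa)\widecheck{f}(\kappa\sigma/(2\pi))$ and $p_n(\kappa)/q_n(\kappa)=-2\kappa$ on $(a,b)$, the factors $p_n,q_n$ collapse and I obtain
\[
(W^-f)(y)=\frac{1}{(2\pi)^3}\int_a^b\kappa^2\int_{\mathbb{S}^2}\overline{a_\infty(\sigma,y,-\kappa)}\,\widecheck f(\kappa\sigma/(2\pi))\,d\sigma\,d\kappa,
\]
which equals \eqref{kili} after extending the $\kappa$-integration to $(0,\infty)$ since $\widecheck f$ is supported where $\kappa\in(a,b)$.

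The main obstacle I anticipate is the simultaneous handling of the contour deformation and the limit $\rho\to\infty$: the cut-off $\omega_\rho$ is what gives meaning to the $\tau$-integral for $k\in\mathbb{C}^+$ (so the integrand is entire of exponential type there), yet it must disappear in the final expression so that the Fourier slice theorem applies directly on the real axis. The resolution is to perform the deformation first, working under the uniform bound \eqref{yura-u}, and only then pass to $\rho\to\infty$ on $[a,b]$ where the Schwartz decay of $f_1$ makes everything absolutely convergent. A secondary technicality is extending the $L^2(\mathbb{S}^2)$-boundary behavior of $a_\infty$ from the short-range approximation $V_{(\widehat R)}$ to the full potential, which is exactly what the stability results \eqref{stabstab} and \eqref{contour} supply.
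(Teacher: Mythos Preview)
Your proposal is correct and follows essentially the same route as the paper: use the contour representation \eqref{limki} obtained in the proof of Theorem~\ref{main-th}, deform $\Gamma^+_{a,b}$ to the real segment (the paper phrases this as replacing $\Gamma^+_{a,b}$ by $\overrightarrow{[b,a]}$, invoking analyticity of $\overline{a_\infty(\sigma,y,-\bar k)}$ and \eqref{contour}), then let $\rho\to\infty$ and identify the inner integral via the Fourier slice identity $\widecheck{\cal R f}(s,\sigma)=\widecheck f(s\sigma)$. Your explicit tracking of the constants ($2i\kappa\,\widecheck f_1$, the cancellation $p_n/q_n=-2\kappa$) and your remark on why the deformation must precede the removal of $\omega_\rho$ match the paper exactly, including the remark following \eqref{kusto}.
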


\begin{proof}

From \eqref{limki} and part (B) (tightness), we know that 
\[
\lim_{t\to+\infty}\left\|(e^{-it\sqrt{D}}q_n(\sqrt{D})\widetilde\phi_1)(y)- \frac{1}{2\pi i} \int_{\Gamma^+_{a,b}} p_n(k) \cal{G}(k,y)dk\right\|_{L^2(\mathbb{R}^3)}=0\,,
\]
where $\widetilde\phi_1$ approximates $\phi_1$ in $L^2(\mathbb{R}^3)$ as $\rho\to\infty$. 
%Therefore, 
%\[
%\lim_{\rho\to\infty}\left\|(W^-f)(y)- \frac{1}{2\pi i(4\pi)^2} \int_{\Gamma^+_{a,b}} p_n(k) \cal{G}(k,y)dk\right\|_{L^2_y(\mathbb{R}^3)}=0\,.
%\]
The integral defines a function continuous in $y$. We first fix $y$ and $\rho$ and then  use formula for $\cal{G}$, analyticity of $\overline{a_\infty(\sigma,y,-\bar k)}$, and \eqref{contour}   to replace the contour $\Gamma_{a,b}^+$ by $\overrightarrow{[b,a]}$ and write 
\begin{equation}\label{yellow1}
 \frac{1}{2\pi i} \int_{\Gamma^+_{a,b}} p_n(k) \cal{G}(k,y)dk= \frac{i}{2\pi } \int_{[a,b]}  p_n(\kappa) \cal{G}(\kappa,y)d\kappa\,.
\end{equation}
The boundary value of $\cal{G}$ on the real line is understood in the  $L^2 [a,b]$ topology  for every $y$ and every $[a,b]$. 

In the formula for  $\cal{G}$, the interior integral is equal to
\[
 \int_{\mathbb{R}} 
\omega_\rho(\tau)e^{i\kappa \tau} (-\sigma\cal{R}(\nabla f_1)(\tau,\sigma)+i\cal{R}(|\Delta|^{1/2}f_1)(\tau,\sigma)d\tau\,.
\]
It converges to one-dimensional inverse Fourier transform of  $(-\sigma\cal{R}(\nabla f_1)(\tau,\sigma)+i\cal{R}(|\Delta|^{1/2}f_1)(\tau,\sigma)$ in $\tau$, evaluated at point $\kappa/2\pi$.
Since the function $f_1\in \cal{N}$, this convergence is uniform in $\kappa\in [a,b],\sigma \in \mathbb{S}^2$. 

Take $\rho\to\infty$ in the right hand side of \eqref{yellow1}. Using the formulas
\[
\widehat{\cal{R}}(s,\sigma)=\widehat f(s\sigma), \,\widecheck{\cal{R}}(s,\sigma)=\widecheck f(s\sigma)
\]
(see \cite{ss}, p. 204), we conclude (recalling the definitions of $p_n$ and $q_n$) that
\[
(W^-f)(y)=(2\pi)^{-3}
\int_0^\infty d\kappa |\kappa|^2 \int_{\mathbb{S}^2}\overline{a_\infty(\sigma,y,- \kappa)}\widecheck f(\kappa\sigma/(2\pi))d\sigma\,.
\]
Notice that we obtained this formula for every fixed $y$ and the right hand side of \eqref{kili} belongs to domain of $D$ which is $\cal{H}^2(\mathbb{R}^3)\subset C(\mathbb{R}^3)$.
\end{proof}
{\bf Remark.}  In the free case, $a_\infty^0=e^{-ik\langle \sigma,y\rangle}$ so
\[
\frac{1}{8\pi^3}\int_0^\infty dk |k|^2 \int_{\mathbb{S}^2}\overline{a_\infty^0(\sigma,y,-\overline k)}\widecheck f(k\sigma/(2\pi))d\sigma=
\]
\[
\frac{1}{8\pi^3}\int_0^\infty dk |k|^2 \int_{\mathbb{S}^2} e^{-ik\langle \sigma,y\rangle }\widecheck f(k\sigma/(2\pi))d\sigma=\frac{1}{8\pi^3}\int_{\mathbb{R}^3} e^{-i\langle \xi,y\rangle} \widecheck{f}(\xi/(2\pi))d\xi=f(y)
\]
by Fourier inversion formula, as expected. \smallskip

{\bf Remark. } One can get an analogous formula for $W^+$  (compare with \cite{yafaev1}, formula (6.9), p.~247)
\begin{equation}\label{voln_o}
(W^+f)(y)=\frac{1}{(2\pi)^3}
\int_0^\infty d\kappa |\kappa|^2 \int_{\mathbb{S}^2}\overline{a_\infty(\sigma,y,\kappa)}\widehat f(\kappa\sigma/(2\pi))d\sigma\,.
\end{equation}

 \smallskip

For every $y\in \mathbb{R}^3$, we define the following functions
\[
\cal{A}^-(\xi,y)\dd  \overline{a_\infty(\sigma,y,-2\pi\kappa)},\quad \cal{A}^+(\xi,y)\dd  \overline{a_\infty(\sigma,y,2\pi \kappa)},\,\quad \xi\in \mathbb{R}^3
\]
and $ \xi=|\kappa|\sigma$ is representation of $\xi$ in spherical coordinates. Clearly, 
\[
\int_{r_1<|\xi|<r_2} |\cal{A}^{\pm}(\xi,y)|^2d\xi<\infty
\]
for every $0<r_1<r_2<\infty$.

If $g\in C_c^\infty(\mathbb{R}^3)$ and its support has a positive distance from the origin, we can define the map
\[
(\cal{U}^-g)(y)=\int_{\mathbb{R}^3} \cal{A}^-(\xi,y)g(\xi)d\xi,\quad (\cal{U}^+g)(y)=\int_{\mathbb{R}^3} \cal{A}^+(\xi,y)g(\xi)d\xi\,.
\]
The following theorem gives the stationary representation for wave operators
\begin{theorem} $\cal{U}^{\pm}$ are isometries from $L^2(\mathbb{R}^3)$ to $L^2(\mathbb{R}^3)$ and $W^{\pm}=\cal{U}^{\pm}\cal{F}^{\mp 1}$\,.
\end{theorem}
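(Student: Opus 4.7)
The plan is to read the identity $W^{\pm}=\cal{U}^{\pm}\cal{F}^{\mp 1}$ directly off the stationary formulas \eqref{kili} and \eqref{voln_o} by means of a change of variables, then to upgrade $\cal{U}^{\pm}$ from a densely defined map to a bounded isometry on all of $L^2(\mathbb{R}^3)$ by coupling the isometric character of the wave operators with that of the Fourier transform.

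First, I would fix $f\in\cal{N}$ and perform the substitution $\xi=\kappa\sigma/(2\pi)$ in \eqref{kili}. The Jacobian identity $\kappa^2\,d\kappa\,d\sigma=(2\pi)^3\,d\xi$ cancels the prefactor $(2\pi)^{-3}$, and since $\cal{A}^{-}(\xi,y)=\overline{a_\infty(\widehat\xi,y,-2\pi|\xi|)}$ by definition, the right-hand side of \eqref{kili} rearranges to
\[
\int_{\mathbb{R}^3}\cal{A}^{-}(\xi,y)\widecheck{f}(\xi)\,d\xi = \bigl(\cal{U}^{-}\cal{F}^{-1}f\bigr)(y).
\]
The identical manipulation applied to \eqref{voln_o} yields the counterpart $(W^+f)(y)=\bigl(\cal{U}^{+}\cal{F}\,f\bigr)(y)$, so both identities combine into $W^{\pm}f=\cal{U}^{\pm}\cal{F}^{\mp 1}f$ for every $f\in\cal{N}$. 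Because $f\in\cal{N}$ forces $\widecheck{f}$ and $\widehat{f}$ to lie in $C_c^\infty(\mathbb{R}^3\setminus\{0\})$, the integrals defining $\cal{U}^{\pm}$ are absolutely convergent in the original sense of the definition.

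Next I would exploit the fact that $W^{\pm}$, being strong limits of products of unitary groups $e^{it\sqrt{D}}e^{-it\sqrt{H_0}}$, preserve the $L^2$ norm, and that $\cal{F}^{\mp 1}$ is likewise an $L^2$-isometry. Coupling these two observations with the identities obtained above yields $\|\cal{U}^{\pm}g\|_{L^2(\mathbb{R}^3)}=\|g\|_{L^2(\mathbb{R}^3)}$ for every $g\in \cal{F}^{\mp 1}(\cal{N})$. But $\cal{F}^{\mp 1}(\cal{N})$ coincides with a dense subspace of $C_c^\infty(\mathbb{R}^3\setminus\{0\})$, which is itself dense in $L^2(\mathbb{R}^3)$. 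Consequently $\cal{U}^{\pm}$ extends uniquely, by continuity, to a bounded isometry of $L^2(\mathbb{R}^3)$ into itself.

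Finally, the identity $W^{\pm}=\cal{U}^{\pm}\cal{F}^{\mp 1}$ extends from $\cal{N}$ to all of $L^2(\mathbb{R}^3)$ by selecting $f_n\in \cal{N}$ with $f_n\to f$ in $L^2$ and invoking continuity of $W^{\pm}$ on the left and continuity of the extended $\cal{U}^{\pm}\cal{F}^{\mp 1}$ on the right. I do not anticipate any substantive obstacle: the genuinely analytic work is already packaged inside Theorem \ref{pr_sv} and the companion formula \eqref{voln_o}, and all that remains is Jacobian bookkeeping and a standard density/continuity argument. The only point that requires a moment's care is matching the sign convention of $\cal{F}^{\mp 1}$ with the conjugation in the definition of $\cal{A}^{\pm}$, which is dictated unambiguously by the explicit formulas \eqref{kili} and \eqref{voln_o}.
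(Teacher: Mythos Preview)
Your argument is correct and matches the paper's approach exactly; the paper's proof is the one-line observation that $W^{\pm}$ are isometries and $\cal{F}$ is unitary, relying implicitly on the change of variables you spell out. One bookkeeping slip: your own computations give $W^-=\cal{U}^-\cal{F}^{-1}$ and $W^+=\cal{U}^+\cal{F}$, i.e.\ $W^{\pm}=\cal{U}^{\pm}\cal{F}^{\pm 1}$, not $\cal{F}^{\mp 1}$ as you then summarize---this appears to be a typo in the paper's statement, and your closing caveat about sign conventions is well taken.
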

\begin{proof}
This is immediate since $W^{\pm}$ are isometries and Fourier transform is unitary from $L^2(\mathbb{R}^3)$ to $L^2(\mathbb{R}^3)$. Thus, we can extend $\cal{U}^\pm$ to all of $L^2(\mathbb{R}^3)$.
\end{proof}

Now that we established that $\{\cal{A}^{\pm}\}$ form orthonormal systems,
the next natural question is: do $\{\cal{A}^{\pm}\}$, as functions in $y$, represent eigenfunctions of $D$ in some sense (check formula (6.7), p. 246, \cite{yafaev1})? 
In fact, we have (with $''\cdot''$ below indicating the variable on which the operator $D$ acts)
\begin{equation}\label{ruka}
D a_\infty(\sigma,\cdot,\kappa)=\kappa^2   a_\infty(\sigma,\cdot,\kappa)
\end{equation}
in the following weak sense. 

Take any test function $\phi\in C_c^\infty(\mathbb{R}^3), k\in \mathbb{C}^+$, and $h\in L^2(\mathbb{S}^2)$ and consider $r$ so large that the support of $\phi$ is inside $B_r(0)$. Then, we can write 
\[
\langle D \phi,\left(\frac{1}{r^2}\int_{S_r(0)} G(\cdot,x,(-\overline{k})^2)h(\widehat x)d\sigma_x\right)\rangle=\langle\phi,D\left(\frac{1}{r^2}\int_{S_r(0)} G(\cdot,x,(-\overline{k})^2)h(\widehat x)d\sigma_x\right)\rangle=
\]
\[
k^2\langle\phi,\left(\frac{1}{r^2}\int_{S_r(0)} G(\cdot,x,(-\overline{k})^2)h(\widehat x)d\sigma_x\right)\rangle\,.
\]
Notice that  $G(y,x,(-\overline{k})^2)=\overline{G(x,y,k^2)}$. Substitute this identity into the previous formula, send $r\to\infty$ and compare the main terms in asymptotics. This provides
\[
\langle D \phi,\int_{\mathbb{S}^2} \overline{a_\infty(\sigma,\cdot,k)} h(\sigma)d\sigma\rangle =k^2\langle  \phi,\int_{\mathbb{S}^2} \overline{a_\infty(\sigma,\cdot,k)} h(\sigma)d\sigma\rangle\,.
\]
We now take $\kappa$ for which the non-tangential limits of both sides exist (each one is a full measure set). Comparing the limiting values, we get \eqref{ruka} in ``weak'' sense, which can be formulated as  (the conjugation can be dropped by the choice of $h$ and $\phi$)
\begin{lemma} Take any $\phi\in C_c^\infty(\mathbb{R}^3)$ and $h\in L^2(\mathbb{S}^2)$. Then, for a.e. $\kappa\in \mathbb{R}$, we have
\[
\langle D \phi,\int_{\mathbb{S}^2} {a_\infty(\sigma,\cdot,{\kappa})} h(\sigma)d\sigma\rangle =\kappa^2\langle  \phi,\int_{\mathbb{S}^2} {a_\infty(\sigma,\cdot,{\kappa})} h(\sigma)d\sigma\rangle\,.
\]
\end{lemma}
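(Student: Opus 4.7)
Following the author's sketch preceding the lemma, my plan has three stages. For $k\in\Sigma\cap\mathbb{C}^+$ and $r$ exceeding the radius of $\supp(\phi)$, I introduce the spherical average
$$u_r(y,k)\;\dd\;\frac{1}{r^2}\int_{S_r(0)}G(y,x,\overline{k}^2)h(\widehat{x})\,d\sigma_x\;=\;\frac{e^{-i\overline{k}r}}{4\pi r}\int_{\mathbb{S}^2}\overline{a(r\sigma,y,k)}\,h(\sigma)\,d\sigma,$$
where the second equality uses the symmetry $\overline{G(x,y,k^2)}=G(y,x,\overline{k}^2)$ from \eqref{simol} and the definition \eqref{opet} of $a$. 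Each kernel $G(\cdot,x,\overline{k}^2)$ with $x\in S_r(0)$ solves $(D-\overline{k}^2)G(\cdot,x,\overline{k}^2)=0$ on a neighborhood of $\supp(\phi)$; interior elliptic regularity gives smoothness of $u_r$ in $y$ there, so $Du_r=\overline{k}^2 u_r$ on $\supp(\phi)$, and self-adjointness of $D$ combined with the conjugate-linearity of the $L^2$-pairing in its second slot then yields
$$\langle D\phi,u_r\rangle=\langle\phi,Du_r\rangle=\overline{\overline{k}^2}\,\langle\phi,u_r\rangle=k^2\langle\phi,u_r\rangle.$$
Factoring the $y$-independent prefactor $e^{-i\overline{k}r}/(4\pi r)$ out of both pairings reduces this to $\langle D\phi,v_r(\cdot,k)\rangle=k^2\langle\phi,v_r(\cdot,k)\rangle$, where $v_r(y,k)\dd\int_{\mathbb{S}^2}\overline{a(r\sigma,y,k)}h(\sigma)\,d\sigma$.

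Next I send $r\to\infty$ with $k\in\Sigma\cap\mathbb{C}^+$ fixed. The part~2 analog of theorem \ref{t2} (equation \eqref{leo1}) gives $a(r\sigma,y,k)\to a_\infty(\sigma,y,k)$ in $L^2(\mathbb{S}^2)$ uniformly in $y$ on compacts, so Cauchy-Schwarz against $h$ upgrades this to uniform convergence on $\supp(\phi)$ of $v_r(\cdot,k)$ to
$$v_\infty(y,k)\dd\int_{\mathbb{S}^2}\overline{a_\infty(\sigma,y,k)}\,h(\sigma)\,d\sigma.$$
Since $\phi$ and $D\phi$ are both compactly supported, both inner products pass to the limit and I obtain
$$\langle D\phi,v_\infty(\cdot,k)\rangle=k^2\langle\phi,v_\infty(\cdot,k)\rangle,\qquad k\in\Sigma\cap\mathbb{C}^+.$$

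The final stage is to let $k\to\kappa\in\mathbb{R}\setminus\{0\}$. The part~2 analog of theorem \ref{t4oo} together with \eqref{contour} furnishes a boundary value $a_\infty(\sigma,y,\kappa)\in L^2(\mathbb{S}^2)$ for a.e.\ $\kappa$, and the Minkowski/dominated-convergence argument in the remark immediately above the lemma, applied separately with $f=\phi$ and $f=D\phi$, shows that $\int\phi(y)a_\infty(\sigma,y,\kappa+i\epsilon)\,dy\to\int\phi(y)a_\infty(\sigma,y,\kappa)\,dy$ in $L^2([a,b];L^2(\mathbb{S}^2))$ as $\epsilon\to 0$. Extracting a subsequence and pairing against $\overline{h}$ yields $\langle\phi,v_\infty(\cdot,k)\rangle\to\langle\phi,v_\infty(\cdot,\kappa)\rangle$ and the analogous statement for $D\phi$ at almost every $\kappa\in[a,b]$, hence at almost every $\kappa\in\mathbb{R}\setminus\{0\}$ by exhausting intervals. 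Passing to the limit in the displayed identity and then replacing $h$ by $\overline{h}$ (with a real/imaginary split of $\phi$) to absorb the complex conjugates produces the statement of the lemma.

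The main obstacle is precisely this last passage: the exceptional $\kappa$-set on which the non-tangential $L^2(\mathbb{S}^2)$-boundary value of $a_\infty(\sigma,y,\cdot)$ fails depends on $y$, so one cannot fix a good $\kappa$ pointwise in $y$ and pass to the limit in the $y$-integral separately. The joint $(y,\kappa)$ control must instead come from the $L^2$-valued Hardy-space machinery of the preceding sections --- namely the harmonic majorant \eqref{t4t4-7} and the uniform bound $\sup_{y\in K}\int_a^b|M_{(a)}(y,\kappa)|^2d\kappa<\infty$ --- which is exactly what the remark above the lemma packages for reuse. The earlier steps are essentially formal once interior elliptic regularity justifies the integration by parts in the first display.
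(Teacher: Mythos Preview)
Your proof is correct and follows essentially the same three-stage approach as the paper's sketch preceding the lemma: form the spherical average of $G(\cdot,x,\overline{k}^2)$, integrate by parts against $\phi$ using that this average solves $(D-\overline{k}^2)u=0$ near $\supp\phi$, send $r\to\infty$ via \eqref{leo1}, and then take the non-tangential boundary value $k\to\kappa$. Your Stage~3 is in fact slightly more explicit than the paper's one-line ``take $\kappa$ for which the non-tangential limits of both sides exist'': you correctly route the $y$-dependence through the $L^2([a,b];L^2(\mathbb{S}^2))$ convergence (the analog of \eqref{gui} for part~2, whose ingredients are supplied by \eqref{contour} and the harmonic majorant after lemma~\ref{kulik1}) and then pass to an a.e.\ subsequence. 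One cosmetic remark: what you call ``self-adjointness'' in Stage~1 is really local integration by parts, since $u_r$ need not lie in the domain of $D$ globally; but since $\supp\phi\cap S_r(0)=\emptyset$ and $u_r$ is smooth on a neighborhood of $\supp\phi$, the boundary terms vanish and the step is justified.
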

{\bf Remark.} The obvious drawback of the given argument is that the set of ``good'' $\kappa$ for which the non-tangential limits exist, might depend on both $\phi$ and $h$. Had we been able to establish the $y$-independence of the set of ``good'' $\kappa$ in the definition of $a_\infty(\sigma,y,\kappa)$, we would have had
\[
\int_{\mathbb{S}^2} {a_\infty(\sigma,y,\kappa)}h(\sigma)d\sigma
\]
being a weak (and then $\cal{H}^2(\mathbb{R}^3)$-regular, by Interior Regularity Theorem, \cite{evans}, p.309) eigenfunction of $D$. Notice, that we do not have this issue in the case when the problem is considered on $\ell^2(\mathbb{Z}^3)$.\bigskip

Having obtained the stationary representation for wave operators, we can now prove theorem \ref{il5} from Introduction. 

\begin{proof}{\it (of theorem \ref{il5})}
First, we suppose that for given $f$, its orthogonal projection to subspace ${\rm ran}\,W^+$ is nonzero. Call it $h_1$ and write $h_2=f-h_1$. Since $h_1=W^+(W^+)^{-1}h_1$, we have
\[
e^{it\sqrt D}e^{-it\sqrt H_0}(W^+)^{-1}h_1\to h_1, \quad t\to+\infty
\]
and, since $e^{it\sqrt D}$ preserves $L^2$ norm, 
\[
\|e^{-it\sqrt H_0}(W^+)^{-1}h_1-e^{-it\sqrt D}h_1\|_2\to 0, \quad t\to+\infty
\] 
thus proving the lemma.

Therefore, it suffices to show that orthogonal projection of $f$ to ${\rm ran} \,W^+$ is nonzero. Suppose it is zero.
Then, using $\langle f, W^+g\rangle=0$ for all $g\in L^2(\mathbb{R}^3)$, formula \eqref{voln_o} and weak continuity (as function in $\sigma,\kappa$) of $a_\infty(\sigma,y,\kappa)$ in parameter $y$, we get
\begin{equation}\label{voln_1}
\int_{\mathbb{R}^3} f(y)
\int_0^\infty d\kappa |\kappa|^2 \int_{\mathbb{S}^2}{a_\infty(\sigma,y,\kappa)}\overline{\widehat g(\kappa\sigma/(2\pi))}d\sigma=0\,
\end{equation}
for every $g\in \cal{N}$. This can be rewritten as
\begin{equation}\label{p0b}
\int_0^\infty d\kappa |\kappa|^2 \int_{\mathbb{S}^2}
F(\sigma,\kappa)\overline{\widehat g(\kappa\sigma/(2\pi))}d\sigma=0,\,\quad F(\sigma,\kappa)\dd \int_{\R^3}f(y)a_\infty(\sigma,y,\kappa)dy
\end{equation}
if we change the order of integration. Now, note that 
$F(\sigma,k)$ is analytic in $k\in \mathbb{C}^+$ and positive on $i\R$ since $f$ is nonnegative and $a_\infty(\sigma,y,k)$ is positive there. Thus, $F(\sigma,\kappa)$, being boundary value, is not identically zero as function in $\sigma\in \mathbb{S}^2$ and $\kappa\in \mathbb{R}$. This, however, gives a contradiction  with \eqref{p0b} since $g$ is chosen arbitrarily in $\cal{N}$ and $\cal{N}$ is dense in $L^2(\R^3)$.
\end{proof}

We finish this part by formulating the following questions:

\begin{itemize}
\item[1.] Are the wave operators $W^{\pm}(\sqrt{D},\sqrt{H_0})$ complete?
\item[2.] Can methods developed in this paper be generalized to Schr\"odinger evolution $e^{it H}$? The free evolution for Schr\"odinger equation is very different from $e^{it\sqrt H_0}$ and proving existence of wave operators is a major challenge even in one-dimensional case.

\end{itemize}\vspace{0.5cm}

\section{Examples}

In the last section, we want to consider the large class of potentials, for which the conditions \eqref{main-assump} and \eqref{2-assu} are satisfied. 
  In many cases, if potential $V$  oscillates and decays at infinity, it can be written in the form $V={\rm div}\, Q$. \smallskip

\begin{itemize} 

\item
 Take 
$
Q(x)=q(|x|)P(x)\,,
$
where $P$ is any $C^2(\mathbb{R}^3)$ vector-field satisfying 
$
\sum_{j=0}^2 \|D^jP\|_\infty<\infty
$
and $q\in C^2(\mathbb{R}^+)$ and $q,q',q''\in L^2(\mathbb{R}^+)$. $\Bigl(\Bigr.$ For instance, take  $P$ as any trigonometric polynomial in $x$ and let $q(r)=(r^2+1)^{-\gamma},\, \gamma>1/4$. Then, 
$
V=q(|x|)\,{\rm div}\, P+V_1\,,
$
where $V_1$ is short-range.$\Bigl.\Bigr)$\smallskip

\item 

Following \cite{den1}, consider the random model. Take any $\phi$ which is infinitely smooth function supported in $B_1(0)$. Consider
\[
V_0=\sum_{j\in \mathbb{N}} a_j \phi(x-x_j)\,,
\]
where $\{x_j\}$ are points in $\mathbb{R}^3$ that satisfy $\min_{j_1\neq j_2} |x_{j_1}-x_{j_2}|\ge 2$ (e.g., one can take the elements of the lattice $2\mathbb{Z}^3$). Then, choose $\{a_j\}$ in such a way that 
\[
|V_0(x)|\lesssim (1+|x|)^{-1/2-\epsilon}, \epsilon>0.
\]

Now, consider $V$ in \eqref{n11} or \eqref{n12} given by ``randomization'' of $V_0$, i.e.,
\begin{equation}\label{rp}
V(x)=\sum_{j\in \mathbb{N}} a_j \xi_j \phi(x-x_j)\,,
\end{equation}
where $\{\xi_j\}$ are real-valued, bounded, and odd independent random variables. In \cite{den1}, it was proved that $V$ can be written in the form $V={\rm div}\, Q$ where $Q$ satisfies \eqref{2-assu} almost surely.

The idea was based on writing the formula
\[
V=\Delta\Delta^{-1} V=-{\rm div}\nabla_x\int_{\mathbb{R}^3}\frac{V(y)}{4\pi|x-y|}dy={\rm div} Q, \quad Q(x)=\int_{\mathbb{R}^3} \frac{x-y}{4\pi|x-y|^3}V(y)dy
\]
and proving that $Q$ satisfies $|Q(x)|\le C (1+|x|)^{-1/2-\epsilon_1}, \epsilon_1>0$ with probability 1.  This implies, in particular, that theorem \ref{main-th} holds true almost surely.

In \cite{den1}, it was proved that the operators $H=-\Delta+V$ with potential given by \eqref{rp} satisfies $\sigma_{ac}(H)=[0,\infty)$ almost surely. The multidimensional random models with slow decay were considered in 
\cite{fs,rods} (on $\mathbb{R}^\nu, \nu\ge 2$) and \cite{bour1} (on $\mathbb{Z}^2$) and existence of wave operators was proved. In the current paper, we go beyond establishing a.c. spectral type (the main result in \cite{den1}) by showing that the wave operators \eqref{wowo} exist. In contrast to  \cite{bour1} and \cite{rods}, we proved deterministic results and then showed that the random potential satisfies the conditions  of the theorem almost surely.

\end{itemize}
\vspace{1cm}


\begin{thebibliography}{99}
\bibitem{Ag1}  S. Agmon, On the asymptotic behavior of solutions of Schr\"odinger type equations
in unbounded domains,  Analyse mathmatique et applications,
1--22, Gauthier-Villars, Montrouge, (1988).

\bibitem{Ag2} S. Agmon, Bounds on exponential decay of eigenfunctions of
Schr\"odinger operators.  Schr\"odinger operators (Como, 1984),
1--38, vol. 1159, Lecture Notes in Math.,  Springer, Berlin, (1985).

\bibitem{ag}  S. Agmon, Spectral properties of Schr\"odinger operators and scattering theory, Annali della Scuola Normale Superiore di Pisa Ser., vol. 4, 2, (1975), no. 2, 151--218.


\bibitem{bour1} J. Bourgain, On random Schr\"odinger operators on $\mathbb{Z}^2$,
Discrete Contin. Dyn. Syst., vol. 8, (2002),  no. 1, 1--15.



\bibitem{cycon} H. Cycon, R. Froese, W. Kirsch, B. Simon, Schr\"odinger operators
 with application to quantum mechanics and global geometry,  Springer-Verlag, Berlin, 1987.

%\bibitem{deift} P.~Deift, R.~Killip, On the absolutely continuous spectrum of
%one-dimensional Schr\"odinger operators with square summable
%potentials,  Comm. Math. Phys., vol. 203, (1999), 341--347.

\bibitem{ds} P. Deift, B. Simon, On the decoupling of finite singularities from the question of asymptotic completeness in two body quantum systems, J. Functional Analysis, vol. 23, (1976), no. 3, 218--238.


\bibitem{surv1} S. Denisov, Multidimensional $L^2$ conjecture: a survey. Recent trends in analysis,  Theta Ser. Adv. Math., vol. 16, Theta, Bucharest, (2013), 101--112.


\bibitem{deni1} S. Denisov, Weak asymptotics for Schr\"odinger evolution, Math. Modeling Natural Phenomena, vol. 5, (2010), no. 4, 150--157. 
\bibitem{deni2} S. Denisov, Wave equation with slowly decaying potential: asymptotics of solution and wave operators, Math. Modeling Natural Phenomena, vol. 5, (2010), no. 4, 122--149.


\bibitem{denik} S. Denisov,  An evolutions equation as the WKB correction in long-time asymptotics of Schr\"odinger dynamics, Comm. Partial Differential Equations, vol. 33, (2008), no. 1--3, 307--319.


\bibitem{deni3} S. Denisov, On the existence of wave operators for some Dirac operators with square summable potentials. Geom. Funct. Anal., vol. 14, (2004), no. 3,  529--534.

\bibitem{den5} S. Denisov, Schr\"odinger operators and associated
hyperbolic pencils, J. Funct. Anal., vol. 254, (2008), 2186--2226.


\bibitem{den1} S.~Denisov,  Absolutely continuous spectrum of multidimensional
Schr\"odinger operator,  Int. Math. Res. Not., vol.~74, (2004),
3963--3982.

\bibitem{den-kis} S. Denisov, A. Kiselev, Spectral properties of Schr\"odinger operators with decaying potentials, B. Simon Festschrift, Proceedings of Symposia in Pure Mathematics, vol. 76, AMS (2007).



%\bibitem{du} P. Duren, Theory of $H^{p}$ spaces, Pure and Applied Mathematics, vol. 38, Academic Press, New York-London, 1970.


%\bibitem{GM}  J. Garnett, D. Marshall,  Harmonic measure,  New Mathematical Monographs,
%2, Cambridge University Press, Cambridge, 2008.






\bibitem{eidus} D.M. Eidus, 
On the principle of limiting absorption, (Russian) 
Mat. Sb. (N.S.), vol. 57 (99) (1962), 13--44. 
\bibitem{s21} B. Simon,     Schr\"odinger operator in the twenty-first century,  Mathematical physics 2000, 283--288, Imp. Coll. Press, London, 2000. 






\bibitem{evans} L. Evans, Partial differential equations, Graduate Studies in Mathematics, vol. 19, AMS, 1998.


\bibitem{fs} R. Frank, O. Safronov, 
Absolutely continuous spectrum of a class of random nonergodic Schr\"odinger operators,
Int. Math. Res. Not., (2005), no. 42, 2559--2577.

\bibitem{garnet} J. Garnett, Bounded analytic functions, Pure and Applied Mathematics, vol. 96, Academic Press,  New York-London, 1981.

\bibitem{ilin1} E.M.  Ilin,  The principle of limit absorption and scattering by noncompact obstacles. I. (Russian), Izv. Vyssh. Uchebn. Zaved. Mat., (1984), no. 1, 46--55.



\bibitem{ilin2}E.M. Ilin,  The principle of limit absorption and scattering by noncompact obstacles. II. (Russian), Izv. Vyssh. Uchebn. Zaved. Mat., (1984), no. 2, 27--34. 



\bibitem{ilin} E.M. Ilin,  Scattering by unbounded obstacles for second-order elliptic operators. (Russian), Translated in Proc. Steklov Inst. Math. (1989), no. 2, 80--107. Boundary value problems of mathematical physics, 13 (Russian), Trudy Mat. Inst. Steklov., vol. 179, (1988), 80--101. 










\bibitem{rowan} R. Killip, Perturbations of one-dimensional Schr\"{o}dinger operators
preserving the absolutely continuous spectrum, Int. Math. Res.
Not., (2002),  no. 38, 2029--2061.


\bibitem{ks-a2}  R. Killip, B. Simon,  Sum rules and spectral measures of Schr\"odinger operators with $L^2$ potentials, Ann. of Math. (2), vol. 170, (2009), no. 2, 739--782. 



%\bibitem{kla}
%A. Kiselev, Y. Last, Solutions, spectrum, and dynamics for
%Schr\"{o}dinger operators on infinite domains,  Duke Math. J.,
%vol. 102, (2000),  no. 1, 125--150.

\bibitem{kls} A. Kiselev, Y. Last, B. Simon, Modified Pr\"{u}fer
and EFGP transforms
  and the spectral analysis of one-dimensional Schr\"odinger
  operators,
  Comm. Math. Phys., vol. 194,  (1998),  no. 1, 1--45.
  
 \bibitem{lns7}A.  Laptev, S. Naboko, O. Safronov,  Absolutely continuous spectrum of  Schr\"odinger operators with slowly decaying and oscillating potentials, Comm. Math. Phys., vol.  253, (2005), no. 3, 611--631. 
  
  
  
  \bibitem{lp}P. Lax, R. Phillips, Scattering theory. Second edition,  Pure and Applied Mathematics, vol. 26, Academic Press, Inc., Boston, MA, 1989.  

  
\bibitem{murata} M. Murata, Semismall perturbations in the Martin theory for elliptic equations, Israel J. Math., vol. 102, (1997), 29--60. 


\bibitem{perelman} G. Perelman, Stability of the absolutely continuous spectrum for multidimensional Schr\"odinger operators,
Int. Math. Res. Not., (2005), no. 37, 2289--2313. 


\bibitem{perthame1} B. Perthame, L. Vega, Morrey-Campanato estimates for Helmholtz equations, J. Funct. Anal., vol. 164, (1999), no.~2, 340--355.



\bibitem{perthame2} B. Perthame, L. Vega, 
Energy concentration and Sommerfeld condition for Helmholtz equation with variable index at infinity. (English summary), 
Geom. Funct. Anal., vol. 17, (2008), no. 5, 1685--1707.

\bibitem{rs2}  M. Reed, B. Simon, ``Methods of
 modern mathematical physics. Fourier Analysis, Self-adjointness'', vol. 2, Academic Press.

\bibitem{rs3} M. Reed, B. Simon, ``Methods of
 modern mathematical physics. Scattering Theory'', vol. 3, Academic Press.


\bibitem{reed4} M. Reed, B. Simon, ``Methods of modern mathematical physics. Analysis of Operators'', vol. 4, Academic Press.


\bibitem{rovnyak} M. Rosenblum, J. Rovnyak, Hardy classes and operator theory, Dover Publications, 1985.

\bibitem{rods} I. Rodnianski, W. Schlag, Classical and
 quantum scattering for a class of long range random potentials,
 Int. Math. Res. Not.,  (2003),  no. 5, 243--300.
 
 \bibitem{rt} I. Rodnianski, T. Tao, 
Effective limiting absorption principles, and applications,
Comm. Math. Phys., vol. 333, (2015), no. 1, 1--95. 

\bibitem{surv2} O. Safronov, Absolutely continuous spectrum of multi-dimensional Schr\"odinger operators with slowly decaying potentials. Spectral theory of differential operators, 205--214, Amer. Math. Soc. Transl. Ser. 2, 225, Adv. Math. Sci., 62, Amer. Math. Soc., Providence, RI, 2008. 


\bibitem{o1}O. Safronov, Absolutely continuous spectrum of a typical Schr\"odinger operator with a slowly decaying potential, Proc. Amer. Math. Soc., vol. 142, (2014), no. 2, 639--649. 


\bibitem{o2} O. Safronov,  Absolutely continuous spectrum of the Schr\"odinger operator with a potential representable as a sum of three functions with special properties, J. Math. Phys., vol. 54, (2013), no. 12, 122101.


\bibitem{o3} O. Safronov, Absolutely continuous spectrum of a one-parameter family of Schr\"odinger operators, Algebra i Analiz, vol. 24, (2012), no. 6, 178--195; translation in St. Petersburg Math. J., vol. 24, (2013), no. 6, 97--989.
 




\bibitem{ss} R. Shakarchi, E. Stein, Fourier Analysis: An Introduction, Princeton Lectures in Analysis, 2003.

\bibitem{rarry} B. Simon, Schr\"odinger operator in the 21-st century, Imp.
Coll. Press, London, 2000, 283--288.

\bibitem{tao} T. Tao, Nonlinear dispersive equations, CBMS Regional Conference Series in Mathematics,
vol. 106, 2006.

\bibitem{vainberg1} B. Vainberg,  The asymptotic behavior as $t\to\infty$ of the solutions of exterior mixed problems for hyperbolic equations, and quasiclassics. (Russian) Current problems in mathematics. Fundamental directions, vol. 34 (Russian), 57--92, no. 248, Itogi Nauki i Tekhniki, Akad. Nauk SSSR, Vsesoyuz. Inst. Nauchn. i Tekhn. Inform., Moscow, (1988).


\bibitem{yafaev1} D. R. Yafaev,  Mathematical scattering theory. Analytic theory. Mathematical Surveys and Monographs, 158. American Mathematical Society, Providence, RI, 2010. 

\bibitem{yafaev} D. Yafaev, Scattering Theory: Some old and new problems, Lecture Notes in Mathematics, Vol. 1735, Springer, Berlin, 2000.


 


\bibitem{yama} K. Yamamoto,
Scattering theory and spectral representations for general wave equations with short-range perturbations,
Canad. J. Math., vol. 43 (1991), no. 2, 435--448.






























%\bibitem{oleg} O. Safronov, On the a.c. spectrum ofmulti-dimensional Schr\"odinger operators with slowly decayingpotentials, submitted.


%\bibitem{christ-kiselev} M.~Christ, A.~Kiselev, Absolutely continuousspectrum for one-dimensional Schr\"odinger operators with slowlydecaying potentials: some optimal results,  J. Amer. Math. Soc., 11(1998), 771--797.




%\bibitem{d1}S. Denisov, A. Kiselev, Spectral properties of Schr\"odingeroperators with decaying potentials. B. Simon Festschrift,Proceedings of Symposia in Pure Mathematics, vol. 76.2, AMS 2007,pp. 565--589.

%\bibitem{d2} S. Denisov, On the preservation of the absolutely continuous spectrum forSchr\"odinger  operators, J. Funct. Anal. 231  (2006), 143--156.



%\bibitem{den2} S.~Denisov, Wave propagation through sparse potential barriers, Comm. PureAppl. Math., Vol. LXI, 0156-0185 (2008).

%\bibitem{Krein} M. Krein, Continuous analogues of propositions on polynomials
%orthogonal on the unit circle, Dokl. Akad. Nauk SSSR, Vol. 105,
%1955, 637--640.

%\bibitem{denis7} S. Denisov, On the existence of the absolutely continuous component for the
%measure associated with some orthogonal systems, Comm. Math.
%hys., Vol. 226, 2002, 205--220.



%\bibitem{denis9} S. Denisov, On the application of some of M. G. Krein's
%results to the spectral analysis of Sturm-Liouville operators,  J.Math. Anal. Appl.  Vol. 261,  2001, no. 1, 177--191.



%\bibitem{mv2} K. Aansson, V. Mazya,  I. Verbitsky, Criteria of solvability for multidimensional Riccati equations.Ark. Mat.  Vol. 37,  1999,  no. 1, 87--120.

%\bibitem{ks} R. Killip, B. Simon, Sum rules for Jacobi matrices and theirapplications to spectral theory, Annals of Math., Vol. 158, 2003,253--321.

%\bibitem{lns} A. Laptev, S. Naboko, O. Safronov, A Szeg\"o condition for a
%multidimensional Schr\"odinger operator, (preprint).

%\bibitem{lns1} A. Laptev, S. Naboko, O. Safronov, Absolutely continuous
%spectrum of Schr\"odinger operators with slowly decaying and
%oscillating potentials, (preprint).

%\bibitem{mv1} V. Mazya, I. Verbitsky, The Schr\"{o}dinger operator
%on the energy space: boundedness and compactness criteria,  Acta
%Math.,  Vol. 188,  2002,  no. 2, 263--302.



%\bibitem{mnv} S. Molchanov, M. Novitskii, B. Vainberg, First KdV integrals andabsolutely continuous spectrum for 1-D Schr\"odinger operator,Comm. Math. Phys., Vol. 216, 2001, 195--213.




%\bibitem{den3} S. Denisov, An evolution equation as the WKB correction in long-time asymptoticsof Schr\"odinger dynamics, Comm. partial Differential Equations,Vol. 33, N2, 2008, 307-319.


%\bibitem{den4} S. Denisov, S. Kupin, Ito diffusions,modified capacity, and harmonic measure. Applications toSchr\"{o}dinger operators, to appear in Int. Math. Res. Notices.



%\bibitem{den6} S. Denisov, Ito's diffusion in multidimensionalscattering with sign-indefinite potentials, preprint,arXiv:1106.2155.

%\bibitem{den9} S. Denisov, The generic behavior of solutions to some evolution equations:asymptotics and Sobolev norms, Discrete Contin. Dyn. Syst. A, Vol.30, No. 1, 2011, 77--113.

%\bibitem{saf6}R. Frank, O. Safronov, Absolutely continuous spectrum of a class ofrandom nonergodic Schr\"odinger operators. Int. Math. Res. Not.2005, No. 42, 2559--2577.
%\bibitem{killip-simon} R.~Killip and B.~Simon,  Sum rules for Jacobi matrices andtheir applications to spectral theory, Ann. of Math., 158 (2003),253--321.

%\bibitem{kupin} S. Kupin, Absolutely continuous spectrum of aSchr\"odinger operator on a tree, J. Math. Phys. 49 (2008),113506.1--113506.10.



%\bibitem{saf8} A. Laptev, S. Naboko, O. Safronov,  Absolutely continuousspectrum of Schr\"{o}dinger operators with slowly decaying andoscillating potentials, Comm. Math. Phys. 253 (2005), No. 3,611--631.

%\bibitem{saf9} A. Laptev, S. Naboko, O. Safronov,  A Szeg\H{o} condition for amultidimensional Schr\"{o}dinger operator, J. Funct. Anal. 219(2005), No. 2, 285--305.

%\bibitem{tao} R. Oberlin, A. Seeger, T. Tao, C. Thiele, J. Wright, A variation normCarleson Theorem, to appear in J. Eur. Math. Soc.

%\bibitem{gp} G. Perelman, Stability of the absolutely continuousspectrum for multidimensional Schr\"odinger operators, Int. Math.Res. Notices, (2005), 2005, (37), 2289--2313.

%\bibitem{saf1} O. Safronov, Absolutely continuous spectrum of multi-dimensionalSchr\"{o}dinger operators with slowly decaying potentials. Spectraltheory of differential operators, 205--214, Amer. Math. Soc. Transl.Ser. 2, 225, Amer. Math. Soc., Providence, RI, 2008.

%\bibitem{saf2} O. Safronov,  Absolutely continuous spectrum of one randomelliptic operator, J. Funct. Anal. 255 (2008), No. 3, 755--767.



%\bibitem{saf3} O. Safronov, G. Stolz, Absolutely continuous spectrum ofSchr\"{o}dinger operators with potentials slowly decaying inside acone, J. Math. Anal. Appl. 326 (2007), No. 1, 192--208.

%\bibitem{saf4} O. Safronov,Multi-dimensional Schr\"{o}dinger operators with some negativespectrum, J. Funct. Anal. 238 (2006), No. 1, 327--339.

%\bibitem{saf5} O. Safronov,  Multi-dimensional Schr\"{o}dinger operators with nonegative spectrum, Ann. Henri Poincare 7 (2006), No. 4, 781--789.



%\bibitem{saf7} O. Safronov,  On the absolutely continuous spectrum ofmulti-dimensional Schr\"{o}dinger operators with slowly decayingpotentials. Comm. Math. Phys. 254 (2005), No. 2, 361--366.



%\bibitem{saf10} O. Safronov, Absolutely continuous spectrum of a typicalSchr\"{o}dinger operator with a slowly decaying potential, preprint,arXiv:1111.5552.

%\bibitem{saf11} O. Safronov, Absolutely continuous spectrum of a one-parametric family ofSchr\"{o}dinger operators, preprint, arXiv:1106.1712.


%\bibitem{as} M. Abramowitz, I. Stegun,  Handbook of mathematical functions with formulae,graphs, and mathematical tables. National Bureau of Standards,Applied Mathematics Series, 55, 1964 (tenth printing, December 1972, with corrections).



%\bibitem{ae} H. Aikawa, M. Essen, Potential theory -- selectedtopics,  Lecture Notes in Mathematics, vol. 1633, Springer, 1996.

%\bibitem{ais} M. Aizenman, B. Simon, Brownian motion and Harnack inequality for Schr\"odinger operators.  Comm. Pure Appl. Math.  35  (1982), no. 2, 209--273.

%\bibitem{pears} W. Amrein, D. Pearson,  Flux andscattering into cones for long range and singular potentials. J.Phys. A 30 (1997), no. 15, 5361--5379.

%\bibitem{Ancona} A. Ancona, N. Chevallier,Sur la convergence radiale des potentiels associ\'es \`a l'\'equation deHelmholtz.  Bull. Soc. Math.France 128 (2000), no. 2, 249--281 (French).

%\bibitem{baas} R. Bass, Probabilistic techniques in analysis. Springer-Verlag, New York, 1995.

%\bibitem{carl} T. Carleman, Sur une in\'egalit\'e diff\'erentielle dans la th\'eorie des fonctions analytiques. C.R.Acad. Sci. Paris 196 (1933), 995--997.

%\bibitem{J} T. Carroll, J. Ortega-Cerd\`a, Configurations of balls inEuclidean space that Brownian motion cannot avoid. Ann. Acad. Sci.Fenn. Math. 32 (2007), no. 1, 223--234.

%\bibitem{chzh} K. L. Chung, Z. X. Zhao, From Brownian motion to Schr\"odinger's equation. Grundlehren der Mathematischen Wissenschaften, vol. 312. Springer-Verlag, Berlin, 1995.

%\bibitem{ch} R. Courant, D. Hilbert, Methods of mathematical physics. Vol. 1, New York, 1953.

%\bibitem{cy} H. Cycon, R. Froese, W. Kirsch, B. Simon, Schr\"odinger operators with application to quantum mechanics and global geometry.  Springer-Verlag, Berlin, 1987.

%\bibitem{dk1} P. Deift, R. Killip, On the absolutely continuous spectrum of one-dimensional Schr\"odinger
%operators with square summable potentials. Comm. Math. Phys. 203 (1999), 341–347.







%\bibitem{s4} M. Reed, B. Simon,  Methods of modern mathematical physics.
%IV. Analysis of operators. Academic Press, 1978.

%\bibitem{zhang} Q.S. Zhang, A sharp comparison result concerning Schr\"odinger heat kernels,
%Bull. London Math. Soc. 35 (2003), 461--472.






\end{thebibliography}
\end{document}